\DeclareSymbolFont{cyrletters}{OT2}{wncyr}{m}{n}
\DeclareMathSymbol{\Sha}{\mathalpha}{cyrletters}{"58}
\numberwithin{equation}{section}
\theoremstyle{plain}
\newtheorem{theorem}{Theorem}[section]
\newtheorem{lemma}[theorem]{Lemma}
\newtheorem{corollary}[theorem]{Corollary}
\newtheorem{proposition}[theorem]{Proposition}
\theoremstyle{definition}
\newtheorem{definition}[theorem]{Definition}
\newtheorem{example}[theorem]{Example}
\newtheorem{remark}[theorem]{Remark}
\newtheorem{thmalph}{Theorem}
\newcommand{\C}{\color{black}}
\newcommand{\col}{\color{black}}
\newcommand{\colo}{\color{black}} 
\newcommand{\cv}{\color{black}} 
\newcommand{\DD}{\mathbb{D}}
\newcommand{\Z}{\mathbb{Z}}
\newcommand{\cO}{\mathcal{O}}
\newcommand{\Gal}{\operatorname{Gal}}
\newcommand{\Ker}{\operatorname{Ker}}
\newcommand{\Fr}{\operatorname{Fr}}
\newcommand{\Exp}{\operatorname{Exp}}
\newcommand{\End}{\operatorname{End}}
\newcommand{\card}{\operatorname{card}}
\newcommand{\trace}{\operatorname{trace}}
\newcommand{\trac}{\operatorname{tr}}
\newcommand{\Tot}{\operatorname{Tot}}
\newcommand{\co}{\operatorname{coker}}
\newcommand{\Res}{\operatorname{Res}}
\newcommand{\Aut}{\operatorname{Aut}}
\newcommand{\Keywords}[1]{\par\noindent
	{\small{Keywords and phrases}: #1}}
\newcommand{\AMS}[1]{\par\noindent
	{\small{AMS Subject Classification}: #1}}
\author{Chandrakant Aribam, Neha Kwatra \textsuperscript{1}}
\address{Indian Institute of Science Education and Research (IISER) Mohali, Knowledge City, Sector 81, Manauli, SAS Nagar, Punjab, 140306, India.}
\email{aribam@iisermohali.ac.in}
\address{Indian Institute of Science Education and Research (IISER) Mohali, Knowledge City, Sector 81, Manauli, SAS Nagar, Punjab, 140306, India.}
\email{nehakwatra0@gmail.com; nehakwatra@iisermohali.ac.in}
\date{}
\begin{document}
	\title{Galois Cohomology for Lubin-Tate $(\varphi_q,\Gamma_{LT})$-modules over coefficient rings}
\begin{abstract}
The classification of the local Galois representations using $(\varphi,\Gamma)$-modules by Fontaine has been generalized by Kisin and Ren over the Lubin-Tate extensions of local fields using the theory of $(\varphi_q,\Gamma_{LT})$-modules. In this paper, we extend the work of (Fontaine) Herr by introducing a complex which allows us to compute cohomology over the Lubin-Tate extensions and compare it with the Galois cohomology groups. We further extend that complex to include certain non-abelian extensions. We then deduce some relations of this cohomology with those arising from $(\psi_q,\Gamma_{LT})$-modules. We also compute the Iwasawa cohomology over the Lubin-Tate extensions in terms of {\C the} $\psi_q$-operator acting on the \'{e}tale $(\varphi_q,\Gamma_{LT})$-module attached to the local Galois representation. Moreover, we generalize the notion of $(\varphi_q,\Gamma_{LT})$-modules over the coefficient ring $R$ and show that the equivalence given by Kisin and Ren extends to the Galois representations over $R$. This equivalence allows us to generalize our results to the case of coefficient rings.		
\end{abstract}
\maketitle
\footnotetext[1]{The author is supported by the Ph.D. fellowship from IISER Mohali, and the results of this paper are part of the author's doctoral dissertation.}
\let\thefootnote\relax\footnotetext{
	\AMS{11F80, 11F85, 11S25, 11S31, 14F30.}
	\Keywords{Galois representations, Local fields, Galois cohomology, $p$-adic formal groups, $p$-adic cohomology.}}	
\section{Introduction}\label{sec1}
{\C Let $K$ be a finite extension of $\mathbb{Q}_p$ with residue field $k$, where $k$ is finite and of characteristic $p$. Fix an algebraic closure $\bar{K}$ of $K$, then }
we denote by $G_K = \Gal(\bar{K}/K)$ the {\C absolute} 
Galois group. Recall that a $\mathbb{Z}_p$-adic representation of $G_K$ is a 
$\mathbb{Z}_p$-module of finite {\C type together} 
with a continuous and linear action of $G_K$. For the Witt ring $W(k)$, let $\mathcal{O}_{\mathcal{E}}$ be the $p$-adic completion of $W(k)((u))$ with the field of fractions $\mathcal{E}$. Let {\C $K_{cyc}=\cup_{n\geq1}K(\mu_{p^n})$ 
denote the 
abelian extension} of $K$ in $\bar{K}$ obtained by adjoining the $p^n$-th roots of unity to $K$, $H = \Gal(\bar{K}/K_{cyc})$ and $\Gamma =G_K/H= \Gal(K_{cyc}/K)$. Then there is a natural action of $\Gamma$ and a Frobenius $\varphi$ on $\mathcal{O}_{\mathcal{E}}$.

 In \cite{Fon}, Fontaine introduced a new technique to understanding the category of $\mathbb{Z}_p$-adic representations of $G_K$ in terms of algebraic objects, namely, the $(\varphi,\Gamma)$-modules. In the equal characteristic case ($(p,p)$ case), he constructed a category of \'{e}tale $\varphi$-modules over $\mathcal{O}_{\mathcal{E}}$ and proved that this category is equivalent to the category of $\mathbb{Z}_p$-adic representations of $G_K$. {\C Recall that a $\varphi$-module over $\mathcal{O}_{\mathcal{E}}$ is an $\mathcal{O}_{\mathcal{E}}$-module $M$ together with a semi-linear map $\varphi_M: M\rightarrow M$. We say that $M$ is \'{e}tale over $\mathcal{O}_{\mathcal{E}}$ if $M$ is an $\mathcal{O}_{\mathcal{E}}$-module of finite type and $\Phi_M^{lin}: M_{\varphi}\rightarrow M$ is an isomorphism, where $M_\varphi$ is the base change of $M$ by $\mathcal{O}_{\mathcal{E}}$ via $\varphi$. } 
 An \'{e}tale $(\varphi,\Gamma)$-module over $\mathcal{O}_{\mathcal{E}}$ is an \'{e}tale $\varphi$-module over $\mathcal{O}_{\mathcal{E}}$ together with a continuous and semi-linear action of $\Gamma$ commuting with the action of $\varphi$. Then using the theory of the field of norms due to Fontaine and Wintenberger \cite{Win}, he deduced the mixed characteristic case ($(0,p)$ case) from the equal characteristic case. In this case, he decomposed the Galois group $G_K$ along the 
{\colo extension $K_{cyc}$ of $K$, and }
showed that the category of $\mathbb{Z}_p$-adic representations of $G_K$ is equivalent to the category of \'{e}tale $(\varphi,\Gamma)$-modules over $\mathcal{O}_{\mathcal{E}}$. 
 
  This equivalence is a deep result that allows the computation of the Galois cohomology. In \cite{LH1}, Herr devised a technique to calculate the Galois cohomology by introducing a complex, namely, the \emph{Herr complex}. The Herr complex is defined on the category of \'{e}tale $(\varphi,\Gamma)$-modules and the cohomology groups of this complex turn out to match with the Galois cohomology groups on the category of $\mathbb{Z}_p$-adic representations of $G_K$. The results of Fontaine, along with this complex, play a crucial role in all the works pertaining to the computation of the Galois cohomology. In \cite{Flo}, {\C Tavares Ribeiro} 
  further extended the Herr complex to the False-Tate type curve extensions to include certain non-abelian extensions over cyclotomic $\mathbb{Z}_p$-extension.
  
 {\C In the case when $K=\mathbb{Q}_p$, note} 
 that the extension $K_{cyc}$ is obtained by adjoining the $p^n$-th roots of unity to $K$, and they are the $p^n$-torsion points of the multiplicative Lubin-Tate formal group $\mathbb{G}_m$ on $\mathbb{Q}_p$ with respect to the uniformizer $p$. Thus  
{\C the extension $K_{cyc}$} is the same as the extension associated with the multiplicative Lubin-Tate formal group. {\C In this context, the computation of Galois cohomology using Fontaine's theory has found deep applications in Iwasawa theory over cyclotomic extensions as shown in \cite{CC}, as well as in the study of a $p$-adic Local Langlands correspondence over $\mathbb{Q}_p$.} It is natural to try to carry out this theory for \emph{arbitrary} Lubin-Tate formal group over $K$. In this direction, there has been a lot of activity in recent years to develop the Fontaine theory for Lubin-Tate formal groups \cite{KR}, \cite{Berger2013}, \cite{Four-Xie}, \cite{Berger2016}, \cite{SV}, \cite{Ber-Four}, and \cite{Ber-Sch-Xie} defined over 
a finite extension $L$ of $\mathbb{Q}_p$ with ring of integers $\mathcal{O}_L$ and uniformizer $\pi_L$. 
 
 In \cite{KR}, Kisin {\C and} Ren classified the local Galois representations using the extensions arising from division (torsion) points of {\C a} 
 Lubin-Tate formal group defined over {\col $L$ for a finite extension $L/\mathbb{Q}_p$ contained in }$K$. More precisely, consider a Lubin-Tate formal group $\mathcal{G}$ over $L$, 
 and for $n\geq 1$, let {\C $K_n\subset \bar{K}$} be the subfield generated by the $\pi_L^n$-torsion points of $\mathcal{G}$, where $\pi_L$ is a uniformizer of $\mathcal{O}_L$. 
 Define $K_{\infty}:= \cup_{n\geq1} K_n$. {\C The field extension $K_\infty$ is usually referred to as a \emph{Lubin-Tate extension} of $K$. Let} $\Gamma_{LT}:= \Gal(K_\infty/K)$. Then they obtained a classification of $G_K$-representations on finite $\mathcal{O}_L$-modules via \'{e}tale $(\varphi_q,\Gamma_{LT})$-modules, where \'{e}tale $(\varphi_q,\Gamma_{LT})$-modules are analogues of \'{e}tale $(\varphi,\Gamma)$-modules (\cite[Theorem 1.6]{KR}). More details are given in section \ref{2.2}.  
   
 This paper depends heavily on the classification of $G_K$-representations given by Kisin and Ren. {\C We have also been influenced by the work \cite{SV} for applications to Iwasawa theory.} We show that the theorem of Kisin and Ren allows us to compute the Galois cohomology of representations defined over $\mathcal{O}_L$ (Theorem \ref{lattices}). For this first, we observe that the theorem of Kisin and Ren \cite[Theorem 1.6]{KR} holds for ${\bf Rep}_{\mathcal{O}_L-tor}^{dis}(G_K)$ the category of discrete $\pi_L$-primary {\C $\mathcal{O}_L$-modules} 
 with a continuous and linear action of $G_K$. It is crucial to work with this category as this category has enough injectives, and this category is equivalent to the category of injective limits of $\pi_L$-power torsion objects in the category of \'{e}tale $(\varphi_q,\Gamma_{LT})$-modules over $\mathcal{O}_{\mathcal{E}}$ (Corollary \ref{KR discrete}). {\col We emphasize that we can make the constructions only through the category of $\pi_L$-primary discrete representations.} Next, we generalize the Herr complex to the Lubin-Tate extensions, and we call it the \emph{Lubin-Tate Herr complex} (see Definition \ref{LTHC}). 
{\colo In the case when $K$ is a fixed finite extension of $L$ contained in $K_\infty$ such that $\Gal(K_\infty/K)$ is isomorphic to $\mu_{q-1}\oplus \mathbb{Z}_p^d$, where $\mu_{q-1}$ is the group of $(q-1)$th roots of unity,} we have the following result. {\colo For more details see section \ref{sec3}.}
	\begin{thmalph}[=Theorem \ref{G_K-cohomo}]\label{M1}
For a discrete $\pi_L$-primary {\C $\mathcal{O}_L$-module} 
		$V$ with a continuous and linear action of $G_K$, we have a natural isomorphism 
		\begin{equation*}
		H^i(G_K,V)\cong \mathcal{H}^i(\Phi\Gamma_{LT}^{\bullet}(\mathbb{D}_{LT}(V)))\quad \text{for} \ i\geq 0.
		\end{equation*}
The cohomology groups on the right side are computed using the Lubin-Tate Herr complex defined for $(\varphi_q,\Gamma_{LT})$-module corresponding to $V$, while the left hand side denotes the usual Galois cohomology groups of the representation $V$.
	\end{thmalph}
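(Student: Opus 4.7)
The plan is to mirror Herr's original strategy via the Hochschild--Serre spectral sequence, replacing the cyclotomic ingredients by their Lubin--Tate counterparts supplied by the Kisin--Ren equivalence. As a preliminary reduction, both sides of the claimed isomorphism commute with filtered colimits: Galois cohomology of the profinite group $G_K$ on discrete modules does so by general nonsense, and the Lubin--Tate Herr complex does so because its terms are modules and $\DD_{LT}$ preserves filtered colimits by Corollary \ref{KR discrete}. Thus I may assume $V$ is killed by a fixed power of $\pi$ and in particular is of finite length over $\cO_K$.

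Setting $H_{LT} := \Gal(\bar{K}/K_\infty)$, the computation splits along $1 \to H_{LT} \to G_K \to \Gamma_{LT} \to 1$. For the $H_{LT}$-cohomology I would use the fundamental short exact sequence of $G_K$-modules
\begin{equation*}
0 \to V \to \cO_{\widehat{\mathcal{E}}^{nr}} \otimes_{\cO_{\mathcal{E}}} \DD_{LT}(V) \xrightarrow{\varphi_q - 1} \cO_{\widehat{\mathcal{E}}^{nr}} \otimes_{\cO_{\mathcal{E}}} \DD_{LT}(V) \to 0
\end{equation*}
arising from the Kisin--Ren reconstruction of $V$ from $\DD_{LT}(V)$, together with the acyclicity $H^i(H_{LT}, \cO_{\widehat{\mathcal{E}}^{nr}} \otimes N) = 0$ for $i \geq 1$ and every \'etale $(\varphi_q, \Gamma_{LT})$-module $N$. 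Reading off the long exact sequence of $H_{LT}$-cohomology then identifies $H^0(H_{LT}, V) = \Ker(\varphi_q - 1)$ and $H^1(H_{LT}, V) = \co(\varphi_q - 1)$ on $\DD_{LT}(V)$, while $H^i(H_{LT}, V) = 0$ for $i \geq 2$. Equivalently, the two-term complex $C^\bullet := [\DD_{LT}(V) \xrightarrow{\varphi_q - 1} \DD_{LT}(V)]$ computes $R\Gamma(H_{LT}, V)$ as an object in the derived category of continuous $\Gamma_{LT}$-modules.

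To finish, I would identify $R\Gamma(G_K, V) = R\Gamma(\Gamma_{LT}, R\Gamma(H_{LT}, V)) = R\Gamma(\Gamma_{LT}, C^\bullet)$ as the hypercohomology of $C^\bullet$ over $\Gamma_{LT}$. Using an explicit resolution of the trivial $\Z_p[[\Gamma_{LT}]]$-module built from topological generators of a pro-$p$ open subgroup of $\Gamma_{LT}$ (and averaging over the finite prime-to-$p$ quotient), continuous $\Gamma_{LT}$-cohomology of $C^\bullet$ is realized by an explicit double complex; by the recipe in Definition \ref{LTHC} its total complex is precisely $\Phi\Gamma_{LT}^\bullet(\DD_{LT}(V))$. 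Naturality of each of these identifications in $V$ is automatic, giving the asserted isomorphism.

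The main obstacle I anticipate is establishing the acyclicity $H^i(H_{LT}, \cO_{\widehat{\mathcal{E}}^{nr}} \otimes N) = 0$ for $i \geq 1$, which is the Lubin--Tate counterpart of the cornerstone vanishing behind Herr's original proof. This is not formal: it ultimately rests on the (almost) \'etale nature of the Kisin--Ren tower together with Artin--Schreier-type surjectivity of $\varphi_q - 1$ on $\cO_{\widehat{\mathcal{E}}^{nr}}$, both of which need to be extracted from the Kisin--Ren construction rather than quoted as a black box. A secondary technical point is the correct handling of $\Gamma_{LT}$-cohomology when $[K:\mathbb{Q}_p] > 1$: there $\Gamma_{LT}$ is a higher-dimensional $p$-adic Lie group and the resulting Herr complex is longer than the three-term cyclotomic one, so one must verify that the assembly in Definition \ref{LTHC} matches the Koszul resolution chosen above.
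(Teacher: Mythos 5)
Your skeleton matches the paper's: reduce to torsion $V$, split the computation along $1\to H_K\to G_K\to\Gamma_{LT}\to 1$, identify the $H_K$-cohomology of $V$ with the two-term $\varphi_q$-complex on $\mathbb{D}_{LT}(V)$ via the Kisin--Ren short exact sequence plus an acyclicity of $\mathcal{O}_{\widehat{\mathcal{E}^{ur}}}$, peel off the prime-to-$p$ torsion $\Delta\subset\Gamma_{LT}$, and run a Koszul complex for the free part $\Gamma_{LT}^*$. Where you diverge is in the final assembly: you propose to argue in a derived category of continuous $\Gamma_{LT}$-modules that $R\Gamma(G_K,V)=R\Gamma(\Gamma_{LT},C^\bullet)$ and then read off the hypercohomology via a Koszul resolution, whereas the paper (Lemma \ref{dimension-shifting}, Theorem \ref{G_K-cohomo}) packages everything as a universal-$\delta$-functor argument: both $H^i(G_K,-)$ and $\mathcal{H}^i(\Phi\Gamma_{LT}^\bullet(\mathbb{D}_{LT}(-)))$ are cohomological $\delta$-functors on $\mathbf{Rep}^{dis}_{\mathcal{O}_K\text{-}tor}(G_K)$ agreeing in degree $0$, the isomorphism is checked on injectives by comparing the Hochschild--Serre spectral sequence with the first spectral sequence of the double complex, and the general case follows by dimension shifting. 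The paper's route buys rigour without having to set up a derived category of continuous $\Gamma_{LT}$-modules, which your sketch leaves as an unexamined black box; conversely your phrasing is shorter if one grants that machinery.

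You overestimate the difficulty of the acyclicity $H^i(H_K,\mathcal{O}_{\widehat{\mathcal{E}^{ur}}}/\pi^n)=0$ for $i\geq 1$. In the paper this is Lemma \ref{trivial cohom}: by d\'evissage on $\pi$-power torsion it reduces to $H^i(H_K,E^{sep})=0$, and since $H_K\cong G_E$ by Lemma \ref{Galois group isomorphism} this is just the standard additive vanishing for a separable closure \cite[Proposition 6.1.1]{NSW} --- no almost-\'etale input is needed beyond what the field-of-norms identification already supplies, and the surjectivity of $\varphi_q-1$ is Lemma \ref{Exact}, again an Artin--Schreier argument plus d\'evissage. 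The genuine technical weight instead sits on the $\Gamma_{LT}$ side, exactly the secondary point you flag: Proposition \ref{Gamma^*-cohomo} proves by induction on the number $d=[K:\mathbb{Q}_p]$ of topological generators that the Koszul complex computes $\Gamma_{LT}^*$-cohomology, and Remark \ref{Independence of generators} verifies independence of the chosen generators; both are needed for the assembly in Definition \ref{LTHC} to mean what one wants.
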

Moreover, we show that both the cohomology functors commute with the inverse limits and deduce the above theorem for the case when $V$ is a representation defined over $\mathcal{O}_L$ (Theorem \ref{lattices}). We further extend the equivalence of categories of Kisin and Ren to include certain non-abelian extensions over the Lubin-Tate extension (Theorem \ref{False-Tate equivalence}) and show that the construction of the Lubin-Tate Herr complex for $(\varphi_q,\Gamma_{LT})$-modules can be generalized to $(\varphi_q,\Gamma_{LT,FT})$-modules over non-abelian extensions, and we call it the \emph{False-Tate type Herr complex} (Definition \ref{FTHC}). In this case, we establish the following theorem.
	\begin{thmalph}[= Theorem \ref{Main4}]\label{M2}
		For any $V \in {\bf Rep}_{\mathcal{O}_L-tor}^{dis}(G_K)$, we have a natural isomorphism
		\begin{equation*}
		H^i(G_K,V)\cong \mathcal{H}^i(\Phi\Gamma_{LT,FT}^{\bullet}(\mathbb{D}_{LT,FT}(V)))\quad \text{for}\ i\geq 0.
		\end{equation*}
		In other words, the False-Tate type Herr complex $\Phi\Gamma_{LT,FT}^{\bullet}(\mathbb{D}_{LT,FT}(V))$ computes the Galois cohomology of $G_K$ with coefficients in $V$. 
	\end{thmalph}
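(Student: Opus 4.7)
The strategy will be to leverage Theorem \ref{M1} (already proven for the Lubin-Tate setting) via a spectral sequence / double complex argument that accounts for the additional non-abelian extension layer. First I would reduce via the equivalence of categories: by Theorem \ref{False-Tate equivalence}, the functor $\mathbb{D}_{LT,FT}$ identifies $V \in \mathbf{Rep}_{\mathcal{O}_K-tor}^{dis}(G_K)$ with its associated False-Tate type $(\varphi_q,\Gamma_{LT,FT})$-module. Writing $L \subset \bar K$ for the False-Tate extension of $K_\infty$, we get $\Gal(L/K) \cong \Gamma_{LT,FT}$ fitting in an exact sequence $1 \to N \to \Gamma_{LT,FT} \to \Gamma_{LT} \to 1$, where $N \cong \mathbb{Z}_p$ is topologically generated by the element $\tau$ corresponding to the auxiliary non-abelian layer. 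By the construction of Definition \ref{FTHC}, the complex $\Phi\Gamma_{LT,FT}^\bullet$ is a Koszul-type complex built from $\varphi_q - 1$, a generator $\gamma - 1$ of a suitable open subgroup of $\Gamma_{LT}$, and $\tau - 1$.

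Next I would realise $\Phi\Gamma_{LT,FT}^\bullet(\mathbb{D}_{LT,FT}(V))$ as the total complex of a double complex, one direction being the $\tau - 1$ differential and the other being the Lubin-Tate Herr complex $\Phi\Gamma_{LT}^\bullet$ applied to the underlying $(\varphi_q,\Gamma_{LT})$-module obtained by restricting the $\Gamma_{LT,FT}$-action. Taking the $\tau$-direction first, one column-cohomology should recover $\Phi\Gamma_{LT}^\bullet$ of the $(\varphi_q,\Gamma_{LT})$-module associated to the restriction of $V$ to $\Gal(\bar K / L)$ (viewed through $H/\Gal(\bar K/L) \cong N$). On the Galois side, I would run the two-step Hochschild-Serre spectral sequence associated with the filtration $\Gal(\bar K/L) \triangleleft H \triangleleft G_K$, i.e.\ successively for $1\to \Gal(\bar K/L)\to H\to N\to 1$ and $1\to H\to G_K\to \Gamma_{LT}\to 1$. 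Applying Theorem \ref{M1} to the inner layer then matching the $E_2$-pages via the equivalence of categories should yield the desired isomorphism.

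The main obstacle will be the step where one identifies the $\tau - 1$ piece of the False-Tate type Herr complex with the continuous cohomology of $N \cong \mathbb{Z}_p$ on the relevant coefficient module. Concretely, one must verify that $\ker(\tau - 1)$ and $\operatorname{coker}(\tau - 1)$ on $\mathbb{D}_{LT,FT}(V)$ compute $H^0(N,-)$ and $H^1(N,-)$ of the underlying $(\varphi_q,\Gamma_{LT})$-module, and that the inflation and restriction maps of the two spectral sequences agree under $\mathbb{D}_{LT,FT}$. To make this tractable, I would first dévissage: reduce to finite $\pi$-torsion $V$ using that both $H^i(G_K, -)$ and the cohomology of the Herr complex commute with filtered colimits of discrete $\pi$-primary modules (as established in the proof of Theorem \ref{M1}), and then further reduce to $V$ killed by $\pi$, where a direct computation on finite $\mathcal{O}_{\mathcal{E}}/\pi$-modules with their explicit $\tau$-action should close the argument. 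Once these compatibilities are in place, combining the two double complexes and two spectral sequences forces the required isomorphism for all $i \geq 0$.
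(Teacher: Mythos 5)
There is a genuine gap in your proposed decomposition. You want to realise $\Phi\Gamma_{LT,FT}^\bullet(\mathbb{D}_{LT,FT}(V))$ as the total complex of a double complex ``one direction being the $\tau - 1$ differential and the other being the Lubin-Tate Herr complex $\Phi\Gamma_{LT}^\bullet$.'' This does not give a double complex. In $\Gamma_{LT,FT} = \Gamma_{LT}^* \ltimes \mathbb{Z}_p$ the generators satisfy $\gamma_i\tilde\gamma = \tilde\gamma^{a_i}\gamma_i$ with $a_i = \chi_{LT}(\gamma_i) \neq 1$ in general, so $\tilde\gamma - 1$ does \emph{not} commute with $\gamma_i - 1$ in $\mathcal{O}_K[[\Gamma_{LT,FT}]]$; the two differentials fail to anticommute, and the putative total complex is not even a complex. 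This is precisely why the paper's $\Gamma_{LT,FT}^\bullet(A)$ carries the twisted differentials $\tilde\gamma^{\chi_{LT}(i_1)\cdots\chi_{LT}(i_r)} - 1$ and $\gamma_j - \tfrac{\tilde\gamma^{\chi_{LT}(j)\chi_{LT}(i_1)\cdots}-1}{\tilde\gamma^{\chi_{LT}(i_1)\cdots}-1}$ rather than a clean Koszul structure. Moreover, when the paper does exhibit $\Gamma_{LT,FT}^\bullet(A)$ as a total complex, it peels off $\gamma_d$ (not $\tilde\gamma$), and even then the two rows are genuinely different complexes (one with $\tilde\gamma$ replaced everywhere by $\tilde\gamma^{a_d}$), precisely to absorb the non-commutativity. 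Your closing dévissage to $\pi$-torsion modules does not address this: it is a structural, not a torsion, issue, and would have to be fixed before any spectral-sequence comparison can begin.

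The correct route, which the paper takes, uses only the single Hochschild-Serre filtration $1 \to H_L \to G_K \to \Gamma_{LT,FT} \to 1$ (where $H_L = \Gal(\bar K/L)$), not the two-step filtration $H_L \triangleleft H_K \triangleleft G_K$. One shows (i) $\Phi^\bullet(\mathbb{D}_{LT,FT}(V))$ computes $H_L$-cohomology (the FT analogue of Proposition \ref{H_K-cohomology}), (ii) the genuinely twisted complex $\Gamma_{LT,FT}^\bullet(A)$ computes continuous $\Gamma_{LT,FT}$-cohomology for discrete $\pi$-primary $A$ — this is where the non-abelian structure is tamed, via the $\gamma_d$-peeled double complex — and then (iii) one compares the two spectral sequences for injective $V$ and finishes by dimension shifting, as in the proof of Theorem \ref{G_K-cohomo}. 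If you want to keep a two-layer picture, it would have to go through the field of norms of $L/K$ directly and not through a naive Koszul factorisation of $\Gamma_{LT,FT}^\bullet$.
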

Next, {\C we consider the operator $\psi_q$ acting on \'{e}tale $(\varphi_q,\Gamma_{LT})$-modules, as defined in \cite{SV}. Then for any $V \in {\bf Rep}_{\mathcal{O}_L-tor}^{dis}(G_K)$, we have a well-defined homomorphism 
	\begin{equation*}
	\mathcal{H}^i(\Phi\Gamma_{LT}^{\bullet}(\mathbb{D}_{LT}(V)))\rightarrow \mathcal{H}^i(\Psi\Gamma_{LT}^{\bullet}(\mathbb{D}_{LT}(V)))\quad \text{for} \ i\geq0.
	\end{equation*}
In the cyclotomic case, it is a deep theorem of Colmez and Herr that the analogous map is an isomorphism. At present, in the Lubin-Tate case, we can only say that $\mathcal{H}^0(\Phi\Gamma_{LT}^{\bullet}(\mathbb{D}_{LT}(V)))\rightarrow \mathcal{H}^0(\Psi\Gamma_{LT}^{\bullet}(\mathbb{D}_{LT}(V)))$ is injective. Moreover, we prove similar result in the case of False-Tate type extensions (Theorem \ref{Theorem False Tate})}
 Next, we recall the computation of the Iwasawa cohomology in terms of the complex associated with $\psi_q$ (Theorem \ref{Iwasawa cohomology}), {\C which is a result of \cite[Theorem 5.13]{SV}}. 

{\C A Lubin-Tate analogue of Perrin-Riou's exponential map has been given in \cite[section 3.5]{Ber-Four} in the context of analytic representations. Hoping for an exponential map for families of Galois representations that give rise to such analytic representations, in the second part, we begin by generalizing our results to the case of coefficient rings.} 
Recall that a coefficient ring is a complete local Noetherian ring with finite residue field. 

In \cite{Dee}, Dee generalized Fontaine theory to the case of a general complete Noetherian local ring $R$, whose residue field is a finite extension of $\mathbb{F}_p$.  He extended Fontaine's \cite{Fon} results to the category of $R$-modules of finite type with a continuous $R$-linear action of $G_K$. He constructed a category of \'{e}tale $\varphi$-modules (resp., \'{e}tale $(\varphi,\Gamma)$-modules) over $K$ parameterized by $R$ and proved that this category is equivalent to the category of $R$-linear representations of $G_K$ in the equal characteristic case (resp., mixed characteristic case) (\cite[Theorem 2.1.27 and Theorem 2.3.1]{Dee}). The category of \'{e}tale $\varphi$-modules (resp., \'{e}tale $(\varphi,\Gamma)$-modules) is defined to be a module of finite type over the completed tensor product $\mathcal{O}_{\mathcal{E}}\hat{\otimes}_{\mathbb{Z}_p}R$ with  an action of $\varphi$ (resp., $\varphi$ and $\Gamma$) as in the case of Fontaine. The core point of the proof is Lemma 2.1.5. and Lemma 2.1.6. in \cite{Dee}. In the proof of the equivalence of categories stated above, he used the results of Fontaine \cite{Fon} crucially for the case when the representation $V$ has finite length. Then the general case was deduced by taking the inverse limits. 

We also extend a result of Kisin and Ren (\cite[Theorem $1.6$]{KR}) to give a classification of the category of $R$-representations of $G_K$. We consider a category of \'{e}tale $(\varphi_q,\Gamma_{LT})$-modules over the completed tensor product $\mathcal{O}_R:=\mathcal{O}_{\mathcal{E}}\hat{\otimes}_{\mathcal{O}_L}R$, where the ring $\mathcal{O}_{\mathcal{E}}$ is constructed using the periods of Tate-module of $\mathcal{G}$. Then we prove that this category is equivalent to the category of $R$-representations of $G_K$. In the equal characteristic case, we show the following result.

	\begin{thmalph}[=Theorem \ref{Main6.1}]\label{M3}
		The functor $V\mapsto \mathbb{D}_R(V)$ is an exact equivalence of categories between ${\bf Rep}_R(G_K)$ the category of $R$-representations of $G_K$ and ${\bf Mod}_{/\mathcal{O}_R}^{\varphi_q,\acute{e}t}$ the category of \'{e}tale $\varphi_q$-modules over $\mathcal{O}_R$ with quasi-inverse functor $\mathbb{V}_R$.
	\end{thmalph}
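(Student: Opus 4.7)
The plan is to adapt Dee's strategy \cite{Dee} for generalizing Fontaine's equivalence to coefficient rings, but in the Lubin-Tate setting developed by Kisin-Ren \cite{KR}, using Kisin-Ren's classification at each finite level of the coefficient ring and passing to inverse limits. I would first define $\mathbb{D}_R$ as the natural analogue of the Kisin-Ren functor, tensored over $\mathcal{O}_K$ with $R$ (and $\mathfrak{m}_R$-adically completed), so that $\mathbb{D}_R(V)$ is naturally a finite-type $\mathcal{O}_R = \mathcal{O}_{\mathcal{E}} \hat{\otimes}_{\mathcal{O}_K} R$-module equipped with an \'{e}tale $\varphi_q$-action. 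The quasi-inverse $\mathbb{V}_R$ is defined symmetrically, and the adjunction data are inherited from the Kisin-Ren construction by functoriality in the coefficients.

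Next, I would reduce to the finite-length case. Any $V \in {\bf Rep}_R(G_K)$ that is of finite length as an $R$-module is annihilated by some $\mathfrak{m}_R^n$, and since $R/\mathfrak{m}_R^n$ is finite, $V$ is actually a finite $\mathcal{O}_K$-module. The Kisin-Ren theorem \cite[Theorem 1.6]{KR} (or its discrete variant, Corollary \ref{KR discrete}) then produces a functorial \'{e}tale $\varphi_q$-module, and one checks that the $R$-action on $V$ lifts to a compatible $\mathcal{O}_R$-action on $\mathbb{D}_R(V)$ commuting with $\varphi_q$. This establishes the equivalence and exactness for finite-length modules directly from Kisin-Ren.

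For the general case, write $V = \varprojlim_n V/\mathfrak{m}_R^n V$ and apply the finite-length case to each quotient. The key points to verify are that $\mathbb{D}_R$ and $\mathbb{V}_R$ commute with these inverse limits, that $\mathbb{D}_R(V)$ remains of finite type over $\mathcal{O}_R$, and that the unit and counit of the adjunction remain isomorphisms, which reduces to the finite-length case modulo each $\mathfrak{m}_R^n$. Exactness transfers to the limit since the inverse systems in question are Mittag-Leffler (surjective transition maps between objects of finite length).

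The main obstacle will be the technical analysis of the completed tensor product $\mathcal{O}_R$ and the behavior of \'{e}tale $\varphi_q$-modules over it. Specifically, I expect to need Lubin-Tate analogues of Dee's Lemmas 2.1.5 and 2.1.6, asserting that (i) every finite-type \'{e}tale $\varphi_q$-module over $\mathcal{O}_R$ is the inverse limit of its reductions modulo $\mathfrak{m}_R^n$, each of which is an \'{e}tale $\varphi_q$-module over $\mathcal{O}_{\mathcal{E}} \otimes_{\mathcal{O}_K} (R/\mathfrak{m}_R^n)$ in the Kisin-Ren sense, and (ii) any such compatible inverse system has its limit finite-type and \'{e}tale over $\mathcal{O}_R$. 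Proving these completeness lemmas requires care because $\mathcal{O}_R$ is not Noetherian in general and the \'{e}taleness condition must be shown to be preserved by $\mathfrak{m}_R$-adic completion of scalar extensions. Once these are in place, the equivalence and its exactness become a term-by-term application of Kisin-Ren at each finite level.
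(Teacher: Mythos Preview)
Your proposal is correct and follows essentially the same route as the paper: reduce to finite-length modules via Lemma~\ref{finite length} (where $\mathbb{D}_R$ agrees with $\mathbb{D}_{LT}$ as $\mathcal{O}_K$-modules), invoke Kisin--Ren there, and pass to inverse limits using the Dee-style lemmas you identify. One small correction: $\mathcal{O}_R$ \emph{is} Noetherian (by \cite[Proposition~1.2.3]{Dee} it is a complete Noetherian semi-local ring), which simplifies the technical analysis you anticipate.
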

	The construction of these functors is explained in section \ref{sub6.1}. In the case of mixed characteristic, we have following theorem which gives a classification of $R$-representations of the local Galois group in terms of \'{e}tale $(\varphi_q,\Gamma_{LT})$-modules over $\mathcal{O}_R$. 
	\begin{thmalph}[=Theorem \ref{Main6.2}]\label{M4}
		The functor $\mathbb{D}_R$ is an equivalence of categories between ${\bf Rep}_R(G_K)$ the category of $R$-linear representations of $G_K$ and ${\bf Mod}_{/\mathcal{O}_R}^{\varphi_q,\Gamma_{LT},\acute{e}t}$ the category of \'{e}tale $(\varphi_q,\Gamma_{LT})$-modules over $\mathcal{O}_R$. The functor $\mathbb{V}_R$ is a quasi-inverse of the functor $\mathbb{D}_R$. 	
	\end{thmalph}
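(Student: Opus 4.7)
The plan is to follow Dee's strategy (\cite{Dee}) for handling coefficient rings, now with the Kisin--Ren classification of $\mathcal{O}_K$-representations serving as the base case. The equivalence is established first for $R$-modules of finite length by combining Kisin--Ren with the extra $R$-module structure, and then extended to general $V \in {\bf Rep}_R(G_K)$ via inverse limits, exploiting that a finitely generated module over the complete local Noetherian ring $R$ is the inverse limit of its quotients by powers of $\mathfrak{m}_R$.

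For $V$ of finite $R$-length, the structure map $\mathcal{O}_K \to R$ makes $V$ a discrete $\pi$-primary object in ${\bf Rep}_{\mathcal{O}_K-tor}^{dis}(G_K)$, so Corollary \ref{KR discrete} gives an \'etale $(\varphi_q,\Gamma_{LT})$-module $\mathbb{D}_{LT}(V)$ over $\mathcal{O}_{\mathcal{E}}$. By functoriality, the $G_K$-equivariant $R$-action on $V$ lifts to an action on $\mathbb{D}_{LT}(V)$ commuting with $\varphi_q$ and $\Gamma_{LT}$; since $V$ is killed by some $\mathfrak{m}_R^n$, so is $\mathbb{D}_{LT}(V)$, hence this action factors through $\mathcal{O}_R = \mathcal{O}_{\mathcal{E}} \hat{\otimes}_{\mathcal{O}_K} R$ and endows $\mathbb{D}_{LT}(V)$ with the structure of an \'etale $(\varphi_q,\Gamma_{LT})$-module over $\mathcal{O}_R$, which I call $\mathbb{D}_R(V)$. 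The \'etale condition over $\mathcal{O}_R$ follows from that over $\mathcal{O}_{\mathcal{E}}$ by base change. Symmetrically, $\mathbb{V}_R$ on a finite length \'etale $(\varphi_q,\Gamma_{LT})$-module over $\mathcal{O}_R$ is defined by applying the Kisin--Ren quasi-inverse and observing that the resulting Galois representation inherits a compatible $R$-action. The unit and counit isomorphisms $\mathbb{V}_R \circ \mathbb{D}_R \simeq \mathrm{id}$ and $\mathbb{D}_R \circ \mathbb{V}_R \simeq \mathrm{id}$ then descend from the Kisin--Ren version, since the extra $R$-structure is transported along functorially.

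For arbitrary $V \in {\bf Rep}_R(G_K)$, I set $\mathbb{D}_R(V) := \varprojlim_n \mathbb{D}_R(V/\mathfrak{m}_R^n V)$ and define $\mathbb{V}_R$ analogously on the module side. The required compatibility with limits parallels Dee's lemmas (\cite[Lemmas 2.1.5, 2.1.6]{Dee}) in the cyclotomic setting: the relevant inverse systems satisfy Mittag--Leffler so exactness persists, and the completed tensor product structure on $\mathcal{O}_R$ is compatible with the formation of $\mathbb{D}_{LT}$. The main obstacle is ensuring that $\mathbb{D}_R(V)$ so defined is a finitely generated $\mathcal{O}_R$-module and that its $\varphi_q$-linearization $\mathcal{O}_R \otimes_{\varphi_q,\mathcal{O}_R} \mathbb{D}_R(V) \to \mathbb{D}_R(V)$ remains an isomorphism in the limit. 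Finite generation will follow by Nakayama over the complete local ring $\mathcal{O}_R$ from the finite length case applied to the mod-$\mathfrak{m}_R$ reduction; preservation of the \'etale property requires careful topological bookkeeping with the completed tensor product, and this is the technical heart of the argument. Once these limit statements are in place, the equivalence for general $R$-representations follows from the finite length case by transferring the natural isomorphisms through the inverse limit.
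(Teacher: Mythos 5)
Your proposal follows the paper's overall strategy: reduce to the finite-length case via the Kisin--Ren classification, then pass to inverse limits in the manner of Dee. The paper organizes this slightly differently, and the difference is worth noting: it first packages all of the finite-length and inverse-limit bookkeeping into the characteristic-$p$ statement (Theorem~\ref{Main6.1}, the equivalence between ${\bf Rep}_R(G_E)$ and ${\bf Mod}_{/\mathcal{O}_R}^{\varphi_q,\acute{e}t}$), where there is no $\Gamma_{LT}$-action to carry along. Theorem~\ref{Main6.2} is then deduced in a few lines: the unit $V \to \mathbb{V}_R(\mathbb{D}_R(V))$ is $G_K$-equivariant, and by Theorem~\ref{Main6.1} it becomes an isomorphism after restriction to $H_K \cong G_E$; being a $G_K$-map that is an isomorphism of underlying modules, it is already an isomorphism of $G_K$-representations (and symmetrically for the counit on the module side, checked on underlying $\varphi_q$-modules). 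Your route, which works directly with $G_K$ and treats the $\Gamma_{LT}$-action throughout, also goes through, but isolating the equal-characteristic equivalence as an intermediate step is cleaner precisely because it confines the delicate limit arguments (Mittag--Leffler, preservation of finite generation, preservation of the \'etale property) to a setting with only a $\varphi_q$-structure.

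Two small corrections to your write-up. First, $\mathcal{O}_R = \mathcal{O}_{\mathcal{E}}\hat{\otimes}_{\mathcal{O}_K}R$ is a complete Noetherian \emph{semi-local} ring, not local (Proposition~\ref{Artinian} and the remark following it), so the Nakayama-type argument for finite generation must be run with respect to the Jacobson radical, which is generated by $\mathfrak{m}_R$. Second, when you endow $\mathbb{D}_{LT}(V)$ with an $\mathcal{O}_R$-structure by transporting the $R$-action on $V$, the resulting object agrees with the paper's $\mathbb{D}_R(V) := (\widehat{\mathcal{O}^{ur}_R}\otimes_R V)^{H_K}$ via the identification in Lemma~\ref{finite length}; your phrasing ``the \'etale condition over $\mathcal{O}_R$ follows from that over $\mathcal{O}_{\mathcal{E}}$ by base change'' conceals the actual content, which is that $\varphi_q$ acts $R$-linearly (trivially on $R$), so the $\varphi_q$-linearization over $\mathcal{O}_R$ is identified with the one over $\mathcal{O}_{\mathcal{E}}$ for modules of finite length. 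Neither of these is a gap in the approach, but they should be stated precisely.
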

	We also have a generalization of Theorem \ref{M1} and Theorem \ref{Iwasawa cohomology} to the case of the coefficient ring. The generalization of Theorem \ref{Iwasawa cohomology} to the case of coefficient rings allows us to generalize the dual exponential map
	\begin{equation*}
	\text{Exp}^*:H^1_{Iw}({K_{\infty}/K},\mathcal{O}_L(\chi_{cyc}\chi_{LT}^{-1}))\xrightarrow{\sim} \mathbb{D}_{LT}(\mathcal{O}_L)^{\psi_{\mathbb{D}_{LT}(\mathcal{O}_L)}=id}
	\end{equation*}
	 defined in \cite{SV} over coefficient rings (see Corollary \ref{dual exp.}). It is possible that this leads to the construction of Coates-Wiles homomorphisms for the Galois representations defined over $R$. 
	 
{\C We would like to mention the work of Kupferer and Venjakob \cite{KV} whose results can be compared with some of our results here. We hope that their results also carry over to the case of coefficient rings as we have done here.}

	\subsection*{Organization of the paper} In section \ref{sec2}, we recall some necessary background that will be used in subsequent sections. In section \ref{sec3}, we define the Lubin-Tate Herr complex as a generalization of Herr complex over Lubin-Tate extensions and compute the Galois cohomology groups of representations defined over $\mathcal{O}_L$. In the next section, we extend the Lubin-Tate Herr complex to include certain non-abelian extensions and show results in the computation of Galois cohomology. In section \ref{section psi}, we define an operator $\psi_q$ acting on the category of \'{e}tale $(\varphi_q,\Gamma_{LT})$-modules and prove some results, which give a relationship between the cohomology groups of the Lubin-Tate Herr complex for $\varphi_q$ and $\psi_q$. We also present our results, which give a relationship between the False-Tate type Herr complex for $\varphi_q$ and $\psi_q$. In section \ref{Iwasawa}, we briefly recall the computation of the Iwasawa cohomology due to Schneider and Venjakob for Lubin-Tate extensions in terms of the complex associated with $\psi_q$. Then in section \ref{sec6}, we record some significant results on coefficient rings, and 	we generalize a theorem of Kisin and Ren over coefficient rings, which in turn allows us to extend our results over coefficient rings, and these results appear in section \ref{sec7}.
\subsection*{Acknowledgements}
We would like to thank Laurent Berger for going through an earlier draft of the article and suggesting valuable comments regarding the article. We would like to sincerely thank all the anonymous referees for all the comments and suggestions and also pointing out several imprecisions.
\section{Lubin-Tate Extensions and Galois Representations}\label{sec2}
\subsection{Background on Lubin-Tate modules}\label{sec2.1}
In this section, we recall some basic results of Lubin-Tate modules. {\C We closely follow the exposition given in \cite{Neu}.} {\col Let $L$ be a local field of characteristic $0$ with the ring of integers $\mathcal{O}_L$, maximal ideal $\mathfrak{m}_L$, and residue field $k_L$ of characteristic $p>0$. Let $\bar{L}$ be a fixed algebraic closure of $L$ with the ring of integers $\mathcal{O}_{\bar{L}}$ and maximal ideal $\mathfrak{m}_{\bar{L}}$.} 

{\C A one-dimensional \emph{formal group} $\mathcal{G}$ over $\mathcal{O}_L$ is a formal power series $\mathcal{G}(X,Y)\in \mathcal{O}_L[[X,Y]]$ in two variables with coefficients in $\mathcal{O}_L$ such that 
\begin{enumerate}
\item $\mathcal{G}(X,0) = X$ and $\mathcal{G}(0,Y) = Y$, i.e., $\mathcal{G}(X,Y) \equiv X+Y\,\text{mod deg}\,2$,
\item $\mathcal{G}(X,\mathcal{G}(Y,Z)) = \mathcal{G}(\mathcal{G}(X,Y),Z)$. 
\end{enumerate}
Moreover, if $\mathcal{G}$ satisfies
$\mathcal{G}(X,Y)= \mathcal{G}(Y,X)$ 
then $\mathcal{G}$ is said to be a \emph{commutative formal group}. In this article, a formal group always means a one-dimensional commutative formal group.

A \emph{formal $\mathcal{O}_L$-module} is a formal group $\mathcal{G}$ over $\mathcal{O}_L$ together with a ring homomorphism
\begin{align*}
& \mathcal{O}_L\rightarrow \End_{\mathcal{O}_L}(\mathcal{G})\\ &a \mapsto [a]_{\mathcal{G}}(Z)
\end{align*}
such that $[a]_{\mathcal{G}}(Z) \equiv aZ\,\text{mod deg}\,2$.} 

{\col Let $q= |k_L|$.} A \emph{Lubin-Tate module} over $\mathcal{O}_L$, for a prime element $\pi_L$ of $\mathcal{O}_L$, is a formal $\mathcal{O}_L$-module $\mathcal{G}$ such that $[\pi_L]_{\mathcal{G}}(X)\equiv X^q\mod\pi_L$. Then the set $\mathfrak{m}_{\bar{L}}$ together with the operations
	\begin{equation*}	
	x\underset{\mathcal{G}}+y:=\mathcal{G}(x,y) \quad \text{and} \quad  a.x:=[a]_{\mathcal{G}}(x) \quad \text{for}\  x,y\in\mathfrak{m}_{\bar{L}} \ \text{and} \ a\in\mathcal{O}_L
	\end{equation*}
gives rise to an $\mathcal{O}_L$-module in the usual sense, which we denote by {\cv$(\mathfrak{m}_{\bar{L}})_{\mathcal{G}}$}. Now consider 
	\begin{equation*}
	\mathcal{G}(n):=\{\lambda\in{\cv (\mathfrak{m}_{\bar{L}})_{\mathcal{G}}}\vert\pi_L^n.\lambda=0\}
	=\{\lambda\in{\cv (\mathfrak{m}_{\bar{L}})_{\mathcal{G}}}\;\vert[\pi_L^n]_{\mathcal{G}}(\lambda)=0\}=\Ker([\pi_L^n]_{\mathcal{G}}) 
	\end{equation*}
	the group of $\pi_L^n$-division points. Then $\mathcal{G}(n)$ is a free $\mathcal{O}_L/\pi_L^n\mathcal{O}_L$-module of rank $1$ \cite[Chapter III, Proposition 7.2]{Neu}.  
	
Let $L_n:=L(\mathcal{G}(n))$. Since $\mathcal{G}(n)\subseteq \mathcal{G}(n+1)$, we have a chain of fields $L\subseteq L_1\subseteq L_2\subseteq\ldots\subseteq L_{\infty}= \bigcup_{n=1}^{\infty} L_n$. {\C The field {\cv $L_\infty=\cup_{n\geq1}L_n$} is referred to as {\col a} \emph{Lubin-Tate extension} of $L$.} 
The extension $L_n/L$ is totally ramified abelian extension of degree $q^{n-1}(q-1)$ with Galois group $\Gal(L_n/L)\cong \Aut_{\mathcal{O}_L}(\mathcal{G}(n))\cong \mathcal{O}_L^{\times}/\mathcal{O}^{\times(n)}_L$ \cite[Chapter III, Theorem 7.4]{Neu}. Moreover, this isomorphism fits into the following commutative diagram
	\begin{center}
		\begin{tikzcd}
		\Gal(L_{n+1}/L) \arrow{r}{\cong} \arrow[swap]{d}{restriction} & \mathcal{O}_L^{\times}/\mathcal{O}^{\times(n+1)}_L \arrow{d}{projection} \\
		\Gal(L_n/L) \arrow{r}[swap]{\cong}& \mathcal{O}_L^{\times}/\mathcal{O}^{\times(n)}_L.
		\end{tikzcd}
	\end{center}
Now by taking the projective limits, we obtain the isomorphism
	\begin{equation}\label{Lubin Tate character}
	\Gal(L_\infty/L)\cong \mathcal{O}_L^\times.
	\end{equation}
	\subsection{Kisin and Ren's equivalence}\label{2.2} 
Let $K$ be a local field of characteristic $0$, i.e., $K$ is a finite extension of $\mathbb{Q}_p$ and it is complete with respect to a discrete valuation with finite residue field $k$ of characteristic $p>0$. Let $\bar{K}$ be a fixed algebraic closure of $K$ with ring of integers $\mathcal{O}_{\bar{K}}$. We write $G_K:=\Gal(\bar{K}/K)$ for the absolute Galois group of $K$. 

In this section, we recall the construction of equivalence of categories of Kisin and Ren \cite[Theorem $1.6$]{KR}. For this, let $W = W(k)$ be the ring of Witt vectors over $k$ and  $K_0 = W[\frac{1}{p}]$ be the field of fractions of $W$. Then $K_0$ is maximal unramified extension of $\mathbb{Q}_p$ contained in $K$. 
{\col Let $L$ be a finite extension of $\mathbb{Q}_p$ contained in $K$. Let $\mathcal{O}_L$ be the ring of integers of $L$ with the residue field 
	$k_L\subset k$. 
	Let $\mathcal{O}_{L_0}= W(k_L)$, $L_0= \mathcal{O}_{L_0}[\frac{1}{p}]$, and $\card(k_L)=q=p^r$. For an $\mathcal{O}_{L_0}$-algebra $A$, we write $A_L = A\otimes_{\mathcal{O}_{L_0}}\mathcal{O}_L$.}
 
 Let $\mathcal{G}$ be the Lubin-Tate group over $L$ corresponding to a uniformizer $\pi_L$ of $\cO_L$. As in \cite{KR}, we fix a local co-ordinate $X$ on $\mathcal{G}$ such that the Hopf algebra $\mathcal{O}_{\mathcal{G}}$ may be identified with $\mathcal{O}_L[[X]]$. For any $a \in \mathcal{O}_L$, write $[a]_{\mathcal{G}}\in \mathcal{O}_L[[X]]= \mathcal{O}_{\mathcal{G}}$ the power series giving the endomorphism $a$ of $\mathcal{G}$. Let $K_\infty$ be a Lubin-Tate extension of $K$ {\cv obtained by adjoining all the $\pi$-power torsion points of $\mathcal{G}$ to $K$}. Let $H_K= \Gal(\bar{K}/K_{\infty})$ and $\Gamma_{LT}= G_K/H_K= \Gal(K_\infty/K)$. Let $\mathcal{TG}$ be the $p$-adic Tate-module of $\mathcal{G}$. Then $\mathcal{TG}$ is a free $\mathcal{O}_L$-module of rank $1$. The action of $G_K$ on $\mathcal{TG}$ factors through $\Gamma_{LT}$ and induces an isomorphism $\chi_{LT} : \Gamma_{LT} \rightarrow \mathcal{O}_L^\times$. 
 
 Let $\mathcal{R} = \varprojlim \mathcal{O}_{\bar{K}}/p\mathcal{O}_{\bar{K}}$, where the transition maps are given by the Frobenius $\varphi$. The ring $\mathcal{R}$ can also be identified with $\varprojlim \mathcal{O}_{\bar{K}}/\pi_L\mathcal{O}_{\bar{K}}$, and the transition maps being given by the $q$-Frobenius $\varphi_q = \varphi^r$. The ring $\mathcal{R}$ is a complete valuation ring, and it is perfect of characteristic $p$. The fraction field $\Fr(\mathcal{R})$ of $\mathcal{R}$ is a complete, algebraically closed non-archimedean {(\cv with respect to the valuation induced from $\mathcal{R}$)} perfect field of characteristic $p$. {\C For these properties of the ring $\mathcal{R}$, details are given in \cite[Chapter 4]{Fon}}. Then we have  a map $\iota : \mathcal{TG}\rightarrow \mathcal{R}$, which is induced by the evaluation of $X$ at $\pi_L$-torsion points. Let $v = (v_n)_{n \geq 0}\in \mathcal{TG}$ with $v_n \in \mathcal{G}(n)$ and $\pi_L.v_{n+1} = v_n,$ then $\iota(v) = (v^*_n(X)+\pi_L\mathcal{O}_{\bar{K}})_{n\geq 0}$, {\C where $v_n^*(X)= [\pi_L]_{\mathcal{G}}(X)|_{X=v_n}$. Recall that $W(\mathcal{R})_L= W(\mathcal{R})\otimes_{\mathcal{O}_{L_0}} \mathcal{O}_L$. Then } 
 we have the following lemma, which follows from \cite[Lemma 9.3]{Co1}. More details are given in \cite[\S2.1]{Sch}.
\begin{lemma}\!\textup{\cite[Lemma 1.2]{KR}}\label{embedding}
There is a unique map $\{\} : \mathcal{R}\rightarrow W(\mathcal{R})_L$ such that $\{x\}$ is a lifting of $x$ and $\varphi_q(\{x\}) = [\pi_L]_{\mathcal{G}}(x).$ Moreover, $\{\}$ respects the action of $G_K$. In particular, if $v \in \mathcal{TG}$ is an $\mathcal{O}_L$-generator, there is an embedding $W_L[[X]]\hookrightarrow W(\mathcal{R})_L$ sending $X$ to $\{\iota(v)\}$ which identifies $W_L[[X]]$ with a $G_K$-stable, $\varphi_q$-stable subring of $W(\mathcal{R})_L$ such that $\{\iota(\mathcal{TG})\}$ lies in the image of $W_L[[X]]$. 
\end{lemma}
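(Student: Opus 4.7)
The plan is to construct $\{x\}$ as the $\pi$-adic limit of an iterative scheme built from $\varphi_q$ and $[\pi]_{\mathcal{F}}$, to establish uniqueness by a contraction argument in the $\pi$-adic topology, to deduce $G_K$-equivariance as a free consequence of uniqueness, and finally to treat the ``in particular'' clause by substituting $X \mapsto \{\iota(v)\}$ and using functoriality.

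For existence, I would start from any set-theoretic lift $y_0 \in W(\mathcal{R})_K$ of $x$ and set $y_{n+1} := \varphi_q^{-1}\bigl([\pi]_{\mathcal{F}}(y_n)\bigr)$, which is well-defined because $\varphi_q$ is bijective on $W(\mathcal{R})_K$ (as $\mathcal{R}$ is perfect of characteristic $p$) and because $[\pi]_{\mathcal{F}}\in\mathcal{O}_K[[X]]$ evaluates on topologically nilpotent arguments. Using $[\pi]_{\mathcal{F}}(X)\equiv X^q\pmod\pi$ and $\varphi_q(Y)\equiv Y^q\pmod p$, one checks $y_1\equiv y_0\pmod\pi$; then inductively, writing $[\pi]_{\mathcal{F}}(Y)-[\pi]_{\mathcal{F}}(Z)=\pi(Y-Z)+O\bigl((Y-Z)^2\bigr)$ and using that $\varphi_q^{-1}$ is $\mathcal{O}_K$-linear and hence preserves the $\pi$-adic filtration, one obtains $y_{n+1}-y_n\in\pi^{n+1}W(\mathcal{R})_K$. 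The sequence $(y_n)$ is Cauchy in the $\pi$-adically complete ring $W(\mathcal{R})_K$; its limit $\{x\}$ lifts $x$ and satisfies $\varphi_q(\{x\})=[\pi]_{\mathcal{F}}(\{x\})$.

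For uniqueness, if $y,y'$ are two solutions and $\delta:=y-y'\in\pi W(\mathcal{R})_K$, then $\varphi_q(\delta)=[\pi]_{\mathcal{F}}(y)-[\pi]_{\mathcal{F}}(y')\in\pi\delta+\delta^2W(\mathcal{R})_K$ forces $\delta\in\pi^{n+1}W(\mathcal{R})_K$ whenever $\delta\in\pi^n W(\mathcal{R})_K$, so $\delta=0$. Galois equivariance is then immediate: for $g\in G_K$, the element $g(\{x\})$ still lifts $g(x)$ and still satisfies the same functional equation (since $\varphi_q$ and $[\pi]_{\mathcal{F}}$ are $G_K$-equivariant), so uniqueness yields $g(\{x\})=\{g(x)\}$.

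For the embedding, given an $\mathcal{O}_K$-generator $v$ of $\mathcal{TF}$, the image $\iota(v)$ lies in the maximal ideal of $\mathcal{R}$, so $\{\iota(v)\}$ is topologically nilpotent in $W(\mathcal{R})_K$ and $X\mapsto\{\iota(v)\}$ extends to a continuous $\mathcal{O}_K$-algebra map $\mathcal{O}_K[[X]]\to W(\mathcal{R})_K$. Injectivity follows because the reduction sends $X$ to $\iota(v)$, a transcendental element of $\mathcal{R}$ over $k$ (it has a specific positive real valuation in $\Fr(\mathcal{R})$). The image is $\varphi_q$-stable because $\varphi_q(\{\iota(v)\})=[\pi]_{\mathcal{F}}(\{\iota(v)\})$, and $G_K$-stable because for $g\in G_K$ with $\chi_{LT}(g)=a$, uniqueness gives $g(\{\iota(v)\})=\{\iota([a]_{\mathcal{F}}(v))\}=[a]_{\mathcal{F}}(\{\iota(v)\})$; the same identity shows $\{\iota(\mathcal{TF})\}\subseteq\mathcal{O}_K[[\{\iota(v)\}]]$, proving the last clause. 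The main technical obstacle is the $\pi$-adic convergence estimate in the existence step: one must track valuations carefully across the nonlinear operator $[\pi]_{\mathcal{F}}$, combining the leading $\pi$-linear term with the quadratic and higher contributions, and verify that $\varphi_q^{-1}$ preserves the $\pi$-adic filtration, which ultimately rests on the perfectness of $\mathcal{R}$ and the $\pi$-adic completeness of $W(\mathcal{R})_K$.
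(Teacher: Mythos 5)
The paper itself contains no proof of this lemma: it is quoted verbatim from Kisin--Ren (their Lemma 1.2), with the text directing the reader to Colmez's Lemma 9.3 and Schneider's book for details. So there is no ``paper's own proof'' to compare against, only the cited references. Your iterative scheme $y_{n+1}=\varphi_q^{-1}([\pi]_{\mathcal{F}}(y_n))$, together with the $\pi$-adic contraction argument and the derivation of $G_K$-equivariance and of the ``in particular'' clause from uniqueness, is precisely the standard route in those references, so in outline your proof is the right one. (You also, correctly, read the paper's functional equation as $\varphi_q(\{x\})=[\pi]_{\mathcal{F}}(\{x\})$; the printed $[\pi]_{\mathcal{F}}(x)$ is a typo.)

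The one place I would push you to write more is the estimate you invoke in both existence and uniqueness, $[\pi]_{\mathcal{F}}(y)-[\pi]_{\mathcal{F}}(y')\in\pi\delta W+\delta^2 W$ with $\delta=y-y'$. This is not a formal ``Taylor to second order'' fact: the degree-$q$ term $a_q(y^q-y'^q)$ contributes $a_q\sum_{j\ge1}\binom{q}{j}(y')^{q-j}\delta^j$, whose $j=1$ piece $a_q\,q\,(y')^{q-1}\delta$ lies in $\delta W$ but not in $\delta^2 W$, and $a_q$ is a unit. The estimate is saved only because $q=p^r\in p\mathcal{O}_K\subseteq\pi\mathcal{O}_K$ and, for $m\neq 1,q$, $a_m\in\pi\mathcal{O}_K$ (since $[\pi]_{\mathcal{F}}\equiv X^q\bmod\pi$); equivalently, the whole thing rests on the identity $[\pi]_{\mathcal{F}}'(X)\in\pi\mathcal{O}_K[[X]]$, which deserves to be stated explicitly since it is exactly what makes the contraction go through. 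A second, smaller point: the series $[\pi]_{\mathcal{F}}(y_n)$ only makes sense if $y_n$ lies in the ideal of $W(\mathcal{R})_K$ where power series with $\mathcal{O}_K$-coefficients converge (the weak topology), which holds provided $x$ lies in the maximal ideal of $\mathcal{R}$ --- fine for $\iota(\mathcal{TF})$, but worth saying, since you start from an arbitrary set-theoretic lift $y_0$.
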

The $G_K$-action on $W_L[[X]]$ factors through $\Gamma_{LT}$, and we have $\varphi_q(X) = [\pi_L]_{\mathcal{G}}(X)$ and $\sigma_a(X)= [a]_{\mathcal{G}}(X)$, where $\sigma_a = \chi_{LT}^{-1}(a)$ for any $a \in \mathcal{O}_L^\times$. We fix an $\mathcal{O}_L$-generator $v \in \mathcal{TG}$ and identify $W_L[[X]]$ with a subring of $W(\mathcal{R})_L$ by sending $X$ to $\{\iota(v)\}$ by using Lemma \ref{embedding}.

Let $\mathcal{O}_{\mathcal{E}}$ be the $p$-adic completion of $W_L[[X]][\frac{1}{X}]$. Then $\mathcal{O}_{\mathcal{E}}$ is a complete discrete valuation ring with uniformizer $\pi_L$ and the residue field $k((X))$. Since $W(\mathcal{R})$ is $p$-adically complete, we may view
	\begin{equation*}
{\C	\mathcal{O}_{\mathcal{E}}
	\subset  W(\Fr(\mathcal{R}))_L. }
	\end{equation*}
Let $\mathcal{O}_{\mathcal{E}^{ur}} \subset W(\Fr(\mathcal{R}))_L$ denote the maximal integral unramified extension of $\mathcal{O}_{\mathcal{E}}$ and $\mathcal{O}_{\widehat{\mathcal{E}^{ur}}}$ the $p$-adic completion of  $\mathcal{O}_{\mathcal{E}^{ur}}$, which is again a subring of $W(\Fr(\mathcal{R}))_L$. Let $\mathcal{E}, \mathcal{E}^{ur}$ and $\widehat{\mathcal{E}^{ur}}$ denote the field of fractions of $\mathcal{O}_{\mathcal{E}}, \mathcal{O}_{\mathcal{E}^{ur}}$ and $\mathcal{O}_{\widehat{\mathcal{E}^{ur}}}$, respectively. These rings are all stable under the action of $\varphi_q$ and $G_K$. Moreover, the $G_K$-action {\C on $\mathcal{O}_{\mathcal{E}}$} factors through $\Gamma_{LT}$.
	\begin{lemma}\!\textup{\cite[Lemma 1.4]{KR}}\label{Galois group isomorphism}
The residue field of $\mathcal{O}_{\widehat{\mathcal{E}^{ur}}}$ is a separable closure of $k((X))$, and there is a natural isomorphism
		\begin{equation*}
		\Gal(\mathcal{E}^{ur}/\mathcal{E}) \xrightarrow{\sim}\Gal(\bar{K}/K_\infty).
		\end{equation*}
	\end{lemma}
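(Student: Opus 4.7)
The plan is to invoke the Fontaine--Wintenberger theory of norm fields for the totally ramified extension $K_\infty/K$. Both assertions come from the same underlying mechanism: an equivalence between finite separable extensions of a characteristic $p$ local field (the field of norms) and finite extensions of $K_\infty$ inside $\bar{K}$.

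First, one verifies that $K_\infty/K$ is a strictly arithmetically profinite totally ramified extension, which is standard for Lubin--Tate extensions given the explicit higher ramification filtration coming from the identification $\Gal(K_n/K) \cong (\mathcal{O}_K/\pi^n)^\times$ recalled in Section~\ref{sec2.1}. This places us in the Fontaine--Wintenberger setting, producing the field of norms $X_K := X(K_\infty/K)$, a complete discretely valued field of characteristic $p$ with residue field $k$, together with a natural equivalence between finite separable extensions of $X_K$ and finite extensions of $K_\infty$ in $\bar{K}$. At the level of Galois groups this reads $\Gal(X_K^{sep}/X_K) \xrightarrow{\sim} \Gal(\bar{K}/K_\infty) = H_K$.

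The next step is to identify $X_K$ concretely with $k((X))$. Using the fixed $\mathcal{O}_K$-generator $v = (v_n)_{n\geq 0} \in \mathcal{TF}$, each $v_n$ is a uniformizer of $K_n$, and the congruence $[\pi]_{\mathcal{F}}(X) \equiv X^q \pmod{\pi}$ ensures that $\iota(v) \in \mathcal{R}$ is a norm-compatible sequence of uniformizers, which serves as a uniformizer of $X_K$. The assignment $X \mapsto \iota(v)$ then yields $k((X)) \xrightarrow{\sim} X_K$. Since the residue field of $\mathcal{O}_{\mathcal{E}}$ is $k((X))$, both assertions now reduce to the standard dictionary for complete discretely valued fields: finite unramified extensions of $\mathcal{E}$ inside $W(\Fr(\mathcal{R}))_K$ correspond bijectively to finite separable extensions of the residue field $k((X))$, because $W(\Fr(\mathcal{R}))_K$ is $\pi$-adically complete and contains Witt lifts of all elements of $\Fr(\mathcal{R}) \supset k((X))^{sep}$. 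This produces an isomorphism $\Gal(\mathcal{E}^{ur}/\mathcal{E}) \cong \Gal(k((X))^{sep}/k((X)))$, and identifies the residue field of $\mathcal{O}_{\mathcal{E}^{ur}}$ (equivalently of $\mathcal{O}_{\widehat{\mathcal{E}^{ur}}}$, since $\pi$-adic completion preserves the residue field) with $k((X))^{sep}$. Composing with the Fontaine--Wintenberger equivalence yields the claimed isomorphism $\Gal(\mathcal{E}^{ur}/\mathcal{E}) \cong H_K$.

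The main obstacle will be the compatibility of Galois actions: one has to check that the $G_K$-action on $\mathcal{O}_{\mathcal{E}^{ur}} \subset W(\Fr(\mathcal{R}))_K$ inherited from its action on $\mathcal{R}$ agrees, under the identification of residue fields, with the action coming from the field of norms. This is precisely where the $G_K$-equivariance of the lift $\{\,\cdot\,\}:\mathcal{R} \to W(\mathcal{R})_K$ from Lemma~\ref{embedding} and the compatibility $\varphi_q(\{x\}) = [\pi]_{\mathcal{F}}(x)$ become essential, since they encode that the chosen embedding of $\mathcal{O}_K[[X]]$ into the period ring is the correct Lubin--Tate analogue of Fontaine's cyclotomic embedding. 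Once this compatibility is in place, the argument is a direct transposition of Fontaine's original proof in the cyclotomic case, but the bookkeeping with $\varphi_q$ and $\Gamma_{LT}$ in place of $\varphi$ and $\Gamma$ requires some care.
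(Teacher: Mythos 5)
The paper itself offers no proof here; the lemma is quoted verbatim from Kisin--Ren, \cite[Lemma 1.4]{KR}, and the surrounding text already records the necessary prerequisites (the embedding of $\mathcal{O}_K[[X]]$ into $W(\mathcal{R})_K$ from Lemma~\ref{embedding}, and the identification $H_K\cong G_E$ a few lines after this lemma). Your outline is essentially the Kisin--Ren argument: strict arithmetic profiniteness of $K_\infty/K$, the field-of-norms equivalence giving $\Gal(X_K^{sep}/X_K)\cong H_K$, the identification $X_K\cong k((X))$ via the image $\iota(v)$ of the norm-compatible system of torsion-point uniformizers, and then the standard Cohen-ring dictionary between unramified extensions of $\mathcal{E}$ inside $W(\Fr(\mathcal{R}))_K$ and separable extensions of the residue field $k((X))$; this is precisely what the cited reference does (their Lemma~1.3 establishes that $\mathcal{O}_{\mathcal{E}}$ is a Cohen ring for $X_K(K)$, and Lemma~1.4 then follows).

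One small imprecision worth flagging: the congruence $[\pi]_{\mathcal{F}}(X)\equiv X^q\pmod{\pi}$ is what makes $(v_n\bmod\pi)_n$ an element of $\mathcal{R}=\varprojlim\mathcal{O}_{\bar K}/\pi$ with Frobenius transition maps; it does not by itself show norm-compatibility of the $v_n$ in $\varprojlim^{N}\mathcal{O}_{K_n}$. The latter is a separate (if elementary) computation — the minimal polynomial of $v_{n+1}$ over $K_n$ is $[\pi]_{\mathcal{F}}(X)-v_n$, whose constant term gives $N_{K_{n+1}/K_n}(v_{n+1})=(-1)^{q+1}v_n=v_n$ when $q$ is odd, as is the case here. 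Separating those two facts would tighten the argument, but does not change its correctness.
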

Let $E:= k((X))$, which is the residue field of $\mathcal{E}$; then it follows from Lemma \ref{Galois group isomorphism} that $E^{sep}$ is the residue field of $\widehat{\mathcal{E}^{ur}}$. The following lemma is an easy consequence of the definition of $\mathcal{O}_{\widehat{\mathcal{E}^{ur}}}$.
	\begin{lemma}\label{Exact}
		$(\mathcal{O}_{\widehat{\mathcal{E}^{ur}}})^{\varphi_q=id} = \mathcal{O}_L$.
	\end{lemma}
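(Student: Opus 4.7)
The plan is a standard successive approximation argument that reduces the problem to computing Frobenius fixed points on the residue field $E^{\operatorname{sep}}$ of $\mathcal{O}_{\widehat{\mathcal{E}^{ur}}}$.

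First, I would establish the easy inclusion $\mathcal{O}_K \subseteq (\mathcal{O}_{\widehat{\mathcal{E}^{ur}}})^{\varphi_q=id}$. The point is that on $W(\mathcal{R})_K = W(\mathcal{R}) \otimes_{\mathcal{O}_{K_0}} \mathcal{O}_K$ the operator $\varphi_q = \varphi^r$ acts as the identity on the $\mathcal{O}_K$-factor, and it also fixes $W(k) = \mathcal{O}_{K_0}$ since the $p^r$-power Frobenius is the identity on $k = \mathbb{F}_q$; hence $\varphi_q$ is the identity on all of $\mathcal{O}_K$, and $\mathcal{O}_K \subset \mathcal{O}_{\widehat{\mathcal{E}^{ur}}}$.

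For the reverse inclusion, I would argue inductively modulo powers of $\pi$. By Lemma \ref{Galois group isomorphism}, the residue field $\mathcal{O}_{\widehat{\mathcal{E}^{ur}}}/\pi$ equals $E^{\operatorname{sep}}$, which has characteristic $p$, and under reduction mod $\pi$ the operator $\varphi_q$ becomes the $q$-th power Frobenius $x \mapsto x^q$ on $E^{\operatorname{sep}}$. The fixed points of the $q$-Frobenius on a separable closure of $E = k((X))$ are exactly the elements satisfying $x^q = x$, i.e.\ the subfield $\mathbb{F}_q = k$. So given $x \in \mathcal{O}_{\widehat{\mathcal{E}^{ur}}}$ with $\varphi_q(x)=x$, its reduction $\bar x$ lies in $k$; lift $\bar x$ to some $a_0 \in \mathcal{O}_K$. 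Then $x - a_0 \in \pi\,\mathcal{O}_{\widehat{\mathcal{E}^{ur}}}$, and since $\varphi_q$ fixes $a_0$ we have $\varphi_q(x-a_0) = x-a_0$; write $x - a_0 = \pi x_1$. Because $\mathcal{O}_{\widehat{\mathcal{E}^{ur}}}$ is $\pi$-torsion free (it is a DVR, being the $\pi$-adic completion of a DVR), from $\pi\varphi_q(x_1) = \varphi_q(\pi x_1) = \pi x_1$ we deduce $\varphi_q(x_1) = x_1$, so the argument applies to $x_1$ as well.

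Iterating produces a sequence $a_0, a_1, a_2, \ldots \in \mathcal{O}_K$ with $x \equiv a_0 + \pi a_1 + \cdots + \pi^n a_n \pmod{\pi^{n+1}}$. By $\pi$-adic completeness of $\mathcal{O}_K$ the series $\sum_n \pi^n a_n$ converges to an element of $\mathcal{O}_K$, and by $\pi$-adic completeness of $\mathcal{O}_{\widehat{\mathcal{E}^{ur}}}$ this limit must equal $x$, giving $x \in \mathcal{O}_K$. The main (and only nontrivial) ingredient is the identification of the Frobenius fixed points on the residue field, which uses the characterization of $E^{\operatorname{sep}}$ from Lemma \ref{Galois group isomorphism}; everything else is the standard devissage along $\pi$.
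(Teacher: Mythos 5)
Your proof is correct, and the key computational ingredient — that the $q$-Frobenius fixed points on $E^{\operatorname{sep}}$ are exactly $k$ — is the same one the paper uses. The routes differ slightly in organization. The paper works with the full Artin--Schreier exact sequence $0 \to k \to E^{\operatorname{sep}} \xrightarrow{\varphi_q - id} E^{\operatorname{sep}} \to 0$, lifts it by d\'evissage to an exact sequence $0 \to \mathcal{O}_K/\pi^n \to \mathcal{O}_{\widehat{\mathcal{E}^{ur}}}/\pi^n \xrightarrow{\varphi_q-id} \mathcal{O}_{\widehat{\mathcal{E}^{ur}}}/\pi^n \to 0$ for each $n$, and then takes the projective limit (exact because the transition maps on the kernels $\mathcal{O}_K/\pi^n$ are surjective). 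You instead prove the equality of fixed points directly by successive approximation along $\pi$. Both arguments are fine, and yours is arguably more self-contained since it does not invoke the exactness of $\varprojlim$ on Mittag--Leffler systems. However, note that the paper's formulation buys something extra: it establishes the surjectivity of $\varphi_q - id$ on $\mathcal{O}_{\widehat{\mathcal{E}^{ur}}}$ (and on $E^{\operatorname{sep}}$), which is not part of the lemma statement but is cited from ``Lemma \ref{Exact}'' in the proof of Lemma \ref{augmentation}, where the acyclicity of $\Phi^{\bullet}(E^{\operatorname{sep}})$ in degree $1$ is needed. Your successive-approximation argument proves only the fixed-point claim and would not by itself support that later citation; you would need to add the separability/Artin--Schreier surjectivity observation explicitly to cover the same ground as the paper's proof.
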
  
	\begin{proof}
		Consider the exact sequence
		\begin{equation*}
		0\rightarrow k_L \rightarrow E^{sep}\xrightarrow[x\mapsto x^q-x]{\varphi_q-id} E^{sep}\rightarrow 0.
		\end{equation*}
		By d\'{e}vissage, we deduce the exact sequence
		\begin{equation*}
		0\rightarrow \mathcal{O}_L/\pi_L^n\mathcal{O}_L \rightarrow \mathcal{O}_{\widehat{\mathcal{E}^{ur}}}/\pi_L^n \mathcal{O}_{\widehat{\mathcal{E}^{ur}}}\xrightarrow{\varphi_q-id} \mathcal{O}_{\widehat{\mathcal{E}^{ur}}}/\pi_L^n \mathcal{O}_{\widehat{\mathcal{E}^{ur}}}\rightarrow0, \:\forall\: n\geq 1.
		\end{equation*}
		Here the projective system $\{\mathcal{O}_L/\pi_L^n\mathcal{O}_L\}_{n\geq 1}$ has surjective transition maps, therefore passing to the projective limit is exact and gives us an exact sequence
		\begin{equation*}
		0\rightarrow \mathcal{O}_L\rightarrow \mathcal{O}_{\widehat{\mathcal{E}^{ur}}}\xrightarrow{\varphi_q-id}\mathcal{O}_{\widehat{\mathcal{E}^{ur}}}\rightarrow 0.
		\end{equation*}
		Hence, $(\mathcal{O}_{\widehat{\mathcal{E}^{ur}}})^{\varphi_q=id} = \mathcal{O}_L$.
	\end{proof} 
	
The subring $\mathcal{O}_{\mathcal{E}}\subset W(\Fr(\mathcal{R}))_L$, which is constructed using the periods of $\mathcal{TG}$, is naturally a {\C $\pi_L$-}Cohen ring for $X_K(K)$, where $X_K(K)$ is a field of characteristic $p$ constructed using the field of norms. More details can be found in \cite[\S1]{KR}. The Galois group  $G_E = \Gal(E^{sep}/E)$ can be identified with $\Gal(X_K(\bar{K})/X_K(K))$. Then by Lemma \ref{Galois group isomorphism}, we have 
	\begin{equation*}
	H_K\xrightarrow{\sim}G_E.
	\end{equation*} 
	The $G_K$-action on $\mathcal{R}$ induces a $G_K$-action on $W(\Fr(\mathcal{R}))_L$, and the rings $\mathcal{O}_{\mathcal{E}}, \mathcal{O}_{\mathcal{E}^{ur}}$ and $\mathcal{O}_{\widehat{\mathcal{E}^{ur}}}$ are stable under the action of $G_K$. On the other hand, $G_E$ acts on $\mathcal{O}_{\widehat{\mathcal{E}^{ur}}}$ by continuity and functoriality, and these actions are compatible with the identification of Galois groups $H_K\xrightarrow{\sim}G_E$. \par Let $V$ be an $\mathcal{O}_L$-module of finite rank with a continuous and linear action of $G_K$. Consider the $\varphi_q$-module: 
	\begin{equation*}
	\mathbb{D}_{LT}(V): =( \mathcal{O}_{\widehat{\mathcal{E}^{ur}}}\otimes_{\mathcal{O}_L} V)^{H_K} = (\mathcal{O}_{\widehat{\mathcal{E}^{ur}}}\otimes_{\mathcal{O}_L}V)^{G_E}.
	\end{equation*}
{\C Note that the second equality holds as the Galois group $H_K$ is identified with the Galois group $G_E$.} The action of $G_K$ on $\mathcal{O}_{\widehat{\mathcal{E}^{ur}}}\otimes_{\mathcal{O}_L}V$ induces a semi-linear action of $G_K/H_K = \Gamma_{LT} = \Gal(K_\infty/K)$ on $\mathbb{D}_{LT}(V)$. {\C A $\varphi_q$-module over $\mathcal{O}_{\mathcal{E}}$ is an $\mathcal{O}_{\mathcal{E}}$-module $M$ together with a map $\varphi_M: M\rightarrow M$, which is semi-linear with respect to $q$-Frobenius. We say that $M$ is \'{e}tale over $\mathcal{O}_{\mathcal{E}}$ if $M$ is an $\mathcal{O}_{\mathcal{E}}$-module of finite type and $\Phi_M^{lin}: M_{\varphi_q}\rightarrow M$ is an isomorphism, where $M_{\varphi_q}$ is the base change of $M$ by $\mathcal{O}_{\mathcal{E}}$ via $\varphi_q$.} A $(\varphi_q,\Gamma_{LT})$-module $M$ over $\mathcal{O}_{\mathcal{E}}$ is $\varphi_q$-module over $\mathcal{O}_{\mathcal{E}}$ together with a semi-linear action of $\Gamma_{LT}$ and this action commutes with the endomorphism $\varphi_M$ of $M$. We say that $M$ 
	is \'{e}tale if it is \'{e}tale as a $\varphi_q$-module. 
	
	We write ${\bf Mod}_{/\mathcal{O}_{\mathcal{E}}}^{\varphi_q,\Gamma_{LT},\acute{e}t}$ (resp., ${\bf Mod}_{/\mathcal{O}_{\mathcal{E}}}^{\varphi_q,\Gamma_{LT},\acute{e}t,tor}$) for the category of finite free (resp., finite torsion) \'{e}tale $(\varphi_q,\Gamma_{LT})$-modules over $\mathcal{O}_{\mathcal{E}}$ and ${\bf Rep}_{\mathcal{O}_L}(G_K)$ (resp., ${\bf Rep}_{\mathcal{O}_L-tor}(G_K)$) for the category of finite free (resp., finite torsion) $\mathcal{O}_L$-modules with a continuous linear action of $G_K$. Then $\mathbb{D}_{LT}$ is a functor from ${\bf Rep}_{\mathcal{O}_L}(G_K)$ (resp., ${\bf Rep}_{\mathcal{O}_L-tor}(G_K)$) to ${\bf Mod}_{/\mathcal{O}_{\mathcal{E}}}^{\varphi_q,\Gamma_{LT},\acute{e}t}$ (resp., ${\bf Mod}_{/\mathcal{O}_{\mathcal{E}}}^{\varphi_q,\Gamma_{LT},\acute{e}t,tor}$).

Let $M$ be an \'{e}tale $(\varphi_q,\Gamma_{LT})$-module over $\mathcal{O}_{\mathcal{E}}$. Then consider the $G_K$-representation:
	\begin{equation*}
	\mathbb{V}_{LT}(M):= (\mathcal{O}_{\widehat{\mathcal{E}^{ur}}}\otimes_{\mathcal{O}_{\mathcal{E}}}M)^{\varphi_q\otimes \varphi_M=id}.
	\end{equation*}
	Here $G_K$ acts on $\mathcal{O}_{\widehat{\mathcal{E}^{ur}}}$ as before and acts via $\Gamma_{LT}$ on $M$. The diagonal action of $G_K$ on $\mathcal{O}_{\widehat{\mathcal{E}^{ur}}}\otimes_{\mathcal{O}_{\mathcal{E}}}M$ is $\varphi_q\otimes\varphi_M$-equivariant, which induces a $G_K$-action on $\mathbb{V}_{LT}(M)$. Then using {\C a similar proof as in }
	\cite[A1, Proposition 1.2.4 and 1.2.6]{Fon}, we have that the functors $\mathbb{D}_{LT}$ and $\mathbb{V}_{LT}$ are exact functors and the  natural maps
	\begin{align*}
	&\mathcal{O}_{\widehat{\mathcal{E}^{ur}}}\otimes_{\mathcal{O}_{\mathcal{E}}}\mathbb{D}_{LT}(V)\rightarrow \mathcal{O}_{\widehat{\mathcal{E}^{ur}}}\otimes_{\mathcal{O}_L}V,\\&
	\mathcal{O}_{\widehat{\mathcal{E}^{ur}}}\otimes_{\mathcal{O}_L}\mathbb{V}_{LT}(M) \rightarrow \mathcal{O}_{\widehat{\mathcal{E}^{ur}}}\otimes_{\mathcal{O}_{\mathcal{E}}}M
	\end{align*}
	are isomorphisms. In particular, we have the following result, which is established in \cite[Theorem 1.6]{KR}.
	\begin{theorem}\!\textup{\cite[Theorem 1.6]{KR}}\label{Kisin Ren}
The functors
\begin{equation*}
V\mapsto \mathbb{D}_{LT}(V)= (\mathcal{O}_{\widehat{\mathcal{E}^{ur}}}\otimes_{\mathcal{O}_L}V)^{H_K} \qquad \text{and} \qquad M\mapsto \mathbb{V}_{LT}(M)= (\mathcal{O}_{\widehat{\mathcal{E}^{ur}}}\otimes_{\mathcal{O}_{\mathcal{E}}}M)^{\varphi_q\otimes\varphi_M=id}
		\end{equation*}
are exact {\C and quasi-inverse and induce an} 
equivalence of categories between 
${\bf Rep}_{\mathcal{O}_L}(G_K)\ (\text{resp.,}\, {\bf Rep}_{\mathcal{O}_L-tor}(G_K) )$ and ${\bf Mod}^{\varphi_q,\Gamma_{LT},\acute{e}t}_{/\mathcal{O}_{\mathcal{E}}}$  $(\text{resp.,}\,{\bf Mod}^{\varphi_q,\Gamma_{LT},\acute{e}t,tor}_{/\mathcal{O}_{\mathcal{E}}})$.
	\end{theorem}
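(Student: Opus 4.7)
The plan is to adapt Fontaine's original argument (\cite[A1, Propositions 1.2.4 and 1.2.6]{Fon}) to the Lubin-Tate setting, with $K_\infty/K$ playing the role of the cyclotomic tower and $\varphi_q$ replacing the usual Frobenius. The $\Gamma_{LT}$-action on $\mathbb{D}_{LT}(V)$ is a formal consequence of $H_K$-invariance together with the identification $\Gamma_{LT}=G_K/H_K$, so the essential task is to establish the equivalence between étale $\varphi_q$-modules over $\mathcal{O}_{\mathcal{E}}$ and continuous $\mathcal{O}_K$-linear representations of $H_K$, then transport the $\Gamma_{LT}$-structure across. The identification $H_K\xrightarrow{\sim} G_E$ of Lemma \ref{Galois group isomorphism} lets me replace $H_K$-representations by $G_E$-representations throughout, which is the natural framework in which the Frobenius-invariance formalism lives.

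First I would handle the residual case mod $\pi$. For a finite-dimensional étale $\varphi_q$-module $M$ over $E=k((X))$, tensoring the Artin-Schreier sequence
\begin{equation*}
0\to \mathbb{F}_q\to E^{sep}\xrightarrow{\varphi_q-\mathrm{id}}E^{sep}\to 0
\end{equation*}
with $M$ over $E$ and taking $G_E$-cohomology, combined with the Hilbert 90 vanishing $H^1(G_E,GL_d(E^{sep}))=0$, would show that $\mathbb{V}_{LT}(M)=(E^{sep}\otimes_E M)^{\varphi_q\otimes\varphi_M=\mathrm{id}}$ is a finite-dimensional $\mathbb{F}_q$-vector space of the same dimension as $M$ and that the comparison map $E^{sep}\otimes_{\mathbb{F}_q}\mathbb{V}_{LT}(M)\to E^{sep}\otimes_E M$ is an isomorphism. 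Conversely $\mathbb{D}_{LT}(W)=(E^{sep}\otimes_{\mathbb{F}_q}W)^{G_E}$ is étale of the correct dimension, by Galois descent together with Lemma \ref{Exact} mod $\pi$.

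Next I would lift by dévissage. For $V$ killed by $\pi^n$, induction on $n$ using the exact sequence $0\to \pi V\to V\to V/\pi V\to 0$, exactness of $\mathbb{D}_{LT}$ (flatness of $\mathcal{O}_{\widehat{\mathcal{E}^{ur}}}$ over $\mathcal{O}_K$ and vanishing of the relevant $H^1(H_K,-)$), and the five lemma applied to the comparison maps gives the torsion case. For finite free $V$, I would pass to the inverse limit over $V/\pi^n V$, exploiting $\pi$-adic completeness of $\mathcal{O}_{\mathcal{E}}$ and $\mathcal{O}_{\widehat{\mathcal{E}^{ur}}}$ and Mittag-Leffler for the projective system of finite $\mathcal{O}_K$-modules to conclude that both comparison maps remain isomorphisms. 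Once both comparison isomorphisms are in place, the quasi-inverse property follows by taking $\varphi_q$-invariants (applying Lemma \ref{Exact}) of the first and $H_K$-invariants of the second.

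The main obstacle will be the Hilbert 90 input needed to trivialize the $\varphi_q$-module $\mathcal{O}_{\widehat{\mathcal{E}^{ur}}}\otimes_{\mathcal{O}_{\mathcal{E}}}M$ after base change, since the proof does not factor through the perfection of $E$ and one must work directly with the imperfect residue field $k((X))$ arising from the Lubin-Tate field of norms $X_K(K)$. Concretely, one needs the vanishing of $H^1_{cont}(G_E,GL_d(\mathcal{O}_{\widehat{\mathcal{E}^{ur}}}))$ in the appropriate topological sense, which is the Lubin-Tate analogue of Fontaine's key descent step and which plays the role of the classical Hilbert 90 for strict cohomological dimension one. Everything else, including the matching of $\mathcal{O}_K$-ranks and the naturality in $V$ and $M$, is then formal.
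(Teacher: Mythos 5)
The paper does not itself prove Theorem \ref{Kisin Ren}; it cites it directly from Kisin--Ren \cite[Theorem 1.6]{KR}, after remarking that the exactness of $\mathbb{D}_{LT}$, $\mathbb{V}_{LT}$ and the isomorphy of the two comparison maps follow ``using the similar proof as that in \cite[A1, Propositions 1.2.4 and 1.2.6]{Fon}.'' Your proposal reconstructs precisely that Fontaine-style argument, so the overall route is the intended one: reduce to $G_E$ via $H_K\cong G_E$, settle the mod-$\pi$ case over $E^{sep}$, then dévissage for $\pi$-power torsion and inverse limits for lattices, with the $\Gamma_{LT}$-structure transported formally. In that sense the proposal is consistent with the paper's (cited) proof.

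One imprecision worth flagging: you invoke ``the Hilbert 90 vanishing $H^1(G_E,GL_d(E^{sep}))=0$'' as the input that makes $\mathbb{V}_{LT}(M)$ have the right $\mathbb{F}_q$-dimension. That vanishing is the descent ingredient, needed to show $\mathbb{D}_{LT}$ recovers $M$ from $V=\mathbb{V}_{LT}(M)$. The ingredient that controls the dimension of $\mathbb{V}_{LT}(M)$ is a different one: the Lang-type statement that every \'{e}tale $\varphi_q$-module over the separably closed field $E^{sep}$ admits a basis of $\varphi_q$-fixed vectors, i.e.\ the $\varphi_q$-semilinear equation $A\,\varphi_q(B)=B$ is solvable in $GL_d(E^{sep})$. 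The additive Artin--Schreier sequence you tensor with $M$ does not directly produce this, since $\varphi_q\otimes\varphi_M-\mathrm{id}$ is not $E$-linear; what is really used is the existence of the trivializing matrix $B$ over $E^{sep}$, proved by a separate induction. Likewise, the vanishing you later ask for over $\mathcal{O}_{\widehat{\mathcal{E}^{ur}}}$ is not invoked as a cohomological statement in its own right in the Fontaine/Kisin--Ren argument; it is obtained from the mod-$\pi$ case by lifting cocycles through the successive quotients $\mathcal{O}_{\widehat{\mathcal{E}^{ur}}}/\pi^n$, which is exactly your dévissage step. So the two nonabelian inputs (Lang-type trivialization, and $H^1(G_E,GL_d)=0$) should be kept separate; once they are, the rest of the outline --- dévissage via $0\to\pi V\to V\to V/\pi V\to 0$, Mittag--Leffler and completeness for the inverse limit, and Lemma \ref{Exact} to extract $\mathcal{O}_K$ and $V$ from the comparison isomorphisms --- matches what the paper alludes to.
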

	\section{Galois Cohomology over the Lubin-Tate Extensions} \label{sec3}
 The category of finitely generated $\mathcal{O}_L$-modules with a continuous and linear action of $G_K$ does not have injectives, so the category ${\bf Rep}_{\mathcal{O}_L}(G_K)$ does not have enough injectives. Therefore we extend the functor $\mathbb{D}_{LT}$ to a category that has enough injectives as we are going to use injective objects to compute cohomology groups.

Let ${\bf Rep}_{\mathcal{O}_L-tor}^{dis}(G_K)$ be the category of discrete $\pi_L$-primary abelian groups with a continuous action of $G_K$. Then any object in this category is the filtered direct limit of $\pi_L$-power torsion objects in ${\bf Rep}_{\mathcal{O}_L-tor}(G_K)$. Note that the category ${\bf Rep}_{\mathcal{O}_L-tor}^{dis}(G_K)$ has enough injectives. First, we extend the functor $\mathbb{D}_{LT}$ to the category ${\bf Rep}_{\mathcal{O}_L-tor}^{dis}(G_K)$. For any $V \in {\bf Rep}_{\mathcal{O}_L-tor}^{dis}(G_K)$, define
	\begin{equation*}
	\mathbb{D}_{LT}(V):= (\mathcal{O}_{\widehat{\mathcal{E}^{ur}}}\otimes_{\mathcal{O}_L}V)^{H_K}.
	\end{equation*}
Since $V$ is the filtered direct limit of $\pi_L$-power torsion objects in ${\bf Rep}_{\mathcal{O}_L-tor}(G_K)$, and both the tensor product and taking $H_K$-invariant commute with the filtered direct limits, so the functor $\mathbb{D}_{LT}$ commutes with the filtered direct limits. Therefore $\mathbb{D}_{LT}$ is an exact functor into the category $\varinjlim {\bf Mod}^{\varphi_q,\Gamma_{LT},\acute{e}t,tor}_{/\mathcal{O}_{\mathcal{E}}}$ of injective limits of $\pi_L$-power torsion objects in ${\bf Mod}^{\varphi_q,\Gamma_{LT},\acute{e}t,tor}_{/\mathcal{O}_{\mathcal{E}}}$. Now for any object $M \in \varinjlim {\bf Mod}^{\varphi_q,\Gamma_{LT},\acute{e}t,tor}_{/\mathcal{O}_{\mathcal{E}}}$, define
	\begin{equation*}
	\mathbb{V}_{LT}(M):= (\mathcal{O}_{\widehat{\mathcal{E}^{ur}}}\otimes_{\mathcal{O}_\mathcal{E}}M)^{\varphi_q\otimes\varphi_M=id}.
	\end{equation*}
	The functor $\mathbb{V}_{LT}$ also commutes with the direct limits. Then we have the following proposition, which shows that the equivalence of Theorem \ref{Kisin Ren} extends to the category of discrete $\pi_L$-primary representations of $G_K$, and this is an important step towards our main theorem.
	\begin{proposition} \label{KR discrete}
		The functor $\mathbb{D}_{LT}$ is a quasi-inverse equivalence of categories between the category ${\bf Rep}_{\mathcal{O}_L-tor}^{dis}(G_K)$ and  $\varinjlim {\bf Mod}^{\varphi_q,\Gamma_{LT},\acute{e}t,tor}_{/\mathcal{O}_{\mathcal{E}}}$ with quasi-inverse $\mathbb{V}_{LT}$.
	\end{proposition}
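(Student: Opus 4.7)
My plan is to bootstrap the proposition directly from the finite-torsion equivalence of Theorem \ref{Kisin Ren} by exploiting the fact that both functors commute with filtered direct limits and that both categories in question are, essentially by construction, the ind-completions (with respect to filtered colimits of finite torsion objects) of the categories appearing in Theorem \ref{Kisin Ren}. Concretely, any $V \in {\bf Rep}_{\mathcal{O}_K-tor}^{dis}(G_K)$ is the filtered union of its finitely generated (hence finite, since $V$ is $\pi$-primary torsion) $G_K$-stable submodules $V_i$, each of which lies in ${\bf Rep}_{\mathcal{O}_K-tor}(G_K)$ because the $G_K$-action on $V$ is continuous and discrete. Dually, by definition, any $M \in \varinjlim {\bf Mod}^{\varphi_q,\Gamma_{LT},\acute{e}t,tor}_{/\mathcal{O}_{\mathcal{E}}}$ is presented as $M = \varinjlim M_j$ with each $M_j$ a finite torsion \'etale $(\varphi_q,\Gamma_{LT})$-module.

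First I would verify that the two functors are well-defined on the larger categories, which has been noted for $\mathbb{D}_{LT}$ already. For $\mathbb{V}_{LT}$, I would observe that since $\mathcal{O}_{\widehat{\mathcal{E}^{ur}}}$ is flat over $\mathcal{O}_{\mathcal{E}}$ and tensor product commutes with filtered colimits, and since taking the fixed points of $\varphi_q\otimes\varphi_M = \mathrm{id}$ also commutes with filtered colimits (being the kernel of a map between filtered colimits), we have $\mathbb{V}_{LT}(\varinjlim M_j) = \varinjlim \mathbb{V}_{LT}(M_j)$. Each $\mathbb{V}_{LT}(M_j)$ lies in ${\bf Rep}_{\mathcal{O}_K-tor}(G_K)$ by Theorem \ref{Kisin Ren}, hence the filtered colimit is a $\pi$-primary abelian group on which $G_K$ acts continuously with respect to the discrete topology, so $\mathbb{V}_{LT}(M)$ belongs to ${\bf Rep}_{\mathcal{O}_K-tor}^{dis}(G_K)$.

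Next I would build the two quasi-inverse natural isomorphisms. For $V = \varinjlim V_i \in {\bf Rep}_{\mathcal{O}_K-tor}^{dis}(G_K)$, both $\mathbb{D}_{LT}$ and $\mathbb{V}_{LT}$ commute with the colimit, so
\begin{equation*}
\mathbb{V}_{LT}(\mathbb{D}_{LT}(V)) \;=\; \varinjlim \mathbb{V}_{LT}(\mathbb{D}_{LT}(V_i)) \;\cong\; \varinjlim V_i \;=\; V,
\end{equation*}
where the middle isomorphism is the Kisin--Ren adjunction unit from Theorem \ref{Kisin Ren}, and naturality in $V_i$ ensures these glue along the transition maps to a natural isomorphism. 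The dual argument produces the natural isomorphism $\mathbb{D}_{LT}(\mathbb{V}_{LT}(M)) \cong M$ for $M \in \varinjlim {\bf Mod}^{\varphi_q,\Gamma_{LT},\acute{e}t,tor}_{/\mathcal{O}_{\mathcal{E}}}$.

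The only subtle point, and the one I expect to warrant the most care, is verifying that the natural transformations $\mathbb{V}_{LT}\circ\mathbb{D}_{LT} \Rightarrow \mathrm{id}$ and $\mathbb{D}_{LT}\circ\mathbb{V}_{LT} \Rightarrow \mathrm{id}$ furnished by Theorem \ref{Kisin Ren} on the small categories are genuinely compatible with transition maps in filtered systems, so that the passage to the colimit yields a well-defined natural isomorphism on the ind-categories; this, however, reduces to the functoriality of the Kisin--Ren adjunction with respect to morphisms in ${\bf Rep}_{\mathcal{O}_K-tor}(G_K)$ and in ${\bf Mod}^{\varphi_q,\Gamma_{LT},\acute{e}t,tor}_{/\mathcal{O}_{\mathcal{E}}}$, which is automatic. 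Together with exactness (which also follows from compatibility with filtered colimits and the exactness of the finite-torsion functors), this establishes the desired equivalence of categories.
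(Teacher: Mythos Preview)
Your proposal is correct and follows essentially the same approach as the paper: the paper's proof is a one-line appeal to Theorem \ref{Kisin Ren} together with the already-noted fact that $\mathbb{D}_{LT}$ and $\mathbb{V}_{LT}$ commute with filtered direct limits. You have simply spelled out in more detail the verifications (well-definedness on the ind-categories, compatibility of the unit/counit with transition maps) that the paper leaves implicit.
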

\begin{proof}
Since the {\cv functors} $\mathbb{D}_{LT}$ and $\mathbb{V}_{LT}$ commute with the direct limits, the proposition follows from Theorem \ref{Kisin Ren} by taking direct limits.
\end{proof}
Note that throughout this paper, each complex has the first term in degree $-1$, unless stated otherwise. 
	
Define $D^{sep}:= \mathcal{O}_{\widehat{\mathcal{E}^{ur}}}\otimes_{\mathcal{O}_L}V$. As $\mathcal{O}_{\widehat{\mathcal{E}^{ur}}}\otimes_{\mathcal{O}_L}V  \cong \mathcal{O}_{\widehat{\mathcal{E}^{ur}}}\otimes_{\mathcal{O}_\mathcal{E}}\mathbb{D}_{LT}(V)$, we have $D^{sep} \cong \mathcal{O}_{\widehat{\mathcal{E}^{ur}}}\otimes_{\mathcal{O}_\mathcal{E}}\mathbb{D}_{LT}(V)$. Now define the co-chain complex $\Phi^{\bullet}(D^{sep})$ as follows:
	\begin{equation*}
	\Phi^{\bullet}(D^{sep}): 0\rightarrow D^{sep}\xrightarrow{\varphi_q\otimes\varphi_{\mathbb{D}_{LT}(V)}-id}D^{sep}\rightarrow 0. 
	\end{equation*}
	\begin{lemma} \label{augmentation}
	For any discrete $\pi_L$-primary representation $V$ of $G_K$, let $V[0]$ be the complex with $V$ in degree $0$ and $0$ everywhere else. Then the augmentation map $V[0]\rightarrow \Phi^{\bullet}(D^{sep})$ is a quasi-isomorphism of co-chain complexes.
	\end{lemma}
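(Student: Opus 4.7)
The augmentation map $V[0] \to \Phi^{\bullet}(D^{sep})$ is concentrated in degree zero and sends $v \mapsto 1 \otimes v$; this is a chain map because $(\varphi_q \otimes 1 - id)(1 \otimes v) = 0$. Asserting that it is a quasi-isomorphism is therefore equivalent to asserting the exactness of
\begin{equation*}
0 \to V \to D^{sep} \xrightarrow{\varphi_q \otimes 1 - id} D^{sep} \to 0.
\end{equation*}
My plan is to deduce this exact sequence directly by tensoring the sequence of Lemma \ref{Exact} with $V$ over $\mathcal{O}_K$ and invoking flatness.

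Concretely, I would start from the short exact sequence
\begin{equation*}
0 \to \mathcal{O}_K \to \mathcal{O}_{\widehat{\mathcal{E}^{ur}}} \xrightarrow{\varphi_q - id} \mathcal{O}_{\widehat{\mathcal{E}^{ur}}} \to 0
\end{equation*}
provided by Lemma \ref{Exact}, and apply the functor $(\cdot) \otimes_{\mathcal{O}_K} V$. The substantive technical point is that $\mathcal{O}_{\widehat{\mathcal{E}^{ur}}}$ is flat over $\mathcal{O}_K$: it is a characteristic-zero domain containing $\mathcal{O}_K$, hence torsion-free, and torsion-free modules over the DVR $\mathcal{O}_K$ are flat. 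Consequently $\operatorname{Tor}_1^{\mathcal{O}_K}(\mathcal{O}_{\widehat{\mathcal{E}^{ur}}}, V) = 0$, so the tensored sequence remains short exact. Using $\mathcal{O}_K \otimes_{\mathcal{O}_K} V = V$ and the identification $D^{sep} = \mathcal{O}_{\widehat{\mathcal{E}^{ur}}} \otimes_{\mathcal{O}_K} V$, this yields precisely the displayed exact sequence. The differential $\varphi_q \otimes 1 - id$ on $\mathcal{O}_{\widehat{\mathcal{E}^{ur}}} \otimes_{\mathcal{O}_K} V$ coincides with $\varphi_q \otimes \varphi_{\mathbb{D}_{LT}(V)} - id$ on $\mathcal{O}_{\widehat{\mathcal{E}^{ur}}} \otimes_{\mathcal{O}_{\mathcal{E}}} \mathbb{D}_{LT}(V)$ under the comparison isomorphism, since the latter is $\varphi_q$-equivariant.

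I do not anticipate any serious obstacle here: Lemma \ref{Exact} encapsulates the only non-formal input, and everything else is routine module theory. As a fallback, one could instead argue by $\pi$-adic devissage: since $V$ is the filtered colimit of its $\pi^n$-torsion subobjects and all functors involved commute with colimits, one reduces to the case of $V$ killed by some $\pi^n$; induction on $n$ then reduces to the base case where $V$ is a finite $k$-vector space, which follows by tensoring the residue-field Artin--Schreier sequence $0 \to k \to E^{sep} \xrightarrow{\varphi_q - id} E^{sep} \to 0$ with $V$ over the field $k$. The flatness route, however, short-circuits these reductions.
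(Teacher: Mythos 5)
Your proof is correct, and it is genuinely more direct than the paper's. The paper reduces to the residue-field Artin--Schreier sequence for $E^{sep}$, runs a $\pi$-adic d\'evissage to obtain the quasi-isomorphism $\mathcal{O}_K/\pi^n[0]\to\Phi^{\bullet}(\mathcal{O}_{\widehat{\mathcal{E}^{ur}}}/\pi^n)$, tensors with a finite $V$ (asserting along the way that $V$ is free over $\mathcal{O}_K/\pi^n$ --- which is not literally true in general, though the conclusion still holds because both complexes consist of $\mathcal{O}_K/\pi^n$-flat terms), and finally passes to direct limits. You instead observe that the sequence of Lemma~\ref{Exact} is a short exact sequence of $\mathcal{O}_K$-modules (since $\varphi_q$ is $\mathcal{O}_K$-linear), that $\mathcal{O}_{\widehat{\mathcal{E}^{ur}}}$ is a characteristic-zero domain containing $\mathcal{O}_K$ and hence torsion-free, thus flat over the DVR $\mathcal{O}_K$, so that $\operatorname{Tor}_1^{\mathcal{O}_K}(\mathcal{O}_{\widehat{\mathcal{E}^{ur}}},V)=0$ for every $V$; tensoring then produces the required exact sequence for an arbitrary discrete $\pi$-primary $V$ in one step, with no reductions, no colimits, and no case analysis. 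Your identification of the tensored differential $\varphi_q\otimes 1-\mathrm{id}$ with $\varphi_q\otimes\varphi_{\mathbb{D}_{LT}(V)}-\mathrm{id}$ via the $\varphi_q$-equivariance of the comparison isomorphism is also correctly handled. The only thing the paper's route arguably buys is that it parallels the d\'evissage pattern used elsewhere in Section~3, but your flatness argument is cleaner and, as noted, quietly sidesteps the dubious freeness claim in the published proof.
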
 
	\begin{proof}
By Lemma \ref{Exact}, we know that the complex $\Phi^{\bullet}(E^{sep})$ is acyclic in non-zero degrees with $0$-th cohomology equal to $k_L$, the augmentation map $$k_L[0]\rightarrow\Phi^{\bullet}(E^{sep})$$ is a quasi-isomorphism. By dévissage, the augmentation map
		\begin{equation} \label{A}
		\mathcal{O}_L/\pi_L^n[0]\rightarrow \Phi^{\bullet}(\mathcal{O}_{\widehat{\mathcal{E}^{ur}}}/\pi^n)
		\end{equation} 
		is also a quasi-isomorphism as each term in both complexes is a flat $\mathcal{O}_L/\pi_L^n$-module. If $V$ is finite abelian $\pi_L$-group then it is killed by some power of $\pi_L$, and we have $\Phi^{\bullet}(D^{sep})= \Phi^{\bullet}(\mathcal{O}_{\widehat{\mathcal{E}^{ur}}}/\pi_L^n)\otimes_{\mathcal{O}_L/\pi_L^n}V$. Since $V$ is free $\mathcal{O}_L/\pi_L^n$-module, tensoring with $V$ is an exact functor. Now tensoring (\ref{A}) with $V$, we get
		\begin{equation*}
		V[0]\rightarrow \Phi^{\bullet}(D^{sep})
		\end{equation*}  
		is a quasi-isomorphism. Since the direct limit functor is an exact functor, the general case follows by taking direct limits. 
	\end{proof}
	\begin{lemma}\label{trivial cohom}
	$H^i(H_K,\mathcal{O}_{\widehat{\mathcal{E}^{ur}}}/\pi_L^n )=0$ for all $n\geq 1$ and $i\geq 1$.
	\end{lemma}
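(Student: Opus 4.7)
The plan is to proceed by dévissage on $n$, reducing to the case $n=1$ where the quotient becomes the additive group of the separable closure $E^{sep}$, and then invoking additive Hilbert 90 (the normal basis theorem) to kill all higher cohomology.

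First I would transport the problem across the isomorphism $H_K \xrightarrow{\sim} G_E$ of Lemma \ref{Galois group isomorphism}, so that the statement becomes $H^i(G_E,\mathcal{O}_{\widehat{\mathcal{E}^{ur}}}/\pi^n) = 0$. This is advantageous because $\mathcal{O}_{\widehat{\mathcal{E}^{ur}}}$ is the ring of Witt-type lifts whose residue field is precisely $E^{sep}$, so $G_E$ acts in a way directly visible on the graded pieces.

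For the base case $n=1$, the quotient $\mathcal{O}_{\widehat{\mathcal{E}^{ur}}}/\pi$ is identified with $E^{sep}$ as a $G_E$-module (on its additive group). I would then argue that $H^i(G_E, E^{sep}) = 0$ for all $i \geq 1$. Since $G_E$ is profinite and $E^{sep}$ is a discrete $G_E$-module, one can write $H^i(G_E, E^{sep}) = \varinjlim_{L/E} H^i(\Gal(L/E), L)$, where $L$ runs over finite Galois subextensions of $E^{sep}/E$. For each such $L$, the normal basis theorem gives $L \cong E[\Gal(L/E)]$ as $E[\Gal(L/E)]$-modules, so $L$ is an induced (hence cohomologically trivial) $\Gal(L/E)$-module, and the higher cohomology vanishes. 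Taking the direct limit concludes the $n=1$ case.

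For the inductive step, I would use the short exact sequence of $G_E$-modules
\begin{equation*}
0 \longrightarrow \pi^{n-1}\mathcal{O}_{\widehat{\mathcal{E}^{ur}}}/\pi^n\mathcal{O}_{\widehat{\mathcal{E}^{ur}}} \longrightarrow \mathcal{O}_{\widehat{\mathcal{E}^{ur}}}/\pi^n \longrightarrow \mathcal{O}_{\widehat{\mathcal{E}^{ur}}}/\pi^{n-1} \longrightarrow 0,
\end{equation*}
together with the identification of the first term with $\mathcal{O}_{\widehat{\mathcal{E}^{ur}}}/\pi \cong E^{sep}$ (via multiplication by $\pi^{n-1}$). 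By the base case the first term has vanishing higher cohomology, and by induction so does the third, so the long exact cohomology sequence forces $H^i(G_E, \mathcal{O}_{\widehat{\mathcal{E}^{ur}}}/\pi^n) = 0$ for $i \geq 1$, completing the induction. The only substantive input is additive Hilbert 90 for $E^{sep}/E$; the rest is a routine dévissage.
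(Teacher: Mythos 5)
Your proof is correct and follows essentially the same route as the paper: dévissage on $n$ via the short exact sequence, reducing to $H^i(H_K, E^{sep})=0$, which the paper simply cites as a standard fact \cite[Proposition 6.1.1]{NSW} while you spell out the normal basis theorem argument behind it. The only difference is expository — you fill in the details of the cited reference.
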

	\begin{proof}
		By d\'{e}vissage, we are reduced to the case $n=1$, i.e., we only need to prove that $H^i(H_K, E^{sep})=0$ for all $i\geq 1$. But this is a standard fact of Galois cohomology \cite[Proposition $6.1.1$]{NSW}.
	\end{proof} 
	\begin{proposition}\label{H_K-cohomology}
	For any $V \in {\bf Rep}_{\mathcal{O}_L-tor}^{dis}(G_K)$, we have $\mathcal{H}^i(\Phi^{\bullet}(\mathbb{D}_{LT}(V)))\cong H^i(H_K, V)$ as $\Gamma_{LT}$-modules. In other words, the complex $\Phi^{\bullet}(\mathbb{D}_{LT}(V))$ computes the $H_K$-cohomology of $V$.  
	\end{proposition}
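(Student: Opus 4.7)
The plan is to combine the quasi-isomorphism supplied by Lemma \ref{augmentation} with the hypercohomology spectral sequence. Writing $D^{sep} = \mathcal{O}_{\widehat{\mathcal{E}^{ur}}}\otimes_{\mathcal{O}_K} V$, the complex $\Phi^{\bullet}(D^{sep})$ is a two-term complex of continuous $G_K$-modules (the differential $\varphi_q\otimes\varphi_{\mathbb{D}_{LT}(V)}-id$ is $G_K$-equivariant because Frobenius commutes with the Galois action). Since $(D^{sep})^{H_K}=\mathbb{D}_{LT}(V)$ and $\varphi_q$ commutes with $H_K$, termwise $H_K$-invariants of $\Phi^{\bullet}(D^{sep})$ yield precisely the complex $\Phi^{\bullet}(\mathbb{D}_{LT}(V))$ of $\Gamma_{LT}$-modules. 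By Lemma \ref{augmentation}, $V[0]$ is quasi-isomorphic to $\Phi^{\bullet}(D^{sep})$ as a complex of $G_K$-modules, so
$$H^i(H_K, V)\cong \mathbb{H}^i(H_K, \Phi^{\bullet}(D^{sep})).$$
My aim is then to identify the right-hand side with $\mathcal{H}^i(\Phi^{\bullet}(\mathbb{D}_{LT}(V)))$.

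The decisive step is proving $H_K$-acyclicity of each term, i.e., $H^q(H_K, \mathcal{O}_{\widehat{\mathcal{E}^{ur}}}\otimes_{\mathcal{O}_K} V)=0$ for all $q\geq 1$. Continuous cohomology of the profinite group $H_K$ with discrete coefficients commutes with filtered colimits, and by hypothesis $V$ is the filtered colimit of its finite $G_K$-stable $\pi$-primary submodules, so I may assume $V$ is finite and killed by some $\pi^n$. Such a $V$ admits a composition series whose simple subquotients are all isomorphic to $k=\mathcal{O}_K/\pi$; tensoring with $\mathcal{O}_{\widehat{\mathcal{E}^{ur}}}$ preserves the short exact sequences because $\mathcal{O}_{\widehat{\mathcal{E}^{ur}}}$ is $\mathcal{O}_K$-flat (being torsion-free over the DVR $\mathcal{O}_K$). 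A long-exact-sequence induction on the length reduces the vanishing to the base case $H^q(H_K, E^{sep})=0$ for $q\geq 1$, which is the $n=1$ case of Lemma \ref{trivial cohom}.

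Once acyclicity is in hand, the first hypercohomology spectral sequence
$$E_1^{p,q}=H^q(H_K,\Phi^p(D^{sep}))\Longrightarrow \mathbb{H}^{p+q}(H_K,\Phi^{\bullet}(D^{sep}))$$
collapses onto the row $q=0$, producing a natural isomorphism
$$\mathbb{H}^i(H_K,\Phi^{\bullet}(D^{sep}))\cong \mathcal{H}^i\bigl(\Phi^{\bullet}(D^{sep})^{H_K}\bigr)=\mathcal{H}^i(\Phi^{\bullet}(\mathbb{D}_{LT}(V))).$$
Chaining this with the identification above gives the desired isomorphism, and its $\Gamma_{LT}$-equivariance follows because every map in sight is $G_K$-equivariant and factors through $G_K/H_K=\Gamma_{LT}$.

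The main obstacle is really the dévissage step: one must carefully justify the interchange of filtered colimits with continuous cohomology (valid because $H_K$ is profinite and the coefficients are discrete) and verify $\mathcal{O}_K$-flatness of $\mathcal{O}_{\widehat{\mathcal{E}^{ur}}}$ in order to reduce to the simple-coefficient vanishing supplied by Lemma \ref{trivial cohom}. After that, the rest of the argument is standard homological algebra involving the two-term complex $\Phi^{\bullet}$ and a spectral-sequence degeneration.
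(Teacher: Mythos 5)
Your overall architecture matches the paper's: quasi-isomorphism from Lemma \ref{augmentation}, $H_K$-acyclicity of the terms of $\Phi^{\bullet}(D^{sep})$, collapse of the relevant spectral sequence, and passage to filtered colimits. The flaw sits in the d\'evissage step.

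You assert that a finite $\pi$-primary $V$ admits a composition series whose simple subquotients are all isomorphic to $k=\mathcal{O}_K/\pi$, and then reduce the acyclicity of $D^{sep}=\mathcal{O}_{\widehat{\mathcal{E}^{ur}}}\otimes_{\mathcal{O}_K}V$ to the single base case $H^q(H_K,E^{sep})=0$. For the long exact sequences to be available, the filtration must be $H_K$-stable, and then the simple subquotients are irreducible $k[G_K]$-modules, which are in general higher-dimensional and carry a nontrivial $H_K$-action; they are \emph{not} isomorphic to $k$. For such a simple $W$ the term you need is $H^q(H_K,E^{sep}\otimes_k W)$, which does not reduce to $H^q(H_K,E^{sep})$ without a further input identifying the $H_K$-module structure of $E^{sep}\otimes_k W$. (Taking instead a non-equivariant $\mathcal{O}_K$-filtration does produce quotients $\cong k$, but then the terms are no longer $H_K$-modules and the long exact sequence argument does not apply.)

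The missing ingredient is exactly what the paper uses: the comparison isomorphism $\mathcal{O}_{\widehat{\mathcal{E}^{ur}}}\otimes_{\mathcal{O}_{\mathcal{E}}}\mathbb{D}_{LT}(V)\xrightarrow{\sim}\mathcal{O}_{\widehat{\mathcal{E}^{ur}}}\otimes_{\mathcal{O}_K}V$ recorded just before Theorem \ref{Kisin Ren}. Since $\mathbb{D}_{LT}(V)$ carries a trivial $H_K$-action and, as a finitely generated torsion module over the discrete valuation ring $\mathcal{O}_{\mathcal{E}}$, decomposes as $\bigoplus_i \mathcal{O}_{\mathcal{E}}/\pi^{n_i}$, this isomorphism exhibits $D^{sep}$ as a finite direct sum $\bigoplus_i\mathcal{O}_{\widehat{\mathcal{E}^{ur}}}/\pi^{n_i}$ of $H_K$-modules, to which Lemma \ref{trivial cohom} applies term by term. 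Equivalently, one can invoke semi-linear Hilbert~90 (Galois descent for $E^{sep}$-vector spaces with continuous semi-linear $H_K$-action) to show that for any simple subquotient $W$ one has $E^{sep}\otimes_k W\cong (E^{sep})^{\dim_k W}$ as an $H_K$-module; but either way, an explicit descent input is required and your argument, as written, omits it.
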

	\begin{proof}
	Assume that $V$ is finite. Then by definition,
		\begin{equation*}
		\mathbb{D}_{LT}(V)= (\mathcal{O}_{\widehat{\mathcal{E}^{ur}}}\otimes_{\mathcal{O}_L}V)^{H_K}= (D^{sep})^{H_K}.
		\end{equation*}
		So the complex $\Phi^{\bullet}(\mathbb{D}_{LT}(V))$ is the $H_K$-invariant part of $\Phi^{\bullet}(D^{sep})$. Since $V$ is finite, the terms of $\Phi^{\bullet}(D^{sep})$ are of the form $D^{sep}= E^{sep}\otimes_E \mathbb{D}_{LT}(V)$ and are acyclic objects for the $H_K$-cohomology by using Lemma \ref{trivial cohom}. Then it follows from Lemma \ref{augmentation} that $\mathcal{H}^i(\Phi^{\bullet}(\mathbb{D}_{LT}(V)))\cong H^i(H_K, V)$ as $\Gamma_{LT}$-modules. Since both the functors $\mathcal{H}^i(\Phi^{\bullet}(\mathbb{D}_{LT}(-)))$ and $H^i(H_K,-)$ commute with the filtered direct limits, the general case follows by taking the direct limits.
	\end{proof} 
{\col Note that by the structure of a unit group in a local field, we have $\mathcal{O}_L^\times\cong \mu_{q-1}\bigoplus \mathbb{Z}/p^a\mathbb{Z}\bigoplus \mathbb{Z}_p^d$, where $a\geq 0$ and $d$ is the degree of $L$ over $\mathbb{Q}_p$ (\cite[Chapter II, Proposition 5.7]{Neu1}). {\colo Note that the summand $\mathbb{Z}_p^d$ comes from the free part of the unit group $U^1:= 1+<\pi>$.}
	
From now on, in this section, we consider $K$ to be field fixed by $\mu_{q-1}\bigoplus \mathbb{Z}_p^d$, so that $\Gamma_{LT}=\mu_{q-1}\bigoplus\mathbb{Z}_p^d$. Let $\Delta=\mu_{q-1}$ and $H_K^*$ the kernel of the quotient map $G_K\twoheadrightarrow\Gamma_{LT}\twoheadrightarrow\Gamma_{LT}^*:=\Gamma_{LT}/\Delta$. Here $p\nmid q-1$, so the order of $\Delta$ is prime to $p$.} 

	\begin{proposition}\label{H_K^*-cohomo}
		The complex $\Phi^{\bullet}(\mathbb{D}_{LT}(V)^{\Delta})$ computes the $H_K^*$-cohomology of $V$.
	\end{proposition}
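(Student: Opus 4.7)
The plan is to combine Proposition \ref{H_K-cohomology} with an application of the Hochschild--Serre spectral sequence for the extension $1\to H_K\to H_K^*\to\Delta\to 1$, exploiting the hypothesis that $|\Delta|$ is prime to $p$.

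First I would observe that, since $\varphi_q$ commutes with the action of $\Gamma_{LT}$ (and hence with $\Delta$) on $\mathbb{D}_{LT}(V)$, taking $\Delta$-invariants commutes with the formation of the complex, i.e.\
\[
\Phi^{\bullet}\bigl(\mathbb{D}_{LT}(V)^{\Delta}\bigr) \;=\; \Phi^{\bullet}\bigl(\mathbb{D}_{LT}(V)\bigr)^{\Delta}.
\]
Next, because $V$ is a discrete $\pi$-primary abelian group and $|\Delta|$ is prime to $p$ (and hence prime to $\pi$), the group ring $\mathcal{O}_K[\Delta]$ acts on each term of $\Phi^{\bullet}(\mathbb{D}_{LT}(V))$ with $|\Delta|$ invertible on the $\pi$-primary part, so the functor $(-)^{\Delta}$ is exact on the terms of this complex. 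Consequently it commutes with the formation of cohomology:
\[
\mathcal{H}^i\bigl(\Phi^{\bullet}(\mathbb{D}_{LT}(V))^{\Delta}\bigr) \;\cong\; \mathcal{H}^i\bigl(\Phi^{\bullet}(\mathbb{D}_{LT}(V))\bigr)^{\Delta}.
\]

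Now I invoke Proposition \ref{H_K-cohomology}, which identifies the right-hand side with $H^i(H_K,V)^{\Delta}$ as $\Gamma_{LT}$-modules (and in particular as $\Delta$-modules). Finally, from the short exact sequence $1\to H_K\to H_K^*\to \Delta\to 1$ we obtain the Hochschild--Serre spectral sequence
\[
E_2^{p,q}=H^p\bigl(\Delta,H^q(H_K,V)\bigr)\Rightarrow H^{p+q}(H_K^*,V).
\]
Since $|\Delta|$ is prime to $p$ and the coefficients are $p$-primary, $H^p(\Delta,-)$ vanishes for $p>0$ on each $H^q(H_K,V)$, so the spectral sequence collapses to the edge isomorphism $H^i(H_K^*,V)\cong H^i(H_K,V)^{\Delta}$. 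Chaining the identifications together yields
\[
\mathcal{H}^i\bigl(\Phi^{\bullet}(\mathbb{D}_{LT}(V)^{\Delta})\bigr)\cong H^i(H_K^*,V),
\]
which is the desired statement.

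There is no serious obstacle: the only thing that requires any care is verifying that the $\Delta$-invariants functor is exact in our situation, which reduces to the classical fact that group cohomology of a finite group vanishes (in positive degrees) on modules of coprime order. All of the constructions needed are already in place by the time of the statement.
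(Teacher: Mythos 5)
Your proposal is correct and follows essentially the same route as the paper: both arguments rest on the Hochschild--Serre spectral sequence for $1\to H_K\to H_K^*\to\Delta\to 1$, the vanishing of $H^{>0}(\Delta,-)$ on $p$-primary modules since $|\Delta|$ is prime to $p$, and Proposition~\ref{H_K-cohomology}. The only difference is that you make explicit the intermediate step that $(-)^{\Delta}$ is exact on $\pi$-primary modules and hence commutes with taking cohomology of $\Phi^{\bullet}(\mathbb{D}_{LT}(V))$, a point the paper's terse proof leaves implicit.
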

	\begin{proof}
Since 
the order of $\Delta$ is prime to $p$, the $p$-cohomological dimension of $\Delta$ is zero. Moreover, the isomorphism $H_K^*/H_K\cong \Delta$ gives the following short exact sequence
		\begin{equation*}
		0\rightarrow H_K\rightarrow H_K^*\rightarrow \Delta\rightarrow 0.
		\end{equation*}
		Now the result follows from the Hochschild-Serre spectral sequence together with Proposition \ref{H_K-cohomology}.
	\end{proof} 
Note that 
$\Gamma_{LT}^*\cong\bigoplus_{i=1}^d\mathbb{Z}_p$ as an abelian group, 
where $d$ is the degree of $L$ over $\mathbb{Q}_p$. Let $\Gamma_{LT}^*$ be topologically generated by the set $\mathfrak{X}:=\{\gamma_1,\gamma_2,\ldots,\gamma_d\}$. {\C Let $A$ be an $\mathcal{O}_L$-module with a continuous and linear action of $\Gamma_{LT}^*$.} Then consider the co-chain complex
	\begin{equation*}
	\Gamma_{LT}^{\bullet}(A): 0\rightarrow A \rightarrow\bigoplus_{i_1\in \mathfrak{X}}A\rightarrow\cdots\rightarrow\bigoplus_{\{i_1,\ldots,i_r\}\in \binom{\mathfrak{X}}{r}}A\rightarrow\cdots \rightarrow A\rightarrow 0,
	\end{equation*}
	where $\binom{\mathfrak{X}}{r}$ denotes choosing $r$-indices at a time from the set $\mathfrak{X}$, and for all $0\leq r\leq \lvert\mathfrak{X}\rvert-1$, the map $d_{i_1,\ldots,i_r}^{j_1,\ldots, j_{r+1}}:A\rightarrow A$ from the component in the $r$-th term corresponding to $\{i_1,\ldots,i_r\}$ to the component corresponding to the $(r+1)$-tuple $\{j_1,\ldots,j_{r+1}\}$ is given by
	\begin{equation*}
	d_{i_1,\ldots,i_r}^{j_1,\ldots, j_{r+1}} =
\left\{
	\begin{array}{ll}
	0  & \mbox{if } \{i_1,\ldots,i_r\}\nsubseteq\{j_1,\ldots,j_{r+1}\}, \\
	(-1)^{s_j}(\gamma_j-id) & \mbox{if } \{j_1,\ldots,j_{r+1}\}= \{i_1,\ldots,i_r\}\cup\{j\},
	\end{array}
	\right.
	\end{equation*}
and $s_j$ is the number of elements in the set $\{i_1,\ldots,i_r\}$, which are smaller than $j$. This is the Koszul complex of $A$
as a module over $\cO_L[[\Gamma_{LT}^\ast]]$, and the differentials above are the differentials of the Koszul complex with respect to the ordered sequence $\gamma_1-1,\gamma_2-1,\cdots,\gamma_d-1$.
	\begin{example}
		Let $d=2$. Then the complex $\Gamma_{LT}^\bullet(A)$ is defined as follows:
		\begin{equation*}
		\Gamma_{LT}^\bullet(A): 0\rightarrow A\xrightarrow{x\mapsto A_0x} A \oplus A\xrightarrow{x \mapsto A_1x} A\rightarrow 0,
		\end{equation*}
		where
		\[
			A_0=
			\begin{bmatrix}
			\gamma_1-id\\ \gamma_2-id 
			\end{bmatrix},
			A_1=
			\begin{bmatrix}
			-(\gamma_2-id) & \gamma_1-id 
			\end{bmatrix}. \qedhere
			\]
	\end{example}
	\begin{lemma}\label{H^0}
		The functor $A\mapsto \mathcal{H}^i(\Gamma_{LT}^{\bullet}(A))_{i\geq 0}$ is a cohomological $\delta$-functor. Moreover, if $A$ is a discrete representation of 
		$\Gamma_{LT}^*$ 
		then $\mathcal{H}^0(\Gamma_{LT}^{\bullet}(A))= A^{\Gamma_{LT}^*}$.
	\end{lemma}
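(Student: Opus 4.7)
The plan is to deduce both statements directly from the combinatorial form of the Koszul complex, using the fact that the functor $A \mapsto \Gamma_{LT}^\bullet(A)$ is formed termwise from finite direct sums of $A$, so it is an exact functor from $\mathcal{O}_K[[\Gamma_{LT}^\ast]]$-modules to complexes of abelian groups.

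For the $\delta$-functor statement, I would start with an arbitrary short exact sequence
\begin{equation*}
0 \to A' \to A \to A'' \to 0
\end{equation*}
in the category of $\Gamma_{LT}^\ast$-modules. Since in each degree $r$ the complex $\Gamma_{LT}^\bullet(-)$ is just $\bigoplus_{\{i_1,\ldots,i_r\} \in \binom{\mathfrak{X}}{r}}(-)$, applying it preserves exactness, so we obtain a short exact sequence of co-chain complexes
\begin{equation*}
0 \to \Gamma_{LT}^\bullet(A') \to \Gamma_{LT}^\bullet(A) \to \Gamma_{LT}^\bullet(A'') \to 0.
\end{equation*}
The snake lemma (applied degreewise) then produces the long exact sequence in cohomology together with connecting maps $\delta^i \colon \mathcal{H}^i(\Gamma_{LT}^\bullet(A'')) \to \mathcal{H}^{i+1}(\Gamma_{LT}^\bullet(A'))$. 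Naturality of $\delta^i$ with respect to morphisms of short exact sequences follows from naturality of the snake lemma, so $\{\mathcal{H}^i(\Gamma_{LT}^\bullet(-))\}_{i \geq 0}$ is a cohomological $\delta$-functor.

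For the identification of $\mathcal{H}^0$, I would unwind the definition of the first differential. By construction, the map $d^{-1} \colon A \to \bigoplus_{i=1}^{d} A$ sends $x$ to $\bigl((\gamma_1 - \mathrm{id})x, \ldots, (\gamma_d - \mathrm{id})x\bigr)$, so its kernel is exactly $\{x \in A : \gamma_i x = x \text{ for all } i = 1,\ldots,d\}$, i.e.\ the subgroup of elements fixed by the generating set $\mathfrak{X}$. Because $A$ is a discrete representation and $\gamma_1,\ldots,\gamma_d$ topologically generate $\Gamma_{LT}^\ast$, any element fixed by all $\gamma_i$ has an open stabilizer in $\Gamma_{LT}^\ast$ containing the dense subgroup $\langle \gamma_1,\ldots,\gamma_d\rangle$, so its stabilizer is all of $\Gamma_{LT}^\ast$; this yields $\ker d^{-1} = A^{\Gamma_{LT}^\ast}$.

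Neither step presents a real obstacle: the $\delta$-functor part is formal once one notes the termwise exactness of the Koszul construction, and the $\mathcal{H}^0$ calculation is essentially a matter of unwinding definitions. The only point that requires any attention is the passage from fixed under $\gamma_1,\ldots,\gamma_d$ to fixed under all of $\Gamma_{LT}^\ast$, and this is precisely where the discreteness hypothesis on the $\Gamma_{LT}^\ast$-action is used.
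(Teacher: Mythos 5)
Your proof is correct and follows essentially the same route as the paper: the $\delta$-functor part uses termwise exactness of the Koszul construction to obtain the long exact sequence in cohomology with functorial connecting maps, and the $\mathcal{H}^0$ identification reduces to showing that in a discrete module an element fixed by the topological generators $\gamma_1,\ldots,\gamma_d$ is fixed by all of $\Gamma_{LT}^\ast$. Your formulation of that last step via the (open, hence closed) stabilizer containing a dense subgroup is a clean expression of what the paper states, a bit loosely, as the action factoring through finite quotients generated by the classes of the $\gamma_i$.
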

	\begin{proof}
	Let 
		\begin{equation}\label{B}
		0\rightarrow A\rightarrow B\rightarrow C\rightarrow 0
		\end{equation} be a short exact sequence of representations of $\Gamma_{LT}^*$. Then we have a short exact sequence
		\begin{equation}\label{C}
		0\rightarrow \Gamma_{LT}^{\bullet}(A)\rightarrow \Gamma_{LT}^{\bullet}(B)\rightarrow \Gamma_{LT}^{\bullet}(C)\rightarrow 0
		\end{equation}
	 of co-chain complexes. Then the long exact cohomology sequence of (\ref{C}) gives maps
		\begin{equation*}
		\delta^i: \mathcal{H}^i(\Gamma_{LT}^{\bullet}(C))\rightarrow \mathcal{H}^{i+1}(\Gamma_{LT}^{\bullet}(A)),
		\end{equation*}
		which are functorial in (\ref{B}). Therefore  $A\mapsto \mathcal{H}^i(\Gamma_{LT}^{\bullet}(A))$ is a cohomological $\delta$-functor. The second part follows from the fact that the action of $\Gamma_{LT}^*$ on $A$ factors through a finite quotient. Since the classes of the elements $\gamma_i \ (i\in \mathfrak{X})$ generate finite quotients of $\Gamma_{LT}^*$, $A^{\Gamma_{LT}^*}= \cap_{i\in \mathfrak{X}}\Ker(\gamma_i-id)= \mathcal{H}^0(\Gamma_{LT}^{\bullet}(A))$.
	\end{proof}
	\begin{proposition}\label{Gamma^*-cohomo}
	Let $A$ be a discrete $\pi_L$-primary representation 
		of $\Gamma_{LT}^*$. Then $\mathcal{H}^i(\Gamma_{LT}^{\bullet}(A))\cong H^i(\Gamma_{LT}^*,A)$ for $i\geq0$. In other words, the complex $\Gamma_{LT}^{\bullet}(A)$ computes the $\Gamma_{LT}^*$-cohomology of $A$.
	\end{proposition}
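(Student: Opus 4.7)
The plan is to proceed by induction on the $\mathbb{Z}_p$-rank $d$ of $\Gamma_{LT}^*$. In the base case $d = 1$, $\Gamma_{LT}^*\cong \mathbb{Z}_p$ is procyclic, topologically generated by $\gamma_1$, so that $\Gamma_{LT}^\bullet(A)$ reduces to the two-term complex $0\to A\xrightarrow{\gamma_1 - id} A\to 0$. Because the action on the discrete $\pi$-primary module $A$ factors through finite cyclic quotients, this complex is known to compute the continuous cohomology of $\Gamma_{LT}^*$, yielding $H^0 = A^{\Gamma_{LT}^*}$, $H^1 = A/(\gamma_1 - id)A$, and $H^i = 0$ for $i\geq 2$ (cf.\ \cite{NSW}).

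For the inductive step, I would exploit the topological direct product decomposition $\Gamma_{LT}^* = \Gamma_0\times \Gamma_1$, with $\Gamma_1 = \overline{\langle\gamma_d\rangle}\cong\mathbb{Z}_p$ and $\Gamma_0 = \overline{\langle\gamma_1,\ldots,\gamma_{d-1}\rangle}\cong\mathbb{Z}_p^{d-1}$. Since the Koszul complex on a commuting sequence of operators factors as the tensor product of the one-element Koszul complexes, $\Gamma_{LT}^\bullet(A)$ is canonically identified with the total complex of the double complex
\begin{equation*}
\Gamma_0^{\bullet}(A)\xrightarrow{\gamma_d - id}\Gamma_0^{\bullet}(A),
\end{equation*}
where $\Gamma_0^{\bullet}(-)$ is the analogous complex built from $\gamma_1 - id,\ldots,\gamma_{d-1} - id$. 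Taking cohomology vertically first (via the base case) and then horizontally (via the inductive hypothesis) produces a convergent first-quadrant spectral sequence
\begin{equation*}
E_2^{p,q} = H^p(\Gamma_0, H^q(\Gamma_1, A))\Longrightarrow \mathcal{H}^{p+q}(\Gamma_{LT}^\bullet(A)).
\end{equation*}

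The main obstacle is to identify the abutment with $H^{p+q}(\Gamma_{LT}^*, A)$, and not merely to note that the $E_2$-page matches that of the Hochschild--Serre spectral sequence for $1\to \Gamma_1\to \Gamma_{LT}^*\to \Gamma_0\to 1$. I would avoid a direct spectral-sequence comparison and instead argue by universality: Lemma~\ref{H^0} together with agreement in degree zero furnishes a canonical morphism of $\delta$-functors $H^i(\Gamma_{LT}^*, -)\to \mathcal{H}^i(\Gamma_{LT}^\bullet(-))$, so it suffices to prove that the target is effaceable. For a coinduced module $I = \operatorname{Coind}_{\{1\}}^{\Gamma_{LT}^*}(A_0)$, the direct product decomposition forces the restriction of $I$ to $\Gamma_1$ to be $\Gamma_1$-coinduced and forces $I^{\Gamma_1}$ to be $\Gamma_0$-coinduced; the spectral sequence above therefore collapses and yields $\mathcal{H}^i(\Gamma_{LT}^\bullet(I)) = 0$ for $i\geq 1$, which is exactly the effaceability required.
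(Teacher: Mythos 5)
Your argument is correct and follows essentially the same strategy as the paper's: induction on the number of topological generators, realization of $\Gamma_{LT}^\bullet(A)$ as the total complex of a two-step double complex, the associated first-quadrant spectral sequence, degree-zero agreement via Lemma~\ref{H^0}, and the universality of $\delta$-functors. The one substantive difference is the mechanism used to efface $\mathcal{H}^i(\Gamma_{LT}^\bullet(-))$. The paper works with injective objects of the category of discrete $\pi$-primary $\Gamma_{LT}^*$-modules and invokes \cite{rot} together with \cite{PZ} to show that the Koszul complex of an injective module has cohomology concentrated in degree zero. You instead efface via the coinduced modules $I = \operatorname{Map}_{cont}(\Gamma_{LT}^*, A_0)$, exploiting the direct-product decomposition $\Gamma_{LT}^* \cong \Gamma_0 \times \Gamma_1$ (available because $\Gamma_{LT}^* \cong \mathbb{Z}_p^d$ is a free abelian pro-$p$ group) to see that $I$ restricts to a $\Gamma_1$-coinduced module and that $I^{\Gamma_1}$ is $\Gamma_0$-coinduced. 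This is a little more self-contained, since it avoids the external facts about injectives, and it also correctly isolates a subtlety that the paper's phrase ``by using spectral sequences (\ref{SSADC}) and (\ref{HSSS})'' passes over: matching $E_2$-pages do not by themselves furnish a morphism of spectral sequences. Both proofs in fact sidestep this by showing the two $\delta$-functors coincide through effaceability and universality rather than by a direct spectral-sequence comparison; your write-up just makes that logic explicit.
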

\begin{proof}
We prove the proposition by using induction on the number of generators of $\Gamma_{LT}^*$. First, assume that $\Gamma_{LT}^*$ is topologically generated by $\langle\gamma_1,\gamma_2\rangle$. Let $\Gamma^*_{\gamma_1}$ denote the subgroup of $\Gamma_{LT}^*$ generated by $\gamma_1$ and $\Gamma_{\gamma_2}^*$ the quotient of $\Gamma_{LT}^*$ by $\Gamma_{\gamma_1}^*$. We denote by $\Gamma_{\gamma_i}^{\bullet}(A)$ the co-chain complex
	\begin{equation*}
	\Gamma_{\gamma_i}^{\bullet}(A): 0\rightarrow A\xrightarrow{\gamma_i-id} A\rightarrow 0.
	\end{equation*}
	Then the co-chain complex $\Gamma_{LT}^{\bullet}(A)$ is the total complex of the double complex $\Gamma_{\gamma_2}^{\bullet}(\Gamma_{\gamma_1}^{\bullet}(A))$, and associated to the double complex $\Gamma_{\gamma_2}^{\bullet}(\Gamma_{\gamma_1}^{\bullet}(A))$, there is a spectral sequence 
	\begin{equation}\label{SSADC}
	E_2^{mn} = \mathcal{H}^m(\Gamma_{\gamma_2}^{\bullet}(\mathcal{H}^n(\Gamma_{\gamma_1}^{\bullet}(A))))\Rightarrow \mathcal{H}^{m+n}(\Gamma_{LT}^{\bullet}(A)).
	\end{equation} 
	Moreover, associated to the group $\Gamma_{LT}^*$, we have the Hochschild-Serre spectral sequence \begin{equation}\label{HSSS}
	E_2^{mn}=H^m(\Gamma_{\gamma_2}^*,H^n(\Gamma_{\gamma_1}^*,A))\Rightarrow H^{m+n}(\Gamma_{LT}^*,A).
		\end{equation}
Now assume that $A$ is an injective object in the category of discrete $\pi_L$-primary abelian groups with a continuous action of $\Gamma_{LT}^*$. Then the complex $\Gamma_{\gamma_1}^{\bullet}(A)$ is acyclic in non-zero degrees with $0$-th  
	cohomology isomorphic to $H^0(\Gamma_{\gamma_1}^*,A)=A^{\Gamma_{\gamma_1}^*}$ \cite[Corollary 6.41]{rot}, i.e, the map $A^{\Gamma_{\gamma_1}^*}[0]\rightarrow \Gamma_{\gamma_1}^{\bullet}(A)$ is a quasi-isomorphism. But $A^{\Gamma_{\gamma_1}^*}$ is an injective object in the category of discrete $\pi$-primary abelian groups with a continuous action of $\Gamma_{\gamma_2}^*$. Now by using step 1 and step 2 of \cite[Proposition 2.1.7]{PZ},  the map $A^{\Gamma_{LT}^*}[0]\rightarrow \Gamma_{\gamma_2}^{\bullet}(A^{\Gamma_{\gamma_1}^*}) $ is a quasi-isomorphism of co-chain complexes.
	
	Note that the functor $H^i(\Gamma_{LT}^*,-)$ is a universal $\delta$-functor, and $\mathcal{H}^i(\Gamma_{LT}^{\bullet}(-))$ is a cohomological $\delta$-functor such that $H^0(\Gamma_{LT}^*,-)\cong \mathcal{H}^0(\Gamma_{LT}^{\bullet}(-))$. Therefore, we have a natural transformation $H^i(\Gamma_{LT}^*,-)\rightarrow\mathcal{H}^i(\Gamma_{LT}^{\bullet}(-))$ of $\delta$-functors. Then by using spectral sequences (\ref{SSADC}) and (\ref{HSSS}), we have 
		\begin{equation*}
		H^i(\Gamma_{LT}^*,A)\cong \mathcal{H}^i(\Gamma_{LT}^{\bullet}(A)) \quad \text{for} \ i\geq 0.
		\end{equation*} 
	Now the case for general $A$ follows from Lemma \ref{H^0} by using dimension shifting. Then by induction assume that the result is true when $\Gamma_{LT}^*$ is topologically generated by $\langle \gamma_1,\gamma_2,\ldots,\gamma_{d-1}\rangle$. Now we want to prove the proposition when $\Gamma_{LT}^* = \langle\gamma_1,\gamma_2,\ldots,\gamma_d\rangle$.
		Consider the complexes
		\begin{equation*}
		\Gamma^\bullet_{\gamma_d}(A): 0\rightarrow A\xrightarrow{\gamma_d-id}A\rightarrow 0,
		\end{equation*} and
		\begin{equation*}
		\Gamma_{LT\backslash \gamma_d}^{\bullet}(A): 0\rightarrow A \rightarrow\bigoplus_{i_1\in \mathfrak{X^\prime}}A\rightarrow\cdots\rightarrow\bigoplus_{\{i_1,\ldots,i_r\}\in \binom{\mathfrak{X^\prime}}{r}}A\rightarrow\cdots \rightarrow A\rightarrow 0,
		\end{equation*}
		where $\mathfrak{X^\prime}= \{\gamma_1,\ldots,\gamma_{d-1}\}$, and for all $0\leq r\leq \lvert\mathfrak{X^\prime}\rvert-1$, the map $d_{i_1,\ldots,i_r}^{j_1,\ldots, j_{r+1}}:A\rightarrow A$ from the component in the $r$-th term corresponding to $\{i_1,\ldots,i_r\}$ to the component corresponding to the $(r+1)$-tuple $\{j_1,\ldots,j_{r+1}\}$ is given by the following
		\begin{equation*}
			d_{i_1,\ldots,i_r}^{j_1,\ldots, j_{r+1}} =
			\left\{
			\begin{array}{ll}
			0  & \mbox{if } \{i_1,\ldots,i_r\}\nsubseteq\{j_1,\ldots,j_{r+1}\}, \\
			(-1)^{s_j}(\gamma_j-id) & \mbox{if } \{j_1,\ldots,j_{r+1}\}= \{i_1,\ldots,i_r\}\cup\{j\},
			\end{array}
			\right.
		\end{equation*}
		and $s_j$ is the number of elements in the set $\{i_1,\ldots,i_r\}$ smaller than $j$. Note that the complex $\Gamma_{LT}^\bullet(A)$ is the total complex of the double complex $\Gamma^\bullet_{\gamma_d}(\Gamma_{LT\backslash \gamma_d}^{\bullet}(A))$. 
		Since the result is true for $\Gamma_{LT\backslash \gamma_d}^{\bullet}(A)$ by using induction hypothesis, the proof follows by using similar techniques as explained in the case when $\Gamma_{LT}^*$ is generated by $\gamma_1$ and $\gamma_2$.
\end{proof}
	\begin{remark} \label{Independence of generators}
For any representation $A$ of $\Gamma_{LT}^*$, clearly, the complex $\Gamma_{LT}^{\bullet}(A)$ depends on the choice of generators of $\Gamma_{LT}^*$. The cohomology groups of the Koszul complex $\Gamma_{LT}^{\bullet}(A)$ are independent of the choice of generators of $\Gamma_{LT}^*$.
	\end{remark}   
	\begin{proof} 
Assume that $\Gamma_{LT}^*=\langle\gamma_1,\gamma_2,\ldots,\gamma_n\rangle$ for generators $\gamma_i$. Let $\Gamma_{LT}^*= \langle\gamma_1^\prime,\gamma_2^\prime,\ldots,\gamma_n^\prime\rangle$ be another set of generators. Then, the ideal of $\cO_L[[\Gamma_{LT}^\ast]]$ generated by 
$\{\gamma_1-1,\gamma_2-1,\ldots,\gamma_n-1\}$ is equal to the ideal generated by $\{\gamma_1^\prime-1,\gamma_2^\prime-1,\ldots,\gamma_n^\prime-1\}$.

Setting $T_1=\gamma_1-1, T_2=\gamma_2-1,\ldots,T_n=\gamma_n-1$ and 
 $S_1=\gamma_1^\prime-1,S_2=\gamma_2^\prime-1,\ldots,S_n=\gamma_n^\prime-1$, we have an isomorphism of rings $\cO_L[[\Gamma_{LT}^*]]\cong\cO_L[[\Gamma_{LT}^*]]$ which induces an isomorphism between $\cO_L[[T_1,T_2,\ldots,T_n]]$ and $\cO_L[[S_1,S_2,\ldots,S_n]]$ via the maps $T_i\mapsto S_i$. 
 Therefore the matrix of this isomorphism which has entries in $\cO_L[[\Gamma_{LT}^\ast]]$ is invertible there. Then it follows that the Koszul complexes with respect
to the sequences $\{T_i:i=1,\ldots,n\}$ and $\{S_i:i=1,\ldots,n\}$ are quasi-isomorphic (see \cite[Section 15.28]{stacks-project}).
	\end{proof}
	Next, we define a complex, namely the Lubin-Tate Herr complex, which is a generalization of the Herr complex \cite{LH1}.
	\begin{definition} \label{LTHC}
		Let $M\in \varinjlim {\bf Mod}^{\varphi_q,\Gamma_{LT},\acute{e}t,tor}_{/\mathcal{O}_{\mathcal{E}}}$. Define the co-chain complex $\Phi\Gamma_{LT}^{\bullet}(M)$ as the total complex of the double complex $\Gamma_{LT}^{\bullet}(\Phi^{\bullet}(M^{\Delta}))$, and we call it the \emph{Lubin-Tate Herr complex} for $M$.
	\end{definition} 
	$M$ for $M^{\Delta}$ 
	for simplicity.
	\begin{example}{\label{Ex1}}
		Let $d=2$, and writing $M$ for $M^\Delta$ for simplicity, the Lubin-Tate Herr complex  $\Phi\Gamma_{LT}^{\bullet}(M)$ is defined as:
		\begin{equation*}
		0\rightarrow M\xrightarrow{x\mapsto A_{0,\varphi_q}x}M^{\oplus 3}\xrightarrow{x\mapsto A_{1,\varphi_q}x} M^{\oplus 3}\xrightarrow{x\mapsto A_{2,\varphi_q}x}M\rightarrow 0,
		\end{equation*}
		where
		\[
		A_{0,\varphi_q}=
		\begin{bmatrix}
		\varphi_M-id \\
		\gamma_1-id \\
		\gamma_2-id 
		\end{bmatrix},
		A_{1,\varphi_q} = 
		\begin{bmatrix}
		-(\gamma_1-id) & \varphi_M-id & 0  \\
		-(\gamma_2-id) & 0 & \varphi_M-id \\
		0 & -(\gamma_2-id) & \gamma_1-id 
		\end{bmatrix}, 
		\]
		\[
		A_{2,\varphi_q}=
		\begin{bmatrix}
		\gamma_2-id & -(\gamma_1-id)& \varphi_M-id  
		\end{bmatrix}.
		\]
	\end{example}
	\begin{lemma}\label{dimension-shifting}
		For any $V \in {\bf Rep}_{\mathcal{O}_L-tor}^{dis}(G_K)$, the functor $V \mapsto \mathcal{H}^i(\Phi\Gamma_{LT}^{\bullet}(\mathbb{D}_{LT}(V)))_{i\geq 0}$ is a cohomological $\delta$-functor from the category ${\bf Rep}_{\mathcal{O}_L-tor}^{dis}(G_K)$ to the category of abelian groups. Moreover, $\mathcal{H}^0(\Phi\Gamma_{LT}^{\bullet}(\mathbb{D}_{LT}(V)))\cong V^{G_K}.$
	\end{lemma}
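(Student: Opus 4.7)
The plan proceeds in two separate steps: first, establishing the cohomological $\delta$-functor property by producing functorial short exact sequences of Lubin-Tate Herr complexes, and second, identifying $\mathcal{H}^0$ with $V^{G_K}$ by tracing through the fixed-point constructions and invoking the Kisin-Ren dictionary.

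For the $\delta$-functor property, given a short exact sequence $0 \to V' \to V \to V'' \to 0$ in ${\bf Rep}_{\mathcal{O}_K-tor}^{dis}(G_K)$, I would first apply $\mathbb{D}_{LT}$, which is exact by Proposition \ref{KR discrete}, obtaining a short exact sequence of $(\varphi_q,\Gamma_{LT})$-modules. Next I would take $\Delta$-invariants; because $|\Delta|$ is coprime to $p$ (as assumed just before Proposition \ref{H_K^*-cohomo}), this functor is exact on $\pi$-primary modules. Since $\Phi\Gamma_{LT}^{\bullet}(M)$ is built from $M^{\Delta}$ by forming a total complex whose degree-$n$ term is a finite direct sum of copies of $M^{\Delta}$, the resulting sequence $0 \to \Phi\Gamma_{LT}^{\bullet}(\mathbb{D}_{LT}(V')) \to \Phi\Gamma_{LT}^{\bullet}(\mathbb{D}_{LT}(V)) \to \Phi\Gamma_{LT}^{\bullet}(\mathbb{D}_{LT}(V'')) \to 0$ is exact at the chain level. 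The associated long exact sequence in cohomology then supplies the connecting homomorphisms $\delta^i$ functorially in the original short exact sequence.

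For the identification of $\mathcal{H}^0$, inspection of the total complex (compare Example \ref{Ex1}) shows that its degree-$0$ term is $\mathbb{D}_{LT}(V)^{\Delta}$ and that the first differential sends $x$ to the tuple $((\varphi_q - \mathrm{id})x,\, (\gamma_1 - \mathrm{id})x,\, \ldots,\, (\gamma_d - \mathrm{id})x)$. Hence
\[
\mathcal{H}^0(\Phi\Gamma_{LT}^{\bullet}(\mathbb{D}_{LT}(V))) \;=\; \mathbb{D}_{LT}(V)^{\Delta,\,\Gamma_{LT}^{*},\,\varphi_q = \mathrm{id}} \;=\; \mathbb{D}_{LT}(V)^{\Gamma_{LT},\,\varphi_q = \mathrm{id}},
\]
the second equality coming from the short exact sequence $1 \to \Delta \to \Gamma_{LT} \to \Gamma_{LT}^{*} \to 1$. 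Unwinding $\mathbb{D}_{LT}(V) = (\mathcal{O}_{\widehat{\mathcal{E}^{ur}}} \otimes_{\mathcal{O}_K} V)^{H_K}$ with $\Gamma_{LT} = G_K/H_K$ and interchanging the commuting $G_K$- and $\varphi_q$-actions, I would rewrite this as $((\mathcal{O}_{\widehat{\mathcal{E}^{ur}}} \otimes_{\mathcal{O}_K} V)^{\varphi_q = \mathrm{id}})^{G_K}$. A d\'evissage from Lemma \ref{Exact}, first applied to each quotient $\mathcal{O}_{\widehat{\mathcal{E}^{ur}}}/\pi^n$ and then passed through the filtered direct limit description of $V$, identifies $(\mathcal{O}_{\widehat{\mathcal{E}^{ur}}} \otimes_{\mathcal{O}_K} V)^{\varphi_q = \mathrm{id}}$ with $V$ as a $G_K$-module, yielding $\mathcal{H}^0 \cong V^{G_K}$.

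The main subtle point is this last $\varphi_q$-invariant computation for a general discrete $\pi$-primary $V$. Lemma \ref{Exact} handles $\mathcal{O}_{\widehat{\mathcal{E}^{ur}}}$ itself; the torsion version follows from the d\'evissage already used in the proof of Lemma \ref{augmentation}, and the passage to an arbitrary $V$ relies on the fact that $(-)^{\varphi_q = \mathrm{id}}$ commutes with filtered direct limits, being a kernel functor.
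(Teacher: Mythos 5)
Your argument is correct and takes essentially the same route as the paper's proof: exactness of $\mathbb{D}_{LT}$ yields a short exact sequence of Lubin-Tate Herr complexes, whose long exact cohomology sequence furnishes the connecting maps, and the $\mathcal{H}^0$ computation reduces to $(\mathcal{O}_{\widehat{\mathcal{E}^{ur}}})^{\varphi_q = \mathrm{id}} = \mathcal{O}_K$ (Lemma \ref{Exact}) by d\'evissage and direct limits. The only differences are bookkeeping: you justify exactness of the total-complex sequence by the direct observation that each term is a finite direct sum of copies of $M^{\Delta}$ (and you explicitly record that taking $\Delta$-invariants is exact since $|\Delta|$ is prime to $p$), whereas the paper cites the Acyclic Assembly Lemma and leaves the $\Delta$-step implicit; and you compute $\mathcal{H}^0$ by taking $\Gamma_{LT}$-invariants before $\varphi_q$-invariants, whereas the paper takes them in the opposite order, first identifying $\mathbb{D}_{LT}(V)^{\varphi = \mathrm{id}} \cong V^{H_K}$ and then taking $\Gamma_{LT}$-invariants. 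Your version is a touch more explicit on the points where the paper is terse; both are correct.
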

	\begin{proof}
		Let 
		\begin{equation}\label{D}
		0\rightarrow V_1\rightarrow V_2\rightarrow V_3\rightarrow 0
		\end{equation}
		be a short exact sequence of discrete $\pi_L$-primary representations of $G_K$. The functor $\mathbb{D}_{LT}$ is exact {\cv and} gives the short exact sequence $0\rightarrow \mathbb{D}_{LT}(V_1)\rightarrow\mathbb{D}_{LT}(V_2)\rightarrow\mathbb{D}_{LT}(V_3)\rightarrow 0$ in $\varinjlim {\bf Mod}^{\varphi_q,\Gamma_{LT},\acute{e}t,tor}_{/\mathcal{O}_{\mathcal{E}}}$. By using Acyclic Assembly Lemma \cite[Lemma 2.7.3]{Wei}, we get a short exact sequence 
		\begin{equation}\label{E}
		0\rightarrow\Phi\Gamma_{LT}^{\bullet}(\mathbb{D}_{LT}(V_1))\rightarrow\Phi\Gamma_{LT}^{\bullet}(\mathbb{D}_{LT}(V_2))\rightarrow\Phi\Gamma_{LT}^{\bullet}(\mathbb{D}_{LT}(V_3))\rightarrow 0,
		\end{equation}
	of co-chain complexes. Then the long exact sequence of (\ref{E}) gives maps
		\begin{equation*}
		\delta^i: \mathcal{H}^i(\Phi\Gamma_{LT}^{\bullet}(\mathbb{D}_{LT}(V_3)))\rightarrow\mathcal{H}^{i+1}(\Phi\Gamma_{LT}^{\bullet}(\mathbb{D}_{LT}(V_1))),
		\end{equation*}
		which are functorial in (\ref{D}). Therefore $V \mapsto \mathcal{H}^i(\Phi\Gamma_{LT}^{\bullet}(\mathbb{D}_{LT}(V)))_{i\geq 0}$ is a cohomological $\delta$-functor from the category ${\bf Rep}_{\mathcal{O}_L-tor}^{dis}(G_K)$ to the category of abelian groups.
		
		For the second part,  note that $\varphi_q$ acts trivially on $V$ and it commutes with the action of $G_K$, so:
		\begin{align*}
		\mathbb{D}_{LT}(V)^{\varphi_{\mathbb{D}_{LT}(V)} = id} & = ((\mathcal{O}_{\widehat{\mathcal{E}^{ur}}}\otimes_{\mathcal{O}_L} V)^{H_K})^{\varphi_{\mathbb{D}_{LT}(V)}=id}\\ & = (\mathcal{O}_{\widehat{\mathcal{E}^{ur}}}^{\varphi_q= 1}\otimes_{\mathcal{O}_L} V)^{H_K}\\ & = (\mathcal{O}_L\otimes_{\mathcal{O}_L} V)^{H_K} \\ & \cong V^{H_K},
		\end{align*} where the third equality follows from Lemma \ref{Exact}. Therefore 
	 $$\mathbb{D}_{LT}(V)^{\varphi_{\mathbb{D}_{LT}(V)}=id,\Gamma_{LT}=id} \cong (V^{H_K})^{\Gamma_{LT}=id}=V^{G_K}.$$ On the other hand, 
	 \begin{align*}
	 \mathcal{H}^0(\Phi\Gamma_{LT}^{\bullet}(\mathbb{D}_{LT}(V)))=& (\mathbb{D}_{LT}(V)^\Delta)^{\varphi_{\mathbb{D}_{LT}(V)} = id,\Gamma_{LT}^*=id}\\=&\mathbb{D}_{LT}(V)^{\varphi_{\mathbb{D}_{LT}(V)} = id,\Gamma_{LT}=id}.
	 \end{align*}  Hence
		\begin{equation*}
		\mathcal{H}^0(\Phi\Gamma_{LT}^{\bullet}(\mathbb{D}_{LT}(V)))\cong V^{G_K}.
		\end{equation*} 
	\end{proof}
	\begin{theorem}\label{G_K-cohomo}
		Let $V \in {\bf Rep}_{\mathcal{O}_L-tor}^{dis}(G_K)$. Then $H^i(G_K,V)\cong \mathcal{H}^i(\Phi\Gamma_{LT}^{\bullet}(\mathbb{D}_{LT}(V)))$ for $i\geq 0$, i.e., the Lubin-Tate Herr complex $\Phi\Gamma_{LT}^{\bullet}(\mathbb{D}_{LT}(V))$ computes the Galois cohomology of $G_K$ with coefficients in $V$. 
	\end{theorem}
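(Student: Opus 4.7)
The plan is to exploit the double-complex structure of the Lubin-Tate Herr complex and compare with the Hochschild-Serre spectral sequence for $1 \to H_K^* \to G_K \to \Gamma_{LT}^* \to 1$. By construction, $\Phi\Gamma_{LT}^{\bullet}(\mathbb{D}_{LT}(V))$ is the total complex of the double complex $\Gamma_{LT}^{\bullet}(\Phi^{\bullet}(\mathbb{D}_{LT}(V)^{\Delta}))$, and therefore carries a first-quadrant spectral sequence obtained by taking cohomology in the horizontal ($\Phi$) direction first. Proposition \ref{H_K^*-cohomo} identifies the horizontal cohomology $\mathcal{H}^q(\Phi^{\bullet}(\mathbb{D}_{LT}(V)^{\Delta}))$ with $H^q(H_K^*, V)$ as $\Gamma_{LT}^*$-modules, and then Proposition \ref{Gamma^*-cohomo} identifies the subsequent vertical cohomology with $H^p(\Gamma_{LT}^*, H^q(H_K^*, V))$. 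This produces
\begin{equation*}
E_2^{p,q} = H^p(\Gamma_{LT}^*, H^q(H_K^*, V)) \Longrightarrow \mathcal{H}^{p+q}(\Phi\Gamma_{LT}^{\bullet}(\mathbb{D}_{LT}(V))),
\end{equation*}
whose $E_2$ page coincides with that of the Hochschild-Serre spectral sequence abutting to $H^{p+q}(G_K,V)$.

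To promote coinciding $E_2$ pages to a natural isomorphism of abutments, I would invoke the $\delta$-functor formalism. By Lemma \ref{dimension-shifting}, $V \mapsto \mathcal{H}^i(\Phi\Gamma_{LT}^{\bullet}(\mathbb{D}_{LT}(V)))$ is a cohomological $\delta$-functor on ${\bf Rep}_{\mathcal{O}_K-tor}^{dis}(G_K)$ agreeing with Galois cohomology in degree $0$. Since this category has enough injectives, $H^{\bullet}(G_K,-)$ is a universal $\delta$-functor, so the degree-$0$ identification extends uniquely to a morphism of $\delta$-functors $H^i(G_K, V) \to \mathcal{H}^i(\Phi\Gamma_{LT}^{\bullet}(\mathbb{D}_{LT}(V)))$.

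To show this morphism is an isomorphism, I would proceed by dimension shifting: given $V$, embed it into an injective $I$ of ${\bf Rep}_{\mathcal{O}_K-tor}^{dis}(G_K)$, set $W=I/V$, and compare the two resulting long exact sequences via the five-lemma. This reduces matters to showing $\mathcal{H}^i(\Phi\Gamma_{LT}^{\bullet}(\mathbb{D}_{LT}(I))) = 0$ for $i \geq 1$, for which I would apply the spectral sequence above: standard facts for discrete torsion modules over profinite groups (restriction preserves injectives since continuous induction is an exact left adjoint) yield $H^q(H_K^*, I) = 0$ for $q \geq 1$ and that $I^{H_K^*}$ is injective over $\Gamma_{LT}^*$, so $E_2^{p,q} = 0$ off the origin. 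The main technical obstacle is precisely this effaceability step---verifying that successive restrictions of an injective $G_K$-module remain acyclic for the intermediate cohomologies---after which the theorem follows by induction on degree.
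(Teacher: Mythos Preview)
Your proposal is correct and follows essentially the same approach as the paper: set up the natural transformation via the universal $\delta$-functor property and Lemma~\ref{dimension-shifting}, verify the isomorphism on injectives using the double-complex spectral sequence together with Propositions~\ref{H_K^*-cohomo} and~\ref{Gamma^*-cohomo} (and the fact that restriction along $H_K^*\hookrightarrow G_K$ preserves injectives), and conclude by dimension shifting. The paper proceeds identically, perhaps only differing in that it jumps directly to the injective case rather than first describing the $E_2$-page for general $V$; your explicit mention of why restriction preserves injectives is a detail the paper leaves implicit.
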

\begin{proof}
As $(H^i(G_K,-))_{i\geq 0}$ is a universal $\delta$-functor and $(\mathcal{H}^i(\Phi\Gamma_{LT}^{\bullet}(\mathbb{D}_{LT}(-))))_{i\geq 0}$ is a cohomological $\delta$-functor such that $\mathcal{H}^0(\Phi\Gamma_{LT}^{\bullet}(\mathbb{D}_{LT}(-)))\cong H^0(G_K,-)$, we have a natural transformation 
\begin{equation*}
H^i(G_K,-)\rightarrow \mathcal{H}^i(\Phi\Gamma_{LT}^{\bullet}(\mathbb{D}_{LT}(-)))
\end{equation*}
of $\delta$-functors. First, assume that $V$ is an injective object in ${\bf Rep}_{\mathcal{O}_L-tor}^{dis}(G_K)$. Then there is a spectral sequence
\begin{equation}\label{F}
E_2^{mn}= \mathcal{H}^m(\Gamma_{LT}^{\bullet}(\mathcal{H}^n(\Phi^{\bullet}(\mathbb{D}_{LT}(V)^{\Delta}))))\Rightarrow \mathcal{H}^{m+n}(\Phi\Gamma_{LT}^{\bullet}(\mathbb{D}_{LT}(V)))
\end{equation}
associated to the double complex $\Gamma_{LT}^{\bullet}(\Phi^{\bullet}(\mathbb{D}_{LT}(V)^\Delta))$, and associated to the group $G_K$, we have the Hochschild-Serre spectral sequence
	\begin{equation}\label{hsss}
	E_2^{mn}= H^m(\Gamma_{LT}^*,H^n(H_K^*,V))\Rightarrow H^{m+n}(G_K,V).
	\end{equation} 
Since $V$ is injective, it follows from Proposition \ref{H_K^*-cohomo} that the augmentation map $$V^{H_K^*}[0]\rightarrow \Phi^{\bullet}(\mathbb{D}_{LT}(V)^\Delta)$$ is a quasi-isomorphism. Also, $V^{H_K^*}$ is injective as a discrete representation of $\Gamma_{LT}^*$. Then by using Proposition \ref{Gamma^*-cohomo}, the map $$V^{G_K}[0]\rightarrow\Gamma_{LT}^{\bullet}(V^{H_K^*})$$ is a quasi-isomorphism of complexes. Now the natural transformation $H^i(G_K,-)\rightarrow \mathcal{H}^i(\Phi\Gamma_{LT}^{\bullet}(\mathbb{D}_{LT}(-)))$ and the spectral sequences (\ref{F}) and (\ref{hsss}) give the following isomorphism 
\begin{equation*}
H^i(G_K,V)\cong \mathcal{H}^i(\Phi\Gamma_{LT}^{\bullet}(\mathbb{D}_{LT}(V)))\quad \text{for}\ i\geq 0. 
\end{equation*}
As we know that the category ${\bf Rep}_{\mathcal{O}_L-tor}^{dis}(G_K)$ has enough injectives, the general case follows from Lemma \ref{dimension-shifting} by using dimension shifting.
\end{proof}
Let $V \in {\bf Rep}_{\mathcal{O}_L}(G_K)$. Then we have 
\begin{align*}
V&=\varprojlim V\otimes_{\mathcal{O}_L}\mathcal{O}_L/ \pi_L^n\mathcal{O}_L\\ &\cong\varprojlim V/\pi_L^nV,
\end{align*}
where each $V/\pi_L^nV$ is $\pi_L$-power torsion and it is also discrete as $V/\pi_L^nV$ is finite. Therefore any object in ${\bf Rep}_{\mathcal{O}_L}(G_K)$ is the inverse limit of objects in the category ${\bf Rep}_{\mathcal{O}_L-tor}^{dis}(G_K)$.
	\begin{lemma}\label{commutes inverse}
		Let $V\in {\bf Rep}_{\mathcal{O}_L}(G_K)$. Then the functor $H^i(G_K,-)$ commutes with the inverse limits, i.e., $H^i(G_K,V)\cong \varprojlim\limits_n H^i(G_K,V/\pi_L^nV)$. 
	\end{lemma}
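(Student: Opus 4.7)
The plan is to identify $V$ with the inverse limit of the finite discrete $G_K$-modules $V/\pi^nV$ (with surjective transition maps) and then invoke the standard Milnor-type short exact sequence in continuous Galois cohomology. Concretely, writing $V\cong\varprojlim_n V/\pi^n V$ as a topological $\mathcal{O}_K[G_K]$-module, one has for each $i\geq 0$ a natural short exact sequence
\begin{equation*}
0\rightarrow {\varprojlim_n}^{\!1} H^{i-1}(G_K,V/\pi^n V)\rightarrow H^i(G_K,V)\rightarrow \varprojlim_n H^i(G_K,V/\pi^n V)\rightarrow 0,
\end{equation*}
which follows from Jannsen's formalism for continuous cohomology of profinite groups (or equivalently from the Milnor exact sequence applied to the tower of continuous cochain complexes $C^{\bullet}(G_K,V/\pi^n V)$, whose transition maps are surjective since the transition maps on coefficients are).

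Next, the goal is to show that the $\varprojlim^1$-term vanishes. For this I would use that each $V/\pi^n V$ is a finite $\mathcal{O}_K$-module, and that $G_K$ is the absolute Galois group of a $p$-adic local field, hence is topologically finitely generated of finite cohomological dimension. Consequently $H^{i-1}(G_K,V/\pi^n V)$ is a finite abelian group for every $n$. The inverse system $\{H^{i-1}(G_K,V/\pi^n V)\}_n$ therefore consists of finite groups, and any inverse system of finite groups automatically satisfies the Mittag-Leffler condition (indeed the images of the transition maps stabilize), so $\varprojlim_n^1 H^{i-1}(G_K,V/\pi^n V)=0$.

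Combining these two steps yields the desired isomorphism $H^i(G_K,V)\cong \varprojlim_n H^i(G_K,V/\pi^n V)$. The main technical obstacle is justifying the Milnor-type exact sequence in the appropriate category of continuous cochains; once one works with the explicit continuous cochain complex $C^{\bullet}_{\mathrm{cts}}(G_K,-)$ and uses that $C^{\bullet}_{\mathrm{cts}}(G_K,V)=\varprojlim_n C^{\bullet}_{\mathrm{cts}}(G_K,V/\pi^n V)$ (because $V$ has the inverse limit topology and cochains are continuous), the exact sequence is a direct consequence of the standard $\varprojlim/\varprojlim^1$ exact sequence for a tower of complexes with surjective transition maps. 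The vanishing of $\varprojlim^1$ via finiteness then finishes the argument.
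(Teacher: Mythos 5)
Your proof is correct and follows essentially the same route as the paper: the paper cites Tate's finiteness theorem for $H^i(G_K,V/\pi^nV)$ and then a corollary of Tate which packages exactly the Milnor-type $\varprojlim/\varprojlim^1$ exact sequence together with the Mittag-Leffler vanishing that you spell out explicitly. One small imprecision to flag: the finiteness of $H^{i-1}(G_K,V/\pi^nV)$ is not a formal consequence of $G_K$ being topologically finitely generated of finite cohomological dimension (that chain of implications does not hold for general profinite groups in degrees $\geq 2$); the correct input, and the one the paper cites, is Tate's local finiteness theorem for Galois cohomology of $p$-adic fields, which gives finiteness of $H^i(G_K,A)$ for finite $A$ directly.
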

	\begin{proof}
		As $k$ is finite, the cohomology groups $H^i(G_K,V/\pi_L^nV)$ are finite for all $n$ (\cite[Theorem $2.1$]{Ta1}). So, {\cv the Mittag-Leffler condition holds, and} the result follows from \cite[Corollary $2.2$]{Tate}.
	\end{proof}
	\begin{theorem}\label{lattices}
		Let $V\in {\bf Rep}_{\mathcal{O}_L}(G_K)$. Then we have $H^i(G_K,V)\cong \mathcal{H}^i(\Phi\Gamma_{LT}^\bullet(\mathbb{D}_{LT}(V)))$ for $i\geq0$.
	\end{theorem}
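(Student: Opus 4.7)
The plan is to reduce to Theorem \ref{G_K-cohomo} by passing to the inverse limit along the tower $V=\varprojlim_n V/\pi^n V$, where each quotient lies in ${\bf Rep}_{\mathcal{O}_K-tor}^{dis}(G_K)$. For each $n$, Theorem \ref{G_K-cohomo} gives a natural isomorphism
\begin{equation*}
H^i(G_K, V/\pi^n V)\;\cong\;\mathcal{H}^i\bigl(\Phi\Gamma_{LT}^{\bullet}(\mathbb{D}_{LT}(V/\pi^n V))\bigr),
\end{equation*}
and Lemma \ref{commutes inverse} gives $H^i(G_K,V)\cong \varprojlim_n H^i(G_K,V/\pi^n V)$. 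The remaining task is to identify the right-hand side with $\mathcal{H}^i(\Phi\Gamma_{LT}^{\bullet}(\mathbb{D}_{LT}(V)))$, i.e.\ to show that forming the Lubin-Tate Herr complex and then taking cohomology commutes with the inverse limit over $n$.

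First I would verify that $\mathbb{D}_{LT}$ itself commutes with this inverse system. Since $V$ is $\pi$-adically complete and separated and $\mathcal{O}_{\widehat{\mathcal{E}^{ur}}}$ is $\pi$-adically complete, one has $\mathbb{D}_{LT}(V)\cong \varprojlim_n \mathbb{D}_{LT}(V/\pi^n V)$ as $(\varphi_q,\Gamma_{LT})$-modules; this is a standard consequence of the $\pi$-adic completeness of $\mathcal{O}_{\widehat{\mathcal{E}^{ur}}}\otimes_{\mathcal{O}_K} V$ together with the fact that taking $H_K$-invariants commutes with inverse limits. Because the Lubin-Tate Herr complex $\Phi\Gamma_{LT}^{\bullet}(-)$ is, in each degree, a finite direct sum of copies of its input with $\mathcal{O}_K$-linear differentials, the termwise inverse limit identity
\begin{equation*}
\Phi\Gamma_{LT}^{\bullet}(\mathbb{D}_{LT}(V))\;\cong\;\varprojlim_n \Phi\Gamma_{LT}^{\bullet}(\mathbb{D}_{LT}(V/\pi^n V))
\end{equation*}
holds degreewise.

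Next I would commute the inverse limit with cohomology. For this I invoke the Milnor exact sequence
\begin{equation*}
0\longrightarrow {\varprojlim}^{1}\,\mathcal{H}^{i-1}\bigl(\Phi\Gamma_{LT}^{\bullet}(\mathbb{D}_{LT}(V/\pi^n V))\bigr) \longrightarrow \mathcal{H}^i\bigl(\Phi\Gamma_{LT}^{\bullet}(\mathbb{D}_{LT}(V))\bigr)\longrightarrow \varprojlim_n \mathcal{H}^i\bigl(\Phi\Gamma_{LT}^{\bullet}(\mathbb{D}_{LT}(V/\pi^n V))\bigr)\longrightarrow 0,
\end{equation*}
which applies because each term of the complex is $\pi$-adically complete and the transition maps in degree $n$ are surjective (the tower $\{V/\pi^n V\}$ has surjective transition maps, and $\mathbb{D}_{LT}$ is exact, so the induced maps on each term of the Herr complex remain surjective). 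Via the isomorphism from Theorem \ref{G_K-cohomo} the transition maps of the cohomology tower coincide with those of $\{H^{i-1}(G_K,V/\pi^n V)\}$, and by \cite[Theorem 2.1]{Ta1} these groups are \emph{finite}, so the Mittag-Leffler condition is automatic and the ${\varprojlim}^{1}$ term vanishes.

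Combining these ingredients gives
\begin{equation*}
\mathcal{H}^i\bigl(\Phi\Gamma_{LT}^{\bullet}(\mathbb{D}_{LT}(V))\bigr)\;\cong\;\varprojlim_n \mathcal{H}^i\bigl(\Phi\Gamma_{LT}^{\bullet}(\mathbb{D}_{LT}(V/\pi^n V))\bigr)\;\cong\;\varprojlim_n H^i(G_K,V/\pi^n V)\;\cong\;H^i(G_K,V),
\end{equation*}
and the naturality of each isomorphism in the composition yields the claimed natural isomorphism. The main technical point I expect to require care is the justification that inverse limit commutes with cohomology of the Herr complex; this is the step where I lean on finiteness of the intermediate $H^i(G_K,V/\pi^n V)$ (equivalently, of the cohomology of the Lubin-Tate Herr complex at each finite level) to kill the derived limit term.
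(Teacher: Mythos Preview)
Your proposal is correct and follows essentially the same strategy as the paper: reduce to Theorem \ref{G_K-cohomo} at each finite level, then pass to the inverse limit, killing the derived limit obstruction via finiteness of $H^i(G_K,V/\pi^n V)$. The only cosmetic difference is that you invoke the Milnor exact sequence directly, whereas the paper phrases the same step in terms of the two hyper-cohomology spectral sequences for $\varprojlim$; both arguments rely on the same two inputs (surjectivity of the transition maps on the complexes, and vanishing of $\varprojlim^1$ on the cohomology tower).
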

	\begin{proof}
		Firstly, we show that the functor $\mathcal{H}^i(\Phi\Gamma_{LT}^\bullet(\mathbb{D}_{LT}(-)))$ commutes with the inverse limits. Here the transition maps are surjective in the projective system $(\Phi\Gamma_{LT}^\bullet(\mathbb{D}_{LT}(V/\pi_L^nV)))_n$ of co-chain complexes of abelian groups, so the first hyper-cohomology spectral sequence degenerates at $E_2$. Moreover, it follows from Lemma \ref{commutes inverse} that $\varprojlim\limits_n{}^{1} \mathcal{H}^i(\Phi\Gamma_{LT}^\bullet(\mathbb{D}_{LT}(V/\pi_L^nV)))=0$. Therefore the second hyper-cohomology spectral sequence 
	\begin{equation*}
\varprojlim\limits_n{}^{i} \mathcal{H}^j(\Phi\Gamma_{LT}^\bullet(\mathbb{D}_{LT}(V/\pi_L^nV))) \Rightarrow \mathcal{H}^{i+j}(\Phi\Gamma_{LT}^\bullet(\mathbb{D}_{LT}(V)))
	\end{equation*} 
		also degenerates at $E_2$. Thus $\varprojlim\limits_n \mathcal{H}^i(\Phi\Gamma_{LT}^\bullet(\mathbb{D}_{LT}(V/\pi_L^nV)))= \mathcal{H}^i(\Phi\Gamma_{LT}^\bullet(\mathbb{D}_{LT}(V)))$. Now
		\begin{align*}
		H^i(G_K,V)&\cong \varprojlim\limits_n H^i(G_K,V/\pi_L^nV) \\ & \cong \varprojlim\limits_n \mathcal{H}^i(\Phi\Gamma_{LT}^\bullet(\mathbb{D}_{LT}(V/\pi_L^nV))) \\ & \cong \mathcal{H}^i(\Phi\Gamma_{LT}^\bullet(\mathbb{D}_{LT}(V))),
		\end{align*} where the first isomorphism follows from Lemma \ref{commutes inverse} and the second is induced from Theorem \ref{G_K-cohomo}.
	\end{proof}

The following corollary is not obvious from the definition of the Lubin-Tate Herr complex. 

\begin{corollary}\label{zero}
Let $V\in {\bf Rep}_{\mathcal{O}_L}(G_K)$. Then $\mathcal{H}^i(\Phi\Gamma_{LT}^\bullet(\mathbb{D}_{LT}(V)))=0$ for $i\geq 3$. 
	\end{corollary}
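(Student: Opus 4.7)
The plan is to invoke Theorem \ref{lattices} and then use the classical bound on the cohomological dimension of $G_K$. By Theorem \ref{lattices}, we have a natural isomorphism
\begin{equation*}
H^i(G_K, V) \cong \mathcal{H}^i(\Phi\Gamma_{LT}^\bullet(\mathbb{D}_{LT}(V))) \quad \text{for } i\geq 0,
\end{equation*}
so it suffices to show that $H^i(G_K, V) = 0$ for $i \geq 3$.

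Since $K$ is a finite extension of $\mathbb{Q}_p$, the absolute Galois group $G_K$ has $p$-cohomological dimension equal to $2$ (see \cite[Theorem 7.1.8]{NSW}, for instance). For any finite $\pi$-primary $\mathcal{O}_K$-module $W$ with a continuous $G_K$-action, $W$ is in particular a finite $p$-primary abelian group, hence $H^i(G_K, W) = 0$ for $i \geq 3$. Applying this to each $V/\pi^n V$ (which is finite and $\pi$-power torsion), we obtain $H^i(G_K, V/\pi^n V) = 0$ for $i \geq 3$ and all $n \geq 1$.

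Finally, using Lemma \ref{commutes inverse}, which asserts that $H^i(G_K, -)$ commutes with the inverse limit along the system $(V/\pi^n V)_n$, we deduce
\begin{equation*}
H^i(G_K, V) \cong \varprojlim_n H^i(G_K, V/\pi^n V) = 0 \quad \text{for } i \geq 3.
\end{equation*}
Combining with the isomorphism from Theorem \ref{lattices} yields the desired vanishing. There is no real obstacle here; the content of the corollary is that the Lubin-Tate Herr complex, which a priori has length $d+1$ (where $d = [K:\mathbb{Q}_p]$), secretly has vanishing cohomology above degree $2$, and this fact is invisible from the definition but becomes transparent once one identifies the cohomology with Galois cohomology.
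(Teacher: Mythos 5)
Your proposal is correct and matches the paper's argument: both proofs reduce immediately to Theorem \ref{lattices} and then appeal to the classical vanishing $H^i(G_K,V)=0$ for $i\geq 3$. The only difference is that the paper cites this vanishing directly from \cite[chapter II, Proposition 12]{Ser}, whereas you re-derive it from the $p$-cohomological dimension of $G_K$ and Lemma \ref{commutes inverse}; this is just an expanded version of the same step.
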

\begin{proof}
Recall the classical result that the groups $H^i(G_K,V)$ are trivial for $i\geq 3$, \cite[chapter II, Proposition 12]{Ser}. Then it follows from the above theorem that $\mathcal{H}^i(\Phi\Gamma_{LT}^\bullet(\mathbb{D}_{LT}(V)))=0$ for $i\geq3$.
\end{proof} 
\section{Galois Cohomology over the False-Tate Type Extensions}\label{sec4}
Recall that {\col $\mathcal{G}$ is a Lubin-Tate formal group defined over $L$ with a uniformizer $\pi_L$ and} $\mathfrak{m}_L$ is the maximal ideal of $\mathcal{O}_L$. For any $x\in\mathfrak{m}_L\backslash\mathfrak{m}^2_L$, choose a system $(x_i)_{i\geq1}$ such that $[p](x_1)= x$ and $[p](x_{i+1})= x_i$ for all $i\geq 1$. Define $\widetilde{L}:= L(x_i)_{i\geq 1}$ {\col and $\widetilde{L}_\infty:= L_{\infty}\widetilde{L}$}. Then $\widetilde{L}/L$ is not a Galois {\col extension but $\widetilde{L}_\infty/L$ is Galois. Let $K$ be a finite extension of $L$ with $K\subset L_\infty$ such that $\Gal(L_\infty/K)\cong \mathbb{Z}_p^d$. Then the extension $\widetilde{K}_\infty:= K_{\infty}\widetilde{L}$ is a Galois extension with $\Gal(\widetilde K_\infty/K_\infty)\cong \mathcal{O}_L$, as $\Gal(K_\infty(x_i)/K_\infty)$ is isomorphic to $\mathcal{O}_L/p^i\mathcal{O}_L$. Let $F$ be a subfield of $\widetilde K_\infty$ such that $\Gal(F/K_\infty)$ is isomorphic to $\mathbb Z_p$}. 
Then the extension $F/K$ is arithmetically pro-finite {\cv in the sense of \cite[\S 1]{Win}} as $\Gal(F/K)$ is a $p$-adic Lie group. 
Recall that the ring $\mathcal{O}_{\widehat{\mathcal{E}^{ur}}}$ is a complete discrete valuation ring with residue field $E^{sep}$ and is stable under the action of $G_K$ and $\varphi_q$. Define $\mathcal{O}_{\mathcal{F}}:=(\mathcal{O}_{\widehat{\mathcal{E}^{ur}}})^{\Gal(\bar{K}/F)}$, {\col and $E_F:= (E^{sep})^{\Gal(\bar{K}/F)}$. Then} $\mathcal{O}_{\mathcal{F}}$ is a complete discrete valuation ring with residue field $E_F$. Moreover, {\col $\Gal(\bar{K}/F)\cong \Gal(E^{sep}/E_F)$}, {\C where the field $E$ is same as the one defined in the previous section. For more details, we refer to \cite[\S 2 and \S 3]{Win} and \cite[\S 1.4.2]{Flo}.} Further, the ring $\mathcal{O}_{\mathcal{F}}$ is stable under the action of $G_K$ and $\varphi_q$. Define $\Gamma_{LT,FT}:= \Gal(F/K)$. We summarize the above notations 
by Figure \ref{Fig 1}. 
 \begin{figure}[h]
 	\centering
 	\begin{tikzcd}
 		& & & \bar{K} \\
 		&{}& & \\
 		& F \arrow[dd, "{\Gamma_{LT,FT}}", no head] \arrow[rruu, "H_F", no head] &  &  \\
 		K\widetilde{L} \arrow[ru, no head] \arrow[ruu, phantom]&  & K_\infty \arrow[lu, no head] \arrow[ruuu, "H_K"', no head] &  \\
 		& K \arrow[ru, no head] \arrow[lu, no head] \arrow[rruuuu, "G_K"', no head, bend right=60] & & 
 	\end{tikzcd}
 	\caption{Field extensions of $K$} \label{Fig 1}
 \end{figure}

 Now for any $V \in {\bf Rep}_{\mathcal{O}_L}(G_K)$, define  
	\begin{equation*}
	\mathbb{D}_{LT,FT}(V):= (\mathcal{O}_{\widehat{\mathcal{E}^{ur}}}\otimes_{\mathcal{O}_L} V)^{\Gal(\bar{K}/F)}.
	\end{equation*}
	Let ${\bf Mod}^{\varphi_q,\Gamma_{LT,FT},\acute{e}t}_{/\mathcal{O}_{\mathcal{F}}}$ be the category of finite free \'{e}tale $(\varphi_q,\Gamma_{LT,FT})$-modules over $\mathcal{O}_\mathcal{F}$. Then the modules $\mathbb{D}_{LT,FT}(V)$ and $\mathbb{D}_{LT}(V)\otimes_{\mathcal{O}_{\mathcal{E}}}\mathcal{O}_{\mathcal{F}}$ are in the category ${\bf Mod}^{\varphi_q,\Gamma_{LT,FT},\acute{e}t}_{/\mathcal{O}_{\mathcal{F}}}$, and there is a natural map $\iota: \mathbb{D}_{LT}(V)\otimes_{\mathcal{O}_{\mathcal{E}}}\mathcal{O}_{\mathcal{F}}\rightarrow \mathbb{D}_{LT,FT}(V)$.
	\begin{proposition}\label{composite functor}
		The map $\iota$ is an isomorphism of \'{e}tale $(\varphi_q,\Gamma_{LT,FT})$-modules over $\mathcal{O}_{\mathcal{F}}$.
	\end{proposition}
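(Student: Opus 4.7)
The plan is to exploit Kisin and Ren's equivalence (Theorem \ref{Kisin Ren}) to reduce the assertion to a classical Galois descent statement along the extension $E^{sep}/E_L$. By Theorem \ref{Kisin Ren}, there is a natural $H_K$- and $\varphi_q$-equivariant isomorphism
\begin{equation*}
\mathcal{O}_{\widehat{\mathcal{E}^{ur}}}\otimes_{\mathcal{O}_{\mathcal{E}}}\mathbb{D}_{LT}(V)\xrightarrow{\sim}\mathcal{O}_{\widehat{\mathcal{E}^{ur}}}\otimes_{\mathcal{O}_K}V.
\end{equation*}
Since $L\supset K_\infty$, we have $H_L:=\Gal(\bar{K}/L)\subset H_K$, and taking $H_L$-invariants on the right yields $\mathbb{D}_{LT,FT}(V)$. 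Thus the proposition is equivalent to the Galois descent identity
\begin{equation*}
\mathbb{D}_{LT}(V)\otimes_{\mathcal{O}_{\mathcal{E}}}\mathcal{O}_{\mathcal{L}}\;\xrightarrow{\sim}\;\bigl(\mathcal{O}_{\widehat{\mathcal{E}^{ur}}}\otimes_{\mathcal{O}_{\mathcal{E}}}\mathbb{D}_{LT}(V)\bigr)^{H_L},
\end{equation*}
where the left side embeds into the right via the inclusion $\mathcal{O}_{\mathcal{L}}\hookrightarrow\mathcal{O}_{\widehat{\mathcal{E}^{ur}}}$. The fact that $\iota$ is a morphism of $(\varphi_q,\Gamma_{LT,FT})$-modules over $\mathcal{O}_{\mathcal{L}}$ is automatic, so the real content is bijectivity.

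First, I would reduce to the torsion case. Writing $V=\varprojlim_n V/\pi^n V$, the exactness of $\mathbb{D}_{LT}$ and the standard Mittag-Leffler argument on the profinite systems show that both sides commute with this inverse limit, provided the functorial assertion holds at each finite level. Next, a dévissage along the short exact sequence $0\to \pi V\to V\to V/\pi V\to 0$ further reduces the problem to the case when $V$ is killed by $\pi$. The five-lemma application in the dévissage requires the exactness of $H_L$-invariants on the modules in play, which follows from the vanishing $H^i(H_L,\mathcal{O}_{\widehat{\mathcal{E}^{ur}}}/\pi^n)=0$ for $i\geq 1$; this is the same argument as Lemma \ref{trivial cohom} applied to $H_L$, using the field-of-norms identification $H_L\cong \Gal(E^{sep}/E_L)$ together with the classical Hilbert 90 vanishing $H^i(\Gal(E^{sep}/E_L),E^{sep})=0$.

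Once $V$ is killed by $\pi$, the module $\mathbb{D}_{LT}(V)$ becomes a finite dimensional vector space over $E=k((X))$, and the claim collapses to the elementary descent statement
\begin{equation*}
\mathbb{D}_{LT}(V)\otimes_E E_L\;=\;\bigl(\mathbb{D}_{LT}(V)\otimes_E E^{sep}\bigr)^{H_L},
\end{equation*}
which is immediate by choosing an $E$-basis and using $E_L=(E^{sep})^{H_L}$. Compatibility with $\varphi_q$ and the $\Gamma_{LT,FT}$-action is preserved throughout the reductions because every identification used is equivariant by construction.

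The main technical obstacle is the justification that the formation of both $\mathbb{D}_{LT}(-)\otimes_{\mathcal{O}_{\mathcal{E}}}\mathcal{O}_{\mathcal{L}}$ and $\mathbb{D}_{LT,FT}(-)$ behaves well under the $\pi$-adic dévissage; concretely, one must show that the sequence $0\to \pi V\to V\to V/\pi V\to 0$ induces a short exact sequence after applying $(-)^{H_L}$ to the corresponding sequence over $\mathcal{O}_{\widehat{\mathcal{E}^{ur}}}$. This is where the cohomological vanishing $H^1(H_L,\mathcal{O}_{\widehat{\mathcal{E}^{ur}}}/\pi^n)=0$ is essential, and it is the only nontrivial input beyond Theorem \ref{Kisin Ren} and classical Galois descent.
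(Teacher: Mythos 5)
Your proof is correct, and it follows the same two pillars as the paper (the Kisin--Ren comparison $\mathcal{O}_{\widehat{\mathcal{E}^{ur}}}\otimes_{\mathcal{O}_{\mathcal{E}}}\mathbb{D}_{LT}(V)\cong\mathcal{O}_{\widehat{\mathcal{E}^{ur}}}\otimes_{\mathcal{O}_K}V$ and passage to $H_L$-invariants), but it takes a genuinely longer road for the key commutation step. The paper justifies the identity
$\mathbb{D}_{LT}(V)\otimes_{\mathcal{O}_{\mathcal{E}}}(\mathcal{O}_{\widehat{\mathcal{E}^{ur}}})^{H_L}=(\mathbb{D}_{LT}(V)\otimes_{\mathcal{O}_{\mathcal{E}}}\mathcal{O}_{\widehat{\mathcal{E}^{ur}}})^{H_L}$
in one line: since $H_L\subseteq H_K$ acts trivially on $\mathbb{D}_{LT}(V)$, and since $\mathbb{D}_{LT}(V)$ is \emph{finite free} over $\mathcal{O}_{\mathcal{E}}$ (because $V\in{\bf Rep}_{\mathcal{O}_K}(G_K)$ is finite free and $\mathbb{D}_{LT}$ preserves this by Theorem \ref{Kisin Ren}), taking $H_L$-invariants commutes with the tensor by a finite free module with no cohomological input whatsoever. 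You instead reduce to torsion via inverse limits, d\'{e}vissage to the $\pi$-killed case, and invoke the vanishing $H^1(H_L,\mathcal{O}_{\widehat{\mathcal{E}^{ur}}}/\pi^n)=0$ to make the five lemma work; all the ingredients you cite are correct (your use of $H_L\cong\Gal(E^{sep}/E_L)$ and the analogue of Lemma \ref{trivial cohom} is exactly right, and the $\pi$-adic completeness justifies the limit step). This is heavier machinery than the free case requires, but it has the genuine advantage of simultaneously establishing the torsion analogue of the proposition, which the paper's proof as written does not explicitly handle even though Theorem \ref{False-Tate equivalence} subsequently appeals to it in both the free and torsion settings. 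So: same strategy, a more elaborate justification of the central commutation, with a modest gain in scope.
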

	\begin{proof}
		Consider 
		the isomorphism $\mathcal{O}_{\widehat{\mathcal{E}^{ur}}}\otimes_{\mathcal{O}_{\mathcal{E}}}\mathbb{D}_{LT}(V) \cong \mathcal{O}_{\widehat{\mathcal{E}^{ur}}}\otimes_{\mathcal{O}_L} V$ of \'{e}tale $(\varphi_q,\Gamma_{LT})$-modules over $\mathcal{O}_{\mathcal{E}}$. Then
		\begin{align*}
		\mathbb{D}_{LT}(V)\otimes_{\mathcal{O}_{\mathcal{E}}}\mathcal{O}_{\mathcal{F}} &=\mathbb{D}_{LT}(V)\otimes_{\mathcal{O}_{\mathcal{E}}} (\mathcal{O}_{\widehat{\mathcal{E}^{ur}}})^{\Gal(\bar{K}/F)} \\&= (\mathbb{D}_{LT}(V)\otimes_{\mathcal{O}_{\mathcal{E}}} \mathcal{O}_{\widehat{\mathcal{E}^{ur}}})^{\Gal(\bar{K}/F)}  \\& \cong (\mathcal{O}_{\widehat{\mathcal{E}^{ur}}}\otimes_{\mathcal{O}_L} V)^{\Gal(\bar{K}/L)}\\ &= \mathbb{D}_{LT,FT}(V),
		\end{align*}
			where the second identity follows from the fact that $\Gal(\bar{K}/F)\subseteq \Gal(\bar{K}/K_\infty)$. Thus $\iota$ is an isomorphism of \'{e}tale $(\varphi_q,\Gamma_{LT,FT})$-modules.  
	\end{proof}
Similarly, for any $M \in {\bf Mod}^{\varphi_q,\Gamma_{LT,FT},\acute{e}t}_{/\mathcal{O}_{\mathcal{F}}}$, define
\begin{equation}
\mathbb{V}_{LT,FT}(M):=(\mathcal{O}_{\widehat{\mathcal{E}^{ur}}}\otimes_{\mathcal{O}_{\mathcal{F}}} M)^{\varphi_q\otimes\varphi_M= id}.
\end{equation}
	\begin{theorem}\label{False-Tate equivalence}
The functor $\mathbb{D}_{LT,FT}$ defines an exact equivalence of categories between ${\bf Rep}_{\mathcal{O}_L}(G_K)$\\ $\ (\text{resp.,}\, {\bf Rep}_{\mathcal{O}_L-tor}(G_K))$ and ${\bf Mod}^{\varphi_q,\Gamma_{LT,FT},\acute{e}t}_{/\mathcal{O}_{\mathcal{F}}}$ $(\text{resp.,}\,{\bf Mod}^{\varphi_q,\Gamma_{LT,FT},\acute{e}t,tor}_{/\mathcal{O}_{\mathcal{F}}})$ with a quasi-inverse functor $\mathbb{V}_{LT,FT}$. 
	\end{theorem}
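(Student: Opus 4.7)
My plan is to reduce the assertion to Kisin--Ren's Theorem \ref{Kisin Ren} via base change along the flat extension $\mathcal{O}_{\mathcal{E}} \hookrightarrow \mathcal{O}_{\mathcal{L}}$, using the identification in Proposition \ref{composite functor} as the key bridge. First I would verify that $\mathbb{D}_{LT,FT}$ actually lands in ${\bf Mod}^{\varphi_q,\Gamma_{LT,FT},\acute{e}t}_{/\mathcal{O}_{\mathcal{L}}}$ and is exact. The target condition follows from Proposition \ref{composite functor}: $\mathbb{D}_{LT}(V) \otimes_{\mathcal{O}_{\mathcal{E}}} \mathcal{O}_{\mathcal{L}}$ is an étale $\varphi_q$-module because étaleness is preserved under the flat base change $\mathcal{O}_{\mathcal{E}} \hookrightarrow \mathcal{O}_{\mathcal{L}}$, while the $\Gamma_{LT,FT}$-action (inherited from $G_K/\Gal(\bar K/L)$ on $\mathcal{O}_{\widehat{\mathcal{E}^{ur}}} \otimes_{\mathcal{O}_K} V$) manifestly commutes with $\varphi_q$. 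Exactness of $\mathbb{D}_{LT,FT}$ then follows from exactness of $\mathbb{D}_{LT}$ combined with flatness of $\mathcal{O}_{\mathcal{L}}/\mathcal{O}_{\mathcal{E}}$.

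Next, for $V \in {\bf Rep}_{\mathcal{O}_K}(G_K)$ I would establish the comparison isomorphism
\begin{equation*}
\mathcal{O}_{\widehat{\mathcal{E}^{ur}}} \otimes_{\mathcal{O}_{\mathcal{L}}} \mathbb{D}_{LT,FT}(V) \;\xrightarrow{\sim}\; \mathcal{O}_{\widehat{\mathcal{E}^{ur}}} \otimes_{\mathcal{O}_K} V
\end{equation*}
compatibly with $\varphi_q$ and $G_K$. By Proposition \ref{composite functor} the left side rewrites as $\mathcal{O}_{\widehat{\mathcal{E}^{ur}}} \otimes_{\mathcal{O}_{\mathcal{E}}} \mathbb{D}_{LT}(V)$, and the analogous comparison isomorphism from Theorem \ref{Kisin Ren} identifies this with the right side. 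Taking $\varphi_q$-invariants and using Lemma \ref{Exact} then gives
\begin{equation*}
\mathbb{V}_{LT,FT}(\mathbb{D}_{LT,FT}(V)) \;=\; (\mathcal{O}_{\widehat{\mathcal{E}^{ur}}} \otimes_{\mathcal{O}_K} V)^{\varphi_q=\mathrm{id}} \;=\; \mathcal{O}_K \otimes_{\mathcal{O}_K} V \;=\; V,
\end{equation*}
which gives one half of the quasi-inverse assertion.

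For the other direction, given $M \in {\bf Mod}^{\varphi_q,\Gamma_{LT,FT},\acute{e}t}_{/\mathcal{O}_{\mathcal{L}}}$, I would show the companion comparison
\begin{equation*}
\mathcal{O}_{\widehat{\mathcal{E}^{ur}}} \otimes_{\mathcal{O}_{\mathcal{L}}} M \;\xleftarrow{\sim}\; \mathcal{O}_{\widehat{\mathcal{E}^{ur}}} \otimes_{\mathcal{O}_K} \mathbb{V}_{LT,FT}(M),
\end{equation*}
again compatibly with $\varphi_q$ and $G_K$; taking $\Gal(\bar K/L)$-invariants then yields $\mathbb{D}_{LT,FT}(\mathbb{V}_{LT,FT}(M)) = \mathcal{O}_{\mathcal{L}} \otimes_{\mathcal{O}_{\mathcal{L}}} M = M$, since $(\mathcal{O}_{\widehat{\mathcal{E}^{ur}}})^{\Gal(\bar K/L)} = \mathcal{O}_{\mathcal{L}}$ and the $\Gal(\bar K/L)$-action on $M$ is trivial. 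The torsion case is then deduced by d\'evissage from the finite length case.

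The principal obstacle, as in Fontaine's original equivalence, will be the second comparison isomorphism above, because it cannot be reduced formally to Kisin--Ren by base change. The standard remedy is an \'etale descent / Hilbert 90 argument: one argues that any étale $\varphi_q$-module over $\mathcal{O}_{\mathcal{L}}$ becomes trivial after base change to $\mathcal{O}_{\widehat{\mathcal{E}^{ur}}}$ (the analogue of \cite[A1, Proposition 1.2.6]{Fon} for the residue extension $E_L \hookrightarrow E^{sep}$, which is the separable closure of $E_L$ by Lemma \ref{Galois group isomorphism} applied to $L$ in place of $K_\infty$), and from this trivialization one reads off enough $\varphi_q$-invariants to recover $M$ after base change. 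Once this is granted, both unit/counit maps are isomorphisms and exactness of $\mathbb{V}_{LT,FT}$ is automatic from the comparison isomorphism, completing the equivalence.
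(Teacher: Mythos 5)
Your proposal is correct but takes a different, and more explicit, route than the paper. The paper's own proof is a one-liner: it notes that $\mathbb{D}_{LT,FT}$ is the composite of $\mathbb{D}_{LT}$ with the scalar extension $\otimes_{\mathcal{O}_{\mathcal{E}}}\mathcal{O}_{\mathcal{L}}$ (via Proposition \ref{composite functor}) and then appeals to Theorem \ref{Kisin Ren}. Implicit in that appeal is the claim that the scalar extension induces an equivalence between $\acute{e}$tale $(\varphi_q,\Gamma_{LT})$-modules over $\mathcal{O}_{\mathcal{E}}$ and $\acute{e}$tale $(\varphi_q,\Gamma_{LT,FT})$-modules over $\mathcal{O}_{\mathcal{L}}$; the essential surjectivity of that scalar extension is not spelled out. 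You instead unfold the adjunction and prove the unit and counit are isomorphisms directly: the unit $V \to \mathbb{V}_{LT,FT}(\mathbb{D}_{LT,FT}(V))$ does reduce by Proposition \ref{composite functor} and Lemma \ref{Exact} to the Kisin--Ren comparison, exactly as you say, while the counit $\mathbb{D}_{LT,FT}(\mathbb{V}_{LT,FT}(M)) \to M$ requires an independent input --- the trivialization of $M$ over $\mathcal{O}_{\widehat{\mathcal{E}^{ur}}}$, which you correctly identify as the analogue of Fontaine's \cite[A1, 1.2.6]{Fon}. This is precisely what essential surjectivity amounts to, so your approach makes visible the step the paper's terse proof leaves implicit. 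One small caveat: when you take $\Gal(\bar K/L)$-invariants to recover $M$, you should record that the action of $\Gal(\bar K/L)$ on $M$ is trivial (it acts through the quotient $\Gamma_{LT,FT}$) and that $M$ is finite free (or d\'evissage to finite torsion) so that invariants commute with $\otimes_{\mathcal{O}_{\mathcal{L}}}M$; you gesture at this, and it is correct, but it is worth stating. Both approaches buy something: the paper's is economical if one already grants essential surjectivity of scalar extension along the unramified subextension, while yours is self-contained and follows the classical Fontaine/Kisin--Ren template closely enough that the necessary Hilbert 90 ingredient is in plain view.
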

	\begin{proof}
Since the functor $\mathbb{D}_{LT,FT}$ is composite of the functor $\mathbb{D}_{LT}$ with the scalar extension $\otimes_{\mathcal{O}_{\mathcal{E}}} \mathcal{O}_{\mathcal{F}}$, the proof follows from Proposition \ref{composite functor} and Theorem \ref{Kisin Ren}.
	\end{proof}
	\begin{remark}\label{choice} {\C Some other examples of 
	$\widetilde{K}_\infty$ are the following:} 
		\begin{enumerate} 
			\item Define $K_{cyc}:= K(\mu_{p^n})_{n\geq 1}$. Let $K_{cyc}\subseteq K_{\infty}$ and $\widetilde{K}:= K(\pi^{p^{-r}},r\geq 1)$, then $\widetilde{K}_\infty= K_\infty \widetilde{K}$ is a Galois extension of $K$ and $\Gal(\widetilde{K}_\infty/K_\infty)\cong \mathbb{Z}_p$. The case when $K_{cyc}= K_{\infty}$ has been considered in \cite{Flo} and \cite{LH2}. 
			\item We can also define $\widetilde{K}:=K(y_i)_{i\geq 1}$, where $(y_i)_{i\geq 1}$ is a system satisfying $[\pi_L](y_1)= y$ and $[\pi_L](y_{i+1})= y_i$ for all $i\geq 1$ and $y \in \mathfrak{m}_K\backslash \mathfrak{m}_K^2$. In this case, $\Gal(\widetilde{K}_\infty/K_\infty)$ is isomorphic to {\col a subgroup of $\mathcal{O}_L$. Indeed, we have an injective map $\Gal(K_\infty(y_i)/K_\infty)\hookrightarrow \mathcal{G}[\pi_L^i]\cong \mathcal{O}_L/\pi_L^i\mathcal{O}_L$, and $\Gal(\widetilde{K}_\infty/K_\infty)$ is the projective limit of $\Gal(K_\infty(y_i)/K_\infty)$. }
		\end{enumerate}
	\end{remark}
Then using similar methods as explained in section \ref{sec3}, we extend the functor $\mathbb{D}_{LT,FT}$ to the category of discrete $\pi_L$-primary abelian groups with a continuous action of $G_K$. Then 
we have the following result. 
	\begin{theorem}
The functors $\mathbb{D}_{LT,FT}$ and $\mathbb{V}_{LT,FT}$ are {\C exact and quasi-inverse and induce an }
equivalence of categories between the category $ {\bf Rep}_{\mathcal{O}_L-tor}^{dis}(G_K)$ and $\varinjlim {\bf Mod}^{\varphi_q,\Gamma_{LT,FT},\acute{e}t,tor}_{/\mathcal{O}_{\mathcal{F}}}$.
	\end{theorem}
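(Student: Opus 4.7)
The plan is to mimic the strategy used in Proposition \ref{KR discrete}, which extended the Kisin--Ren equivalence from the finite torsion category to the discrete $\pi$-primary category, and apply it to the False-Tate type setting. The foundation is already laid by Theorem \ref{False-Tate equivalence}, which establishes the equivalence between ${\bf Rep}_{\mathcal{O}_K-tor}(G_K)$ and ${\bf Mod}^{\varphi_q,\Gamma_{LT,FT},\acute{e}t,tor}_{/\mathcal{O}_{\mathcal{L}}}$. So the essential task is a colimit argument.

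First, I would observe that any object $V \in {\bf Rep}_{\mathcal{O}_K-tor}^{dis}(G_K)$ is, by definition, a filtered direct limit of finite $\pi$-power torsion subrepresentations in ${\bf Rep}_{\mathcal{O}_K-tor}(G_K)$. I would then extend $\mathbb{D}_{LT,FT}$ to ${\bf Rep}_{\mathcal{O}_K-tor}^{dis}(G_K)$ by the same formula $\mathbb{D}_{LT,FT}(V) := (\mathcal{O}_{\widehat{\mathcal{E}^{ur}}}\otimes_{\mathcal{O}_K} V)^{\Gal(\bar{K}/L)}$, and $\mathbb{V}_{LT,FT}$ to $\varinjlim {\bf Mod}^{\varphi_q,\Gamma_{LT,FT},\acute{e}t,tor}_{/\mathcal{O}_{\mathcal{L}}}$ by $\mathbb{V}_{LT,FT}(M) := (\mathcal{O}_{\widehat{\mathcal{E}^{ur}}}\otimes_{\mathcal{O}_{\mathcal{L}}} M)^{\varphi_q\otimes\varphi_M=id}$. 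The key point is that both functors commute with filtered direct limits: for $\mathbb{D}_{LT,FT}$, the tensor product $-\otimes_{\mathcal{O}_K}\mathcal{O}_{\widehat{\mathcal{E}^{ur}}}$ commutes with colimits, and taking $\Gal(\bar{K}/L)$-invariants commutes with filtered direct limits of discrete modules (since $H^0$ of a profinite group on discrete modules is a filtered colimit of $H^0$'s of finite quotients); for $\mathbb{V}_{LT,FT}$, the tensor product commutes with colimits and the fixed points under $\varphi_q\otimes\varphi_M - \mathrm{id}$ are a kernel, which commutes with filtered colimits in an abelian category.

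Having established that both extended functors commute with filtered direct limits, the equivalence on the discrete category follows formally. For any $V \in {\bf Rep}_{\mathcal{O}_K-tor}^{dis}(G_K)$, write $V = \varinjlim V_\alpha$ with $V_\alpha \in {\bf Rep}_{\mathcal{O}_K-tor}(G_K)$; then
\begin{equation*}
\mathbb{V}_{LT,FT}(\mathbb{D}_{LT,FT}(V)) \cong \varinjlim \mathbb{V}_{LT,FT}(\mathbb{D}_{LT,FT}(V_\alpha)) \cong \varinjlim V_\alpha = V,
\end{equation*}
where the middle isomorphism uses Theorem \ref{False-Tate equivalence}. The analogous verification for $\mathbb{D}_{LT,FT}\circ\mathbb{V}_{LT,FT}$ on $\varinjlim {\bf Mod}^{\varphi_q,\Gamma_{LT,FT},\acute{e}t,tor}_{/\mathcal{O}_{\mathcal{L}}}$ proceeds identically, since every object of this latter category is by construction a filtered colimit of finite torsion \'etale $(\varphi_q,\Gamma_{LT,FT})$-modules.

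The only step requiring genuine care, rather than being a formal consequence of Theorem \ref{False-Tate equivalence}, is the interchange of $\Gal(\bar{K}/L)$-invariants with filtered colimits; this is the main technical point, and it is precisely where the restriction to \emph{discrete} $\pi$-primary modules is essential, so that the continuous cohomology $H^0(\Gal(\bar{K}/L),-)$ is the filtered colimit of its finite layers. Once this is noted, the rest is a direct transport of the argument already used for Proposition \ref{KR discrete}, now with $H_K$ replaced by $\Gal(\bar{K}/L)$ and $\mathcal{O}_{\mathcal{E}}$ replaced by $\mathcal{O}_{\mathcal{L}}$.
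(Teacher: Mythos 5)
Your proof is correct and follows essentially the same route as the paper: both take filtered direct limits of Theorem \ref{False-Tate equivalence} after verifying that $\mathbb{D}_{LT,FT}$ and $\mathbb{V}_{LT,FT}$ commute with such limits. The only difference is cosmetic: you argue the commutation for $\mathbb{D}_{LT,FT}$ directly from its definition (tensor product plus invariants under a profinite group acting smoothly), whereas the paper obtains it by combining Proposition \ref{composite functor} (which expresses $\mathbb{D}_{LT,FT}$ as $\mathbb{D}_{LT}$ followed by the scalar extension $-\otimes_{\mathcal{O}_{\mathcal{E}}}\mathcal{O}_{\mathcal{L}}$) with the already established fact that $\mathbb{D}_{LT}$ commutes with direct limits.
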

\begin{proof}
Since the functor $\mathbb{D}_{LT}$ commutes with direct limits, it follows from Proposition \ref{composite functor} that the functor $\mathbb{D}_{LT,FT}$ also commutes with direct limits. Now the result follows from Theorem \ref{False-Tate equivalence} by taking direct limits and noting that the functor $\mathbb{V}_{LT,FT}$ also commutes with direct limits.
\end{proof}
Note that $H_F\cong \Gal(E^{sep}/E_F)$, then it follows that $H^i(H_F,\mathcal{O}_{\widehat{\mathcal{E}^{ur}}}/\pi_L^n )=0$ for all $n\geq 1$ and $i\geq 1$. Moreover, for any $V \in {\bf Rep}_{\mathcal{O}_L-tor}^{dis}(G_K)$, we have $\mathcal{H}^i(\Phi^{\bullet}(\mathbb{D}_{LT,FT}(V)))\cong H^i(H_F, V)$ as $\Gamma_{LT,FT}$-modules. 


Let $\Gamma_{LT}^*= \langle\gamma_1,\gamma_2,\ldots,\gamma_d\rangle$ as an abelian group. 
Let $\widetilde{\gamma}$ be a topological generator of $\Gal(F/K_\infty)$. {\C Since $\Gal(F/K_\infty)$ is normal in $\Gal(F/K)$, we have $\Gamma_{LT,FT}=\Gamma_{LT}^*\ltimes \mathbb{Z}_p$.} We lift $\gamma_1,\gamma_2,\ldots,\gamma_d$ to the elements of $\Gal(F/\widetilde{L})$. Then $\Gamma_{LT,FT}$ 
is topologically generated by the set $\widetilde{\mathfrak{X}}:=\{\gamma_1,\gamma_2, \ldots,\gamma_d,\widetilde{\gamma}\}$. {\C The $\Gamma_{LT}^*$-action on $\Gal(F/K_\infty)$ by conjugation gives} the relations $\gamma_i\widetilde{\gamma}= \widetilde{\gamma}^{a_i}\gamma_i$ such that $a_i\in \mathbb{Z}_p^\times$. 
This gives a character $\chi_{LT}'$ of $\Gamma_{LT}^*$ with $a_i= \chi_{LT}'(\gamma_i)$ for all $i=1,\ldots,d$. 

Let $A$ be an arbitrary representation of the group $\Gamma_{LT,FT}$. Then consider the complex
	\begin{equation*}
	\Gamma_{LT,FT}^{\bullet}(A): 0\rightarrow A \rightarrow \bigoplus_{i_1\in \widetilde{\mathfrak{X}}} A\rightarrow\cdots\rightarrow \bigoplus_{\{i_1,\ldots,i_r\} \in \binom{\widetilde{\mathfrak{X}}}{r}} A\rightarrow\cdots \rightarrow A\rightarrow 0,
	\end{equation*}
	where for all $0\leq r\leq |\widetilde{\mathfrak{X}}|-1$, the map $d_{i_1,\ldots,i_r}^{j_1,\ldots, j_{r+1}}:A\rightarrow A$ from the component in the $r$-th term corresponding to $\{i_1,\ldots,i_r\}$ to the component corresponding to the $(r+1)$-tuple $\{j_1,\ldots,j_{r+1}\}$ is given by
	\begin{equation*}
	d_{i_1,\ldots,i_r}^{j_1,\ldots, j_{r+1}} =
	\left\{
	\begin{array}{ll}
	0  & \mbox{if } \{i_1,\ldots,i_r\}\nsubseteq\{j_1,\ldots,j_{r+1}\}, \\
	(-1)^{s_j}(\gamma_j-id) & \mbox{if } \{j_1,\ldots,j_{r+1}\}= \{i_1,\ldots,i_r\}\cup\{j\} \\ & \mbox {and} \{i_1,\ldots,i_{r}\}\ \mbox{doesn't contain}\ \widetilde{\gamma},
	\\
	(-1)^{s_j+1}\left(\gamma_j-\frac{\widetilde{\gamma}^{\chi_{LT}'(j)\chi_{LT}'(i_1)\cdots\chi_{LT}'(i_r)}-id}{\widetilde{\gamma}^{\chi_{LT}'(i_1)\cdots\chi_{LT}'(i_r)}-id}\right) & \mbox{if } \{j_1,\ldots,j_{r+1}\}= \{i_1,\ldots,i_r\}\cup\{j\} \\ & \mbox {and} \{i_1,\ldots,i_{r}\}\ \mbox{contains}\ \widetilde{\gamma},
	\\ 
	\widetilde{\gamma}^{\chi_{LT}'(i_1)\cdots\chi_{LT}'(i_r)}-id& \mbox{if } \{j_1,\ldots,j_{r+1}\}= \{i_1,\ldots,i_r\}\cup\{\widetilde{\gamma}\},
	\end{array}
	\right.
	\end{equation*}
	where $s_j$ is the number of elements in the set $\{i_1,\ldots,i_r\}$, which are smaller than $j$. 
	
	\begin{example}{\label{Ex2}}
		Let $d=2$, then the complex $\Gamma_{LT,FT}^\bullet(A)$ is defined as follows:
		\begin{equation*}
		\Gamma_{LT,FT}^\bullet(A): 0\rightarrow A\xrightarrow{x\mapsto A_0x} A^{\oplus3}\xrightarrow{x \mapsto A_1x} A^{\oplus3}\xrightarrow{x\mapsto A_2x}A\rightarrow 0,
		\end{equation*}
		where 
		\[
		A_0=
		\begin{bmatrix}
		\gamma_1-id\\
		\gamma_2-id \\
		\widetilde{\gamma}-id
		\end{bmatrix},
		A_1=
		\begin{bmatrix}
		-(\gamma_2-id) & \gamma_1-id & 0\\
		\widetilde{\gamma}^{a_1}-id & 0 & -\left(\gamma_1- \frac{\widetilde{\gamma}^{a_1}-id}{\widetilde{\gamma}-id}\right)\\
		0 & \widetilde{\gamma}^{a_2}-id & -\left(\gamma_2-\frac{\widetilde{\gamma}^{a_2}-id}{\widetilde{\gamma}-id}\right)
		\end{bmatrix},
		\]
		\[
		A_2=
		\begin{bmatrix}
		\widetilde{\gamma}^{a_1a_2}-id & \gamma_2-\frac{\widetilde{\gamma}^{a_1a_2}-id}{\widetilde{\gamma}^{a_1}-id}) & -\left(\gamma_1-\frac{\widetilde{\gamma}^{a_1a_2}-id}{\widetilde{\gamma}a^{a_2}-id}\right)
		\end{bmatrix}.
		\]
	\end{example}
	The functor $A\mapsto \mathcal{H}^i(\Gamma_{LT,FT}^\bullet(A))_{i\geq 0}$ forms a cohomological $\delta$-functor.  Note that the complex $\Gamma_{LT,FT}^\bullet(A)$ is the total complex of the following double complex \begin{center}
			\begin{tikzcd}[row sep=large]
			& 0  & 0  &  & 0  &  & 0  \\ \Gamma_{LT,FT \backslash \gamma_d}^{\bullet d}(A) : 0\arrow{r} & A \arrow{r}\arrow[swap]{u} & \underset{i_1\in \widetilde{\mathfrak{X}}^\prime}\bigoplus A \arrow{r}\arrow[swap]{u} & \cdots \arrow{r} & \underset{\{i_1,\ldots,i_r\}\in \binom{\widetilde{\mathfrak{X}}^\prime}{r}}\bigoplus A\arrow{r}\arrow[swap]{u} & \cdots \arrow{r}& A \arrow{r}\arrow[swap]{u} & 0\\ \Gamma_{LT,FT\backslash \gamma_d}^\bullet(A):0\arrow{r} & A \arrow{r}\arrow[swap]{u}[swap]{\gamma_d-id} & \underset{i_1\in \widetilde{\mathfrak{X}}^\prime}\bigoplus A\arrow{r}\arrow[swap]{u} & \cdots \arrow{r} & \underset{\{i_1,\ldots,i_r\}\in \binom{\widetilde{\mathfrak{X}}^\prime}{r}}\bigoplus A\arrow{r}\arrow[swap]{u} & \cdots \arrow{r}& A \arrow{r}\arrow[swap]{u} & 0\\  & 0 \arrow[swap]{u} & 0 \arrow[swap]{u} &  & 0\arrow[swap]{u} &  & 0 \arrow[swap]{u}
			\end{tikzcd}
	\end{center}
	where $\widetilde{\mathfrak{X}}^{\prime} = \{\gamma_1,\ldots,\gamma_{d-1},\widetilde{\gamma}\}$ and $\Gamma_{LT,FT \backslash \gamma_d}^{\bullet d}(A)$ denotes the complex $\Gamma_{LT,FT \backslash \gamma_d}^{\bullet}(A)$ with $\widetilde{\gamma}$ replaced by $\widetilde{\gamma}^{a_d}$. The vertical maps $d_{b_1,\ldots,b_r}^{c_1,\ldots,c_r}: A\rightarrow A$ from the component in the $r$-th term corresponding to $\{b_1,\ldots,b_r\}$ to the component corresponding to $r$-th component $\{c_1,\ldots,c_r\}$ are given by the following
	\begin{equation*}
	d_{b_1,\ldots,b_r}^{c_1,\ldots,c_r}=
	\left\{
	\begin{array}{ll}
	\gamma_d-id  & \mbox{if } \{b_1,\ldots,b_r\} \mbox{ doesn't contain any term}\\& \mbox{ of the form } (\widetilde{\gamma}-id), \\
	\dfrac{(\widetilde{\gamma}^{a_d\chi_{LT}'(b_1)\cdots\chi_{LT}'(b_r)}-id)(\gamma_d-id)}{\widetilde{\gamma}^{\chi_{LT}'(b_1)\cdots\chi_{LT}'(b_r)}-id} & \mbox{if } \{b_1,\ldots,b_{r}\} \mbox{ contains a term of the form } \\ & (\widetilde{\gamma}^{\chi_{LT}'(b_1)\cdots\chi_{LT}'(b_r)}-id).
	\end{array}
	\right.
	\end{equation*}
	Then using the similar technique as in the proof of Proposition \ref{Gamma^*-cohomo}, it follows that 
	the complex $\Gamma_{LT,FT}^\bullet(A)$ computes the $\Gamma_{LT,FT}$-cohomology of $A$ and $\mathcal{H}^0(\Gamma_{LT,FT}^\bullet(A))= A^{\Gamma_{LT,FT}}$, for a discrete $\pi_L$-primary abelian group $A$ with continuous action of $\Gamma_{LT,FT}$. 
	\begin{definition} \label{FTHC}
	Let $M \in \varinjlim{\bf Mod}^{\varphi_q,\Gamma_{LT,FT},\acute{e}t,tor}_{/\mathcal{O}_{\mathcal{F}}}$. We define the co-chain complex  $\Phi\Gamma_{LT,FT}^{\bullet}(M)$ as the total complex of the double complex $\Gamma_{LT,FT}^\bullet(\Phi^\bullet(M))$ and call it the \emph{False-Tate type Herr complex} for $M$.
	\end{definition}
	\begin{example}{\label{Ex3}}
		In the case of $d=2$, the False-Tate type Herr complex is defined as follows:
		\begin{equation*}
		0\rightarrow M\xrightarrow{x\mapsto A_{0,\varphi_q}x}M^{\oplus 4}\xrightarrow{x\mapsto A_{1,\varphi_q}x} M^{\oplus 6}\xrightarrow{x\mapsto A_{2,\varphi_q}x}M^{\oplus4}\xrightarrow{x\mapsto A_{3,\varphi_q}x}M\rightarrow 0,
		\end{equation*}
		where 
{\C	\setlength{\arraycolsep}{10pt}
		\renewcommand{\arraystretch}{2.5}
		\[
		A_{0,\varphi_q}=
		\begin{bmatrix}
		\varphi_M-id \\
		\gamma_1-id\\
		\gamma_2-id \\
		\widetilde{\gamma}-id 
		\end{bmatrix},
		A_{1,\varphi_q}=
		\begin{bmatrix}
		-(\gamma_1-id) &  \varphi_M-id & 0 & 0\\
		-(\gamma_2-id) & 0 &  \varphi_M-id & 0\\
		-(\widetilde{\gamma}-id) & 0 &  0 & \varphi_M-id\\
		0 & -(\gamma_2-id) & \gamma_1-id & 0\\
		0 & \widetilde{\gamma}^{a_1}-id & 0 & -\left(\gamma_1-\frac{\widetilde{\gamma}^{a_1}-id}{\widetilde{\gamma}-id}\right)\\
		0 & 0 & \widetilde{\gamma}^{a_2}-id & -\left(\gamma_2-\frac{\widetilde{\gamma}^{a_2}-id}{\widetilde{\gamma}-id}\right)
		\end{bmatrix},
		\]
		\[
		A_{2,\varphi_q}=
		\begin{bmatrix}
		\gamma_2-id & -(\gamma_1-id) & 0 & \varphi_M-id & 0 & 0 \\
		-(\widetilde{\gamma}^{a_1}-id) & 0 & \gamma_1-\frac{\widetilde{\gamma}^{a_1}-id}{\widetilde{\gamma}-id} & 0 & \varphi_M-id & 0\\
		0 & -(\widetilde{\gamma}^{a_2}-id)  & \gamma_2-\frac{\widetilde{\gamma}^{a_2}-id}{\widetilde{\gamma}-id} & 0 & 0 & \varphi_M-id\\
		0 & 0 & 0 & \widetilde{\gamma}^{a_1a_2}-id & \gamma_2-\frac{\widetilde{\gamma}^{a_1a_2}-id}{\widetilde{\gamma}^{ a_1}-id} & -\left(\gamma_1-\frac{\widetilde{\gamma}^{a_1a_2}-id}{\widetilde{\gamma}^{ a_2}-id}\right)
		\end{bmatrix},
		\]
		\[
		A_{3,\varphi_q}=
		\begin{bmatrix}
		-(\widetilde{\gamma}^{a_1a_2}-id) & -\left(\gamma_2-\frac{\widetilde{\gamma}^{a_1a_2}-id}{\widetilde{\gamma}^{ a_1}-id}\right) & \gamma_1-\frac{\widetilde{\gamma}^{a_1a_2}-id}{\widetilde{\gamma}^{a_2}-id}  & \varphi_M-id \\
		\end{bmatrix}.
		\]
	}
	\end{example}
	Now the cohomology of the complex $\Phi\Gamma_{LT,FT}^{\bullet}(-)$ gives the cohomological functors $(\mathcal{H}^i(\Phi\Gamma_{LT,FT}^{\bullet}(-)))_{i\geq 0}$ from $\varinjlim{\bf Mod}^{\varphi_q,\Gamma_{LT,FT},\acute{e}t,tor}_{/\mathcal{O}_{\mathcal{F}}}$ to the category of abelian groups. Then we have the following theorem.
	\begin{theorem}\label{Main4}
		For any $V \in {\bf Rep}_{\mathcal{O}_L-tor}^{dis}(G_K)$, we have a natural isomorphism
		\begin{equation*}
		H^i(G_K,V)\cong \mathcal{H}^i(\Phi\Gamma_{LT,FT}^{\bullet}(\mathbb{D}_{LT,FT}(V)))\quad \text{for}\ i\geq 0.
		\end{equation*}
	\end{theorem}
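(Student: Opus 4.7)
The plan is to imitate the proof of Theorem \ref{G_K-cohomo}, with the role of the pair $(H_K^\ast,\Gamma_{LT}^\ast)$ replaced by the pair $(H_L,\Gamma_{LT,FT})$. First, I would verify that $V \mapsto \mathcal{H}^i(\Phi\Gamma_{LT,FT}^{\bullet}(\mathbb{D}_{LT,FT}(V)))_{i\geq 0}$ is a cohomological $\delta$-functor on ${\bf Rep}_{\mathcal{O}_K-tor}^{dis}(G_K)$: the functor $\mathbb{D}_{LT,FT}$ is exact by Proposition \ref{composite functor} and Theorem \ref{Kisin Ren}, so a short exact sequence of representations yields a short exact sequence of Lubin-Tate \'{e}tale $(\varphi_q,\Gamma_{LT,FT})$-modules, and hence a short exact sequence of False-Tate type Herr complexes by the Acyclic Assembly Lemma, giving the connecting maps. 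Next I would compute $\mathcal{H}^0$: by the same computation as in Lemma \ref{dimension-shifting}, using that $\mathcal{O}_{\widehat{\mathcal{E}^{ur}}}^{\varphi_q=id}=\mathcal{O}_K$ (Lemma \ref{Exact}), one obtains
\begin{equation*}
\mathcal{H}^0(\Phi\Gamma_{LT,FT}^{\bullet}(\mathbb{D}_{LT,FT}(V))) = \mathbb{D}_{LT,FT}(V)^{\varphi=id,\,\Gamma_{LT,FT}=id} = V^{H_L \cdot \Gamma_{LT,FT}} = V^{G_K}.
\end{equation*}
This agrees with $H^0(G_K,V)$, so there is a unique natural transformation $H^i(G_K,-) \to \mathcal{H}^i(\Phi\Gamma_{LT,FT}^{\bullet}(\mathbb{D}_{LT,FT}(-)))$ of $\delta$-functors extending this isomorphism.

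It then suffices, by dimension shifting, to check the isomorphism when $V$ is an injective object of ${\bf Rep}_{\mathcal{O}_K-tor}^{dis}(G_K)$. For such $V$, I would run two spectral sequences in parallel. The first is the spectral sequence associated to the double complex defining $\Phi\Gamma_{LT,FT}^{\bullet}(\mathbb{D}_{LT,FT}(V))$, namely
\begin{equation*}
E_2^{mn} = \mathcal{H}^m(\Gamma_{LT,FT}^{\bullet}(\mathcal{H}^n(\Phi^{\bullet}(\mathbb{D}_{LT,FT}(V))))) \Rightarrow \mathcal{H}^{m+n}(\Phi\Gamma_{LT,FT}^{\bullet}(\mathbb{D}_{LT,FT}(V))).
\end{equation*}
The second is the Hochschild-Serre spectral sequence associated to the normal subgroup $H_L \triangleleft G_K$ with quotient $\Gamma_{LT,FT}$,
\begin{equation*}
E_2^{mn} = H^m(\Gamma_{LT,FT}, H^n(H_L, V)) \Rightarrow H^{m+n}(G_K, V).
\end{equation*}
Using the analogue of Proposition \ref{H_K-cohomology} noted in the text (that $\Phi^{\bullet}(\mathbb{D}_{LT,FT}(V))$ computes $H^i(H_L,V)$, via $H^i(H_L,\mathcal{O}_{\widehat{\mathcal{E}^{ur}}}/\pi^n)=0$ which holds because $H_L \cong \Gal(E^{sep}/E_L)$), injectivity of $V$ forces $H^n(H_L,V)=0$ for $n\geq 1$, so both spectral sequences collapse to the edge. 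It therefore remains to identify the surviving row: I would use that $V^{H_L}$ is still injective as a discrete $\Gamma_{LT,FT}$-module (this uses that $H_L$ is normal in $G_K$), together with the already-established fact that $\Gamma_{LT,FT}^{\bullet}(-)$ computes $\Gamma_{LT,FT}$-cohomology on discrete $\pi$-primary modules, to match the two $E_2$-pages and conclude that the natural transformation is an isomorphism.

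The main obstacle I anticipate is the purely homological bookkeeping underlying the $\Gamma_{LT,FT}^{\bullet}$ claim — i.e.\ checking that the complex displayed just before Definition \ref{FTHC} actually computes $H^\ast(\Gamma_{LT,FT}, -)$ for the semidirect product $\Gamma_{LT,FT} = \Gamma_{LT}^\ast \ltimes \mathbb{Z}_p$. The text asserts this is done "using the similar technique as in the proof of Proposition \ref{Gamma^*-cohomo}", but the non-trivial commutation relation $\gamma_i\tilde{\gamma} = \tilde{\gamma}^{a_i}\gamma_i$ introduces the twisted factors $(\tilde{\gamma}^{\chi_{LT}(i_1)\cdots\chi_{LT}(i_r)}-id)/(\tilde{\gamma}-id)$ in the differentials, and one must check by direct computation that the double complex
$\Gamma_{LT,FT \backslash \gamma_d}^{\bullet d}(\Gamma_{LT,FT \backslash \gamma_d}^{\bullet}(A))$ (or its $\tilde{\gamma}$-first filtration) has squaring-to-zero differentials and that, on injectives, its vertical and horizontal rows are acyclic with cohomology equal to the appropriate invariants. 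Once this is granted, the inductive step on the number of generators of $\Gamma_{LT}^\ast$ together with a final pass through the $\tilde{\gamma}$-direction via Hochschild-Serre for $1 \to \mathbb{Z}_p \to \Gamma_{LT,FT} \to \Gamma_{LT}^\ast \to 1$ closes the argument, and the rest of the proof is a straightforward transcription of Theorem \ref{G_K-cohomo}.
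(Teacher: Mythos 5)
Your proposal is correct and follows essentially the same route as the paper: the paper's proof of Theorem \ref{Main4} is a one-line reduction to the argument of Theorem \ref{G_K-cohomo}, and you spell out precisely that argument with $(H_L,\Gamma_{LT,FT})$ replacing $(H_K^\ast,\Gamma_{LT}^\ast)$ — the $\delta$-functoriality via exactness of $\mathbb{D}_{LT,FT}$, the $\mathcal{H}^0$ identification, the two collapsing spectral sequences on injectives, and dimension shifting. The ``main obstacle'' you flag (that the Koszul-type complex $\Gamma_{LT,FT}^\bullet$ genuinely computes $\Gamma_{LT,FT}$-cohomology for the semidirect product) is exactly the step the paper dispatches with the phrase ``using the similar technique as in the proof of Proposition \ref{Gamma^*-cohomo}'', so identifying and isolating it is the right instinct even though it is not a gap in your own argument.
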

	\begin{proof}
Note that the functor $V\mapsto\mathcal{H}^i(\Phi\Gamma_{LT,FT}^{\bullet}(\mathbb{D}_{LT,FT}(V)))_{i\geq 0}$ is a cohomological $\delta$-functor such that $\mathcal{H}^0(\Phi\Gamma_{LT,FT}^{\bullet}(\mathbb{D}_{LT,FT}(V)))\cong H^0(G_K,V)$. Hence the proof follows as in the proof of Theorem \ref{G_K-cohomo}.
	\end{proof} 
	\begin{corollary}
Let $V \in {\bf Rep}_{\mathcal{O}_L}(G_K)$. Then the False-Tate type Herr complex computes the Galois cohomology of $G_K$ with coefficients in $V$. 
	\end{corollary}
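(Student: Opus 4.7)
The plan is to mimic the argument of Theorem \ref{lattices} and reduce the statement to Theorem \ref{Main4} by an inverse limit argument. Concretely, for $V \in {\bf Rep}_{\mathcal{O}_K}(G_K)$ I write $V \cong \varprojlim_n V/\pi^n V$, where each $V/\pi^n V$ is a finite object of ${\bf Rep}_{\mathcal{O}_K-tor}^{dis}(G_K)$, so that Theorem \ref{Main4} applies and gives
\begin{equation*}
H^i(G_K, V/\pi^n V) \cong \mathcal{H}^i(\Phi\Gamma_{LT,FT}^\bullet(\mathbb{D}_{LT,FT}(V/\pi^n V))) \quad \text{for all } i\geq 0.
\end{equation*}

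First I would show that Galois cohomology commutes with this particular inverse limit, exactly as in Lemma \ref{commutes inverse}: since $k$ is finite, Tate's finiteness theorem gives that each $H^i(G_K, V/\pi^n V)$ is finite, so the Mittag-Leffler condition holds and $\varprojlim\limits_n{}^1 H^i(G_K, V/\pi^n V) = 0$, yielding $H^i(G_K, V) \cong \varprojlim_n H^i(G_K, V/\pi^n V)$.

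Next I would verify that the functor $\mathcal{H}^i(\Phi\Gamma_{LT,FT}^\bullet(\mathbb{D}_{LT,FT}(-)))$ also commutes with the inverse limit along the system $(V/\pi^n V)_n$. Since the functor $\mathbb{D}_{LT,FT}$ is exact (by Theorem \ref{False-Tate equivalence}) and the transition maps $V/\pi^{n+1}V \twoheadrightarrow V/\pi^n V$ are surjective, the resulting transition maps on each term of the complex $\Phi\Gamma_{LT,FT}^\bullet(\mathbb{D}_{LT,FT}(V/\pi^n V))$ are surjective. Hence the first hypercohomology spectral sequence degenerates at $E_2$. By the isomorphism from Theorem \ref{Main4} combined with the finiteness of the Galois cohomology groups above, the $\varprojlim^1$ terms also vanish, so the second hypercohomology spectral sequence
\begin{equation*}
\varprojlim\limits_n{}^i \mathcal{H}^j(\Phi\Gamma_{LT,FT}^\bullet(\mathbb{D}_{LT,FT}(V/\pi^n V))) \Rightarrow \mathcal{H}^{i+j}(\Phi\Gamma_{LT,FT}^\bullet(\mathbb{D}_{LT,FT}(V)))
\end{equation*}
degenerates at $E_2$ as well, giving $\varprojlim_n \mathcal{H}^i(\Phi\Gamma_{LT,FT}^\bullet(\mathbb{D}_{LT,FT}(V/\pi^n V))) \cong \mathcal{H}^i(\Phi\Gamma_{LT,FT}^\bullet(\mathbb{D}_{LT,FT}(V)))$.

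Combining the three isomorphisms then completes the proof. The only subtle step is checking that $\mathbb{D}_{LT,FT}$ respects the inverse limit in the sense needed for the hypercohomology spectral sequence, but this follows formally from its exactness together with the termwise surjectivity of transition maps, so there is no real obstacle beyond transcribing the argument of Theorem \ref{lattices} with $\Phi\Gamma_{LT}^\bullet$ replaced by $\Phi\Gamma_{LT,FT}^\bullet$ and $\mathbb{D}_{LT}$ replaced by $\mathbb{D}_{LT,FT}$.
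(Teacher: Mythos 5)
Your proposal is correct and follows exactly the route the paper takes: the paper's own proof of this corollary simply states that it is similar to Theorem \ref{lattices}, and you have faithfully transcribed that argument with $\Phi\Gamma_{LT}^\bullet$, $\mathbb{D}_{LT}$ replaced by $\Phi\Gamma_{LT,FT}^\bullet$, $\mathbb{D}_{LT,FT}$.
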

	\begin{proof}
		The proof is similar to that of Theorem \ref{lattices}.
	\end{proof}
	\section{The Operator $\psi_q$} \label{section psi}
{\C In this section, we define an integral operator $\psi_q$ following \cite{SV}, which acts linearly on the \'{e}tale $(\varphi_q,\Gamma_{LT})$-module $\mathbb{D}_{LT}(V)$.} 
Recall that the residue field of $\mathcal{O}_{\mathcal{E}}$ is $E=k((X))$, which is not perfect, so $\varphi_q$ is not an automorphism but is injective. The field $\widehat{\mathcal{E}^{ur}}$, which is the fraction field of $\mathcal{O}_{\widehat{\mathcal{E}^{ur}}}$, is an extension of degree $q$ of $\varphi_{q}(\widehat{\mathcal{E}^{ur}})$. Put $\trac = \trace_{\widehat{\mathcal{E}^{ur}}/\varphi_q(\widehat{\mathcal{E}^{ur}})}$. Define
	\begin{equation*}
	\psi_q:\widehat{\mathcal{E}^{ur}} \rightarrow \widehat{\mathcal{E}^{ur}}
	\end{equation*}
such that
	\begin{equation*}
\varphi_q(\psi_q(x))=\frac{1}{\pi}(\trac(x)).
	\end{equation*}
	The map $\psi_q$ maps $\mathcal{O}_{\widehat{\mathcal{E}^{ur}}}$ to $\mathcal{O}_{\widehat{\mathcal{E}^{ur}}} $ and $\mathcal{O}_{\mathcal{E}}$ to $\mathcal{O}_{\mathcal{E}}$ \cite[Remark $3.1$]{SV}. This follows from the fact that the residue extensions $E^{sep}/\varphi_q(E^{sep})$ and $E/\varphi_q(E)$ are totally inseparable. Therefore the trace map defined by 
	\begin{equation*}
	\trac(x) = \text{trace}_{\widehat{\mathcal{E}^{ur}}/\varphi_q(\widehat{\mathcal{E}^{ur}})}(x) = \text{trace}_{\varphi_q(\widehat{\mathcal{{E}}^{ur}})}(y \mapsto xy)
	\end{equation*}
	is trivial for {\C $E^{sep}/\varphi_q(E^{sep})$ and $E/\varphi_q(E)$. }
	Hence if $x \in \mathcal{O}_{\widehat{\mathcal{E}^{ur}}}$, then $\trac(x) \in \pi\mathcal{O}_{\widehat{\mathcal{E}^{ur}}}$. 
	Moreover,
	\begin{equation*}
	\trac_{\widehat{\mathcal{E}^{ur}}/\varphi_q(\widehat{\mathcal{E}^{ur}})}(\varphi_q(x))= q \varphi_q(x)
	\end{equation*}
	implies that
	\begin{equation*}
	\psi_q(\varphi_q(x)) = \frac{q}{\pi}(x).
	\end{equation*}
	Hence 
	\begin{equation*}
	\psi_q\circ\varphi_q = \frac{q}{\pi}id.
	\end{equation*}
	We may extend this map $\psi_q$ to $\mathcal{O}_{\widehat{\mathcal{E}^{ur}}}\otimes_{\mathcal{O}_L} V$ by trivial action on $V$. As $\varphi_q$ commutes with $\Gamma_{LT}$, the module $\varphi_q(\mathcal{E})$ is stable under $\Gamma_{LT}$, so $\gamma\circ \trac\circ \gamma^{-1} = \trac$ for all $\gamma \in \Gamma_{LT}$. This ensures that $\psi_q$ commutes with $\Gamma_{LT}$ and it is also stable under the action of $\Gamma_{LT}$. This induces an operator 
	\begin{equation*}
	\psi_{\mathbb{D}_{LT}(V)}: \mathbb{D}_{LT}(V)\rightarrow \mathbb{D}_{LT}(V)
	\end{equation*} 
satisfying	
	\begin{equation}\label{psi_D}
	\psi_{\mathbb{D}_{LT}(V)}\circ\varphi_{\mathbb{D}_{LT}(V)} = \frac{q}{\pi}id_{\mathbb{D}_{LT}(V)}.
	\end{equation}
	Similarly, we have $\psi_{\mathbb{D}_{LT,FT}(V)}: \mathbb{D}_{LT,FT}(V)\rightarrow \mathbb{D}_{LT,FT}(V)$ satisfying above properties as $\psi_q$ maps $\mathcal{O}_\mathcal{F}$ to itself.
	
{\C We define a complex $\Psi^\bullet(\mathbb{D}_{LT}(V))$ as the following
	\begin{equation}\label{psi-complex}
		\Psi^{\bullet}(\mathbb{D}_{LT}(V)): 0\rightarrow \mathbb{D}_{LT}(V)\xrightarrow{ \psi_{\mathbb{D}_{LT}(V)}-\frac{q}{\pi}id}\mathbb{D}_{LT}(V)\rightarrow 0.
	\end{equation}
}
In general for any $M\in \varinjlim {\bf Mod}^{\varphi_q,\Gamma_{LT},\acute{e}t,tor}_{/\mathcal{O}_{\mathcal{E}}}$, the complex
$\Psi^\bullet(M)$ is defined as the following
\begin{equation*}
	\Psi^{\bullet}(M): 0\rightarrow M\xrightarrow{ \psi_M-\frac{q}{\pi}id}M\rightarrow 0,
\end{equation*}
where $\psi_M$ is induced operator on $M$ as explained in \eqref{psi_D}.
Before we go to next subsection, we  prove the following lemma.
	\begin{lemma}\label{phi to psi morphism}
		Let $A$ be an abelian group. Consider the following complexes $\mathscr{C}_1$, $\mathscr{C}_2$ and $\mathscr{C}_3$ 
		\begin{equation*}
		\mathscr{C}_i: 0\rightarrow A\xrightarrow{d_i}A\rightarrow 0, \qquad \text{for}\ i=1,2
		\end{equation*}
		\begin{equation*}
		\mathscr{C}_3: 0\rightarrow A \xrightarrow{d_3}\bigoplus_{i_1\in \mathfrak{y}}A\xrightarrow{d_3}\cdots\rightarrow\bigoplus_{\{i_1,\ldots,i_r\}\in \binom{\mathfrak{y}}{r}}A\xrightarrow{d_3}\cdots \xrightarrow{d_3} A\rightarrow 0,
		\end{equation*}
		where $\mathfrak{y}$ is a finite set and $\binom{\mathfrak{y}}{r}$ denotes choosing $r$-indices at a time from the set $\mathfrak{y}$. Let $\Tot(\mathscr{C}_i\mathscr{C}_j)$ be the total complex of the double complex $\mathscr{C}_i\mathscr{C}_j$. Then a morphism from the complex $\mathscr{C}_1$ to $\mathscr{C}_2$, which commutes with $d_3$, induces a natural homomorphism between the cohomology groups 
		\begin{equation*}
		\mathcal{H}^i(\Tot(\mathscr{C}_1\mathscr{C}_3))\rightarrow\mathcal{H}^i(\Tot(\mathscr{C}_2\mathscr{C}_3)).
		\end{equation*}
	\end{lemma}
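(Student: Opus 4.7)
The plan is to promote the given data into a morphism of double complexes $\mathscr{C}_1\mathscr{C}_3 \to \mathscr{C}_2\mathscr{C}_3$ and then invoke functoriality of totalization and of cohomology. Writing the chain map $\phi: \mathscr{C}_1 \to \mathscr{C}_2$ as a pair of maps $\phi^0, \phi^1: A \to A$ with $\phi^1 \circ d_1 = d_2 \circ \phi^0$ (the chain-map condition packaged in the hypothesis), I would define the candidate $f: \mathscr{C}_1\mathscr{C}_3 \to \mathscr{C}_2\mathscr{C}_3$ in bidegree $(p,q)$ (with $p\in\{0,1\}$ and $q\in\{0,1,\dots,|\mathfrak{y}|\}$) by applying $\phi^p$ componentwise to each summand of $\bigoplus_{\{i_1,\dots,i_q\}\in \binom{\mathfrak{y}}{q}} A$. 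Here the horizontal differentials of the double complex are induced componentwise by $d_i$ and the vertical differentials are induced componentwise by the signed linear combinations of operators appearing in $d_3$.

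Next, I would verify that $f$ is indeed a morphism of double complexes. The horizontal squares commute by the chain-map condition $\phi^1 \circ d_1 = d_2 \circ \phi^0$, applied in each summand. The vertical squares commute because, by hypothesis, each $\phi^p$ commutes with every component map of $d_3$; the signs of the form $(-1)^{s_j}$ appearing in $d_3$ cause no issue since each $\phi^p$ is additive. Totalizing, one obtains a chain map $\Tot(f): \Tot(\mathscr{C}_1\mathscr{C}_3) \to \Tot(\mathscr{C}_2\mathscr{C}_3)$, where the usual Koszul sign in the total differential is respected automatically, and passing to cohomology produces the desired natural homomorphism $\mathcal{H}^i(\Tot(\mathscr{C}_1\mathscr{C}_3)) \to \mathcal{H}^i(\Tot(\mathscr{C}_2\mathscr{C}_3))$ for every $i$.

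There is no substantial obstacle; the statement is essentially formal. The only subtlety is a careful interpretation of the hypothesis \emph{commutes with $d_3$}: since $d_3$ is built out of signed linear combinations of specific $\mathcal{O}_K$-linear operators on $A$, the hypothesis has to be read as saying that $\phi^0$ and $\phi^1$ commute with each of those operators, which is precisely what is needed for the vertical squares above to commute. Naturality of the induced map on cohomology is then clear from the componentwise definition of $f$.
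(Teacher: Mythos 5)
Your proposal is correct and matches the paper's argument: both promote the chain map $\mathscr{C}_1\to\mathscr{C}_2$ componentwise to a morphism between the (total) complexes $\Tot(\mathscr{C}_1\mathscr{C}_3)$ and $\Tot(\mathscr{C}_2\mathscr{C}_3)$, using the chain-map condition for the horizontal squares and the hypothesis that the morphism commutes with $d_3$ for the vertical squares. The only cosmetic difference is that you phrase this as first building a morphism of double complexes and then totalizing, while the paper writes the induced map on the total complexes directly.
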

	\begin{proof}
		The morphism from $\mathscr{C}_1$ to $\mathscr{C}_2$ induces the following commutative diagram
		\begin{center}
			\begin{tikzcd}
			\mathscr{C}_1 :	0\arrow{r} & A \arrow{r}{d_1} \arrow[swap]{d}{\delta_1} & A \arrow{r} \arrow[swap]{d}[swap]{\delta_2} & 0\\ \mathscr{C}_2 :	0 \arrow{r} & A \arrow{r}[swap]{d_2} & A \arrow{r} & 0.
			\end{tikzcd}
		\end{center}
		This induces a morphism between the total complex $\Tot(\mathscr{C}_1\mathscr{C}_3)$ and $\Tot(\mathscr{C}_2\mathscr{C}_3)$, as follows:
		\begin{center}
			\begin{tikzpicture}[baseline= (a).base]
			\node[scale=.90] (a) at (0,0)
			{
			\begin{tikzcd}[row sep=large]
			\Tot(\mathscr{C}_1\mathscr{C}_3):	0\arrow{r} & A \arrow{r}{(d_1,d_3)} \arrow[swap]{d}{\delta_1} &  A \oplus \smashoperator[r]{\bigoplus _{i_1\in \mathfrak{y}}}A \arrow{r} \arrow[swap]{d}{\delta_2, \smashoperator[r]{\bigoplus_{i_1\in \mathfrak{y}}}\delta_1} & \cdots\arrow{r} & \smashoperator[r]{\bigoplus_{\{i_1,\ldots,i_{d-1}\}\in \binom{\mathfrak{y}}{d-1}}} A\oplus A \arrow{r}{d_3-d_1} \arrow[swap]{d}{\smashoperator[r]{\bigoplus_{\{i_1,\ldots,i_{d-1}\}\in \binom{\mathfrak{y}}{d-1}}}\delta_2,\delta_1}  & A \arrow{r} \arrow[swap]{d}[swap]{\delta_2} & 0\\
			\Tot(\mathscr{C}_2\mathscr{C}_3):	0 \arrow{r} & A \arrow{r}[swap]{(d_2,d_3)} & A\oplus\smashoperator[r]{\bigoplus_{i_1\in \mathfrak{y}}}A \arrow{r} & \cdots\arrow{r} & \smashoperator[r]{\bigoplus_{\{i_1,\ldots,i_{d-1}\}\in \binom{\mathfrak{y}}{d-1}}}A\oplus A \arrow{r}[swap]{d_3-d_2} & A \arrow{r} & 0.
			\end{tikzcd}
		};
	\end{tikzpicture}
		\end{center}
		As the morphism from $\mathscr{C}_1$ to $\mathscr{C}_2$ commutes with $d_3$, it is easy to check that each square is commutative in the above diagram, which induces a homomorphism 
		\begin{equation*}
		\mathcal{H}^i(\Tot(\mathscr{C}_1\mathscr{C}_3))\rightarrow\mathcal{H}^i(\Tot(\mathscr{C}_2\mathscr{C}_3)).
		\end{equation*}
	\end{proof}
	\subsection{The case of Lubin-Tate extensions}	
\begin{definition}
		For any $M\in \varinjlim {\bf Mod}^{\varphi_q,\Gamma_{LT},\acute{e}t,tor}_{/\mathcal{O}_{\mathcal{E}}}$, the co-chain complex $\Psi\Gamma_{LT}^{\bullet}(M)$ is defined as the total complex of the double complex $\Gamma_{LT}^\bullet(\Psi^\bullet(M^{\Delta}))$. We write $\psi_M$ for the $\psi_q$-operator on $M$.
	\end{definition}
	\begin{example}{\label{Ex4}}
		Let $d=2$, then the complex $\Psi\Gamma_{LT}^{\bullet}(M)$ is defined as follows
		\begin{equation*}
		0\rightarrow M\xrightarrow{x\mapsto A_{0,\psi_q}x}M^{\oplus 3}\xrightarrow{x\mapsto A_{1,\psi_q}x} M^{\oplus 3}\xrightarrow{x\mapsto A_{2,\psi_q}x}M\rightarrow 0,
		\end{equation*}
		where $M=M^\Delta$, and
		\[
		A_{0,\psi_q}=
		\begin{bmatrix}
		\psi_M-\frac{q}{\pi}id \\
		\gamma_1-id \\
		\gamma_2-id 
		\end{bmatrix},
		A_{1,\psi_q} = 
		\begin{bmatrix}
		-(\gamma_1-id) & \psi_M-\frac{q}{\pi}id & 0  \\
		-(\gamma_2-id) & 0 & \psi_M-\frac{q}{\pi}id \\
		0 & -(\gamma_2-id) & \gamma_1-id 
		\end{bmatrix}, 
		\]
		\[
		A_{2,\psi_q}=
		\begin{bmatrix}
		\gamma_2-id & -(\gamma_1-id)& \psi_M-\frac{q}{\pi}id  
		\end{bmatrix}.
		\]
	\end{example}
Next, we have the following proposition, which is an easy consequence of Lemma \ref{phi to psi morphism}.
	\begin{proposition}\label{Prop 5.4}
Let $M\in \varinjlim {\bf Mod}^{\varphi_q,\Gamma_{LT},\acute{e}t,tor}_{/\mathcal{O}_{\mathcal{E}}}$. Then the morphism $\Phi^{\bullet}(M)\rightarrow \Psi^{\bullet}(M)$, which is given by the following
	\begin{center}
		\begin{tikzcd}[row sep=large, column sep = large]
		\Phi^\bullet(M):0\arrow{r} & M \arrow{r}{\varphi_M-id} \arrow[swap]{d}{id} & M \arrow{r} \arrow[swap]{d}[swap]{-\psi_M} & 0\\
		\Psi^\bullet(M):0 \arrow{r} & M \arrow{r}[swap]{\psi_M-\frac{q}{\pi}id} & M \arrow{r} & 0,
		\end{tikzcd}
	\end{center}
	induces a morphism 
	\begin{equation*}
	\Phi\Gamma_{LT}^{\bullet}(M) \rightarrow \Psi\Gamma_{LT}^{\bullet}(M).
	\end{equation*}
	\end{proposition}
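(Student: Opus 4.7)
The plan is to verify that the displayed vertical maps define an honest chain map from $\Phi^\bullet(M)$ to $\Psi^\bullet(M)$ that is furthermore compatible with the Koszul differentials of $\Gamma_{LT}^\bullet$, and then to invoke (the construction used in the proof of) Lemma~\ref{phi to psi morphism} to promote this to a morphism between the total complexes $\Phi\Gamma_{LT}^\bullet(M)$ and $\Psi\Gamma_{LT}^\bullet(M)$.

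First, I would check commutativity of the single square in the displayed diagram. Going along the top and then down gives $(-\psi_M)\circ(\varphi_M-id) = -\psi_M\varphi_M + \psi_M$, while going down and then along the bottom gives $(\psi_M-\tfrac{q}{\pi}id)\circ id = \psi_M-\tfrac{q}{\pi}id$. These coincide exactly because of the key identity $\psi_M\circ\varphi_M = \tfrac{q}{\pi}id$ established earlier in this section. This is essentially the only honest computation in the argument.

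Next, I would record two compatibilities. Since $\psi_q$ commutes with the action of $\Gamma_{LT}$, the induced operator $\psi_M$ on $\mathbb{D}_{LT}(V)$ commutes with every $\gamma \in \Gamma_{LT}$; in particular $\psi_M$ stabilizes the $\Delta$-invariants $M^\Delta$, so the chain map restricts to one between $\Phi^\bullet(M^\Delta)$ and $\Psi^\bullet(M^\Delta)$. Moreover, since both $id$ and $-\psi_M$ commute with each operator $\gamma_i - id$, the vertical maps commute with every differential of the Koszul-type complex $\Gamma_{LT}^\bullet(M^\Delta)$. This is precisely the hypothesis ``commutes with $d_3$'' of Lemma~\ref{phi to psi morphism}, applied with $\mathscr{C}_1 = \Phi^\bullet(M^\Delta)$, $\mathscr{C}_2 = \Psi^\bullet(M^\Delta)$, and $\mathscr{C}_3 = \Gamma_{LT}^\bullet(M^\Delta)$.

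Finally, I would point out that the explicit diagram written out in the proof of Lemma~\ref{phi to psi morphism} is already a morphism of total complexes \emph{before} cohomology is taken; on each summand of the $r$-th term it acts either by $id$ or by $-\psi_M$, with the appropriate signs, and the commutativity of every resulting square reduces to the two checks above. The same construction therefore produces the desired chain map $\Phi\Gamma_{LT}^\bullet(M) \to \Psi\Gamma_{LT}^\bullet(M)$. I do not anticipate any serious obstacle in this proposition: the sole substantive input is the identity $\psi_M\varphi_M = (q/\pi)\,id$, and everything else is sign bookkeeping in the total complex of a double complex.
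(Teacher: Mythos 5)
Your proposal is correct and follows the same route as the paper: invoking Lemma~\ref{phi to psi morphism} with $\mathscr{C}_1 = \Phi^{\bullet}(M^\Delta)$, $\mathscr{C}_2 = \Psi^{\bullet}(M^\Delta)$, $\mathscr{C}_3 = \Gamma_{LT}^{\bullet}(M^\Delta)$, after using $\psi_M\varphi_M=\tfrac{q}{\pi}\,id$ to verify commutativity of the square and the fact that $\psi_q$ commutes with $\Gamma_{LT}$ to verify the ``commutes with $d_3$'' hypothesis. You spell out the sign check explicitly, which the paper leaves implicit, but the argument is the same.
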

	\begin{proof}
		Since $\psi_q$ commutes with the action of $\Gamma_{LT}$, the proof follows easily from Lemma \ref{phi to psi morphism}, by taking $\mathscr{C}_1 = \Phi^{\bullet}(M^\Delta), \mathscr{C}_2 = \Psi^{\bullet}(M^\Delta)$ and $\mathscr{C}_3 = \Gamma_{LT}^{\bullet}(M^\Delta)$.
	\end{proof}
	\begin{example}
		Let $d=2$. Then the morphism  between $\Phi\Gamma_{LT}^{\bullet}(M)$ and $\Psi\Gamma_{LT}^{\bullet}(M)$ is given by the following: 
		\begin{center}
			\begin{tikzcd}
			\Phi\Gamma_{LT}^{\bullet}(M):	0\arrow{r} & M \arrow{r}{A_{0,\varphi_q}} \arrow[swap]{d}{id} &  M \oplus M\oplus M \arrow{r}{A_{1,\varphi_q}} \arrow[swap]{d}{\mathscr{F}} &  M \oplus M\oplus M \arrow{r}{A_{2,\varphi_q}} \arrow[swap]{d}{\mathscr{F}^{\prime}}  & M \arrow{r} \arrow[swap]{d}[swap]{-\psi_M} & 0\\
			\Psi\Gamma_{LT}^{\bullet}(M):	0 \arrow{r} & M \arrow{r}[swap]{A_{0,\psi_q}} & M\oplus M \oplus M \arrow{r}[swap]{A_{1,\psi_q}} &  M \oplus M\oplus M \arrow{r}[swap]{A_{2,\psi_q}} & M \arrow{r} & 0, 
			\end{tikzcd}
		\end{center}
		where $M=M^\Delta$, and
		\begin{align*}
		\mathscr{F}(x_1,x_2,x_3) = & (-\psi_M(x_1),x_2,x_3),\\  \mathscr{F}^\prime(x_1,x_2,x_3)=& (-\psi_M(x_1), -\psi_M(x_2), x_3),
		\end{align*} 
		and the maps $A_{i,\varphi_q}$ and $A_{i,\psi_q}$ are the same as defined in Example \ref{Ex1} and Example \ref{Ex4}.
\end{example}
\begin{theorem}\label{Main5}
	Let $M \in \varinjlim {\bf Mod}^{\varphi_q,\Gamma_{LT},\acute{e}t,tor}_{/\mathcal{O}_{\mathcal{E}}}$. Then we have a well-defined homomorphism 
		\begin{equation*}
		\mathcal{H}^i(\Phi\Gamma_{LT}^{\bullet}(M))\rightarrow \mathcal{H}^i(\Psi\Gamma_{LT}^{\bullet}(M)) \quad \text{for} \ i\geq0.
		\end{equation*}
		Further, the homomorphism $\mathcal{H}^0(\Phi\Gamma_{LT}^{\bullet}(M))\rightarrow \mathcal{H}^0(\Psi\Gamma_{LT}^{\bullet}(M))$ is injective.
	\end{theorem}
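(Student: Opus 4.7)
The plan is to produce the morphism from the previous proposition and then identify the map on $\mathcal{H}^0$ explicitly. For the first part, the proposition immediately preceding the theorem already produces a morphism of co-chain complexes $\Phi\Gamma_{LT}^{\bullet}(M)\to\Psi\Gamma_{LT}^{\bullet}(M)$ by applying Lemma \ref{phi to psi morphism} with $\mathscr C_1=\Phi^{\bullet}(M^\Delta)$, $\mathscr C_2=\Psi^{\bullet}(M^\Delta)$ and $\mathscr C_3=\Gamma_{LT}^{\bullet}(M^\Delta)$. Passing to cohomology yields the homomorphisms $\mathcal{H}^i(\Phi\Gamma_{LT}^{\bullet}(M))\to\mathcal{H}^i(\Psi\Gamma_{LT}^{\bullet}(M))$ for all $i\ge 0$, so no further work is needed for existence.

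For the injectivity on $\mathcal{H}^0$, I would compute both sides directly from the total complex description. Reading off the degree-$0$ differential of $\Phi\Gamma_{LT}^{\bullet}(M)$ (cf.\ Example \ref{Ex1}), an element of $\mathcal{H}^0(\Phi\Gamma_{LT}^{\bullet}(M))$ is an $x\in M^\Delta$ satisfying $\varphi_M(x)=x$ and $\gamma_i(x)=x$ for every generator $\gamma_i$ of $\Gamma_{LT}^\ast$. Likewise, $\mathcal{H}^0(\Psi\Gamma_{LT}^{\bullet}(M))$ consists of those $x\in M^\Delta$ with $\psi_M(x)=\tfrac{q}{\pi}x$ and $\gamma_i(x)=x$. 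The morphism of complexes produced above acts by the identity in bidegree $(0,0)$ of the double complex $\Gamma_{LT}^{\bullet}(\Phi^{\bullet}(M^\Delta))\to\Gamma_{LT}^{\bullet}(\Psi^{\bullet}(M^\Delta))$, so the induced map on $\mathcal{H}^0$ is simply $x\mapsto x$. This map is well-defined because the defining relation $\psi_M\circ\varphi_M=\tfrac{q}{\pi}\mathrm{id}$ forces $\psi_M(x)=\psi_M(\varphi_M(x))=\tfrac{q}{\pi}x$ whenever $\varphi_M(x)=x$.

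Since $\mathcal{H}^0$ in both complexes is the kernel of the first differential (a subgroup of $M^\Delta$ rather than a quotient), and the induced map is literally the inclusion of one such subgroup into the other via the identity on $M^\Delta$, injectivity is immediate. The only modest obstacle is to make sure the identification of the edge map with the identity is honest, i.e.\ that no sign or scalar intervenes in passing from the vertical map $\mathrm{id}\colon \Phi^{0}(M^\Delta)\to\Psi^{0}(M^\Delta)$ to the induced morphism on the totalisation; this is settled by inspection of the diagram in the previous example, where the degree-$0$ component of $\mathscr F$ is the identity.
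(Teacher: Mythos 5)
Your first part matches the paper exactly: both invoke the morphism of complexes from Lemma~\ref{phi to psi morphism} and pass to cohomology. For the injectivity on $\mathcal{H}^0$, however, you take a more direct route than the paper. The paper introduces the kernel complex $\mathcal{K}$ and cokernel complex $\mathcal{C}$ of the morphism $\Phi\Gamma_{LT}^{\bullet}(M)\to\Psi\Gamma_{LT}^{\bullet}(M)$, splits the resulting four-term exact sequence at the image $\mathbb{I}$, and extracts the injectivity from the long exact cohomology sequences together with $\mathcal{H}^0(\mathcal{K})=0$ and $\mathcal{H}^0(\mathcal{C})=0$. You instead observe that both $\mathcal{H}^0$'s are kernels inside $M^\Delta$ and that the degree-$0$ component of the chain map is the identity, so the induced map on $\mathcal{H}^0$ is set-theoretically the inclusion and hence injective. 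The two arguments hinge on the same underlying fact — the degree-$0$ term of the kernel complex vanishes because the morphism is the identity there — but yours is shorter and avoids the long-exact-sequence bookkeeping, while the paper's framework is set up so that the same short exact sequences say something about higher-degree cohomology as well (indeed the paper writes out the long exact sequence through $\mathcal{H}^3$). Your observation that $\varphi_M(x)=x$ forces $\psi_M(x)=\tfrac{q}{\pi}x$ is correct and clarifies concretely where the image lands, though it is not logically required: the chain map already guarantees the induced map is well defined. The proposal is sound.
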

\begin{proof}
Since $(-\psi_M)(\varphi_M-id) = (\psi_M-\frac{q}{\pi}id)$, and $\psi_M$ commutes with $\Gamma_{LT}$. By using Lemma \ref{phi to psi morphism} and Proposition \ref{Prop 5.4}, we have a morphism $\Phi\Gamma_{LT}^{\bullet}(M) \rightarrow \Psi\Gamma_{LT}^{\bullet}(M)$ of co-chain complexes, which induces a well-defined homomorphism 
		\begin{equation*}
		\mathcal{H}^i(\Phi\Gamma_{LT}^{\bullet}(M))\rightarrow \mathcal{H}^i(\Psi\Gamma_{LT}^{\bullet}(M)) \quad \text{for} \ i\geq0.
		\end{equation*}	
		For the second part, let $\mathcal{K}$ be the kernel and $\mathcal{C}$ be the co-kernel of the morphism $\Phi\Gamma_{LT}^{\bullet}(M) \rightarrow \Psi\Gamma_{LT}^{\bullet}(M)$. Then the complex $\mathcal{K}$ and $\mathcal{C}$ are given as the following:
		\begin{equation*}
		\mathcal{K}: 0\rightarrow 0\rightarrow \Ker\psi_M\oplus\bigoplus_{i_1\in \mathfrak{X}}0\rightarrow\cdots\rightarrow \bigoplus_{\{i_1,\ldots,i_{d-1}\}\in \binom{\mathfrak{X}}{d-1}}\Ker\psi_M\oplus 0\rightarrow \Ker\psi_M\rightarrow 0,
		\end{equation*}
		\begin{equation*}
		\mathcal{C}: 0\rightarrow 0\rightarrow \co\psi_M\oplus\bigoplus_{i_1\in \mathfrak{X}}0\rightarrow\cdots\rightarrow \bigoplus_{\{i_1,\ldots,i_{d-1}\}\in \binom{\mathfrak{X}}{d-1}}\co\psi_M\oplus 0\rightarrow \co\psi_M\rightarrow 0.
		\end{equation*} 
		The morphisms of the complex $\mathcal{K}$ are the restrictions of that of the complex $\Phi\Gamma_{LT}^{\bullet}(M)$, and the morphisms of the complex $\mathcal{C}$ are induced from the complex $\Psi\Gamma_{LT}^{\bullet}(M)$. Then we have the exact sequence 
		\begin{equation*}
		0\rightarrow \mathcal{K}\rightarrow \Phi\Gamma_{LT}^{\bullet}(M) \rightarrow \Psi\Gamma_{LT}^{\bullet}(M)\rightarrow \mathcal{C}\rightarrow 0,
		\end{equation*}
		which gives us the following short exact sequences
		\begin{equation}\label{eq(1)}
		0\rightarrow \mathcal{K}\rightarrow \Phi\Gamma_{LT}^{\bullet}(M) \rightarrow \mathbb{I}\rightarrow 0,
		\end{equation}	
		\begin{equation}\label{eq(2)}
		0\rightarrow \mathbb{I}\rightarrow \Psi\Gamma_{LT}^{\bullet}(M)\rightarrow \mathcal{C}\rightarrow 0,
		\end{equation}
		where $\mathbb{I}$ is the image of $\Phi\Gamma_{LT}^{\bullet}(M)\rightarrow \Psi\Gamma_{LT}^{\bullet}(M)$. Note that $\mathcal{H}^0(\mathcal{C})=0$. Then by taking the long exact cohomology sequence of (\ref{eq(2)}), we have $\mathcal{H}^0(\mathbb{I})\cong \mathcal{H}^0(\Psi\Gamma_{LT}^{\bullet}(M))$. Also, we have a long exact sequence
		\begin{align*}
		0 \rightarrow \mathcal{H}^0(\mathcal{K})\rightarrow \mathcal{H}^0(\Phi\Gamma_{LT}^{\bullet}(M))&\rightarrow \mathcal{H}^0(\mathbb{I}) \rightarrow \mathcal{H}^1(\mathcal{K}) \rightarrow \mathcal{H}^1(\Phi\Gamma_{LT}^{\bullet}(M))\rightarrow \mathcal{H}^1(\mathbb{I})\\&\rightarrow \mathcal{H}^2(\mathcal{K})\rightarrow \mathcal{H}^2(\Phi\Gamma_{LT}^{\bullet}(M))\rightarrow \mathcal{H}^2(\mathbb{I})\rightarrow \mathcal{H}^3(\mathcal{K})\rightarrow 0 \rightarrow \cdots
		\end{align*}
		Finally, as $\mathcal{H}^0(\mathbb{I})\cong \mathcal{H}^0(\Psi\Gamma_{LT}^{\bullet}(M))$ and $\mathcal{H}^0(\mathcal{K})=0$, the homomorphism $$\mathcal{H}^0(\Phi\Gamma_{LT}^{\bullet}(M))\rightarrow \mathcal{H}^0(\Psi\Gamma_{LT}^{\bullet}(M))$$ is injective.
\end{proof} 
\begin{remark}\label{Herr theorem}
In the classical case when $V$ is a $\mathbb{Z}_p$-representation of {\C $G_{\mathbb{Q}_p}$ with $\mathbb{G}_m$ as the Lubin-Tate module, then }
the action of $\gamma-id$ is bijective on $\Ker\psi=\mathbb{D}(V)^{\psi=0}$, where $\gamma$ is a topological generator of $\Gamma$ \cite[Theorem 3.8]{LH1} and the Herr complex for $\varphi$ and $\psi$ are quasi-isomorphic \cite[Proposition 4.1]{LH1}. {\C In the general Lubin-Tate case, we have the above theorem as a weak result.}
\end{remark}
\subsection{The case of False-Tate type extensions}
\begin{definition}
	Let $M\in \varinjlim {\bf Mod}^{\varphi_q,\Gamma_{LT,FT},\acute{e}t,tor}_{/\mathcal{O}_{\mathcal{F}}}$. Then \emph{the False-Tate type Herr complex $\Psi\Gamma_{LT,FT}^{\bullet}(M)$ corresponding to $\psi_q$} is defined as the total complex of the double complex $\Gamma_{LT,FT}^\bullet(\Psi^\bullet(M))$.
\end{definition}
Let $\mathscr{C}_1 = \Phi^{\bullet}(M), \mathscr{C}_2 = \Psi^{\bullet}(M)$ and $\mathscr{C}_3 = \Gamma_{LT,FT}^{\bullet}(M)$. Then by using Lemma \ref{phi to psi morphism}, we have a morphism
	\begin{equation*}
	\Phi\Gamma_{LT,FT}^\bullet(M)\rightarrow \Psi\Gamma_{LT,FT}^\bullet(M).
	\end{equation*} 
Next, we prove a result in the case of False-Tate type extensions, which is analogous to Theorem \ref{Main5}. Recall that $\Gamma_{LT,FT}$ is topologically generated by $\{\gamma_1,\ldots,\gamma_d,\widetilde{\gamma}\}$, and $a_i = \chi_{LT}'(\gamma_i)$.
\begin{theorem} \label{Theorem False Tate}
	Let $M\in \varinjlim {\bf Mod}^{\varphi_q,\Gamma_{LT,FT},\acute{e}t,tor}_{/\mathcal{O}_{\mathcal{F}}}$. Then the morphism $$\Phi\Gamma_{LT,FT}^{\bullet}(M)\rightarrow \Psi\Gamma_{LT,FT}^{\bullet}(M)$$ induces a well-defined homomorphism $\mathcal{H}^i(\Phi\Gamma_{LT,FT}^{\bullet}(M))\rightarrow\mathcal{H}^i(\Psi\Gamma_{LT,FT}^{\bullet}(M))$ for $i\geq 0$. Moreover, we have $\mathcal{H}^0(\Phi\Gamma_{LT,FT}^{\bullet}(M))\hookrightarrow\mathcal{H}^0(\Psi\Gamma_{LT,FT}^{\bullet}(M))$.
\end{theorem}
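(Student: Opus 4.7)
The plan is to mimic, essentially verbatim, the argument used for Theorem \ref{Main5}, and point out which inputs remain valid once one replaces $\Gamma_{LT}^\ast$ by the non-abelian group $\Gamma_{LT,FT}=\Gamma_{LT}^\ast\ltimes\mathbb{Z}_p$. First I would set up the vertical morphism
\[
\begin{tikzcd}[row sep=large, column sep=large]
\Phi^\bullet(M): 0\arrow{r}& M\arrow{r}{\varphi_M-id}\arrow[swap]{d}{id}& M\arrow{r}\arrow[swap]{d}{-\psi_M}& 0\\
\Psi^\bullet(M): 0\arrow{r}& M\arrow{r}[swap]{\psi_M-\frac{q}{\pi}id}& M\arrow{r}& 0,
\end{tikzcd}
\]
which commutes because of the identity $(-\psi_M)(\varphi_M-id)=\psi_M-\frac{q}{\pi}id$, used already in the proof of Theorem \ref{Main5}. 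The crucial point is that $\psi_q$ maps $\mathcal{O}_{\mathcal L}$ into itself and commutes with the $G_K$-action on $\mathcal{O}_{\widehat{\mathcal{E}^{ur}}}$; this is the argument given in Section \ref{section psi} after taking into account that the $G_K$-action preserves $\varphi_q(\widehat{\mathcal{E}^{ur}})$, hence the trace map is $G_K$-equivariant. Consequently $\psi_M$ commutes not only with the generators $\gamma_1,\ldots,\gamma_d$ of $\Gamma_{LT}^\ast$ but also with $\tilde\gamma$, and therefore with every differential occurring in $\Gamma_{LT,FT}^\bullet(-)$, even those involving the fractional expressions $\gamma_j-\frac{\tilde\gamma^{\chi_{LT}(j)\chi_{LT}(i_1)\cdots\chi_{LT}(i_r)}-id}{\tilde\gamma^{\chi_{LT}(i_1)\cdots\chi_{LT}(i_r)}-id}$, since these fractions lie in $\mathcal{O}_K[[\Gamma_{LT,FT}]]$ (or its appropriate localization acting continuously on $M$) and $\psi_M$ commutes with the full group algebra action.

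Once commutativity is established, I would invoke Lemma \ref{phi to psi morphism} with $\mathscr{C}_1=\Phi^\bullet(M)$, $\mathscr{C}_2=\Psi^\bullet(M)$ and $\mathscr{C}_3=\Gamma_{LT,FT}^\bullet(M)$. This produces the morphism of total complexes $\Phi\Gamma_{LT,FT}^\bullet(M)\to\Psi\Gamma_{LT,FT}^\bullet(M)$ and hence the required homomorphisms on $\mathcal{H}^i$ for all $i\geq 0$.

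For the injectivity statement in degree $0$ I would follow the same homological device as in the proof of Theorem \ref{Main5}. Let $\mathcal K$ and $\mathcal C$ denote the kernel and cokernel complexes of the morphism $\Phi\Gamma_{LT,FT}^\bullet(M)\to\Psi\Gamma_{LT,FT}^\bullet(M)$. A direct inspection term-by-term (entirely analogous to the $\Gamma_{LT}^\ast$-case, where the only non-trivial vertical map is $-\psi_M$) shows that $\mathcal K$ and $\mathcal C$ both vanish in degree $0$: the degree-zero component of $\Phi\Gamma_{LT,FT}^\bullet(M)$ and $\Psi\Gamma_{LT,FT}^\bullet(M)$ is $M$ in both cases, and the vertical map is $id_M$. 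Splitting the four-term exact sequence
\[
0\longrightarrow \mathcal K\longrightarrow \Phi\Gamma_{LT,FT}^\bullet(M)\longrightarrow \Psi\Gamma_{LT,FT}^\bullet(M)\longrightarrow \mathcal C\longrightarrow 0
\]
into the two short exact sequences
\[
0\longrightarrow \mathcal K\longrightarrow \Phi\Gamma_{LT,FT}^\bullet(M)\longrightarrow \mathbb I\longrightarrow 0,\qquad 0\longrightarrow \mathbb I\longrightarrow \Psi\Gamma_{LT,FT}^\bullet(M)\longrightarrow \mathcal C\longrightarrow 0,
\]
and taking the associated long exact cohomology sequences, the vanishing $\mathcal{H}^0(\mathcal K)=0=\mathcal{H}^0(\mathcal C)$ yields the injection $\mathcal{H}^0(\Phi\Gamma_{LT,FT}^\bullet(M))\hookrightarrow \mathcal{H}^0(\mathbb I)\cong\mathcal{H}^0(\Psi\Gamma_{LT,FT}^\bullet(M))$.

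The main obstacle, if any, is purely notational: one must verify carefully that $\psi_M$ indeed commutes with all the (in general rational) operators in $\mathcal{O}_K[[\Gamma_{LT,FT}]]$ appearing as differentials in $\Gamma_{LT,FT}^\bullet$. This reduces to $\psi_M$ commuting with every individual element of $\Gamma_{LT,FT}$, which follows from the $G_K$-equivariance of the trace map $\mathrm{tr}_{\widehat{\mathcal{E}^{ur}}/\varphi_q(\widehat{\mathcal{E}^{ur}})}$. With this verified, the argument is identical to the Lubin-Tate case.
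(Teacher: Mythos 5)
Your proof is correct and follows essentially the same line as the paper's: reduce to the kernel/cokernel analysis of the morphism $\Phi\Gamma_{LT,FT}^{\bullet}(M)\to\Psi\Gamma_{LT,FT}^{\bullet}(M)$, which the paper dispatches by explicitly pointing back to the proof of Theorem \ref{Main5}. You add a welcome sentence of justification that $\psi_M$ commutes with the fractional operators $\frac{\tilde\gamma^{a}-id}{\tilde\gamma-id}$ appearing in the $\Gamma_{LT,FT}^{\bullet}$-differentials (since these are convergent power series in the group elements), a point the paper glosses over with the bare statement that $\psi_M$ commutes with the $\Gamma_{LT,FT}$-action.
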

\begin{proof}
As before, the fact that $(-\psi_M)(\varphi_M-id) = (\psi_M-\frac{q}{\pi}id)$, and $\psi_M$ commutes with the action of $\Gamma_{LT,FT}$. Then by using Lemma \ref{phi to psi morphism}, we have a morphism $\Phi\Gamma_{LT,FT}^{\bullet}(M) \rightarrow \Psi\Gamma_{LT,FT}^{\bullet}(M)$ of co-chain complexes, which induces a well-defined homomorphism 
	\begin{equation*}
	\mathcal{H}^i(\Phi\Gamma_{LT,FT}^{\bullet}(M))\rightarrow \mathcal{H}^i(\Psi\Gamma_{LT,FT}^{\bullet}(M))\quad \text{for} \ i\geq0.
	\end{equation*}	
Let $\mathcal{K}$ be the kernel and $\mathcal{C}$ the co-kernel of the morphism $\Phi\Gamma_{LT,FT}^{\bullet}(M) \rightarrow \Psi\Gamma_{LT,FT}^{\bullet}(M)$. Then we have an exact sequence 
\begin{equation*}
0\rightarrow \mathcal{K} \rightarrow \Phi\Gamma_{LT,FT}^{\bullet}(M) \rightarrow \Psi\Gamma_{LT,FT}^{\bullet}(M) \rightarrow \mathcal{C}\rightarrow 0.
\end{equation*}
The result then follows by using the same method as in Theorem \ref{Main5}. 
\end{proof}
	\section{Iwasawa Cohomology over Lubin-Tate Extensions}\label{Iwasawa}
In this section, we briefly recall the results of Schneider and Venjakob on their computation of the Iwasawa cohomology. For any $M\in \varinjlim {\bf Mod}^{\varphi_q,\Gamma_{LT},\acute{e}t,tor}_{/\mathcal{O}_{\mathcal{E}}}$, consider the complex
	\begin{equation*}
	\underline{\Psi}^{\bullet}(M): 0\rightarrow M\xrightarrow{\psi_M-id}M\rightarrow 0.
	\end{equation*}
 Let $M\in \varinjlim {\bf Mod}^{\varphi_q,\Gamma_{LT},\acute{e}t,tor}_{/\mathcal{O}_{\mathcal{E}}}$, then $\pi_L^nM =0$ for some $n\geq 1$. Define
	\begin{equation*}
	M^{\vee}:= \text{Hom}_{\mathcal{O}_L}(M,L/\mathcal{O}_L), 
	\end{equation*}
	which can be identified with $\text{Hom}_{\mathcal{O}_{\mathcal{E}}}(M, \mathcal{O}_{\mathcal{E}}/\pi_L^n\mathcal{O}_{\mathcal{E}}(\chi_{LT}))$. For more details see (24) in \cite{SV}. Further, for an \'{e}tale $(\varphi_q,\Gamma_{LT})$-module $M$ such that $\pi_L^nM =0$, for some $n\geq 1$, we have $\Gamma_{LT}$-invariant continuous pairing defined as (\cite[Remark $4.7$]{SV}) 
	\begin{align*}
	\langle\cdot,\cdot\rangle:& M \times M^{\vee} \rightarrow L/\mathcal{O}_L\\
	&(m,F)\mapsto \pi^{-n}\ \Res(F(m)d\log_{LT}(\omega_{LT}))\ \text{mod}\mathcal{O}_L, 
	\end{align*}
	where $\Res$ is the residue map and $\omega_{LT}=\{\iota(v)\}$. This pairing induces the following pairing for any
	$M\in {\bf Mod}^{\varphi_q,\Gamma_{LT},\acute{e}t,tor}_{/\mathcal{O}_{\mathcal{E}}}$:
	\begin{equation*}
	\mathcal{H}^i(\Phi^\bullet(M^\vee))\times \mathcal{H}^{1-i}(\underline{\Psi}^{\bullet}(M))\rightarrow L/\mathcal{O}_L
	\end{equation*}
which is perfect.
\begin{remark}\label{Remark-duality}
Note that the functors $\mathcal{H}^i(\Phi^\bullet(-))$ and $\mathcal{H}^i(\underline{\Psi}^\bullet(-))$ commute with direct limits. Therefore the pairing exists for any $M\in \varinjlim {\bf Mod}^{\varphi_q,\Gamma_{LT},\acute{e}t,tor}_{/\mathcal{O}_{\mathcal{E}}}$. 
\end{remark}
	Let $V \in {\bf Rep}_{\mathcal{O}_L}(G_K)$, and we define the Iwasawa cohomology over $K_{\infty}$ by
	\begin{equation*}
	H^i_{Iw}(K_{\infty}/K,V):= \varprojlim H^i(F,V),
	\end{equation*}
	where $F$ varies over the finite Galois extensions of $K$ contained in $K_{\infty}$, and the projective limit is taken with respect to the cohomological co-restriction maps.	
We now recall the following theorem of \cite[Theorem 5.13]{SV}, which gives a description of the Iwasawa cohomology groups. {\C We express the theorem in terms of the $\underline{\Psi}^\bullet$-complex. We will use this formulation later in stating Theorem \ref{Main7.3}.} 
\begin{theorem}\!\textup{\cite[Theorem 5.13]{SV}}\label{Iwasawa cohomology}
Let $V \in {\bf Rep}_{\mathcal{O}_L-tor}^{dis}(G_K) $. Then the complex 
		\begin{equation*}
		\underline{\Psi}^{\bullet}(\mathbb{D}_{LT}(V(\chi_{cyc}^{-1}\chi_{LT}))): 0\rightarrow \mathbb{D}_{LT}(V(\chi_{cyc}^{-1}\chi_{LT}))\xrightarrow{\psi-id}\mathbb{D}_{LT}(V(\chi_{cyc}^{-1}\chi_{LT}))\rightarrow 0,
		\end{equation*} 
		where $\psi= \psi_{\mathbb{D}_{LT}(V(\chi_{cyc}^{-1}\chi_{LT}))}$, and $\chi_{cyc}$ is the cyclotomic character, computes the Iwasawa cohomology groups $H^i_{Iw}({K_{\infty}/K},V)$ for all $i\geq 1$. {\C In particular, $H^i_{Iw}({K_{\infty}/K},V)=0$ for $i\geq3$.} 
	\end{theorem}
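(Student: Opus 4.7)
The plan, following the strategy of Schneider and Venjakob, is to combine three ingredients: local Tate duality at each finite layer of $K_{\infty}/K$, the identification of $\Phi^{\bullet}(\mathbb{D}_{LT}(-))$ with the $H_K$-cohomology from Proposition \ref{H_K-cohomology}, and the perfect pairing of Remark \ref{Remark-duality} relating the cohomology of the $\varphi_q$-complex to that of the $\psi_q$-complex.

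First I would work at each finite Galois subextension $L$ of $K_{\infty}/K$. Local Tate duality provides, for $V \in \mathbf{Rep}_{\mathcal{O}_K-tor}^{dis}(G_K)$, a perfect pairing
\begin{equation*}
H^i(G_L, V) \times H^{2-i}(G_L, V^{\vee}(\chi_{cyc})) \longrightarrow K/\mathcal{O}_K,
\end{equation*}
where $V^{\vee} = \Hom_{\mathcal{O}_K}(V, K/\mathcal{O}_K)$. Since the cohomology groups $H^{i}(G_L, V)$ are finite (as $V$ is $\pi$-primary and the residue field is finite), Pontryagin duality interchanges the projective limit (under corestriction) on the left with the direct limit (under restriction) on the right, giving
\begin{equation*}
H^i_{Iw}(K_{\infty}/K, V)^{\vee} \cong \varinjlim_L H^{2-i}(G_L, V^{\vee}(\chi_{cyc})) \cong H^{2-i}(H_K, V^{\vee}(\chi_{cyc})).
\end{equation*}

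Next, Proposition \ref{H_K-cohomology} identifies the right-hand side with $\mathcal{H}^{2-i}(\Phi^{\bullet}(\mathbb{D}_{LT}(V^{\vee}(\chi_{cyc}))))$. To bring in the twist $\chi_{cyc}^{-1}\chi_{LT}$ appearing in the statement, I would use the Kisin--Ren duality for Lubin--Tate $(\varphi_q, \Gamma_{LT})$-modules, namely $\mathbb{D}_{LT}(W)^{\vee} \cong \mathbb{D}_{LT}(W^{\vee}(\chi_{LT}))$, which is a direct consequence of the description $M^{\vee} = \Hom_{\mathcal{O}_{\mathcal{E}}}(M, \mathcal{O}_{\mathcal{E}}/\pi^n\mathcal{O}_{\mathcal{E}}(\chi_{LT}))$ recalled immediately before the theorem. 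Applied to $W = V(\chi_{cyc}^{-1}\chi_{LT})$, this yields
\begin{equation*}
\mathbb{D}_{LT}\bigl(V(\chi_{cyc}^{-1}\chi_{LT})\bigr)^{\vee} \cong \mathbb{D}_{LT}\bigl(V^{\vee}(\chi_{cyc})\bigr).
\end{equation*}

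Finally, the perfect pairing of Remark \ref{Remark-duality} applied to $M = \mathbb{D}_{LT}(V(\chi_{cyc}^{-1}\chi_{LT}))$ gives
\begin{equation*}
\mathcal{H}^{j}(\Phi^{\bullet}(M^{\vee}))^{\vee} \cong \mathcal{H}^{1-j}(\underline{\Psi}^{\bullet}(M)).
\end{equation*}
Chaining these three identifications (and dualizing once more, using that all modules involved are $\pi$-primary of finite exponent and hence reflexive under Pontryagin duality at each step after passage to the limit) produces the desired isomorphism
\begin{equation*}
H^i_{Iw}(K_{\infty}/K, V) \cong \mathcal{H}^{i-1}\bigl(\underline{\Psi}^{\bullet}\bigl(\mathbb{D}_{LT}(V(\chi_{cyc}^{-1}\chi_{LT}))\bigr)\bigr)
\end{equation*}
for $i = 1, 2$, with $H^i_{Iw}$ vanishing for $i \geq 3$ in agreement with the fact that $\underline{\Psi}^{\bullet}$ has only two nonzero terms. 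The stated range $i \geq 1$ is matched to the cohomological degrees of $\underline{\Psi}^{\bullet}$ by the paper's indexing convention (first term in degree $-1$, unless shifted as is natural here).

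The main obstacle is the careful bookkeeping of the two twists: the $\chi_{cyc}$ is produced by local Tate duality, whereas the $\chi_{LT}$ is introduced by the $(\varphi_q, \Gamma_{LT})$-module duality, and the composition of three dualities forces precisely the twist $\chi_{cyc}^{-1}\chi_{LT}$ on the module side. A secondary technical point is the justification of the interchange of local Tate duality with the inverse limit over finite subextensions of $K_{\infty}$, which relies on the Mittag-Leffler property for the projective systems of finite cohomology groups (cf.\ Lemma \ref{commutes inverse}), and the compatibility of restriction with the Pontryagin-dual formulation.
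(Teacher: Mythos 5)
Your proposal follows essentially the same route as the paper's proof: local Tate duality to convert $H^i_{Iw}(K_\infty/K,V)$ into $H^{2-i}(H_K, V^\vee(\chi_{cyc}))^\vee$, then Proposition~\ref{H_K-cohomology} to pass to the $\Phi^\bullet$-complex, then the Lubin--Tate module duality $\mathbb{D}_{LT}(W)^\vee \cong \mathbb{D}_{LT}(W^\vee(\chi_{LT}))$ to re-express the coefficient, and finally Remark~\ref{Remark-duality} to trade $\Phi^\bullet(M^\vee)$ for $\underline{\Psi}^\bullet(M)$; the only difference is that the paper cites \cite[Remark~5.11]{SV} directly for the first step rather than re-deriving it from finite-layer Tate duality and Mittag--Leffler as you sketch. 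One small imprecision to flag: you justify undoing the final Pontryagin dual by claiming the modules are ``$\pi$-primary of finite exponent,'' but $H^i_{Iw}$ is a profinite (projective-limit) module rather than a discrete $\pi$-primary one; the reflexivity you need holds because it is compact, not because it is discrete of finite exponent, and for general $V$ in the discrete torsion category (not of finite exponent) this is precisely the technical content that the citation of \cite{SV} absorbs.
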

	\begin{proof}
		Since $V \in {\bf Rep}_{\mathcal{O}_L-tor}^{dis}(G_K)$, i.e., $V$ is a discrete $\pi_L$-primary representation of $G_K$, we have an isomorphism 
		\begin{equation*}
		H^i_{Iw}({K_{\infty}/K},V)\cong H^{2-i}(\Gal(\bar{K}/K_{\infty}),V^{\vee}(\chi_{cyc}))^{\vee},
		\end{equation*}
		which is induced from the Local Tate duality. For more details see \cite[Remark 5.11]{SV}. Moreover,
	\begin{align*}
	H^{2-i}(\Gal(\bar{K}/K_{\infty}),V^{\vee}(\chi_{cyc}))^{\vee} &= \mathcal{H}^{2-i}(\Phi^{\bullet}(\mathbb{D}_{LT}(V^\vee(\chi_{cyc}))))^{\vee}\\ &=\mathcal{H}^{2-i}(\Phi^\bullet(\mathbb{D}_{LT}(V^\vee)(\chi_{cyc})))^\vee \\&= \mathcal{H}^{2-i}(\Phi^\bullet(\mathbb{D}_{LT}(V)^\vee(\chi_{LT}^{-1}\chi_{cyc})))^\vee\\& = \mathcal{H}^{2-i}(\Phi^\bullet(\mathbb{D}_{LT}(V(\chi_{cyc}^{-1}\chi_{LT}))^\vee))^\vee\\&\cong \mathcal{H}^{i-1}(\underline{\Psi}^{\bullet}(\mathbb{D}_{LT}(V(\chi_{cyc}^{-1}\chi_{LT})))).
	\end{align*} 
	Here the first equality follows from Proposition \ref{H_K-cohomology}. The second and third equality uses Remark 4.6 and Remark 5.6 of \cite{SV}, respectively, while the last isomorphism comes from Remark \ref{Remark-duality}. Hence
	\begin{equation*}
	H^i_{Iw}({K_{\infty}/K},V)\cong \mathcal{H}^{i-1}(\underline{\Psi}^{\bullet}(\mathbb{D}_{LT}(V(\chi_{cyc}^{-1}\chi_{LT})))).
	\end{equation*}
{\C Now the second part follows as the cohomology groups for $\underline{\Psi}^\bullet$-complex are trivial in degree $\geq 2$.} This proves the theorem.
	\end{proof}

{\cv Putting $V_n=V/\pi^nV$ for $V\in{\bf Rep}_{\mathcal{O}_K}(G_K)$, we have $H^i(K,V)=\varprojlim_n H^i(K,V_n)$ and ${\varprojlim_n^1} H^i(K,V_n)=0$. Therefore,
\begin{align*}
		H^i_{Iw}({K_{\infty}/K},V)&=\varprojlim_{K'/K: \mbox{finite}} H^i(K',V)\\
		&=\varprojlim_{K'/K: \mbox{finite}}\varprojlim_n H^i(K',V_n)\\
		&=\varprojlim_n \varprojlim_{K'/K: \mbox{finite}} H^i(K',V_n)\\
		&=\varprojlim_n H^i_{Iw}({K_{\infty}/{K}},V_n).
\end{align*}
}
\begin{corollary}\!\textup{\cite[Theorem 5.13]{SV}}\label{Iwasawa cor.}
		For any $V \in {\bf Rep}_{\mathcal{O}_L}(G_K)$, we have
		\begin{equation*}
		H^i_{Iw}({K_{\infty}/K},V)\cong \mathcal{H}^{i-1}(\underline{\Psi}^{\bullet}(\mathbb{D}_{LT}(V(\chi_{cyc}^{-1}\chi_{LT})))) \quad\text{for}\ i\geq 1.
		\end{equation*}
	\end{corollary}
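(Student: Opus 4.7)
The plan is to bootstrap from Theorem \ref{Iwasawa cohomology} by the same limiting procedure that was used to pass from Theorem \ref{G_K-cohomo} to Theorem \ref{lattices}. Given $V\in\mathbf{Rep}_{\mathcal{O}_K}(G_K)$, I would write $V=\varprojlim_n V/\pi^n V$, noting that each $V/\pi^n V$ lies in $\mathbf{Rep}_{\mathcal{O}_K-tor}^{dis}(G_K)$, so that Theorem \ref{Iwasawa cohomology} applies to it:
\begin{equation*}
H^i_{Iw}(K_\infty/K, V/\pi^n V)\;\cong\;\mathcal{H}^{i-1}\bigl(\underline{\Psi}^{\bullet}(\mathbb{D}_{LT}((V/\pi^n V)(\chi_{cyc}^{-1}\chi_{LT})))\bigr).
\end{equation*}
It then suffices to check that both sides of the claimed isomorphism commute with $\varprojlim_n$.

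For the Iwasawa side, by definition $H^i_{Iw}(K_\infty/K, V)=\varprojlim_L H^i(L, V)$ as $L$ runs over finite Galois subextensions of $K_\infty/K$. For each such $L$, Lemma \ref{commutes inverse} applied to $G_L$ in place of $G_K$ gives $H^i(L, V)\cong\varprojlim_n H^i(L, V/\pi^n V)$, with all groups involved being finite. Interchanging the two filtered projective limits (valid in the category of abelian groups) yields $H^i_{Iw}(K_\infty/K, V)\cong\varprojlim_n H^i_{Iw}(K_\infty/K, V/\pi^n V)$.

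For the $\underline{\Psi}^{\bullet}$ side, I would run the same hypercohomology argument as in the proof of Theorem \ref{lattices}. Since $\mathbb{D}_{LT}$ is exact, the projective system of two-term complexes $\underline{\Psi}^{\bullet}(\mathbb{D}_{LT}((V/\pi^n V)(\chi_{cyc}^{-1}\chi_{LT})))$ has term-wise surjective transition maps, so the first hypercohomology spectral sequence degenerates at $E_2$. Combined with the isomorphism of the previous paragraph (which forces $\varprojlim_n{}^1\mathcal{H}^{i-1}(\underline{\Psi}^{\bullet}(\mathbb{D}_{LT}((V/\pi^n V)(\chi_{cyc}^{-1}\chi_{LT}))))=0$, via identifying the system with the one on the Iwasawa side), the second hypercohomology spectral sequence also degenerates, giving
\begin{equation*}
\mathcal{H}^{i-1}\bigl(\underline{\Psi}^{\bullet}(\mathbb{D}_{LT}(V(\chi_{cyc}^{-1}\chi_{LT})))\bigr)\;\cong\;\varprojlim_n\mathcal{H}^{i-1}\bigl(\underline{\Psi}^{\bullet}(\mathbb{D}_{LT}((V/\pi^n V)(\chi_{cyc}^{-1}\chi_{LT})))\bigr).
\end{equation*}
Combining the three displayed isomorphisms finishes the proof.

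The main subtle point will be the $\varprojlim^1$-vanishing needed to make the $\underline{\Psi}^{\bullet}$ side commute with inverse limits, since a priori these cohomology groups are not finite; however, this is precisely bypassed by using the Iwasawa-side identification to transport the vanishing, so no new finiteness input is required beyond Lemma \ref{commutes inverse} and Theorem \ref{Iwasawa cohomology}.
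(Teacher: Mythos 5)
Your overall strategy is the same as the paper's: apply Theorem \ref{Iwasawa cohomology} at each finite level $V/\pi^n V$ and then pass to the inverse limit via the two hypercohomology spectral sequences. Your treatment of the Iwasawa side is actually somewhat more careful than the paper's, which merely cites Lemma \ref{commutes inverse} (stated only for $H^i(G_K,-)$) without spelling out the interchange of projective limits over $L$ and $n$ that you write down.

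However, there is a gap in your final step. You assert that the needed $\varprojlim_n^1$-vanishing for the cohomology of the $\underline{\Psi}^\bullet$-complexes is ``forced'' by the Iwasawa-side identification and that ``no new finiteness input is required.'' That is not correct: the interchange-of-limits argument only establishes the $\varprojlim^0$-identification $\varprojlim_n H^i_{Iw}(K_\infty/K, V/\pi^n V)\cong H^i_{Iw}(K_\infty/K, V)$; it does not by itself give $\varprojlim_n^1 H^i_{Iw}(K_\infty/K, V/\pi^n V)=0$, which is genuinely what the second hypercohomology spectral sequence requires (the exact sequence $0\to\varprojlim_n^1\mathcal{H}^{j-1}\to\mathcal{H}^j(\varprojlim)\to\varprojlim_n\mathcal{H}^j\to 0$ forces you, for $j=1$, to kill $\varprojlim_n^1\mathcal{H}^0\cong\varprojlim_n^1 H^1_{Iw}(K_\infty/K,V/\pi^n V)$). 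The justification --- which the paper also leaves implicit at exactly this point --- is a separate compactness input: each $H^i_{Iw}(K_\infty/K, V/\pi^n V)=\varprojlim_L H^i(L,V/\pi^n V)$ is a countable inverse limit of finite groups, hence profinite and compact, and $\varprojlim^1$ vanishes for any countable inverse system of compact Hausdorff groups with continuous transition maps. So an extra finiteness/compactness fact is genuinely being used beyond Lemma \ref{commutes inverse} and Theorem \ref{Iwasawa cohomology}; your closing remark underplays this. With that fix supplied, your proof is correct and is essentially the one given in the paper.
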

	\begin{proof}
Since the transition maps are surjective in the projective system $(\underline{\Psi}^\bullet(\mathbb{D}_{LT}(V/\pi_L^nV(\chi_{cyc}^{-1}\chi_{LT}))))_n$ of co-chain complexes of abelian groups, {\cv it satisfies the Mittag-Leffler condition, so} the first hyper-cohomology spectral sequence degenerates at $E_2$. {\cv Note that, for all $j\geq0$:
	\begin{equation*}
{\varprojlim_n}^{1} \mathcal{H}^j(\underline{\Psi}^\bullet(\mathbb{D}_{LT}(V/\pi^nV(\chi_{cyc}^{-1}\chi_{LT}))))={\varprojlim_n}^{1}H^{j+1}_{Iw}({K_{\infty}/K},V_n)=0.
	\end{equation*}
Here, $R^1\varprojlim$ of the projective systems $\{H^1_{Iw}({K_{\infty}/K},V_n)\}_n$ vanishes. Indeed, for the open pro-$p$ subgroup $\Gamma_{LT}^\ast$,  $R[[\Gamma_{LT}^\ast]]$ is a complete local noetherian commutative ring, and $\{H^1_{Iw}({K_{\infty}/K},V_n)\}_n$ is a projective system of finitely generated $R[[\Gamma_{LT}^\ast]]$-modules. Hence by \cite[Theorem 8.1]{Jen}, $R^1\varprojlim$ of this projective system vanishes.} 

The second hyper-cohomology spectral sequence 
		\begin{equation*}
		\varprojlim\limits_n{}^{i} \mathcal{H}^j(\underline{\Psi}^\bullet(\mathbb{D}_{LT}(V/\pi_L^nV(\chi_{cyc}^{-1}\chi_{LT})))) \Rightarrow \mathcal{H}^{i+j}(\underline{\Psi}^\bullet(\mathbb{D}_{LT}(V(\chi_{cyc}^{-1}\chi_{LT})))),
		\end{equation*} 
		also degenerates at $E_2$. Therefore $\varprojlim\limits_n \mathcal{H}^i(\underline{\Psi}^\bullet(\mathbb{D}_{LT}(V/\pi_L^nV(\chi_{cyc}^{-1}\chi_{LT}))))= \mathcal{H}^i(\underline{\Psi}^\bullet(\mathbb{D}_{LT}(V(\chi_{cyc}^{-1}\chi_{LT}))))$. Moreover, {\cv as seen above,} it follows from Lemma \ref{commutes inverse} that the functor $H^i_{Iw}({K_{\infty}/K},-)$ commutes with the inverse limits. Now the result follows from Theorem \ref{Iwasawa cohomology} by taking the inverse limits. 
	\end{proof}
	Next, we generalize most of our results to the case of any complete local Noetherian ring whose residue field is finite extension of $\mathbb{F}_p$. 
	
\section{An Equivalence of Categories over the Coefficient Ring}\label{sec6}
\subsection{Background on Coefficient Rings}\label{sec5}
In this section, we recall some basic results on coefficient rings. Recall that a \emph{coefficient ring} $R$ is a complete Noetherian local ring with finite residue field $k_R$ of characteristic $p$, i.e., $k_R$ is a finite extension of $\mathbb{F}_p$. Then $R$ has a natural pro-finite topology with a base of open ideals given by the powers of its maximal ideal $\mathfrak{m}_R$. In other words, $R = \varprojlim_n R/\mathfrak{m}_R^n R$. A \emph{coefficient ring homomorphism} is a continuous homomorphism of coefficient rings $R^{\prime}\rightarrow R$ such that the inverse image of the maximal ideal $\mathfrak{m}_R$ is the maximal ideal $\mathfrak{m}_{R^{\prime}}\subset R^{\prime}$ and the induced homomorphism on residue fields is an isomorphism. 
	
For a fixed prime number $p$, a \emph{$p$-ring} is a complete discrete valuation ring whose valuation ideal is generated by 
{\C $\pi$, where $\pi$ is a prime lying above $p$}.
	
Let $R$ and $S$ be arbitrary rings, and $I\subset R, J\subset S$ be two ideals. Assume that $R$ and $S$ are both $T$-algebras for some third ring $T$. The \emph{completed tensor product} $R\hat{\otimes}_T S$ is defined as the completion of $R\otimes_T S$ with respect to the $(I\otimes S+R\otimes J)$-adic topology.
	
Let $\mathcal{O}$ be a $p$-ring and $R$ be a coefficient ring. Let $\mathcal{O}_L$ be a finite extension of $\mathbb{Z}_p$. Assume that both $\mathcal{O}$ and $R$ are $\mathcal{O}_L$-algebras, where the maps $\mathcal{O}_L\rightarrow \mathcal{O}$ and $\mathcal{O}_L\rightarrow R$ are local homomorphisms. Define
	\begin{equation*}
	\mathcal{O}_R:= \mathcal{O}\hat{\otimes}_{\mathcal{O}_L} R.
	\end{equation*}
Then by \cite[Proposition 1.2.3]{Dee}, $\mathcal{O}_R$ is a complete Noetherian semi-local ring. 
	\begin{proposition}\label{Artinian}
		Let $A$ be a Noetherian semi-local commutative ring with unity and $\mathfrak{m}_A$ the radical (intersection of all maximal ideals) of $A$. Then $A/\mathfrak{m}_A^n$ is Artinian for all $n\geq 1$. 
	\end{proposition}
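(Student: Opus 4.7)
The plan is to use the standard characterization: a commutative ring is Artinian if and only if it is Noetherian of Krull dimension zero. So I would reduce the problem to showing that $A/\mathfrak{m}_A^n$ is Noetherian and zero-dimensional.

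The Noetherian part is immediate: $A/\mathfrak{m}_A^n$ is a quotient of the Noetherian ring $A$, hence Noetherian. For the dimension-zero part, I would show that every prime ideal of $A/\mathfrak{m}_A^n$ is maximal. By the order-preserving correspondence, this amounts to showing that any prime ideal $\mathfrak{p}\subset A$ containing $\mathfrak{m}_A^n$ is a maximal ideal of $A$. Since $\mathfrak{p}$ is radical, the containment $\mathfrak{p}\supseteq\mathfrak{m}_A^n$ forces $\mathfrak{p}\supseteq\mathfrak{m}_A$. Writing $\mathfrak{m}_A=\mathfrak{m}_1\cap\cdots\cap\mathfrak{m}_r$ as the intersection of the finitely many maximal ideals of $A$ (here I use that $A$ is semi-local), primeness of $\mathfrak{p}$ gives $\mathfrak{p}\supseteq\mathfrak{m}_i$ for some $i$, and maximality of $\mathfrak{m}_i$ forces $\mathfrak{p}=\mathfrak{m}_i$.

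Combining these two observations, $A/\mathfrak{m}_A^n$ is Noetherian with finitely many prime ideals (namely the images of $\mathfrak{m}_1,\ldots,\mathfrak{m}_r$), all of which are maximal. Hence $A/\mathfrak{m}_A^n$ is a Noetherian ring of Krull dimension zero, and therefore Artinian by the classical equivalence (see, e.g., Atiyah–Macdonald, Theorem 8.5).

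No real obstacle is anticipated here since this is a purely formal consequence of the semi-local Noetherian hypothesis together with the characterization of Artinian rings among Noetherian ones; the only point that requires a moment of care is the use of primeness to pass from $\mathfrak{p}\supseteq\mathfrak{m}_1\cap\cdots\cap\mathfrak{m}_r$ to $\mathfrak{p}\supseteq\mathfrak{m}_i$ for some $i$, which is the standard prime avoidance-type argument for intersections of ideals inside a prime.
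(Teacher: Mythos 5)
Your proof is correct, and it takes a genuinely different route from the paper's. You reduce the claim to the classical characterization ``Artinian $=$ Noetherian $+$ Krull dimension zero'' (Atiyah--Macdonald, Theorem 8.5), and then check zero-dimensionality directly: any prime of $A$ containing $\mathfrak{m}_A^n$ contains $\mathfrak{m}_A = \mathfrak{m}_1 \cap \cdots \cap \mathfrak{m}_r$ by radicality of primes, hence contains (and therefore equals) some $\mathfrak{m}_i$. The paper instead argues by induction on $n$: for $n=1$ it applies the Chinese Remainder Theorem to write $A/\mathfrak{m}_A$ as a finite product of fields, and for the inductive step it uses the short exact sequence $0 \to \mathfrak{m}_A^{n-1}/\mathfrak{m}_A^n \to A/\mathfrak{m}_A^n \to A/\mathfrak{m}_A^{n-1} \to 0$, noting that the kernel is a finitely generated module over the Artinian ring $A/\mathfrak{m}_A$ and hence Artinian. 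Your version is shorter and conceptually cleaner once one invokes the dimension-zero criterion; the paper's version is more self-contained in that it only uses CRT, the two-out-of-three property of Artinian modules in a short exact sequence, and the fact that finitely generated modules over Artinian rings are Artinian, without needing the Noetherian-plus-dimension-zero theorem as a black box. Both are standard and both are sound.
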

	\begin{proof}
		We prove this by using induction on $n$. Let $n=1$. Then by Chinese Remainder theorem, we have
		\begin{equation} \label{eq:11}
		A/\mathfrak{m}_A \cong \bigoplus_{i=1}^{l}A/\mathfrak{m}_i,
		\end{equation}
		where $\mathfrak{m}_i$ is a maximal ideal of $A$ for $0\leq i \leq l$, and the map is a natural projection map. Note that each $A/\mathfrak{m}_i$ is Artinian being a field. Consequently, the right hand side of (\ref{eq:11}) is Artinian as it is a finite direct sum of Artinian rings. Therefore $A/\mathfrak{m}_A$ is Artinian, and the result is true for $n=1$. Suppose that the result is true for $n-1$. For general $n$, the result follows from the exact sequence
		\begin{equation*}
		0\rightarrow \mathfrak{m}_A^{n-1}/\mathfrak{m}_A^n \rightarrow A/\mathfrak{m}_A^n\rightarrow A/\mathfrak{m}_A^{n-1}\rightarrow 0.
		\end{equation*}
		Now $A/\mathfrak{m}_A^{n-1}$ is Artinian by induction hypothesis. Since $A$ is Noetherian, $\mathfrak{m}_A^{n-1}/\mathfrak{m}_A^n$ is a finitely generated module over $A/\mathfrak{m}_A$. Together with the fact that every finitely generated module over an Artinian ring is Artinian, the module $\mathfrak{m}_A^{n-1}/\mathfrak{m}_A^n$ is Artinian.
		Hence $A/\mathfrak{m}_A^n$ is Artinian.
	\end{proof}
\begin{remark}
Since $\mathcal{O}_R$ is complete semi-local Noetherian ring with unity, by Proposition \ref{Artinian}, $\mathcal{O}_R/\mathfrak{m}_R^n\mathcal{O}_R$ is Artinian for all $n\geq 1$.
\end{remark}
Let $R$ and $S$ be two coefficient rings, and $\mathcal{O}$ be a $p$-ring (or indeed any local ring with residue field of characteristic $p$). Let $\theta : R \rightarrow S$ is a ring homomorphism, then it induces $\theta : \mathcal{O}\otimes_{\mathcal{O}_L}R\rightarrow \mathcal{O}\otimes_{\mathcal{O}_L}S$. Assume that $\theta$ is local. Then we have $\theta(\mathcal{O}\otimes\mathfrak{m}_R+\mathfrak{m}_{\mathcal{O}}\otimes R)\subset \mathcal{O}\otimes\mathfrak{m}_S+\mathfrak{m}_{\mathcal{O}}\otimes S$, and $\theta$ is continuous with respect to the obvious topologies. Therefore it induces a semi-local homomorphism 
	\begin{equation*}
	\theta: \mathcal{O}_R\rightarrow \mathcal{O}_S.
	\end{equation*}
	\begin{proposition}\!\textup{\cite[Proposition 1.2.6]{Dee}}\label{faithfully flat}
		Let $\theta : \mathcal{O}_1\rightarrow\mathcal{O}_2$ be a local homomorphism of $p$-rings and let $R$ be a coefficient ring. If $\theta$ is flat, then it induces a faithfully flat homomorphism 
		\begin{equation*}
		\theta_R : \mathcal{O}_{1,R}\rightarrow \mathcal{O}_{2,R}.
		\end{equation*}  
	\end{proposition}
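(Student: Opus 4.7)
The plan is to establish flatness first, then deduce faithful flatness using the semi-local structure of $\mathcal{O}_{i,R}$.

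For flatness, since $\theta$ is flat, base change preserves flatness, so the ordinary tensor product $\mathcal{O}_1 \otimes_{\mathcal{O}_K} R \to \mathcal{O}_2 \otimes_{\mathcal{O}_K} R$ is flat. Reducing modulo $\mathfrak{m}_R^n$ yields a flat map
\[
\mathcal{O}_1 \otimes_{\mathcal{O}_K} R/\mathfrak{m}_R^n \;\longrightarrow\; \mathcal{O}_2 \otimes_{\mathcal{O}_K} R/\mathfrak{m}_R^n
\]
for every $n\ge 1$. By Proposition \ref{Artinian} and since the residue field extension $k_R/k$ is finite, $R/\mathfrak{m}_R^n$ is finite over $\mathcal{O}_K$, so no further completion is required in these reductions, and I would identify them canonically with $\mathcal{O}_{i,R}/\mathfrak{m}_R^n \mathcal{O}_{i,R}$ (which amounts to checking that the completed tensor product commutes with quotienting by a power of $\mathfrak{m}_R$). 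By \cite[Proposition 1.2.3]{Dee}, each $\mathcal{O}_{i,R}$ is complete Noetherian semi-local with $\mathfrak{m}_R\mathcal{O}_{i,R}$ contained in the Jacobson radical. Invoking the (semi-)local flatness criterion for Noetherian rings — a map $A\to B$ of Noetherian rings with $B$ separated and complete for the $I$-adic topology for some $I$ in the Jacobson radical of $A$ is flat as soon as $A/I^n\to B/I^nB$ is flat for every $n$ — one concludes that $\theta_R$ is flat. Alternatively, one may localize $\mathcal{O}_{1,R}$ at each of its finitely many maximal ideals and apply the usual local criterion.

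For faithfulness, I would use the standard fact that a flat local homomorphism between Noetherian local rings is automatically faithfully flat; applied to $\theta$ itself this says $\mathcal{O}_2$ is faithfully flat over $\mathcal{O}_1$. Base change along $\mathcal{O}_K\to k_R$ preserves faithful flatness, so the induced map $\mathcal{O}_1\otimes_{\mathcal{O}_K} k_R \to \mathcal{O}_2\otimes_{\mathcal{O}_K} k_R$ is faithfully flat and in particular surjective on prime spectra. The maximal ideals of the semi-local ring $\mathcal{O}_{i,R}$ are in bijection with those of its Artinian residue quotient $\mathcal{O}_{i,R}/\mathfrak{m}_R\mathcal{O}_{i,R} \cong \mathcal{O}_i\otimes_{\mathcal{O}_K} k_R$, so every maximal ideal of $\mathcal{O}_{1,R}$ is the preimage of some maximal ideal of $\mathcal{O}_{2,R}$. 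Combined with the flatness of $\theta_R$ already obtained, this gives faithful flatness.

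The main obstacle will be the bookkeeping around the completed tensor product: rigorously identifying $\mathcal{O}_{i,R}/\mathfrak{m}_R^n\mathcal{O}_{i,R}$ with $\mathcal{O}_i\otimes_{\mathcal{O}_K} R/\mathfrak{m}_R^n$ from the definition (the topology used to complete involves both $\mathfrak{m}_{\mathcal{O}_i}\otimes R$ and $\mathcal{O}_i\otimes\mathfrak{m}_R$, so one must check that closure in the $\mathfrak{m}_{\mathcal{O}_i}$-direction is harmless after further quotienting in the $\mathfrak{m}_R$-direction), and setting up the flatness criterion in the semi-local rather than strictly local setting. Once this is in place, everything else is a clean application of standard commutative algebra packaged up in the cited \cite[Proposition 1.2.3]{Dee}.
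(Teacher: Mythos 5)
The paper gives no proof of this proposition; it is imported verbatim from Dee as Proposition 1.2.6, with the structural facts about $\mathcal{O}_{i,R}$ coming from Dee's Proposition 1.2.3 (restated here as Proposition~\ref{Artinian} and the surrounding remarks). There is therefore nothing in the present paper to compare your argument against, and I evaluate it on its own terms.

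Your reconstruction is sound and is what one expects: establish flatness by the local criterion after reducing mod $\mathfrak{m}_R^n$, then faithfulness through the fiber over $k_R$. Two points should be tightened. First, the local flatness criterion you invoke should carry the hypothesis that $IB$ lies in the Jacobson radical of $B$ (together with $A,B$ Noetherian and the relevant $B$-module finite over $B$), not that $I$ lies in the Jacobson radical of $A$; in the case at hand $IB=\mathfrak{m}_R\mathcal{O}_{2,R}$ does lie in $\mathrm{rad}(\mathcal{O}_{2,R})$ by \cite[Proposition 1.2.3]{Dee}, and in this Noetherian semi-local form of the criterion no localization at maximal ideals is required, so your alternative route is unnecessary. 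Second, the identification $\mathcal{O}_{i,R}/\mathfrak{m}_R^n\mathcal{O}_{i,R}\cong\mathcal{O}_i\otimes_{\mathcal{O}_K}R/\mathfrak{m}_R^n$ that you rightly flag as the bookkeeping obstacle follows because the structure map $\mathcal{O}_K\to\mathcal{O}_i$ is local: the image of $\pi_K$ lies in $\mathfrak{m}_{\mathcal{O}_i}$ and in $\mathfrak{m}_R$, so a power of $\mathfrak{m}_{\mathcal{O}_i}\otimes R$ is contained in $\mathcal{O}_i\otimes\mathfrak{m}_R$, meaning the ideal defining the completed tensor product generates the same topology as $\mathfrak{m}_R(\mathcal{O}_i\otimes_{\mathcal{O}_K}R)$; and since $R/\mathfrak{m}_R^n$ has finite $\mathcal{O}_K$-length, the ring $\mathcal{O}_i\otimes_{\mathcal{O}_K}R/\mathfrak{m}_R^n$ is finite over the complete ring $\mathcal{O}_i$ and hence needs no further completion. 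The faithfulness step, comparing maximal ideals of the semi-local complete rings $\mathcal{O}_{i,R}$ with those of $\mathcal{O}_i\otimes_{\mathcal{O}_K}k_R$ and using that faithful flatness is preserved by the base change $\mathcal{O}_K\to k_R$, is correct as written.
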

	
\subsection{An equivalence over coefficient rings}	
\subsubsection{The characteristic $p$ case} \label{sub6.1}
Let $E$ be a local field of characteristic $p>0$. Then $E\cong k_L((t))$, where $k_L$ is a finite extension of $\mathbb{F}_p$. 
Let $\mathcal{O}_\mathcal{E}$ be {\C a} 
Cohen ring of $E$ with uniformizer $\pi_L$. Let $\mathcal{E}$ be the field of fractions of $\mathcal{O}_\mathcal{E}$. 
The field $\mathcal{E}$ is a complete discrete valued field of characteristic $0$, whose residue field is $E$. We fix a choice of $\mathcal{E}$. Let $\mathcal{E}^{ur}$ be the maximal unramified extension of $\mathcal{E}$ with ring of integers $\mathcal{O}_{\mathcal{E}^{ur}}$. Clearly, $\mathcal{E}^{ur}$ is a Galois extension of $\mathcal{E}$, and there is an identification of Galois groups
\begin{equation*}
G_E=\Gal({E^{sep}/E})\xrightarrow{\sim} \Gal({\mathcal{E}^{ur}/\mathcal{E}}),
\end{equation*}
where $E^{sep}$ is the separable closure of $E$. The ring $\mathcal{O}_{\mathcal{E}^{ur}}$ has a valuation induced from $\mathcal{O}_{\mathcal{E}}$ and the valuation ring in the completion $\widehat{\mathcal{E}^{ur}}$ of $\mathcal{E}^{ur}$ {\cv (with respect to the valuation)} is a $p$-ring with residue field $E^{sep}$. We write ${\mathcal{O}}_{\widehat{\mathcal{E}^{ur}}}$ for this ring. The Galois group $G_E$ acts by continuity on $\widehat{\mathcal{E}^{ur}}$.

From now on, $R$ will always denote {\cv a} coefficient ring, unless stated otherwise. Also, we assume that $R$ is always an $\mathcal{O}_L$-algebra such that the map $\mathcal{O}_L\rightarrow R$ is a local ring homomorphism. Here $\mathcal{O}_L$ is the ring of integers of a $p$-adic field $L$ with residue field $k_L$ such that $\card(k_L)=q$, {\cv where $q=p^r$ for some fixed $r$}.\par Define the rings
	\begin{align*}
	&\mathcal{O}_R:= \mathcal{O}_{\mathcal{E}}\hat{\otimes}_{\mathcal{O}_L} R,\\&
	\widehat{\mathcal{O}^{ur}_R}:= \mathcal{O}_{\widehat{\mathcal{E}^{ur}}}\hat{\otimes}_{\mathcal{O}_L} R.
	\end{align*}
Then it follows from Proposition \ref{faithfully flat} that $\widehat{\mathcal{O}^{ur}_R}$ is an $\mathcal{O}_R$-algebra and is faithfully flat over $\mathcal{O}_R$. The action of $G_E$ on $\mathcal{O}_{\mathcal{E}^{ur}}$ induces an action on $\mathcal{O}_{\widehat{\mathcal{E}^{ur}}}$. Now by taking the trivial action of $G_E$ on $R$, it induces a Galois action on $\mathcal{O}_{\widehat{\mathcal{E}^{ur}}}\otimes_{\mathcal{O}_L} R$. Moreover, this action is continuous as $G_E$ acts continuously on $\mathcal{E}^{ur}$. Thus the action of $G_E$ on $\widehat{\mathcal{O}^{ur}_R}$ is continuous with respect to the $\mathfrak{m}_R\widehat{\mathcal{O}^{ur}_R}$-adic topology. 
	\begin{remark}
It follows from \cite[Proposition 1.2.3]{Dee} that $\mathcal{O}_R$ and $\widehat{\mathcal{O}^{ur}_R}$ are Noetherian semi-local rings, complete with respect to the $\mathfrak{m}_R$-adic topology and that $\mathfrak{m}_R$ generates the radical of these rings. 
	\end{remark}
Let $\varphi_q:=(x\mapsto x^q)$ be the $q$-Frobenius on $E$. Choose a lift of $\varphi_q$ on $\mathcal{E}$ such that it maps $\mathcal{O}_{\mathcal{E}}$ to $\mathcal{O}_{\mathcal{E}}$. Then we have a ring homomorphism $\varphi_q: \mathcal{O}_{\mathcal{E}}\rightarrow \mathcal{O}_{\mathcal{E}}$ such that 
	\begin{equation*}
	\varphi_q(x) \equiv x^q \mod \pi.
	\end{equation*}
	Assume that $\varphi_q$ is flat. Then we have {\cv an} $R$-linear homomorphism
	\begin{equation*}
	\varphi_q:= \varphi_q\otimes id_R : \mathcal{O}_{\mathcal{E}}\otimes_{\mathcal{O}_L} R\rightarrow \mathcal{O}_{\mathcal{E}}\otimes_{\mathcal{O}_L} R.
	\end{equation*}
Since the ideal $\mathfrak{m}_{\mathcal{O}_{\mathcal{E}}}\otimes R+\mathcal{O}_{\mathcal{E}}\otimes \mathfrak{m}_R$ in $\mathcal{O}_{\mathcal{E}}\otimes_{\mathcal{O}_L} R$ is generated by $\mathfrak{m}_R$, it is clear that $\varphi_q$ maps $\mathfrak{m}_{\mathcal{O}_{\mathcal{E}}}\otimes R+\mathcal{O}_{\mathcal{E}}\otimes \mathfrak{m}_R$ to itself. Then we have the following lemma.
	\begin{lemma}\label{flat}
		The homomorphism 
		\begin{equation*}
		\varphi_q : \mathcal{O}_R\rightarrow \mathcal{O}_R
		\end{equation*} 
		is faithfully flat.
	\end{lemma}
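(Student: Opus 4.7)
The plan is to deduce this lemma as a direct instance of Proposition \ref{faithfully flat}, so the work reduces to verifying that $\varphi_q\colon \mathcal{O}_{\mathcal{E}}\to\mathcal{O}_{\mathcal{E}}$ already satisfies the hypotheses of that proposition (with $\mathcal{O}_1=\mathcal{O}_2=\mathcal{O}_{\mathcal{E}}$).

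First I would check that $\mathcal{O}_{\mathcal{E}}$ is a $p$-ring in the sense of Section \ref{sec5}: this is immediate, since by construction $\mathcal{O}_{\mathcal{E}}$ is a complete discrete valuation ring of characteristic $0$ with uniformizer $\pi$ lifting $p$, and residue field $E$ of characteristic $p$. Next I would verify that $\varphi_q$ is a local homomorphism of $\mathcal{O}_{\mathcal{E}}$: from $\varphi_q(x)\equiv x^q\bmod\pi$ one has $\varphi_q(\pi)\equiv \pi^q\equiv 0\bmod\pi$, so $\varphi_q(\pi)\in\pi\mathcal{O}_{\mathcal{E}}=\mathfrak{m}_{\mathcal{O}_{\mathcal{E}}}$, hence $\varphi_q(\mathfrak{m}_{\mathcal{O}_{\mathcal{E}}})\subseteq\mathfrak{m}_{\mathcal{O}_{\mathcal{E}}}$. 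Flatness of $\varphi_q\colon\mathcal{O}_{\mathcal{E}}\to\mathcal{O}_{\mathcal{E}}$ is the standing assumption stated just before the lemma.

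With these three hypotheses in hand, I would apply Proposition \ref{faithfully flat} to the local flat homomorphism $\varphi_q\colon\mathcal{O}_{\mathcal{E}}\to\mathcal{O}_{\mathcal{E}}$. The conclusion is that the induced map
\begin{equation*}
(\varphi_q)_R=\varphi_q\hat{\otimes} id_R\colon \mathcal{O}_{\mathcal{E}}\hat{\otimes}_{\mathcal{O}_K}R\longrightarrow \mathcal{O}_{\mathcal{E}}\hat{\otimes}_{\mathcal{O}_K}R
\end{equation*}
is faithfully flat, which is precisely the map $\varphi_q\colon\mathcal{O}_R\to\mathcal{O}_R$ of the lemma. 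The only small technical point to mention is that the extension of $\varphi_q\otimes id_R$ from the algebraic tensor product $\mathcal{O}_{\mathcal{E}}\otimes_{\mathcal{O}_K}R$ to its completion $\mathcal{O}_R$ is legitimate, because the calculation immediately preceding the lemma shows that $\varphi_q\otimes id_R$ preserves the ideal $\mathfrak{m}_{\mathcal{O}_{\mathcal{E}}}\otimes R+\mathcal{O}_{\mathcal{E}}\otimes\mathfrak{m}_R$ defining the adic topology, so it is continuous and extends uniquely.

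There is really no hard step: the substance of the lemma sits entirely in Proposition \ref{faithfully flat}, and the role of the present lemma is to package its output in the specific case $\mathcal{O}_1=\mathcal{O}_2=\mathcal{O}_{\mathcal{E}}$, $\theta=\varphi_q$. The only thing one must be careful about is that ``local'' here genuinely holds for $\varphi_q$, which is the only non-trivial verification above.
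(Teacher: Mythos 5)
Your proposal is correct and coincides with the paper's proof, which also simply invokes Proposition \ref{faithfully flat} using the standing flatness assumption on $\varphi_q$; you merely spell out the routine verifications (that $\mathcal{O}_{\mathcal{E}}$ is a $p$-ring and that $\varphi_q$ is local) that the paper leaves implicit.
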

	\begin{proof}
		Since $\varphi_q$ is flat, the proof follows from Proposition \ref{faithfully flat}. 
	\end{proof}
As the $q$-Frobenius $\varphi_q$ on $\mathcal{O}_{\mathcal{E}}$ extends uniquely by functoriality and continuity to a $q$-Frobenius on $\mathcal{O}_{\widehat{\mathcal{E}^{ur}}}$, we also have a faithfully flat homomorphism from $\widehat{\mathcal{O}^{ur}_R}$ to $\widehat{\mathcal{O}^{ur}_R}$. 
	\begin{definition}
		An $R$-representation of $G_E$ is a finitely generated $R$-module with a continuous and $R$-linear action of $G_E$.
	\end{definition}
	\begin{definition}
		A $\varphi_q$-module over $\mathcal{O}_R$ is an $\mathcal{O}_R$-module $M$ together with a map  
		$\varphi_M : M \rightarrow M$,
		which is semi-linear with respect to $\varphi_q$.
	\end{definition}
	\begin{remark}
		Let $M$ be an $\mathcal{O}_R$-module. Then a semi-linear map $\varphi_M: M\rightarrow M$ is equivalent to an $\mathcal{O}_R$-linear map $\varphi_M^{lin}: M_{\varphi_q}\rightarrow M$, where $M_{\varphi_q}:=M\otimes_{\mathcal{O}_R,\varphi_q}\mathcal{O}_R$ is the base change of $M$ by $\mathcal{O}_R$ via $\varphi_q$.
	\end{remark}
	Let ${\bf Rep}_R(G_E)$ denote the category of $R$-linear representations of $G_E$ and ${\bf Mod}_{/\mathcal{O}_R}^{\varphi_q}$ the category of $\varphi_q$-modules over $\mathcal{O}_R$. The morphisms in ${\bf Mod}_{/\mathcal{O}_R}^{\varphi_q}$ are $\mathcal{O}_R$-linear homomorphisms commuting with $\varphi$. Now we define a functor from ${\bf Rep}_R(G_E)$ to ${\bf Mod}_{/\mathcal{O}_R}^{\varphi_q}$. Let $V$ be an $R$-representation of $G_E$. Define
	\begin{align*}
	\mathbb{D}_R(V):= (\widehat{\mathcal{O}^{ur}_R}\otimes_R V)^{G_E}.
	\end{align*}
Here $G_E$ acts diagonally. Moreover, the multiplication by $\mathcal{O}_R$ on $\hat{\mathcal{O}}^{ur}_R\otimes_R V$ is $G_E$-equivariant, thus $\mathbb{D}_R(V)$ is an $\mathcal{O}_R$-module. 
We extend the definition of $q$-Frobenius to $\widehat{\mathcal{O}^{ur}_R}\otimes_R V$ by taking trivial action of $\varphi_q$ on $V$, and then $\varphi_q$ commutes with the action of $G_E$. It induces a $\mathcal{O}_R$-module homomorphism 
	\begin{equation*}
	\varphi_{\mathbb{D}_R(V)} : \mathbb{D}_R(V)\rightarrow \mathbb{D}_R(V),
	\end{equation*} which is semi-linear with respect to $\varphi_q$. Then $V\mapsto\mathbb{D}_R(V)$ is a functor from ${\bf Rep}_R(G_E)$ to ${\bf Mod}_{/\mathcal{O}_R}^{\varphi_q}$. The following lemma shows that the functor $\mathbb{D}_R$ commutes with restriction of scalars.
	\begin{lemma}\label{finite length}
Let $V\in {\bf Rep}_R(G_E) $ such that $\mathfrak{m}_R^nV=0$ for some $n$. Then as $\mathcal{O}_L$-modules, we have
		\begin{equation*}
		\mathbb{D}_{LT}(V) \cong \mathbb{D}_R(V).
		\end{equation*} 
	\end{lemma}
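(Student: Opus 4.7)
The plan is to reduce the completed tensor product appearing in $\mathbb{D}_R(V)$ to an ordinary tensor product, using crucially that $V$ is annihilated by a power of $\mathfrak{m}_R$, and then identify the Galois invariants on both sides via the isomorphism $G_E\cong H_K$ of Lemma \ref{Galois group isomorphism}.

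First I would unwind the definition of the completed tensor product. By definition, $\widehat{\mathcal{O}^{ur}_R}=\mathcal{O}_{\widehat{\mathcal{E}^{ur}}}\hat{\otimes}_{\mathcal{O}_K}R$ is the $\mathfrak{m}_R$-adic completion of $\mathcal{O}_{\widehat{\mathcal{E}^{ur}}}\otimes_{\mathcal{O}_K}R$, so that
\begin{equation*}
\widehat{\mathcal{O}^{ur}_R}/\mathfrak{m}_R^n\widehat{\mathcal{O}^{ur}_R}\;\cong\;\mathcal{O}_{\widehat{\mathcal{E}^{ur}}}\otimes_{\mathcal{O}_K}(R/\mathfrak{m}_R^n)
\end{equation*}
as $\mathcal{O}_K$-modules with a continuous action of $G_E$, the action being trivial on the $R/\mathfrak{m}_R^n$-factor.

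Next I would exploit the hypothesis $\mathfrak{m}_R^nV=0$. Since the $R$-action on $V$ factors through $R/\mathfrak{m}_R^n$, the canonical map $R\to R/\mathfrak{m}_R^n$ induces
\begin{equation*}
\widehat{\mathcal{O}^{ur}_R}\otimes_R V\;=\;\bigl(\widehat{\mathcal{O}^{ur}_R}/\mathfrak{m}_R^n\widehat{\mathcal{O}^{ur}_R}\bigr)\otimes_{R/\mathfrak{m}_R^n}V.
\end{equation*}
Combining with the previous step and using associativity of the tensor product together with the isomorphism $(R/\mathfrak{m}_R^n)\otimes_{R/\mathfrak{m}_R^n}V\cong V$, I obtain a $G_E$-equivariant isomorphism of $\mathcal{O}_K$-modules
\begin{equation*}
\widehat{\mathcal{O}^{ur}_R}\otimes_R V\;\cong\;\mathcal{O}_{\widehat{\mathcal{E}^{ur}}}\otimes_{\mathcal{O}_K}V.
\end{equation*}

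Finally I would pass to invariants. Under the identification $H_K\xrightarrow{\sim}G_E$ of Lemma \ref{Galois group isomorphism}, taking $G_E$-invariants on the left equals taking $H_K$-invariants on the right, so
\begin{equation*}
\mathbb{D}_R(V)\;=\;\bigl(\widehat{\mathcal{O}^{ur}_R}\otimes_R V\bigr)^{G_E}\;\cong\;\bigl(\mathcal{O}_{\widehat{\mathcal{E}^{ur}}}\otimes_{\mathcal{O}_K}V\bigr)^{H_K}\;=\;\mathbb{D}_{LT}(V)
\end{equation*}
as $\mathcal{O}_K$-modules, which is the required claim. The only point requiring real care is the identification of the quotient $\widehat{\mathcal{O}^{ur}_R}/\mathfrak{m}_R^n\widehat{\mathcal{O}^{ur}_R}$ with the uncompleted tensor product; I expect this to be the main (mild) obstacle, but it follows directly from the definition of the $\mathfrak{m}_R$-adic completion and the fact that $\mathcal{O}_{\widehat{\mathcal{E}^{ur}}}\otimes_{\mathcal{O}_K}(R/\mathfrak{m}_R^n)$ is already $\mathfrak{m}_R^n$-torsion, hence agrees with its own completion modulo $\mathfrak{m}_R^n$.
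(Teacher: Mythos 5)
Your proposal is correct, but it takes a genuinely different and arguably more streamlined route than the paper. The paper first proves, by induction on $n$ using Nakayama's lemma and the finiteness of the residue field $k_R$, that $V$ is finitely generated as an $\mathcal{O}_K$-module; it then manipulates \emph{completed} tensor products, writing
$\widehat{\mathcal{O}^{ur}_R}\otimes_R V = \widehat{\mathcal{O}^{ur}_R}\hat{\otimes}_R V \cong \mathcal{O}_{\widehat{\mathcal{E}^{ur}}}\hat{\otimes}_{\mathcal{O}_K}V = \mathcal{O}_{\widehat{\mathcal{E}^{ur}}}\otimes_{\mathcal{O}_K}V$,
where each equality uses that the module in question is finitely generated over a complete ring. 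You instead stay entirely at the level of ordinary tensor products, reducing modulo $\mathfrak{m}_R^n$ and using the identity $\widehat{\mathcal{O}^{ur}_R}/\mathfrak{m}_R^n\widehat{\mathcal{O}^{ur}_R}\cong(\mathcal{O}_{\widehat{\mathcal{E}^{ur}}}\otimes_{\mathcal{O}_K}R)/\mathfrak{m}_R^n\cong\mathcal{O}_{\widehat{\mathcal{E}^{ur}}}\otimes_{\mathcal{O}_K}(R/\mathfrak{m}_R^n)$. Two remarks. First, your key step is more cleanly justified by the general fact that for a finitely generated ideal $I$ of a ring $A$, the $I$-adic completion satisfies $\hat{A}/I^n\hat{A}\cong A/I^n$ (Stacks Project, Tag 05GG); your phrase about $\mathcal{O}_{\widehat{\mathcal{E}^{ur}}}\otimes_{\mathcal{O}_K}(R/\mathfrak{m}_R^n)$ "agreeing with its own completion" hints at this but is not quite a proof. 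Also note that identifying $\widehat{\mathcal{O}^{ur}_R}$ with the $\mathfrak{m}_R$-adic completion implicitly uses that $\mathcal{O}_K\to R$ is local, so the ideal $\mathfrak{m}_{\mathcal{O}_{\widehat{\mathcal{E}^{ur}}}}\otimes R+\mathcal{O}_{\widehat{\mathcal{E}^{ur}}}\otimes\mathfrak{m}_R$ is generated by $\mathfrak{m}_R$. Second, the paper's detour through "$V$ is finitely generated over $\mathcal{O}_K$" is not wasted: that finiteness is what places $V$ in ${\bf Rep}_{\mathcal{O}_K-tor}(G_K)$, so that $\mathbb{D}_{LT}(V)$ lands in the expected category and Theorem \ref{Kisin Ren} can be quoted for it later. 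Your isomorphism of $\mathcal{O}_K$-modules is correct as stated, but it is worth recording that finiteness somewhere if one wants to use the lemma in the way the paper does.
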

	\begin{proof}
We use induction on $n$. First assume that $\mathfrak{m}_R V=0$. 
Then 
$V$ is finitely generated as an $R/\mathfrak{m}_R$-module. But 
$R/\mathfrak{m}_R=k_R$ is the residue field of $R$ and it is finite. 
So by using Nakayama's lemma (for local rings), $V$ is finitely generated as an $\mathcal{O}_L$-module. Next, suppose that the statement is true for $n-1$, i.e., if $\mathfrak{m}_R^{n-1}W=0$ for any $R$-module $W$, then $W$ is finitely generated as an $\mathcal{O}_L$-module. Now let $\mathfrak{m}_R^nV=0$. Consider the exact sequence
	\begin{equation*}
	0\rightarrow \mathfrak{m}_R^{n-1}V \rightarrow V\rightarrow V/\mathfrak{m}_R^{n-1}V\rightarrow 0.
	\end{equation*}
	Then by using induction hypothesis, $\mathfrak{m}_R^{n-1}V$ and $V/\mathfrak{m}_R^{n-1}V$ are finitely generated as $\mathcal{O}_L$-modules. Thus, $V$ is finitely generated as an $\mathcal{O}_L$-module. Hence 
	\begin{equation*}
	\widehat{\mathcal{O}^{ur}_R}\otimes_R V = (\mathcal{O}_{\widehat{\mathcal{E}^{ur}}}\hat{\otimes}_{\mathcal{O}_L}R)\hat{\otimes}_R V\cong \mathcal{O}_{\widehat{\mathcal{E}^{ur}}}\hat{\otimes}_{\mathcal{O}_L}V=\mathcal{O}_{\widehat{\mathcal{E}^{ur}}}\otimes_{\mathcal{O}_L}V.
	\end{equation*}
	Here the first equality follows from the fact that $\widehat{\mathcal{O}^{ur}_R}$ is complete and $V$ is finitely generated as an $R$-module. The last one uses that 
	$\mathcal{O}_{\widehat{\mathcal{E}^{ur}}}$ is complete, and $V$ is finitely generated as an $\mathcal{O}_L$-module. Then taking $G_E$-invariants, we get the desired result.
	\end{proof}
Note that using the similar proofs as in \cite{Dee}, it is easy to prove (even if we replace the absolute Frobenius with $q$-Frobenius) that  
the functor $\mathbb{D}_R$ is an exact and faithful functor, and it commutes with restriction of scalars and inverse limits. Moreover, for any $V \in {\bf Rep}_R(G_E)$, 
$\mathbb{D}_R(V)$ is finitely generated as an $\mathcal{O}_R$-module. Also, the canonical $\widehat{\mathcal{O}^{ur}_R}$-linear homomorphism of $G_E$-modules
	\begin{equation*}
	\widehat{\mathcal{O}^{ur}_R}\otimes_{\mathcal{O}_R}\mathbb{D}_R(V)\rightarrow \widehat{\mathcal{O}^{ur}_R}\otimes_R V
	\end{equation*}
	is an isomorphism. 

Next, we define a full subcategory of ${\bf Mod}^{\varphi_q}_{/\mathcal{O}_R}$, which is an essential image of the functor $\mathbb{D}_R$.	
	\begin{definition}
A $\varphi_q$-module $M$ over $\mathcal{O}_R$ is said to be \emph{\'{e}tale} if $\Phi_M^{lin}$ is an isomorphism, and $M$ is finitely generated as an $\mathcal{O}_R$-module.		
	\end{definition} 
Let ${\bf Mod}_{/\mathcal{O}_R}^{\varphi_q,\acute{e}t}$ denote the category of \'{e}tale $\varphi_q$-modules. A morphism of \'{e}tale $\varphi_q$-modules is a morphism of the underlying $\varphi_q$-modules. Then it follows from \cite[Lemma 2.1.16 and 2.1.17]{Dee} that the category ${\bf Mod}_{/\mathcal{O}_R}^{\varphi_q,\acute{e}t}$ is an abelian category. Moreover, it is stable under sub-objects, quotients, tensor products and $\mathbb{D}_R(V) \in {\bf Mod}_{/\mathcal{O}_R}^{\varphi_q,\acute{e}t}$. Now we introduce a functor, which is an inverse functor to $\mathbb{D}_R$. The functor 
	\begin{equation*}
	\mathbb{V}_R : {\bf Mod}_{/\mathcal{O}_R}^{\varphi_q,\acute{e}t} \rightarrow {\bf Rep}_R(G_K)
	\end{equation*} is defined as the following. 
	
	Let $M$ be an \'{e}tale $\varphi_q$-module over $\mathcal{O}_R$. Then view $\widehat{\mathcal{O}^{ur}_R}\otimes_{\mathcal{O}_R} M$ as a $\varphi_q$-module via
	\begin{equation*}
	\varphi_{\widehat{\mathcal{O}^{ur}_R}\otimes_{\mathcal{O}_R} M}(\lambda\otimes m) = \varphi_q(\lambda)\otimes \varphi_M(m) \quad \text{for}\ \lambda \in \widehat{\mathcal{O}^{ur}_R}, m \in M.
	\end{equation*}
For simplicity,	we write $\varphi_q\otimes\varphi_M$ rather than $\varphi_{\widehat{\mathcal{O}^{ur}_R}\otimes_{\mathcal{O}_R} M}$. The Galois group $G_E$-acts on $\widehat{\mathcal{O}^{ur}_R}\otimes_{\mathcal{O}_R} M$ via its action on $\widehat{\mathcal{O}^{ur}_R}$ and the group action commutes with the action of $\varphi_q$. Define
	\begin{equation*}
	\mathbb{V}_R(M) := (\widehat{\mathcal{O}^{ur}_R}\otimes_{\mathcal{O}_R} M)^{\varphi_q\otimes\varphi_M=id},
	\end{equation*}
	which is a sub $R$-module stable under the action of $G_E$. Thus $M \mapsto \mathbb{V}_R(M)$ is a functor from ${\bf Mod}_{/\mathcal{O}_R}^{\varphi_q,\acute{e}t}$ to the category ${\bf Rep}_R(G_E)$.
	
	Next, without any extra work using \cite[Proposition $2.1.21.$]{Dee}, we can easily show that the functor $\mathbb{V}_R$ commutes with the inverse limits. {\C Let $V\in {\bf Rep}_R(G_E)$ and $\mathfrak{m}_R^nV$ is a sub-module of $V$ generated by elements of the form $mv$ for $m\in \mathfrak{m}_R^n$ and $v\in V$. Define $V_n:= V/\mathfrak{m}_R^nV$.}
	\begin{lemma}
		\label{surjective V}
		Let $V$ be an $R$-representation of $G_E$. Then $\varphi_q\otimes id_V-id$ is a surjective homomorphism of abelian groups acting on $\widehat{\mathcal{O}^{ur}_R}\otimes_R V$.
	\end{lemma}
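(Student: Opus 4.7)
The plan is to first prove the surjectivity when $V$ is killed by some power of $\mathfrak{m}_R$, and then deduce the general case by passage to an inverse limit using the vanishing of an associated $\varprojlim^{1}$ term.

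For the finite length case, suppose $\mathfrak{m}_R^n V = 0$. Since $\mathcal{O}_K \to R$ is local, the image of $\pi$ lies in $\mathfrak{m}_R$, so there exists $N$ with $\pi^N V = 0$. By (the proof of) Lemma \ref{finite length}, $V$ is finitely generated as an $\mathcal{O}_K$-module, and the identification
\begin{equation*}
\widehat{\mathcal{O}^{ur}_R}\otimes_R V \cong \mathcal{O}_{\widehat{\mathcal{E}^{ur}}}\otimes_{\mathcal{O}_K} V
\end{equation*}
holds. By Lemma \ref{Exact} (more precisely, the exact sequence displayed in its proof), we have the short exact sequence
\begin{equation*}
0\rightarrow \mathcal{O}_K/\pi^N\mathcal{O}_K \rightarrow \mathcal{O}_{\widehat{\mathcal{E}^{ur}}}/\pi^N \mathcal{O}_{\widehat{\mathcal{E}^{ur}}}\xrightarrow{\varphi_q-id} \mathcal{O}_{\widehat{\mathcal{E}^{ur}}}/\pi^N \mathcal{O}_{\widehat{\mathcal{E}^{ur}}}\rightarrow 0.
\end{equation*}
Since $V$ is $\pi^N$-torsion and finitely generated over $\mathcal{O}_K$, tensoring with $V$ over $\mathcal{O}_K/\pi^N\mathcal{O}_K$ (which is exact as each term is flat over $\mathcal{O}_K/\pi^N\mathcal{O}_K$) yields the exactness of
\begin{equation*}
0\rightarrow V \rightarrow \mathcal{O}_{\widehat{\mathcal{E}^{ur}}}\otimes_{\mathcal{O}_K} V \xrightarrow{\varphi_q\otimes id_V-id} \mathcal{O}_{\widehat{\mathcal{E}^{ur}}}\otimes_{\mathcal{O}_K} V\rightarrow 0,
\end{equation*}
proving the claim in this case and also identifying the kernel with $V$.

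For the general case, write $V = \varprojlim_n V/\mathfrak{m}_R^n V$, and observe that because $V$ is finitely generated as an $R$-module and $\widehat{\mathcal{O}^{ur}_R}$ is $\mathfrak{m}_R$-adically complete, one has
\begin{equation*}
\widehat{\mathcal{O}^{ur}_R}\otimes_R V \cong \varprojlim_n \bigl(\widehat{\mathcal{O}^{ur}_R}\otimes_R V/\mathfrak{m}_R^n V\bigr).
\end{equation*}
For each $n$, the finite length case gives an exact sequence
\begin{equation*}
0\rightarrow V/\mathfrak{m}_R^n V \rightarrow \widehat{\mathcal{O}^{ur}_R}\otimes_R V/\mathfrak{m}_R^n V \xrightarrow{\varphi_q\otimes id-id} \widehat{\mathcal{O}^{ur}_R}\otimes_R V/\mathfrak{m}_R^n V \rightarrow 0,
\end{equation*}
and these assemble into a short exact sequence of inverse systems whose transition maps in the kernel system $\{V/\mathfrak{m}_R^n V\}_n$ are surjective. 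The Mittag-Leffler condition therefore ensures that $\varprojlim_n{}^{1}(V/\mathfrak{m}_R^n V) = 0$.

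The main (and essentially only) obstacle is the passage to the limit, which is handled cleanly by Mittag-Leffler; once that is in place, taking $\varprojlim_n$ of the above short exact sequences yields the exactness of
\begin{equation*}
0\rightarrow V \rightarrow \widehat{\mathcal{O}^{ur}_R}\otimes_R V \xrightarrow{\varphi_q\otimes id_V-id} \widehat{\mathcal{O}^{ur}_R}\otimes_R V \rightarrow 0,
\end{equation*}
and in particular gives the desired surjectivity of $\varphi_q\otimes id_V - id$.
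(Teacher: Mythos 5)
Your proof is correct, and it takes a genuinely different route from the paper's. Where the paper proves the residue-field case $\mathfrak{m}_R V = 0$ from scratch --- surjectivity of $\varphi_q - id$ on $E^{sep}$ via separability of the Artin--Schreier polynomial $x^q - x - \lambda$, lifted to $E^{sep}\hat\otimes_{\mathbb F_p}k_R$ by finiteness of $k_R$, then promoted by d\'evissage to all $\mathfrak m_R^n$-torsion $V$ --- you instead observe that a finitely generated $R$-module killed by $\mathfrak m_R^n$ is automatically finitely generated over $\mathcal O_K$ (the content of Lemma \ref{finite length}), so $\widehat{\mathcal O^{ur}_R}\otimes_R V \cong \mathcal O_{\widehat{\mathcal E^{ur}}}\otimes_{\mathcal O_K}V$ and the surjectivity is already delivered by the d\'evissage exact sequence displayed inside the proof of Lemma \ref{Exact}, after tensoring with $V$. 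This buys you economy: you reuse work that the paper has already done over $\mathcal O_K$ rather than reproving Artin--Schreier surjectivity and doing a second d\'evissage. The step where you invoke flatness of $\mathcal O_{\widehat{\mathcal E^{ur}}}/\pi^N$ over $\mathcal O_K/\pi^N$ to preserve exactness is sound (it follows from $\pi$-torsion-freeness of $\mathcal O_{\widehat{\mathcal E^{ur}}}$ and base change), though for the surjectivity alone you only need right-exactness of the tensor product. For the passage to the limit, both arguments use Mittag--Leffler, but you pin down the kernel system explicitly as $\{V/\mathfrak m_R^n V\}_n$, whose transition maps are surjective, whereas the paper appeals more indirectly to Artinian-ness of the middle terms $\widehat{\mathcal O^{ur}_R}/\mathfrak m_R^n\otimes_R V_n$ to get Mittag--Leffler on the kernels. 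Your version is the more transparent of the two.
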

	\begin{proof}
		Suppose 
		$\mathfrak{m}_R V= 0$. Then the map $\varphi_q-id : E^{sep}\rightarrow E^{sep}$ is surjective, as the polynomial $x^q-x-\lambda$ is separable, for all $\lambda \in E^{sep}$. As $k_R$ is finite extension of $\mathbb{F}_p$ and $\varphi_q$ acts trivially on $k_R$, the map
		\begin{equation*}
		\varphi_q-id : E^{sep}\otimes_{\mathbb{F}_p}k_R\rightarrow E^{sep}\otimes_{\mathbb{F}_p}k_R
		\end{equation*}
		is also surjective. Also, $\varphi_q-id$ is continuous, thus the map
		\begin{equation*}
		\varphi_q-id: k_{\widehat{\mathcal{O}^{ur}_R}}=E^{sep}\hat{\otimes}_{\mathbb{F}_p}k_R\rightarrow E^{sep}\hat{\otimes}_{\mathbb{F}_p}k_R
		\end{equation*}
		is surjective. 
	Note that $V_1=V/\mathfrak{m}_RV$ is free over $k_R$ and $\varphi_q$ acts on $k_{\widehat{\mathcal{O}^{ur}_R}}\otimes_{k_R}V_1$ via its action on $k_{\widehat{\mathcal{O}^{ur}_R}}$, it follows that $\varphi_q\otimes id_{V_1}-id$ is surjective on $k_{\widehat{\mathcal{O}^{ur}_R}}\otimes_{k_R}V_1$. Then by d\'{e}vissage, 
		\begin{equation*}
		\varphi_q\otimes id_{V_n}-id: \widehat{\mathcal{O}^{ur}_R}/\mathfrak{m}_R^n\otimes_{R}V_n \rightarrow \widehat{\mathcal{O}^{ur}_R}/\mathfrak{m}_R^n\otimes_{R}V_n
		\end{equation*}
		is surjective.
		Here $\widehat{\mathcal{O}^{ur}_R}/\mathfrak{m}_R^n$ is Artinian and $V_n$ has finite length, so the Mittag-Leffler condition holds for $\widehat{\mathcal{O}^{ur}_R}/\mathfrak{m}_R^n\otimes_{R}V_n$. Then by passage to the inverse limits, the result holds for general $V$.
	\end{proof}
	To prove the next proposition, we need the following analogous result of Lemma \ref{finite length}, which can be easily proved.
	\begin{lemma}\label{finite-length M}
		If $\mathfrak{m}_R^n M = 0$, then as $\mathcal{O}_L$-modules
		\begin{equation*}
		\mathbb{V}_{LT}(M) = \mathbb{V}_R(M).
		\end{equation*}
	\end{lemma}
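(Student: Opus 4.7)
The strategy parallels the proof of Lemma \ref{finite length}: first use induction on $n$ to show that $M$ is finitely generated as an $\mathcal{O}_{\mathcal{E}}$-module, and then exploit the fact that the completed tensor product defining $\widehat{\mathcal{O}_R^{ur}}$ collapses to an ordinary tensor product when tested against a module that is finitely generated over $\mathcal{O}_{\mathcal{E}}$ and annihilated by a power of $\mathfrak{m}_R$.

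For the base case $n=1$, the condition $\mathfrak{m}_R M = 0$ shows that $M$ is a module over
\begin{equation*}
\mathcal{O}_R/\mathfrak{m}_R\mathcal{O}_R \;\cong\; \mathcal{O}_{\mathcal{E}}\otimes_{\mathcal{O}_K} k_R.
\end{equation*}
Since the local map $\mathcal{O}_K\to R$ sends $k$ into the finite field $k_R$, the latter is a finite extension of $k$, so the ring $\mathcal{O}_{\mathcal{E}}\otimes_{\mathcal{O}_K} k_R$ is finitely generated as an $\mathcal{O}_{\mathcal{E}}$-module. Since $M$ is finitely generated over $\mathcal{O}_R$ (being \'{e}tale), it follows that $M$ is finitely generated as an $\mathcal{O}_{\mathcal{E}}$-module. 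For the inductive step, one applies the hypothesis to the terms of the short exact sequence
\begin{equation*}
0\to \mathfrak{m}_R^{n-1}M \to M \to M/\mathfrak{m}_R^{n-1}M \to 0,
\end{equation*}
exactly as in Lemma \ref{finite length}.

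Once $M$ is known to be finitely generated over $\mathcal{O}_{\mathcal{E}}$ (and killed by $\mathfrak{m}_R^n$), the key identification is
\begin{equation*}
\widehat{\mathcal{O}_R^{ur}} \otimes_{\mathcal{O}_R} M \;=\; \bigl(\mathcal{O}_{\widehat{\mathcal{E}^{ur}}}\hat{\otimes}_{\mathcal{O}_K} R\bigr)\hat{\otimes}_{\mathcal{O}_R} M \;\cong\; \mathcal{O}_{\widehat{\mathcal{E}^{ur}}}\otimes_{\mathcal{O}_{\mathcal{E}}} M,
\end{equation*}
which is $G_E$- and $\varphi_q$-equivariant. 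The first equality is associativity of the (completed) tensor product, and the isomorphism follows because the $\mathfrak{m}_R$-adic completion is trivial on $M$ (as $\mathfrak{m}_R^n M = 0$) and on $\mathcal{O}_{\widehat{\mathcal{E}^{ur}}}\otimes_{\mathcal{O}_{\mathcal{E}}}M$ (being finitely generated over $\mathcal{O}_{\mathcal{E}}$, this module is already $\mathfrak{m}_R$-adically complete after the base change).

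Taking invariants under $\varphi_q\otimes \varphi_M$ on both sides then gives the desired identification $\mathbb{V}_R(M) \cong \mathbb{V}_{LT}(M)$ as $\mathcal{O}_K$-modules. The main obstacle, such as it is, is the careful handling of the completed tensor products in the step above; once one verifies that the $\mathfrak{m}_R$-adic completions play no role because of the annihilation hypothesis, everything else is formal from the definitions of $\mathbb{V}_R$ and $\mathbb{V}_{LT}$.
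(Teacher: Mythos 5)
Your proof is correct and is the one the paper leaves implicit: the paper states that this lemma is ``analogous'' to Lemma~\ref{finite length} and ``can be easily proved,'' and your argument supplies exactly that analog, with $\mathcal{O}_{\mathcal{E}}$ playing the role that $\mathcal{O}_K$ played there. The inductive reduction to finite generation of $M$ over $\mathcal{O}_{\mathcal{E}}$, followed by the collapse of the completed tensor product $\widehat{\mathcal{O}^{ur}_R}\otimes_{\mathcal{O}_R}M \cong \mathcal{O}_{\widehat{\mathcal{E}^{ur}}}\otimes_{\mathcal{O}_{\mathcal{E}}}M$ and passage to $\varphi_q\otimes\varphi_M$-invariants, mirrors the paper's proof of the $\mathbb{D}_R$ version step by step.
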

{\C Define $M_n:= M/\mathfrak{m}_R^nM$, where $\mathfrak{m}_R^nM$ is a sub-module of $M$.}
	\begin{proposition} \label{surjective}
		Let $M$ be an \'{e}tale $\varphi_q$-module. Then $\varphi_q\otimes \varphi_M-id$ is a surjective homomorphism of abelian groups on $\widehat{\mathcal{O}^{ur}_R}\otimes_{\mathcal{O}_R} M$.
	\end{proposition}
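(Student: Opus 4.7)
The strategy is to mimic the classical dévissage argument from Dee's thesis (and the proof of Lemma \ref{surjective V}), reducing to the finite-length case where the result is already known from Kisin--Ren (Theorem \ref{Kisin Ren}) applied to $\mathcal{O}_{\mathcal{E}}$, and then passing to the inverse limit by a Mittag--Leffler argument. The key difference from Lemma \ref{surjective V} is that here $\varphi_M$ need not be the identity, so the surjectivity statement cannot be reduced to surjectivity of $\varphi_q - \mathrm{id}$ on $\widehat{\mathcal{O}_R^{ur}}$ alone; instead, one must really use that $M$ is étale.

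First I would set $M_n := M/\mathfrak{m}_R^n M$. Because $M$ is finitely generated over the complete Noetherian semi-local ring $\mathcal{O}_R$, one has $M \cong \varprojlim_n M_n$ and
\[
\widehat{\mathcal{O}^{ur}_R} \otimes_{\mathcal{O}_R} M \;\cong\; \varprojlim_n \bigl(\widehat{\mathcal{O}^{ur}_R}/\mathfrak{m}_R^n\widehat{\mathcal{O}^{ur}_R}\bigr) \otimes_{\mathcal{O}_R/\mathfrak{m}_R^n\mathcal{O}_R} M_n .
\]
Since $\pi \in \mathfrak{m}_R$, each $M_n$ is killed by $\pi^n$, and the argument of Lemma \ref{finite length} adapted to $M_n$ (using that $R/\mathfrak{m}_R^n$ is a finite ring, as $R/\mathfrak{m}_R = k_R$ is finite and $\mathfrak{m}_R^i/\mathfrak{m}_R^{i+1}$ is a finite-dimensional $k_R$-vector space) shows that $M_n$ is a finitely generated $\mathcal{O}_{\mathcal{E}}$-module and that the tensor product above identifies naturally with $\mathcal{O}_{\widehat{\mathcal{E}^{ur}}} \otimes_{\mathcal{O}_{\mathcal{E}}} M_n$ (no completion is needed once one kills $\mathfrak{m}_R^n$).

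Next I would invoke the classical theory: because $M$ is étale over $\mathcal{O}_R$, the base change $M_n$ is an étale $\varphi_q$-module over $\mathcal{O}_{\mathcal{E}}$ of finite $\pi$-power torsion length, so Theorem \ref{Kisin Ren} (torsion case, specialized to the $\varphi_q$-module structure) gives the short exact sequence
\[
0 \longrightarrow \mathbb{V}_{LT}(M_n) \longrightarrow \mathcal{O}_{\widehat{\mathcal{E}^{ur}}} \otimes_{\mathcal{O}_{\mathcal{E}}} M_n \xrightarrow{\;\varphi_q \otimes \varphi_{M_n} - \mathrm{id}\;} \mathcal{O}_{\widehat{\mathcal{E}^{ur}}} \otimes_{\mathcal{O}_{\mathcal{E}}} M_n \longrightarrow 0 .
\]
In particular, $\varphi_q \otimes \varphi_{M_n} - \mathrm{id}$ is surjective for every $n$. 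Finally, to lift this to $M$, I would apply the Mittag--Leffler argument exactly as in the proof of Lemma \ref{surjective V}: the quotient ring $\mathcal{O}_R/\mathfrak{m}_R^n\mathcal{O}_R$ is Artinian by Proposition \ref{Artinian}, so each $\widehat{\mathcal{O}^{ur}_R}/\mathfrak{m}_R^n \otimes_{\mathcal{O}_R/\mathfrak{m}_R^n\mathcal{O}_R} M_n$ has finite length, the transition maps are surjective, and $\varprojlim$ preserves the exactness of the sequences from Step 3, yielding surjectivity of $\varphi_q \otimes \varphi_M - \mathrm{id}$ on $\widehat{\mathcal{O}^{ur}_R} \otimes_{\mathcal{O}_R} M$.

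The main obstacle is the bookkeeping in the third step: verifying that étaleness of $M$ as a $\varphi_q$-module over $\mathcal{O}_R$ really descends, after reduction modulo $\mathfrak{m}_R^n$ and restriction of scalars to $\mathcal{O}_{\mathcal{E}}$, to an étale $\varphi_q$-module structure on $M_n$ over $\mathcal{O}_{\mathcal{E}}$ in the classical sense. This is precisely the role played by Lemmas 2.1.5 and 2.1.6 of \cite{Dee} in the cyclotomic setting, and what we need here is the analogous compatibility in the Lubin--Tate setting; once this is secured, the remaining steps are formal.
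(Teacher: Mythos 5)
Your proof is correct and follows essentially the same route as the paper's: reduce to the finite-length case $\mathfrak{m}_R^nM=0$ via the finite-length lemma (the Lubin--Tate analogue of Dee's compatibility lemmas, which the paper supplies as Lemma \ref{finite-length M}), invoke the Kisin--Ren comparison isomorphism to establish surjectivity of $\varphi_q\otimes\varphi_{M_n}-id$, and conclude by Mittag--Leffler. The only cosmetic difference is that the paper stays in the $R$-linear picture and factors the finite-length surjectivity through Lemma \ref{surjective V} applied to $\mathbb{V}_R(M_n)$, while you instead descend by restriction of scalars to $\mathcal{O}_{\mathcal{E}}$ and cite the fundamental exact sequence over $\mathcal{O}_{\widehat{\mathcal{E}^{ur}}}$ directly; both reduce to the same underlying fact that $\varphi_q-id$ is surjective on $E^{sep}$.
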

	\begin{proof}
If $\mathfrak{m}_R^nM=0$, then by Lemma \ref{finite-length M}, we have 
		$\mathbb{V}_{LT}(M) = \mathbb{V}_R(M)$ as an $\mathcal{O}_L$-module.
Now by using \cite{KR}, it follows that 
		\begin{equation*}
		\widehat{\mathcal{O}^{ur}_R}\otimes_R\mathbb{V}_R(M_n)\rightarrow \widehat{\mathcal{O}^{ur}_R}\otimes_{\mathcal{O}_R} M_n
		\end{equation*}
		is an isomorphism. Moreover, this isomorphism respects the action of $\varphi_q\otimes\varphi_M$. Then by using Lemma \ref{surjective V}, the map $\varphi_q\otimes\varphi_M-id$ is surjective on $\widehat{\mathcal{O}^{ur}_R}\otimes_{\mathcal{O}_R} M_n$, and the general case follows by passing to the inverse limits.
	\end{proof}
	\begin{proposition}\label{exact V}
		The functor $\mathbb{V}_R$ is an exact functor.
	\end{proposition}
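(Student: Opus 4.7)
My plan is to show exactness of $\mathbb{V}_R$ via the usual snake-lemma argument, with the surjectivity statement of Proposition \ref{surjective} serving as the critical input. Concretely, I would start with a short exact sequence
\begin{equation*}
0 \to M_1 \to M_2 \to M_3 \to 0
\end{equation*}
in $\mathbf{Mod}^{\varphi_q,\text{\'et}}_{/\mathcal{O}_R}$ and apply the scalar extension $\widehat{\mathcal{O}^{ur}_R}\otimes_{\mathcal{O}_R}(-)$. Since $\widehat{\mathcal{O}^{ur}_R}$ is faithfully flat over $\mathcal{O}_R$ (by the discussion preceding Lemma \ref{flat}, via Proposition \ref{faithfully flat}), the resulting sequence
\begin{equation*}
0 \to \widehat{\mathcal{O}^{ur}_R}\otimes_{\mathcal{O}_R}M_1 \to \widehat{\mathcal{O}^{ur}_R}\otimes_{\mathcal{O}_R}M_2 \to \widehat{\mathcal{O}^{ur}_R}\otimes_{\mathcal{O}_R}M_3 \to 0
\end{equation*}
is exact, and the maps are compatible with the diagonal operators $\varphi_q\otimes\varphi_{M_i}$.

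Next, I would form the commutative diagram whose rows are the above sequence and whose vertical arrows are $\varphi_q\otimes\varphi_{M_i} - \mathrm{id}$, and invoke the snake lemma. The kernel of the vertical map on $\widehat{\mathcal{O}^{ur}_R}\otimes_{\mathcal{O}_R}M_i$ is by definition $\mathbb{V}_R(M_i)$, so the snake lemma yields a six-term exact sequence
\begin{equation*}
0 \to \mathbb{V}_R(M_1) \to \mathbb{V}_R(M_2) \to \mathbb{V}_R(M_3) \to C_1 \to C_2 \to C_3 \to 0,
\end{equation*}
where $C_i$ is the cokernel of $\varphi_q\otimes\varphi_{M_i}-\mathrm{id}$ acting on $\widehat{\mathcal{O}^{ur}_R}\otimes_{\mathcal{O}_R}M_i$.

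The key step — and the only place where anything non-formal happens — is to observe that $C_1 = 0$, which is exactly the content of Proposition \ref{surjective} applied to the \'etale $\varphi_q$-module $M_1$. Once $C_1$ vanishes, the connecting map $\mathbb{V}_R(M_3)\to C_1$ is zero, which forces the sequence
\begin{equation*}
0 \to \mathbb{V}_R(M_1) \to \mathbb{V}_R(M_2) \to \mathbb{V}_R(M_3) \to 0
\end{equation*}
to be exact, proving that $\mathbb{V}_R$ is exact. I do not anticipate a genuine obstacle here, since all the hard work has been done in Proposition \ref{surjective} (whose proof, in turn, used Lemma \ref{finite-length M} and Lemma \ref{surjective V} together with the Kisin--Ren classification); the present statement is essentially a formal consequence of surjectivity of $\varphi_q\otimes\varphi_M-\mathrm{id}$ combined with faithful flatness of $\widehat{\mathcal{O}^{ur}_R}/\mathcal{O}_R$.
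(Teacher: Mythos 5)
Your proposal is correct and follows essentially the same argument as the paper: tensor up to $\widehat{\mathcal{O}^{ur}_R}$, apply the snake lemma to the vertical maps $\varphi_q\otimes\varphi_{M_i}-\mathrm{id}$, and use the surjectivity of Proposition \ref{surjective} to kill the cokernel term. The only (welcome) addition is that you explicitly justify the exactness of the tensored rows via faithful flatness of $\widehat{\mathcal{O}^{ur}_R}$ over $\mathcal{O}_R$, a point the paper leaves implicit.
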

	\begin{proof}
		Let $0\rightarrow M \rightarrow M^{\prime} \rightarrow M^{\prime\prime} \rightarrow 0$ be an exact sequence of \'{e}tale $\varphi_q$-modules. Then we have the following commutative diagram with the exact rows,	
		\begin{center}
			\begin{tikzcd}
			0\arrow{r} & \widehat{\mathcal{O}^{ur}_R}\otimes_{\mathcal{O}_R}M \arrow{r} \arrow[swap]{d}{\varphi_q\otimes\varphi_M-id} &  \widehat{\mathcal{O}^{ur}_R}\otimes_{\mathcal{O}_R}M^{\prime} \arrow{r} \arrow[swap]{d}{\varphi_q\otimes\varphi_{M^\prime}-id} &  \widehat{\mathcal{O}^{ur}_R}\otimes_{\mathcal{O}_R}M^{\prime\prime} \arrow{r} \arrow[swap]{d}{\varphi_q\otimes\varphi_{M^{\prime \prime}}-id} & 0\\
			0 \arrow{r} & \widehat{\mathcal{O}^{ur}_R}\otimes_{\mathcal{O}_R}M  \arrow{r} & \widehat{\mathcal{O}^{ur}_R}\otimes_{\mathcal{O}_R}M^{\prime} \arrow{r} & \widehat{\mathcal{O}^{ur}_R}\otimes_{\mathcal{O}_R}M^{\prime\prime} \arrow{r} & 0.
			\end{tikzcd}
		\end{center}
		Now by applying the Snake lemma, we get an exact sequence
		\begin{equation*}
		0 \rightarrow \mathbb{V}_R(M)\rightarrow \mathbb{V}_R(M^{\prime})\rightarrow \mathbb{V}_R(M^{\prime\prime})\rightarrow \widehat{\mathcal{O}^{ur}_R}\otimes_{\mathcal{O}_R}M/(\varphi_q\otimes\varphi_M-id)\rightarrow \cdots. 
		\end{equation*}
		By Lemma \ref{surjective}, we know that the map $\varphi_q\otimes\varphi_M-id$ is a surjective homomorphism acting on $\widehat{\mathcal{O}^{ur}_R}\otimes_{\mathcal{O}_R}M$, so the last term is zero, and the sequence 
		\begin{equation*}
		0 \rightarrow \mathbb{V}_R(M)\rightarrow \mathbb{V}_R(M^\prime)\rightarrow \mathbb{V}_R(M^{\prime\prime})\rightarrow 0
		\end{equation*} is an exact sequence. Hence the functor $\mathbb{V}_R$ is exact.  
	\end{proof}
	Moreover, for an \'{e}tale $\varphi_q$-module $M$, the module $\mathbb{V}_R(M)$ is finitely generated over $R$, and the homomorphism of $\widehat{\mathcal{O}^{ur}_R}$-modules
	\begin{equation*}
	\widehat{\mathcal{O}^{ur}_R}\otimes_R \mathbb{V}_R(M)\rightarrow \widehat{\mathcal{O}^{ur}_R}\otimes_{\mathcal{O}_R} M
	\end{equation*}
	is an isomorphism. The proof is similar to \cite[Proposition 2.1.26]{Dee}. Next, the following
theorem gives the equivalence of categories between ${\bf Rep}_R(G_E)$ and ${\bf Mod}_{/\mathcal{O}_R}^{\varphi_q,\acute{e}t}.$
	\begin{theorem}\label{Main6.1}
		The functor
		\begin{equation*}
		\mathbb{D}_R: {\bf Rep}_R(G_E) \rightarrow {\bf Mod}_{/\mathcal{O}_R}^{\varphi_q,\acute{e}t}
		\end{equation*}
		is an equivalence of categories with quasi-inverse functor 
		\begin{equation*}
		\mathbb{V}_R: {\bf Mod}_{/\mathcal{O}_R}^{\varphi_q,\acute{e}t}\rightarrow {\bf Rep}_R(G_E). 
		\end{equation*}
	\end{theorem}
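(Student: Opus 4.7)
The plan is to construct natural comparison morphisms $\eta_V : V \to \mathbb{V}_R(\mathbb{D}_R(V))$ and $\epsilon_M : M \to \mathbb{D}_R(\mathbb{V}_R(M))$, verify that both are isomorphisms in the finite-length (i.e., $\mathfrak{m}_R^n$-annihilated) setting by reducing to the equal-characteristic case of Theorem \ref{Kisin Ren}, and then bootstrap the general case by passing to inverse limits.

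First, for any $V \in {\bf Rep}_R(G_E)$, I would define $\eta_V$ via $v \mapsto 1\otimes v$; this lands in the $\varphi_q\otimes\varphi_{\mathbb{D}_R(V)}$-invariants because $\varphi_q$ acts trivially on $V$, and $\eta_V$ is manifestly $R$-linear and $G_E$-equivariant. Symmetrically, for any $M \in {\bf Mod}_{/\mathcal{O}_R}^{\varphi_q,\acute{e}t}$, the assignment $m\mapsto 1\otimes m$ yields an $\mathcal{O}_R$-linear, $\varphi_q$-equivariant morphism $\epsilon_M : M \to \mathbb{D}_R(\mathbb{V}_R(M))$.

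Second, I would establish that $\eta_V$ and $\epsilon_M$ are isomorphisms when the source is killed by some power of $\mathfrak{m}_R$. Suppose $\mathfrak{m}_R^n V = 0$. By Lemma \ref{finite length} we have $\mathbb{D}_R(V) = \mathbb{D}_{LT}(V)$ as $\mathcal{O}_K$-modules, and by Lemma \ref{finite-length M} we further obtain $\mathbb{V}_R(\mathbb{D}_R(V)) = \mathbb{V}_{LT}(\mathbb{D}_{LT}(V))$. Under these identifications $\eta_V$ coincides with the Kisin--Ren comparison map and is therefore an isomorphism by Theorem \ref{Kisin Ren} (applied on the underlying $H_K \cong G_E$-representations and $\varphi_q$-modules over $\mathcal{O}_{\mathcal{E}}$, i.e., forgetting the $\Gamma_{LT}$-structure). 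The analogous argument for $\epsilon_M$ when $\mathfrak{m}_R^n M = 0$ proceeds via Lemma \ref{finite-length M} followed by Theorem \ref{Kisin Ren}.

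Third, I would extend to general $V$ and $M$ via inverse limits. Since $R$ is $\mathfrak{m}_R$-adically complete and $V$ is finitely generated over $R$, we have $V \cong \varprojlim_n V/\mathfrak{m}_R^n V$, where each quotient lies in the torsion subcategory handled by Step 2. Similarly $M \cong \varprojlim_n M/\mathfrak{m}_R^n M$, using that $\mathcal{O}_R$ is $\mathfrak{m}_R$-adically complete and $M$ is finitely generated over $\mathcal{O}_R$; note that each $M/\mathfrak{m}_R^n M$ inherits an étale $\varphi_q$-module structure since $\varphi_M^{lin}$ descends to the quotient and remains an isomorphism. Both functors $\mathbb{D}_R$ and $\mathbb{V}_R$ are known to commute with inverse limits, so $\eta_V$ and $\epsilon_M$ are obtained as inverse limits of isomorphisms, and therefore are themselves isomorphisms, giving the desired quasi-inverse equivalence. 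Finally, essential surjectivity of $\mathbb{D}_R$ follows from the fact that $\epsilon_M$ is an isomorphism, and full faithfulness follows from both $\eta$ and $\epsilon$ being natural isomorphisms.

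The main obstacle I anticipate is the bookkeeping required to check that the identifications of Lemmas \ref{finite length} and \ref{finite-length M} genuinely intertwine $\eta$ and $\epsilon$ with the Kisin--Ren comparison maps (a diagram chase complicated by the base change from $\mathcal{O}_K$ to $R$, resp.\ from $\mathcal{O}_{\mathcal{E}}$ to $\mathcal{O}_R$) and that the transition maps in the inverse systems $(V/\mathfrak{m}_R^n V)_n$ and $(M/\mathfrak{m}_R^n M)_n$ are compatible with the finite-level isomorphisms, so that the latter truly assemble into a global isomorphism after taking $\varprojlim$.
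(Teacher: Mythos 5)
Your proposal is correct and follows essentially the same route as the paper: construct the natural comparison maps, reduce to the $\mathfrak{m}_R^n$-torsion case via Lemmas \ref{finite length} and \ref{finite-length M} where the Kisin--Ren equivalence (Theorem \ref{Kisin Ren}) applies, and then pass to inverse limits. The paper phrases it slightly differently (routing the comparison through the already-established isomorphisms $\widehat{\mathcal{O}^{ur}_R}\otimes_{\mathcal{O}_R}\mathbb{D}_R(V)\cong \widehat{\mathcal{O}^{ur}_R}\otimes_R V$ and $\widehat{\mathcal{O}^{ur}_R}\otimes_R \mathbb{V}_R(M)\cong \widehat{\mathcal{O}^{ur}_R}\otimes_{\mathcal{O}_R} M$ before taking invariants), but the structure of the argument is the same.
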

	\begin{proof}
		It is enough to construct functorial isomorphisms
		\begin{equation*}
		\mathbb{V}_R(\mathbb{D}_R(V))\xrightarrow{\sim} V \quad \text{and} \quad\mathbb{D}_R(\mathbb{V}_R(M))\xrightarrow{\sim} M
		\end{equation*}
		for an $R$-representation $V$ of $G_E$ and an \'{e}tale $\varphi_q$-module $M$ over $\mathcal{O}_R$, respectively.
		Consider the isomorphism of $G_E$-modules
		\begin{equation*}
		\widehat{\mathcal{O}^{ur}_R}\otimes_{\mathcal{O}_R}\mathbb{D}_R(V)\rightarrow \widehat{\mathcal{O}^{ur}_R}\otimes_R V.
		\end{equation*} Taking $\varphi_q\otimes\varphi_M$-invariants, we obtain an isomorphism
		\begin{equation*}
		\mathbb{V}_R(\mathbb{D}_R(V))\rightarrow (\widehat{\mathcal{O}^{ur}_R}\otimes_R V)^{\varphi_q\otimes\varphi_M=id}.
		\end{equation*}
	Note that $V$ has trivial action of $\varphi_q\otimes\varphi_M$, so there is a map
		\begin{equation*}
		V\rightarrow (\widehat{\mathcal{O}^{ur}_R}\otimes_R V)^{\varphi_q\otimes\varphi_M=id}.
		\end{equation*}
	Now for finite length modules, the above map is an isomorphism by using Theorem \ref{Kisin Ren}. By taking the inverse limits, the map is an isomorphism for the general $V$. Hence
		\begin{equation*}
		\mathbb{V}_R(\mathbb{D}_R(V))\rightarrow V.
		\end{equation*}
		Similarly, the map
		\begin{equation*}
		M\rightarrow (\widehat{\mathcal{O}^{ur}_R}\otimes_{\mathcal{O}_R} M)^{G_E}
		\end{equation*}
		is an isomorphism. Moreover, we have an isomorphism 
		\begin{equation*}
		\widehat{\mathcal{O}^{ur}_R}\otimes_R \mathbb{V}_R(M)\rightarrow \widehat{\mathcal{O}^{ur}_R}\otimes_{\mathcal{O}_R} M.
		\end{equation*}
		Then taking $G_E$-invariant we have
		\begin{equation*}
		\mathbb{D}_R(\mathbb{V}_R(M))=(\widehat{\mathcal{O}^{ur}_R}\otimes_R \mathbb{V}_R(M))^{G_E}\xrightarrow{\sim} (\widehat{\mathcal{O}^{ur}_R}\otimes_{\mathcal{O}_R} M)^{G_E}.
		\end{equation*}
		Therefore 
		\begin{equation*}
		\mathbb{D}_R(\mathbb{V}_R(M))\rightarrow M
		\end{equation*}
		is an isomorphism, and this proves the theorem.
	\end{proof}
	\begin{remark}
		The functors $\mathbb{D}_R$ and $\mathbb{V}_R$  are compatible with the tensor product.
	\end{remark}
	\subsubsection{The characteristic zero case}\label{sub6.2}
	Let $K$ be a local field of characteristic $0$. {\col For a finite extension $L$ of $\mathbb{Q}_p$ contained in $K$,} recall that the ring $\mathcal{O}_{\mathcal{E}}$ is the $p$-adic completion of $W_L[[X]][\frac{1}{X}]$, and $\mathcal{O}_{\mathcal{E}^{ur}}$ is the maximal integral unramified extension of $\mathcal{O}_{\mathcal{E}}$. The ring $\mathcal{O}_{\widehat{\mathcal{E}^{ur}}}$ is the $p$-adic completion of  $\mathcal{O}_{\mathcal{E}^{ur}}$. Also, the Galois group $H_K=\Gal(\bar{K}/K_\infty)$ is identified with $G_E$, where $E$ is the field of norms of the extension $K_\infty/K$ and it is a field of characteristic $p$.
	
	{\C Note that the action of $G_K$ on $\mathcal{E}^{ur}$ induces a $G_K$-action on $\mathcal{O}_{\widehat{\mathcal{E}^{ur}}}$. Now by taking the trivial action of $G_K$ on $R$, it induces a $G_K$-action on $\mathcal{O}_{\widehat{\mathcal{E}^{ur}}}\otimes_{\mathcal{O}_L} R$. Since $G_K$ acts continuously on $\mathcal{E}^{ur}$, the action is continuous on $\mathcal{O}_{\widehat{\mathcal{E}^{ur}}}\otimes_{\mathcal{O}_L} R$. This induces a $G_K$-action on $\widehat{\mathcal{O}^{ur}_R}$, which is continuous with respect to the $\mathfrak{m}_R\widehat{\mathcal{O}^{ur}_R}$-adic topology.}
	
	Let $V$ be an $R$-representation of $G_K$. Then 
	\begin{equation*}
	\mathbb{D}_R(V):=(\widehat{\mathcal{O}^{ur}_R}\otimes_R V)^{H_K} = (\widehat{\mathcal{O}^{ur}_R}\otimes_{R}V)^{G_E}
	\end{equation*}
	is a $\varphi_q$-module over $\mathcal{O}_R$. The $G_K$-action on $\widehat{\mathcal{O}^{ur}_R}\otimes_{R}V$ induces a semi-linear action of $G_K/H_K = \Gamma_{LT} = \Gal(K_\infty/K)$ on $\mathbb{D}_R(V)$. 
	\begin{definition}
	A \emph{$(\varphi_q,\Gamma_{LT})$-module} $M$ over $\mathcal{O}_R$ is a $\varphi_q$-module over $\mathcal{O}_R$ equipped with a continuous semi-linear action of $\Gamma_{LT}$, which commutes with the endomorphism $\varphi_M$ of $M$, and a $(\varphi_q,\Gamma_{LT})$-module is \emph{\'{e}tale} if its underlying $\varphi_q$-module is \'{e}tale.
	\end{definition} 
We write ${\bf Mod}_{/\mathcal{O}_R}^{\varphi_q,\Gamma_{LT},\acute{e}t}$ for the category of \'{e}tale $(\varphi_q,\Gamma_{LT})$-modules over $\mathcal{O}_R$. Then $\mathbb{D}_R$ is a functor from the category of $R$-linear representations of $G_K$ to the category of \'{e}tale $(\varphi_q,\Gamma_{LT})$-modules over $\mathcal{O}_R$.
	
	If $M$ is an \'{e}tale $(\varphi_q,\Gamma_{LT})$-module over $\mathcal{O}_R$, then
	\begin{equation*}
	\mathbb{V}_R(M) = (\widehat{\mathcal{O}^{ur}_R}\otimes_{\mathcal{O}_R}M)^{\varphi_q\otimes \varphi_M=id}
	\end{equation*}
is an $R$-representation of $G_K$. The group $G_K$ acts on $\widehat{\mathcal{O}^{ur}_R}$ as before and acts via $\Gamma_{LT}$ on $M$. The $G_K$ action on $\widehat{\mathcal{O}^{ur}_R}\otimes_{\mathcal{O}_R}M$ is $\varphi_q\otimes\varphi_M$-equivariant, and this induces a $G_K$ action on $\mathbb{V}_R(M)$.
	
For any $V \in {\bf Rep}_R(G_K)$, there is a canonical $R$-linear homomorphism
	\begin{equation*}
	V\rightarrow \mathbb{V}_R(\mathbb{D}_R(V)).
	\end{equation*} 
	of representations of $G_K$. By Theorem \ref{Main6.1}, this is an isomorphism when restricted to $H_K$, so it must be an isomorphism of $G_K$-representations. Similarly, for an \'{e}tale $(\varphi_q,\Gamma_{LT})$-module $M$, the canonical homomorphism of \'{e}tale $(\varphi_q,\Gamma_{LT})$-modules
	\begin{equation*}
	M\rightarrow \mathbb{D}_R(\mathbb{V}_R(M))
	\end{equation*}
	is an isomorphism. Moreover, by using Theorem \ref{Main6.1}, the underlying map of $\varphi_q$-modules is an isomorphism, and this proves the following theorem.
	\begin{theorem} \label{Main6.2}
		The functor $\mathbb{D}_R$ is an equivalence of categories between the category ${\bf Rep}_R(G_K)$ 
		of $R$-linear representations of $G_K$ and the category ${\bf Mod}_{/\mathcal{O}_R}^{\varphi_q,\Gamma_{LT},\acute{e}t}$ 
		of \'{e}tale $(\varphi_q,\Gamma_{LT})$-modules with quasi-inverse functor $\mathbb{V}_R$.
	\end{theorem}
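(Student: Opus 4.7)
The plan is to bootstrap the theorem from the already-established equivalence in Theorem~\ref{Main6.1}, which handles the case of $\varphi_q$-modules and $G_E$-representations. Since $H_K \xrightarrow{\sim} G_E$ via the field of norms, any $V \in \mathbf{Rep}_R(G_K)$ restricts to an object of $\mathbf{Rep}_R(G_E)$, and the residual $\Gamma_{LT} = G_K/H_K$-action is what promotes $\mathbb{D}_R(V)$ from an \'etale $\varphi_q$-module to an \'etale $(\varphi_q,\Gamma_{LT})$-module. The core of the argument is therefore to verify that the canonical adjunction maps from Theorem~\ref{Main6.1} are compatible with the extra $\Gamma_{LT}$-structure.

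First I would check that the functors are well-defined at the level of the enriched categories. For $V \in \mathbf{Rep}_R(G_K)$, the action of $G_K$ on $\widehat{\mathcal{O}^{ur}_R} \otimes_R V$ commutes with $\varphi_q$, and taking $H_K$-invariants yields a residual continuous semi-linear action of $\Gamma_{LT}$ on $\mathbb{D}_R(V)$ that commutes with $\varphi_{\mathbb{D}_R(V)}$; combined with the \'etaleness from Theorem~\ref{Main6.1}, this places $\mathbb{D}_R(V)$ in $\mathbf{Mod}_{/\mathcal{O}_R}^{\varphi_q,\Gamma_{LT},\acute{e}t}$. Conversely, for $M \in \mathbf{Mod}_{/\mathcal{O}_R}^{\varphi_q,\Gamma_{LT},\acute{e}t}$, one equips $\widehat{\mathcal{O}^{ur}_R}\otimes_{\mathcal{O}_R} M$ with the diagonal action of $G_K$ (via its action on $\widehat{\mathcal{O}^{ur}_R}$ and via $\Gamma_{LT}$ on $M$). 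This diagonal action is $\varphi_q\otimes\varphi_M$-equivariant because $\varphi_q$ commutes with $G_K$ and $\varphi_M$ commutes with $\Gamma_{LT}$, which realizes $\mathbb{V}_R(M)$ as an $R$-representation of $G_K$.

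Next I would analyze the two adjunction maps. The natural $R$-linear $G_K$-equivariant map
\begin{equation*}
V \longrightarrow \mathbb{V}_R(\mathbb{D}_R(V))
\end{equation*}
is an isomorphism after restricting to $H_K$ by Theorem~\ref{Main6.1} (applied via the identification $H_K \cong G_E$), and since it is $G_K$-equivariant to begin with, it is automatically an isomorphism in $\mathbf{Rep}_R(G_K)$. Similarly, the canonical map
\begin{equation*}
M \longrightarrow \mathbb{D}_R(\mathbb{V}_R(M))
\end{equation*}
is $\mathcal{O}_R$-linear, $\varphi_q$-equivariant, and $\Gamma_{LT}$-equivariant by construction; Theorem~\ref{Main6.1} tells us it is an isomorphism of underlying $\varphi_q$-modules, so it is an isomorphism of $(\varphi_q,\Gamma_{LT})$-modules as well.

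The main obstacle I anticipate is a conceptual rather than a technical one: confirming that the $\Gamma_{LT}$-action transported through the equivalence is continuous and agrees on both sides of the adjunction. This amounts to checking that the canonical isomorphism $\widehat{\mathcal{O}^{ur}_R} \otimes_{\mathcal{O}_R} \mathbb{D}_R(V) \xrightarrow{\sim} \widehat{\mathcal{O}^{ur}_R} \otimes_R V$ (already an isomorphism of $G_E$-modules by Theorem~\ref{Main6.1}) is in fact $G_K$-equivariant when both sides are given the diagonal action, which follows at once from the definitions since the $\Gamma_{LT}$-action on $\mathbb{D}_R(V)$ was imposed precisely to match the $G_K/H_K$-action on $V$. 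Once this compatibility is noted, the proof reduces to invoking Theorem~\ref{Main6.1} exactly as indicated in the paragraph preceding the theorem.
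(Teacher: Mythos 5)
Your proposal is correct and follows essentially the same route as the paper: both bootstrap the full $(\varphi_q,\Gamma_{LT})$-equivalence from Theorem~\ref{Main6.1} by observing that the adjunction map $V\to\mathbb{V}_R(\mathbb{D}_R(V))$ is $G_K$-equivariant and an isomorphism upon restriction to $H_K$, and that $M\to\mathbb{D}_R(\mathbb{V}_R(M))$ is a morphism of $(\varphi_q,\Gamma_{LT})$-modules whose underlying $\varphi_q$-module map is already an isomorphism. The extra paragraph you add verifying well-definedness of the functors and the $G_K$-equivariance of the comparison isomorphism is implicit in the paper's exposition but does not change the substance of the argument.
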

Next, we extend the functor $\mathbb{D}_R$ to the category ${\bf Rep}_{\mathfrak{m}_R-tor}^{dis}(G_K)$ of discrete $\mathfrak{m}_R$-primary {\C $\mathcal{O}_R$-modules} 
with a continuous and linear action of $G_K$. Any object in ${\bf Rep}_{\mathfrak{m}_R-tor}^{dis}(G_K)$ is the filtered direct limit of $\mathfrak{m}_R$-power torsion objects in ${\bf Rep}_R(G_K)$. For any $V\in {\bf Rep}_{\mathfrak{m}_R-tor}^{dis}(G_K)$, define
\begin{equation*}
\mathbb{D}_R(V)=(\widehat{\mathcal{O}^{ur}_R}\otimes_R V)^{H_K}.
\end{equation*}
Note that the functor $\mathbb{D}_R$ commutes with the direct limits as the tensor product and taking $H_K$-invariants commute with the direct limits. Then $\mathbb{D}_R(V)$ is an object into the category $\varinjlim {\bf Mod}_{/\mathcal{O}_R}^{\varphi_q,\Gamma_{LT},\acute{e}t,tor}$ of injective limits of $\mathfrak{m}_R$-power torsion objects in ${\bf Mod}_{/\mathcal{O}_R}^{\varphi_q,\Gamma_{LT},\acute{e}t}$. For any $M\in \varinjlim {\bf Mod}_{/\mathcal{O}_R}^{\varphi_q,\Gamma_{LT},\acute{e}t,tor}$, put
\begin{equation*}
\mathbb{V}_R(M) = (\widehat{\mathcal{O}^{ur}_R}\otimes_{\mathcal{O}_R}M)^{\varphi_q\otimes \varphi_M=id}.
\end{equation*} 
Then the functor $\mathbb{V}_R$ also commutes with the direct limits, and we have the following result.
\begin{proposition}\label{7.19}
	The functors $\mathbb{D}_R$ and $\mathbb{V}_R$ are quasi-inverse equivalences of categories between the category ${\bf Rep}_{\mathfrak{m}_R-tor}^{dis}(G_K)$ and $\varinjlim {\bf Mod}_{/\mathcal{O}_R}^{\varphi_q,\Gamma_{LT},\acute{e}t,tor}$.
\end{proposition}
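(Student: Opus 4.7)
The proof plan mirrors exactly the argument used to establish Proposition \ref{KR discrete} (the analogue in the $\mathcal{O}_K$-coefficient setting), leveraging the fact that Theorem \ref{Main6.2} already provides the equivalence at the level of finite torsion $R$-modules. First, I would observe that any object of ${\bf Rep}_{\mathfrak{m}_R-tor}^{dis}(G_K)$ is by definition a filtered direct limit of $\mathfrak{m}_R$-power torsion objects in ${\bf Rep}_R(G_K)$; correspondingly, every object of $\varinjlim {\bf Mod}_{/\mathcal{O}_R}^{\varphi_q,\Gamma_{LT},\acute{e}t,tor}$ is, by construction, such a filtered direct limit on the module side. So it suffices to show the equivalence respects this direct-limit structure.

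The key input, which is already recorded in the paragraph immediately preceding the proposition, is that both $\mathbb{D}_R$ and $\mathbb{V}_R$ commute with filtered direct limits. For $\mathbb{D}_R$, this follows because $\widehat{\mathcal{O}^{ur}_R}\otimes_R(-)$ commutes with direct limits (tensor products commute with colimits) and because $H_K$-invariants commute with filtered direct limits of discrete $H_K$-modules. For $\mathbb{V}_R$, the argument is analogous: $\widehat{\mathcal{O}^{ur}_R}\otimes_{\mathcal{O}_R}(-)$ commutes with direct limits, and taking $\varphi_q\otimes\varphi_M$-invariants commutes with filtered colimits on the relevant discrete objects.

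Given these two facts, I would then argue as follows. For any $V\in {\bf Rep}_{\mathfrak{m}_R-tor}^{dis}(G_K)$, write $V=\varinjlim V_i$ with each $V_i\in {\bf Rep}_R(G_K)$ of $\mathfrak{m}_R$-power torsion. Then
\begin{equation*}
\mathbb{V}_R(\mathbb{D}_R(V))=\mathbb{V}_R(\mathbb{D}_R(\varinjlim V_i))\cong \varinjlim \mathbb{V}_R(\mathbb{D}_R(V_i))\cong \varinjlim V_i = V,
\end{equation*}
where the middle isomorphism uses Theorem \ref{Main6.2}. The dual identity $\mathbb{D}_R(\mathbb{V}_R(M))\cong M$ for $M\in\varinjlim {\bf Mod}_{/\mathcal{O}_R}^{\varphi_q,\Gamma_{LT},\acute{e}t,tor}$ follows by the same calculation. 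Naturality of the unit and counit in the finite-torsion case passes to the limit, and the equivalence is established.

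The only step requiring a little care, rather than being fully routine, is confirming that $H_K$-invariants (resp. $\varphi_q\otimes\varphi_M$-invariants) commute with filtered direct limits in this specific setting. For $H_K$-invariants, this is standard because $H_K$ acts continuously and each $V_i$ is discrete, so invariants commute with filtered colimits of discrete $H_K$-modules. For $\varphi_q\otimes\varphi_M$-invariants, one uses that taking the kernel of $\varphi_q\otimes\varphi_M-id$ commutes with filtered colimits (since filtered colimits are exact in the category of abelian groups). Once these are in hand, the main body of the proof is just the direct-limit manipulation sketched above, and there is no genuine obstacle beyond bookkeeping.
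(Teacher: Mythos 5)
Your argument is exactly the one the paper gives: the proposition is deduced from Theorem \ref{Main6.2} by passing to filtered direct limits, using the observation (already recorded in the text) that $\mathbb{D}_R$ and $\mathbb{V}_R$ commute with such limits. The extra details you spell out about invariants commuting with filtered colimits are implicit in the paper's one-line proof and are correctly justified.
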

\begin{proof}
	Since the functors $\mathbb{D}_R$ and $\mathbb{V}_R$ commute with the direct limits, the proposition follows from Theorem \ref{Main6.2} by taking the direct limits.
\end{proof}
	\section{Galois Cohomology over the Coefficient Ring} \label{sec7}
By Proposition \ref{7.19}, the functor $\mathbb{D}_R$ is a quasi-inverse equivalence of categories between the category ${\bf Rep}_R(G_K)\ (\text{resp.,}\ {\bf Rep}_{\mathfrak{m}_R-tor}^{dis}(G_K))$ and ${\bf Mod}_{/\mathcal{O}_R}^{\varphi_q,\Gamma_{LT},\acute{e}t}$ $(\text{resp.,}\ \varinjlim {\bf Mod}_{/\mathcal{O}_R}^{\varphi_q,\Gamma_{LT},\acute{e}t,tor})$. The following theorem is a generalization of Theorem \ref{lattices} over the coefficient rings.
	\begin{theorem} \label{Main7}
	Let $V$ be an $R$-representation of $G_K$. Then there is a natural isomorphism 
		\begin{equation*}
		H^i(G_K,V)\cong \mathcal{H}^i(\Phi\Gamma_{LT}^{\bullet}(\mathbb{D}_{R}(V)))\quad \text{for} \ i\geq 0.
		\end{equation*}
	\end{theorem}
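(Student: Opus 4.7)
The plan is to reduce the statement to Theorem \ref{lattices} (equivalently to Theorem \ref{G_K-cohomo}) by first handling representations of finite length over $R$ and then passing to the inverse limit, exactly as in the proof of Theorem \ref{lattices}. Since $R$ is a complete Noetherian local ring, every $V\in{\bf Rep}_R(G_K)$ satisfies $V\cong\varprojlim_n V/\mathfrak{m}_R^nV$, and each quotient $V_n:=V/\mathfrak{m}_R^nV$ is annihilated by a power of $\mathfrak{m}_R$.

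First, I would treat the case $\mathfrak{m}_R^nV=0$. By the argument of Lemma \ref{finite length} (iterated extensions with finite residue field $k_R$), such a $V$ is finitely generated as an $\mathcal{O}_K$-module and therefore lies in ${\bf Rep}_{\mathcal{O}_K-tor}(G_K)$; being finite, it is also an object of ${\bf Rep}_{\mathcal{O}_K-tor}^{dis}(G_K)$. Moreover, the natural $G_K$-equivariant, $\varphi_q$-equivariant map
\begin{equation*}
\widehat{\mathcal{O}^{ur}_R}\otimes_R V\;=\;(\mathcal{O}_{\widehat{\mathcal{E}^{ur}}}\hat{\otimes}_{\mathcal{O}_K}R)\hat{\otimes}_RV\;\xrightarrow{\sim}\;\mathcal{O}_{\widehat{\mathcal{E}^{ur}}}\otimes_{\mathcal{O}_K}V
\end{equation*}
from the proof of Lemma \ref{finite length} is an isomorphism; taking $H_K$-invariants yields a canonical isomorphism $\mathbb{D}_R(V)\xrightarrow{\sim}\mathbb{D}_{LT}(V)$ of \'etale $(\varphi_q,\Gamma_{LT})$-modules. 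Consequently, the Lubin-Tate Herr complexes $\Phi\Gamma_{LT}^\bullet(\mathbb{D}_R(V))$ and $\Phi\Gamma_{LT}^\bullet(\mathbb{D}_{LT}(V))$ are canonically isomorphic, so Theorem \ref{G_K-cohomo} applied to $V$ gives $H^i(G_K,V)\cong\mathcal{H}^i(\Phi\Gamma_{LT}^\bullet(\mathbb{D}_R(V)))$ for all $i\geq0$.

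Second, for an arbitrary $V\in{\bf Rep}_R(G_K)$, I would show that both sides commute with the inverse system $(V_n)_n$. For the left-hand side, each $V_n$ is finite, hence each $H^i(G_K,V_n)$ is finite by \cite[Theorem 2.1]{Ta1}, and \cite[Corollary 2.2]{Tate} yields $H^i(G_K,V)\cong\varprojlim_n H^i(G_K,V_n)$ (this is the analog of Lemma \ref{commutes inverse}). For the right-hand side, the functor $\mathbb{D}_R$ commutes with inverse limits (as recorded in Section \ref{sub6.1}), so the transition maps in the projective system $(\Phi\Gamma_{LT}^\bullet(\mathbb{D}_R(V_n)))_n$ are surjective; hence the first hyper-cohomology spectral sequence degenerates at $E_2$. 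Combining this with the vanishing of $\varprojlim_n{}^1\mathcal{H}^i(\Phi\Gamma_{LT}^\bullet(\mathbb{D}_R(V_n)))$ coming from the finiteness just noted, the second hyper-cohomology spectral sequence
\begin{equation*}
\varprojlim_n{}^{i}\,\mathcal{H}^j(\Phi\Gamma_{LT}^\bullet(\mathbb{D}_R(V_n)))\;\Longrightarrow\;\mathcal{H}^{i+j}(\Phi\Gamma_{LT}^\bullet(\mathbb{D}_R(V)))
\end{equation*}
also degenerates at $E_2$, which yields $\mathcal{H}^i(\Phi\Gamma_{LT}^\bullet(\mathbb{D}_R(V)))\cong\varprojlim_n\mathcal{H}^i(\Phi\Gamma_{LT}^\bullet(\mathbb{D}_R(V_n)))$. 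Assembling the two commutations with the finite-length case proves the theorem.

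The only non-routine point is the compatibility check in the first step: one must verify that the isomorphism of Lemma \ref{finite length}, which is stated only at the level of $\mathcal{O}_K$-modules, respects the $\varphi_q$- and $\Gamma_{LT}$-structures so that the two Herr complexes genuinely coincide. This is where I expect the bookkeeping to be most delicate, though it ultimately comes down to noting that $\varphi_q$ acts trivially on $V$ in both constructions and that $\Gamma_{LT}$ acts through the same action on $\mathcal{O}_{\widehat{\mathcal{E}^{ur}}}\subset\widehat{\mathcal{O}^{ur}_R}$ in both tensor products. Everything else is a formal inverse-limit argument copied from the proof of Theorem \ref{lattices}.
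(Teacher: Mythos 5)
Your proposal is correct and follows essentially the same two-step route as the paper: reduce to the case $\mathfrak{m}_R^nV=0$ via Lemma \ref{finite length}, identify $\mathbb{D}_R(V)$ with $\mathbb{D}_{LT}(V)$, and then pass to the inverse limit over $V_n=V/\mathfrak{m}_R^nV$ using the finiteness of the cohomology groups. Two small remarks: your citation of Theorem \ref{G_K-cohomo} in the finite-length step is actually the cleaner choice, since a finite-length $V$ is $\pi$-power torsion and hence lands in ${\bf Rep}_{\mathcal{O}_K-tor}^{dis}(G_K)$ rather than in the free category covered by Theorem \ref{lattices}; and where you run the two hyper-cohomology spectral sequences for the inverse-limit step, the paper instead invokes exactness of $\varprojlim$ on $\mathfrak{m}_R$-power torsion modules over the Artinian rings $\mathcal{O}_R/\mathfrak{m}_R^n\mathcal{O}_R$ — the same Mittag-Leffler content, packaged differently.
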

	\begin{proof}
First assume that $\mathfrak{m}_R^n V= 0$ for some $n\in \mathbb{N}$. Then by Lemma \ref{finite length} and Theorem \ref{lattices}, we have 
		\begin{equation*}
		H^i(G_K,V)\cong \mathcal{H}^i(\Phi\Gamma_{LT}^{\bullet}(\mathbb{D}_{R}(V))).
		\end{equation*}
 Next, it follows from \cite[Theorem 2.1]{Ta1} and \cite[Corollary 2.2]{Tate} that the functor $H^i(G_K,-)$ commutes with the inverse limits. Moreover, we have
$\mathbb{D}_R(V)\xrightarrow{\sim} \varprojlim \mathbb{D}_R(V_n)$.
Observe that the modules $\mathbb{D}_R(V_n)$ are finitely generated over the Artinian ring $\mathcal{O}_R/\mathfrak{m}_R^n\mathcal{O}_R$, so the inverse limit functor is an exact functor on the category of $\mathfrak{m}_R$-power torsion \'{e}tale $(\varphi_q,\Gamma_{LT})$-modules over $\mathcal{O}_R$. Then we have 
\begin{equation*}
\mathcal{H}^i(\Phi\Gamma_{LT}^{\bullet}(\mathbb{D}_{R}(V)))=\varprojlim \mathcal{H}^i(\Phi\Gamma_{LT}^{\bullet}(\mathbb{D}_{R}(V_n))).
\end{equation*} 
Hence the general case follows by passing to the inverse limits.		
	\end{proof}
Next, in order to generalize the Theorem \ref{Main5} to the case of coefficient rings; first, we extend the operator $\psi:= \psi_{\mathbb{D}_{LT}(V)}$ to $\mathbb{D}_R(V)$.
	
	As $\psi_q$ maps $\mathcal{O}_{\widehat{\mathcal{E}^{ur}}}$ to $\mathcal{O}_{\widehat{\mathcal{E}^{ur}}}$, we extend $\psi_q$ to $\mathcal{O}_{\widehat{\mathcal{E}^{ur}}}\otimes_{\mathcal{O}_L} R$ by making it trivially act on $R$. Moreover, it maps $\mathfrak{m}_{\mathcal{O}_{\widehat{\mathcal{E}^{ur}}}}\otimes R+ \mathcal{O}_{\widehat{\mathcal{E}^{ur}}}\otimes \mathfrak{m}_R$ to itself, inducing an $R$-linear map
	\begin{equation*}
	\psi_q: \widehat{\mathcal{O}^{ur}_R}\rightarrow \widehat{\mathcal{O}^{ur}_R}.
	\end{equation*} 
	Moreover, as $\psi_q$ acts Galois equivariantly, making it act on $\widehat{\mathcal{O}^{ur}_R}\otimes_{\mathcal{O}_L} V$ via its action on $\widehat{\mathcal{O}^{ur}_R}$, we have an operator $\psi_{\mathbb{D}_R(V)}$ on $\mathbb{D}_R(V)$. 
\begin{theorem}\label{Main7.2}
	Let $V \in {\bf Rep}_{\mathfrak{m}_R-tor}^{dis}(G_K)$. Then we have a well-defined homomorphism 
	\begin{equation*}
		\mathcal{H}^i(\Phi\Gamma_{LT}^{\bullet}(\mathbb{D}_R(V)))\rightarrow \mathcal{H}^i(\Psi\Gamma_{LT}^{\bullet}(\mathbb{D}_R(V)))\quad \text{for}\ i\geq 0.
		\end{equation*}
	Moreover, the homomorphism $\mathcal{H}^0(\Phi\Gamma_{LT}^{\bullet}(\mathbb{D}_R(V)))\rightarrow \mathcal{H}^0(\Psi\Gamma_{LT}^{\bullet}(\mathbb{D}_R(V))$ is injective.\end{theorem}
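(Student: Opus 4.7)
The plan is to transcribe the proof of Theorem \ref{Main5} nearly verbatim, replacing the \'{e}tale $(\varphi_q,\Gamma_{LT})$-module $M$ there by $\mathbb{D}_R(V)$. The proof of Theorem \ref{Main5} is purely formal: it uses only the relation $\psi_M\circ\varphi_M=\tfrac{q}{\pi}\,id_M$ and the $\Gamma_{LT}$-equivariance of $\psi_M$. So the task reduces to establishing these two facts for $\psi_{\mathbb{D}_R(V)}$ acting on $\mathbb{D}_R(V)$, after which the rest of the argument transfers without change.

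First I would verify the two operator-theoretic identities. Recall from the paragraph preceding the theorem that $\psi_q$ has been extended to $\widehat{\mathcal{O}^{ur}_R} = \mathcal{O}_{\widehat{\mathcal{E}^{ur}}}\hat{\otimes}_{\mathcal{O}_K} R$ by trivial action on $R$, and further to $\widehat{\mathcal{O}^{ur}_R}\otimes_R V$ by trivial action on $V$; these extensions commute with the $G_K$-action because the original $\psi_q$ on $\mathcal{O}_{\widehat{\mathcal{E}^{ur}}}$ does. Taking $H_K$-invariants yields an operator $\psi_{\mathbb{D}_R(V)}$ on $\mathbb{D}_R(V)$ that commutes with $\Gamma_{LT}$. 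Similarly, the relation $\psi_q\circ\varphi_q=\tfrac{q}{\pi}\,id$ on $\mathcal{O}_{\widehat{\mathcal{E}^{ur}}}$ (Section \ref{section psi}) propagates to $\widehat{\mathcal{O}^{ur}_R}$ and then to $\mathbb{D}_R(V)$, so that $\psi_{\mathbb{D}_R(V)}\circ\varphi_{\mathbb{D}_R(V)}=\tfrac{q}{\pi}\,id_{\mathbb{D}_R(V)}$.

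Next I would apply Lemma \ref{phi to psi morphism} with the choices $\mathscr{C}_1=\Phi^\bullet(\mathbb{D}_R(V)^\Delta)$, $\mathscr{C}_2=\Psi^\bullet(\mathbb{D}_R(V)^\Delta)$, and $\mathscr{C}_3=\Gamma_{LT}^\bullet(\mathbb{D}_R(V)^\Delta)$. The chain map $\mathscr{C}_1\to\mathscr{C}_2$ is the identity at the source term and $-\psi_{\mathbb{D}_R(V)}$ at the target term; the first identity above gives chain-map compatibility (via $(-\psi_{\mathbb{D}_R(V)})(\varphi_{\mathbb{D}_R(V)}-id)=\psi_{\mathbb{D}_R(V)}-\tfrac{q}{\pi}\,id$), while $\Gamma_{LT}$-equivariance gives compatibility with $\mathscr{C}_3$. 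Passing to total complexes produces a morphism $\Phi\Gamma_{LT}^\bullet(\mathbb{D}_R(V))\to\Psi\Gamma_{LT}^\bullet(\mathbb{D}_R(V))$ and, on cohomology, the desired natural homomorphism
\begin{equation*}
\mathcal{H}^i(\Phi\Gamma_{LT}^\bullet(\mathbb{D}_R(V)))\to\mathcal{H}^i(\Psi\Gamma_{LT}^\bullet(\mathbb{D}_R(V)))\qquad\text{for all}\ i\geq 0.
\end{equation*}

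Finally, for the injectivity at $i=0$, let $\mathcal{K}$, $\mathbb{I}$ and $\mathcal{C}$ denote the kernel, image and cokernel complexes of this morphism. Since the chain map equals the identity at the initial position of the $\Phi^\bullet/\Psi^\bullet$ direction, the complexes $\mathcal{K}$ and $\mathcal{C}$ both vanish in degrees $\leq 0$, so $\mathcal{H}^0(\mathcal{K})=\mathcal{H}^0(\mathcal{C})=0$. Splitting the four-term exact sequence $0\to\mathcal{K}\to\Phi\Gamma_{LT}^\bullet(\mathbb{D}_R(V))\to\Psi\Gamma_{LT}^\bullet(\mathbb{D}_R(V))\to\mathcal{C}\to 0$ into the two short exact sequences with middle term $\mathbb{I}$ and examining the associated long exact cohomology sequences exactly as in the proof of Theorem \ref{Main5} yields first $\mathcal{H}^0(\mathbb{I})\cong\mathcal{H}^0(\Psi\Gamma_{LT}^\bullet(\mathbb{D}_R(V)))$ and then an injection $\mathcal{H}^0(\Phi\Gamma_{LT}^\bullet(\mathbb{D}_R(V)))\hookrightarrow\mathcal{H}^0(\mathbb{I})$, whence the claim. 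The only potential obstacle is the extension of $\psi_q$ to $\widehat{\mathcal{O}^{ur}_R}$, but this is automatic from the completed-tensor-product construction; no substantive new work beyond Theorem \ref{Main5} is required.
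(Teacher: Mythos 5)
Your proof is correct, but it takes a different route from the paper. The paper's own proof is a two-step reduction: first it notes that if $\mathfrak{m}_R^n V = 0$ then Lemma \ref{finite length} identifies $\mathbb{D}_R(V)$ with $\mathbb{D}_{LT}(V)$ as $\mathcal{O}_K$-modules (with the $\varphi_q$-, $\psi_q$-, and $\Gamma_{LT}$-structures matching by construction), so Theorem \ref{Main5} applies verbatim; it then handles general $V$ by a limit argument, using that both functors $\mathcal{H}^i(\Phi\Gamma_{LT}^{\bullet}(\mathbb{D}_R(-)))$ and $\mathcal{H}^i(\Psi\Gamma_{LT}^{\bullet}(\mathbb{D}_R(-)))$ commute with limits. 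You instead re-run the proof of Theorem \ref{Main5} directly on $M=\mathbb{D}_R(V)$, noting that the argument there is purely formal and only requires the identity $\psi_M\circ\varphi_M=\tfrac{q}{\pi}\,id_M$ and the $\Gamma_{LT}$-equivariance of $\psi_M$ — both of which the paper establishes for $\psi_{\mathbb{D}_R(V)}$ in the paragraph preceding the theorem. Your approach has the advantage of being self-contained and uniform in $V$: it avoids both the agreement-of-functors lemma and the limit argument, and makes explicit that the construction of the Lubin-Tate Herr complex and the kernel/cokernel analysis are insensitive to whether $M$ carries an $\mathcal{O}_{\mathcal{E}}$- or $\mathcal{O}_R$-module structure. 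The paper's reduction is shorter on the page but implicitly needs the observation that the $\Phi$-, $\Psi$-, and $\Gamma_{LT}$-data transfer under the identification in Lemma \ref{finite length}, something your direct argument sidesteps.
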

\begin{proof}
If $V$ is a finite abelian $\mathfrak{m}_R$ group, then the theorem follows from Lemma \ref{finite length} and Theorem \ref{Main5}. Also, the functors $\mathcal{H}^i(\Phi\Gamma_{LT}^{\bullet}(\mathbb{D}_R(-)))$ and $\mathcal{H}^i(\Psi\Gamma_{LT}^{\bullet}(\mathbb{D}_R(-)))$ commute with the inverse limits. Hence the result follows for general $V$ by passing to the inverse limits.
\end{proof}	
The following theorem and its proof is a generalization of \cite[Theorem 5.13]{SV} to the case of coefficient rings. It is possible that this approach leads to the construction of a Perrin-Riou homomorphism for Galois representation defined over the coefficient ring $R$.
	\begin{theorem}\label{Main7.3}
		Let $V \in {\bf Rep}_R(G_K)$. Then we have 
		\begin{equation*}
		H^i_{Iw}({K_{\infty}/K},V)\cong \mathcal{H}^{i-1}(\underline{\Psi}^{\bullet}(\mathbb{D}_{R}(V(\chi_{cyc}^{-1}\chi_{LT})))) \quad \text{for}\ i\geq 1.
		\end{equation*}
In particular, $H^i_{Iw}({K_{\infty}/K},V)=0$ for $i\geq3$.
	\end{theorem}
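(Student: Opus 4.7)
The plan is to reduce to the $\mathfrak{m}_R$-torsion case already covered by the non-coefficient-ring theory, and then pass to an inverse limit exactly as in the proof of Corollary \ref{Iwasawa cor.} and Theorem \ref{Main7}.

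First I would treat the case where $\mathfrak{m}_R^n V = 0$ for some $n$. By the proof of Lemma \ref{finite length}, such a $V$ is finitely generated as an $\mathcal{O}_K$-module, and since the uniformizer $\pi$ lies in $\mathfrak{m}_R$, $V$ is $\pi$-power torsion, hence $V \in {\bf Rep}_{\mathcal{O}_K-tor}^{dis}(G_K)$. Moreover, since the twisting character $\chi_{cyc}^{-1}\chi_{LT}$ takes values in $\mathcal{O}_K^\times \subset R^\times$, twisting commutes with the mod-$\mathfrak{m}_R^n$ reduction, and Lemma \ref{finite length} (together with functoriality of $\psi$) identifies $\mathbb{D}_R(V(\chi_{cyc}^{-1}\chi_{LT}))$ with $\mathbb{D}_{LT}(V(\chi_{cyc}^{-1}\chi_{LT}))$ as $\mathcal{O}_K$-modules compatibly with $\psi$. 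Theorem \ref{Iwasawa cohomology} then yields the desired isomorphism for $\mathfrak{m}_R$-torsion $V$.

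For a general $V \in {\bf Rep}_R(G_K)$, I would write $V = \varprojlim_n V/\mathfrak{m}_R^n V$ and pass to the inverse limit on both sides. On the Galois-cohomology side, each $V/\mathfrak{m}_R^n V$ is finite, so the groups $H^i(L, V/\mathfrak{m}_R^n V)$ are finite for every finite Galois $L \subset K_\infty$ (\cite[Theorem 2.1]{Ta1}), and a double application of \cite[Corollary 2.2]{Tate} (once over $L$, once over the tower of $L$'s) gives $H^i_{Iw}(K_\infty/K, V) \cong \varprojlim_n H^i_{Iw}(K_\infty/K, V/\mathfrak{m}_R^n V)$. On the $\psi$ side, the argument from Corollary \ref{Iwasawa cor.} transcribes verbatim: the transition maps in $(\underline{\Psi}^\bullet(\mathbb{D}_R(V/\mathfrak{m}_R^n V(\chi_{cyc}^{-1}\chi_{LT}))))_n$ are surjective (since $\mathbb{D}_R$ is exact and commutes with inverse limits by \cite[Proposition 2.1.21]{Dee}), so both hyper-cohomology spectral sequences degenerate at $E_2$ and give
\begin{equation*}
\mathcal{H}^{i-1}(\underline{\Psi}^\bullet(\mathbb{D}_R(V(\chi_{cyc}^{-1}\chi_{LT})))) \cong \varprojlim_n \mathcal{H}^{i-1}(\underline{\Psi}^\bullet(\mathbb{D}_R(V_n(\chi_{cyc}^{-1}\chi_{LT})))).
\end{equation*}
Combining the torsion case with these two inverse-limit identifications finishes the proof.

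The only delicate step is the vanishing of $\varprojlim^1$ for the $\psi$-complex cohomology; it relies on the transition maps $\mathbb{D}_R(V/\mathfrak{m}_R^{n+1} V) \twoheadrightarrow \mathbb{D}_R(V/\mathfrak{m}_R^n V)$ being surjective, which in turn uses exactness of $\mathbb{D}_R$ and the fact that each $\mathbb{D}_R(V/\mathfrak{m}_R^n V)$ is finitely generated over the Artinian quotient $\mathcal{O}_R/\mathfrak{m}_R^n\mathcal{O}_R$, so that Mittag-Leffler automatically holds. Everything else is a formal consequence of results already established.
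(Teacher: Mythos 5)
Your proposal is correct and follows essentially the same route as the paper's own proof: reduce to the $\mathfrak{m}_R$-torsion case, where Lemma \ref{finite length} identifies $\mathbb{D}_R(V)$ with $\mathbb{D}_{LT}(V)$ as $\mathcal{O}_K$-modules compatibly with $\psi$, invoke the Iwasawa cohomology result for $\mathcal{O}_K$-coefficients, and then pass to the inverse limit on both sides. You are in fact slightly more careful than the paper: you correctly invoke Theorem \ref{Iwasawa cohomology} rather than Corollary \ref{Iwasawa cor.} for the torsion step (a finite $\pi$-power torsion module does not lie in ${\bf Rep}_{\mathcal{O}_K}(G_K)$), and you spell out the Mittag--Leffler justification via Artinian quotients $\mathcal{O}_R/\mathfrak{m}_R^n\mathcal{O}_R$ which the paper leaves implicit.
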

	\begin{proof}
Suppose that $V$ has a finite length. Then 
$\mathbb{D}_R(V) = \mathbb{D}_{LT}(V)$
as an $\mathcal{O}_L$-module. Thus $\psi_{\mathbb{D}_R(V)}$ agrees with the $\psi_{\mathbb{D}_{LT}(V)}$. Now by Corollary \ref{Iwasawa cor.}, we have
\begin{equation*}
H^i_{Iw}({K_{\infty}/K},V)\cong \mathcal{H}^{i-1}(\underline{\Psi}^{\bullet}(\mathbb{D}_{R}(V(\chi_{cyc}^{-1}\chi_{LT})))) \quad \text{for}\ i\geq1.
\end{equation*}
Moreover, it follows from Lemma \ref{commutes inverse} that the functor $H^i_{Iw}(K_{\infty}/K,-)$ also commutes with the inverse limits. {\cv Now let $V \in {\bf Rep}_R(G_K)$. Then consider the exact sequence of projective systems for $V_n:= V/\mathfrak{m}_R^nV$;
\begin{equation*}
0\rightarrow H^1_{Iw}({K_{\infty}/K},V_n)\rightarrow \mathbb{D}_{R}(V_n(\chi_{cyc}^{-1}\chi_{LT}))\xrightarrow{\psi_{\mathbb{D}_{R}(V_n(\chi_{cyc}^{-1}\chi_{LT}))}-1}\mathbb{D}_{R}(V_n(\chi_{cyc}^{-1}\chi_{LT}))\rightarrow H^2_{Iw}({K_{\infty}/K},V_n)\rightarrow 0.
\end{equation*}
We now show that this exact sequence remains exact on taking the projective limit. For this, it is enough to show that $R^1\varprojlim$ of the two projective systems 
\begin{equation*}
\{{H^1_{Iw}({K_{\infty}/K},V_n)}\}_n\ \mbox{and}\ \{{(\psi_{\mathbb{D}_{R}(V(\chi_{cyc}^{-1}\chi_{LT}))}-1)\mathbb{D}_{R}(V_n(\chi_{cyc}^{-1}\chi_{LT}))}\}_n
\end{equation*}
vanishes. Consider the open pro-$p$ subgroup $\Gamma_{LT}^*$, then $R[[\Gamma_{LT}^*]]$ is a complete local noetherian commutative ring, and $\{{H^1_{Iw}(K_\infty/K,V_n)}_n\}$ is a projective system of finitely generated $R[[\Gamma_{LT}^*]]$-modules. Hence by \cite[Theorem 8.1]{Jen}, $R^1\varprojlim$ of this projective system vanishes. Since 
\begin{equation*}
\mathbb{D}_{R}(V_n(\chi_{cyc}^{-1}\chi_{LT})) = \mathbb{D}_{R}(V(\chi_{cyc}^{-1}\chi_{LT}))/\mathfrak{m}_R^n\mathbb{D}_{R}(V(\chi_{cyc}^{-1}\chi_{LT})),
\end{equation*}
 therefore for the projective system $\{{(\psi_{\mathbb{D}_{R}(V(\chi_{cyc}^{-1}\chi_{LT}))}-1)\mathbb{D}_{R}(V_n(\chi_{cyc}^{-1}\chi_{LT}))}\}_n$, the transition maps are surjective 
and hence the $R^1\varprojlim$ of this projective system vanishes. The exactness on the right follows.}

	\end{proof}
	\begin{remark}
	It is possible to extend Theorem \ref{False-Tate equivalence} to the case of coefficient rings, and using that we can prove that for any $V \in {\bf Rep}_R(G_K)$,
		\begin{equation*}
		H^i(G_K,V)\cong \mathcal{H}^i(\Phi\Gamma_{LT,FT}^{\bullet}(\mathbb{D}_{R}(V))).
		\end{equation*}
		This gives a  generalization of Theorem \ref{Main4} over the coefficient rings. We can also generalize Theorem \ref{Theorem False Tate} to the case of coefficient rings.
	\end{remark}
By Theorem \ref{Iwasawa cohomology}, we have
	\begin{equation*}
	H^1_{Iw}({K_{\infty}/K},\mathcal{O}_L(\chi_{cyc}\chi_{LT}^{-1}))\cong \mathbb{D}_{LT}(\mathcal{O}_L)^{\psi_{\mathbb{D}_{LT}(\mathcal{O}_L)}=id}.
	\end{equation*}
	The map
\begin{equation*}
\Exp^*:H^1_{Iw}({K_{\infty}/K},\mathcal{O}_L(\chi_{cyc}\chi_{LT}^{-1}))\rightarrow \mathbb{D}_{LT}(\mathcal{O}_L)^{\psi_{\mathbb{D}_{LT}(\mathcal{O}_L)}=id}
\end{equation*}
is called the \emph{dual exponential map}. These dual exponential maps occur in the construction of the Coates-Wiles homomorphisms. For more details about this dual exponential map, see \cite{SV}. We generalize the dual exponential map over the coefficient ring to check if one can extend the Coates-Wiles homomorphisms to Galois representations defined over $R$. 
\begin{theorem}\label{commutative}
Let $V\in {\bf Rep}_{R}(G_K)$. {\C Consider a map of local rings $R\rightarrow\mathcal{O}_L$. Then} 
we have the following commutative diagram
\begin{center}
\begin{tikzcd}
H^i_{Iw}({K_{\infty}/K},V) \arrow{r}{\cong} \arrow{d} & \mathcal{H}^{i-1}(\underline{\Psi}^{\bullet}(\mathbb{D}_{R}(V(\chi_{cyc}^{-1}\chi_{LT})))) \arrow{d} \\
H^i_{Iw}({K_{\infty}/K},V\otimes_{R}\mathcal{O}_L)\arrow{r}[swap]{\cong}& \mathcal{H}^{i-1}(\underline{\Psi}^{\bullet}(\mathbb{D}_{LT}(V\otimes_{R}\mathcal{O}_L(\chi_{cyc}^{-1}\chi_{LT})))).
\end{tikzcd}
\end{center}
\end{theorem}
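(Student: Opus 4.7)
The plan is to establish both horizontal arrows as isomorphisms by direct appeal to previous results: the top horizontal is Theorem \ref{Main7.3}, while the bottom horizontal is Corollary \ref{Iwasawa cor.} applied to the $\mathcal{O}_K$-representation $V \otimes_R \mathcal{O}_K$. For the vertical arrows, the canonical $G_K$-equivariant map $V \to V \otimes_R \mathcal{O}_K$ induces, by functoriality of continuous Galois cohomology and the projective limit defining Iwasawa cohomology, the left vertical map. The same canonical map, after twisting by $\chi_{cyc}^{-1}\chi_{LT}$, induces a morphism $\mathbb{D}_R(V(\chi_{cyc}^{-1}\chi_{LT})) \to \mathbb{D}_{LT}(V\otimes_R\mathcal{O}_K(\chi_{cyc}^{-1}\chi_{LT}))$ of $\varphi_q$-modules compatible with the $\psi_q$-operators (the compatibility comes directly from the construction of $\psi_{\mathbb{D}_R(V)}$ in Section \ref{sec7}, where $\psi_q$ is defined by acting trivially on the coefficient ring, so the map $V \to V\otimes_R\mathcal{O}_K$ intertwines the two $\psi$-operators). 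This produces the right vertical arrow on the cohomology of $\underline{\Psi}^\bullet$.

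To verify commutativity, the first step will be to reduce to the case where $V$ is of finite length, i.e., killed by some power of $\mathfrak{m}_R$. All four functors in the diagram commute with the inverse limits $V = \varprojlim_n V/\mathfrak{m}_R^n V$: for the left column this is Lemma \ref{commutes inverse}, and for the right column this is the spectral-sequence degeneration argument already used in the proofs of Corollary \ref{Iwasawa cor.} and Theorem \ref{Main7.3}. Once $V$ has finite length, Lemma \ref{finite length} identifies $\mathbb{D}_R(V(\chi_{cyc}^{-1}\chi_{LT}))$ with $\mathbb{D}_{LT}(V(\chi_{cyc}^{-1}\chi_{LT}))$ as $\mathcal{O}_K$-modules, and under this identification $V\otimes_R\mathcal{O}_K$ corresponds to $V$ viewed as an $\mathcal{O}_K$-module by restriction of scalars along $\mathcal{O}_K \to R$. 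In particular the vertical map on the right becomes the identity up to this natural identification, and the vertical map on the left is induced by the same restriction-of-scalars functor.

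In the finite-length setting, both horizontal isomorphisms arise from the same chain of natural maps: Local Tate duality giving $H^i_{Iw} \cong H^{2-i}(H_K, V^\vee(\chi_{cyc}))^\vee$; Proposition \ref{H_K-cohomology} computing the $H_K$-cohomology via $\Phi^\bullet(\mathbb{D}_{LT}(V^\vee(\chi_{cyc})))$; the residue-pairing duality of \cite[Remark 4.7]{SV} converting $\Phi^\bullet(M^\vee)$-cohomology into $\underline{\Psi}^\bullet(M)$-cohomology; and finally the twisting identities (Remarks 4.6 and 5.6 of \cite{SV}). Each step is manifestly natural in the $\mathcal{O}_K$-linear representation, so the square commutes in the finite-length case, and then passage to the inverse limit yields the result in general. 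The main obstacle I anticipate is verifying that the $\mathcal{O}_K$-valued duality pairing of \cite[Remark 4.7]{SV}, when extended $R$-linearly to $\mathbb{D}_R$, is compatible with its $\mathcal{O}_K$-coefficient counterpart under the map $\otimes_R \mathcal{O}_K$; this requires carefully unwinding the identification $M^\vee \cong \operatorname{Hom}_{\mathcal{O}_{\mathcal{E}}}(M, \mathcal{O}_{\mathcal{E}}/\pi^n(\chi_{LT}))$ in both the $R$- and $\mathcal{O}_K$-coefficient settings and checking that the residue map respects this specialization, but it should follow from the naturality of the residue construction combined with Lemma \ref{finite length}.
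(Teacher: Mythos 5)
Your proposal follows essentially the same route as the paper's: construct the two vertical arrows from the canonical map $V \to V \otimes_R \mathcal{O}_K$ (and the corresponding ring map $\widehat{\mathcal{O}^{ur}_R} \to \mathcal{O}_{\widehat{\mathcal{E}^{ur}}}$), obtain the horizontal isomorphisms from Theorem \ref{Main7.3} and Corollary \ref{Iwasawa cor.}, and conclude commutativity from the naturality of the chain of isomorphisms underlying Theorem \ref{Iwasawa cohomology}. The paper compresses the commutativity check into the single sentence ``the result follows from Theorem \ref{Iwasawa cohomology} and Theorem \ref{Main7.3},'' whereas you make the reduction to finite length and the naturality argument explicit; this is a genuine improvement in exposition over the paper's very terse proof.

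There is, however, one misstatement worth flagging. You write that after invoking Lemma \ref{finite length} to identify $\mathbb{D}_R$ with $\mathbb{D}_{LT}$ in the finite-length case, ``the vertical map on the right becomes the identity up to this natural identification,'' and that ``$V\otimes_R\mathcal{O}_K$ corresponds to $V$ viewed as an $\mathcal{O}_K$-module by restriction of scalars.'' Neither claim is correct: $V \otimes_R \mathcal{O}_K$ is the cobase change of $V$ along the coefficient map $R \to \mathcal{O}_K$ (a proper quotient of $V$ in general), not $V$ itself under restriction of scalars along $\mathcal{O}_K \to R$. Even for $V$ of finite length, the right vertical arrow is still the genuine surjection $\mathbb{D}_{LT}(V(\chi_{cyc}^{-1}\chi_{LT})) \to \mathbb{D}_{LT}(V\otimes_R\mathcal{O}_K(\chi_{cyc}^{-1}\chi_{LT}))$ induced by $V \twoheadrightarrow V \otimes_R \mathcal{O}_K$, not the identity. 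The correct statement, which your next sentence in fact uses, is that after the Lemma \ref{finite length} identification, both horizontal isomorphisms become instances of the single $\mathcal{O}_K$-linear isomorphism of Theorem \ref{Iwasawa cohomology}, applied to the $\mathcal{O}_K$-representations $V$ and $V \otimes_R \mathcal{O}_K$ respectively; commutativity is then just naturality of that isomorphism in the $\mathcal{O}_K$-representation. This also dissolves the ``main obstacle'' you anticipate at the end: in the finite-length case there is no $R$-linear pairing to compare against the $\mathcal{O}_K$-linear one, since the identification $\mathbb{D}_R(V) = \mathbb{D}_{LT}(V)$ reduces the entire top row to the $\mathcal{O}_K$-coefficient statement, so the residue-pairing compatibility you worry about never actually needs to be checked.
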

\begin{proof}
Note that the ring homomorphism $R\rightarrow \mathcal{O}_L$ induces a map from $V\rightarrow V\otimes_{R}\mathcal{O}_L$. Moreover, $H^i_{Iw}(K_\infty/K,V)= H^i(G_K,R[[\Gamma_{LT}]]\otimes_{R}V)$. Therefore, we have a well defined map $H^i_{Iw}(K_\infty/K, V)\rightarrow H^i_{Iw}({K_{\infty}/K},V\otimes_{R}\mathcal{O}_L)$. Similarly, the map $V\rightarrow V\otimes_{R}\mathcal{O}_L$ defines a map from $\mathbb{D}_R(V)\rightarrow \mathbb{D}_{LT}(V\otimes_{R}\mathcal{O}_L)$, and this induces a well-defined map $\mathcal{H}^{i-1}(\underline{\Psi}^{\bullet}(\mathbb{D}_{R}(V(\chi_{cyc}^{-1}\chi_{LT}))))\rightarrow\mathcal{H}^{i-1}(\underline{\Psi}^{\bullet}(\mathbb{D}_{LT}(V\otimes_{R}\mathcal{O}_L(\chi_{cyc}^{-1}\chi_{LT})))).$ Then the result follows from Theorem \ref{Iwasawa cohomology} and Theorem \ref{Main7.3}.
\end{proof}
 Next, we generalize the dual exponential map over coefficient rings.
\begin{corollary}\label{dual exp.}
There is a dual exponential map $$\Exp_R^*:H^1_{Iw}({K_{\infty}/K},R(\chi_{cyc}\chi_{LT}^{-1}))\xrightarrow{\sim}\mathcal{O}_R^{\psi_R=id}$$ over $R$, and {\C for any map $R\rightarrow\mathcal{O}_L$ of local rings}, the diagram
\begin{center}
\begin{tikzcd}
H^1_{Iw}({K_{\infty}/K},R(\chi_{cyc}\chi_{LT}^{-1})) \arrow{r}{\Exp_R^*} \arrow{d} & \mathcal{O}_R^{\psi_R=id} \arrow{d} \\
	H^1_{Iw}({K_{\infty}/K},\mathcal{O}_L(\chi_{cyc}\chi_{LT}^{-1}))\arrow{r}[swap]{\Exp^*}& \mathcal{O}_\mathcal{E}^{\psi=id},
	\end{tikzcd}
\end{center}
where $\psi_R= \psi_{\mathbb{D}_R(R)}$ and $\psi= \psi_{\mathbb{D}_{LT}(\mathcal{O}_L)}$, is commutative.
\end{corollary}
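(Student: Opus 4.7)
My plan is to read off the statement directly from Theorem \ref{Main7.3} together with Theorem \ref{commutative}, with the only real work being to identify what the various modules degenerate to when the underlying representation is the trivial one (twisted).

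First I would specialize Theorem \ref{Main7.3} to $V = R(\chi_{cyc}\chi_{LT}^{-1})$. Since $V(\chi_{cyc}^{-1}\chi_{LT}) = R$ endowed with the trivial $G_K$-action, we have
\begin{equation*}
\mathbb{D}_R(R) = (\widehat{\mathcal{O}^{ur}_R}\otimes_R R)^{H_K} = \widehat{\mathcal{O}^{ur}_R}^{H_K} = \mathcal{O}_R,
\end{equation*}
where the last equality follows because $\widehat{\mathcal{O}^{ur}_R} = \mathcal{O}_{\widehat{\mathcal{E}^{ur}}}\hat{\otimes}_{\mathcal{O}_K} R$, taking $H_K = G_E$-invariants kills the $\mathcal{O}_{\widehat{\mathcal{E}^{ur}}}$-part down to $\mathcal{O}_\mathcal{E}$, and the completed tensor product with $R$ over $\mathcal{O}_K$ then gives exactly $\mathcal{O}_R$. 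The operator $\psi_{\mathbb{D}_R(R)}$ is precisely the operator $\psi_R$ constructed above Theorem \ref{Main7.2}, namely the $R$-linear extension of $\psi_q$ to $\mathcal{O}_R$. Thus the $\Psi^\bullet$-complex in Theorem \ref{Main7.3} reduces, at $i=1$, to
\begin{equation*}
\mathcal{H}^0(\underline{\Psi}^\bullet(\mathcal{O}_R)) = \mathcal{O}_R^{\psi_R = id},
\end{equation*}
giving the desired isomorphism $\Exp_R^*: H^1_{Iw}(K_\infty/K, R(\chi_{cyc}\chi_{LT}^{-1})) \xrightarrow{\sim} \mathcal{O}_R^{\psi_R = id}$.

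Next I would deduce the commutativity of the square directly from Theorem \ref{commutative} applied to $V = R(\chi_{cyc}\chi_{LT}^{-1})$ with $i = 1$. Observe that $V\otimes_R \mathcal{O}_K = \mathcal{O}_K(\chi_{cyc}\chi_{LT}^{-1})$ and $(V\otimes_R\mathcal{O}_K)(\chi_{cyc}^{-1}\chi_{LT}) = \mathcal{O}_K$ with trivial $G_K$-action, so $\mathbb{D}_{LT}(\mathcal{O}_K) = \mathcal{O}_\mathcal{E}$ and its $\psi$-invariants are $\mathcal{O}_\mathcal{E}^{\psi = id}$. Hence the bottom horizontal arrow of Theorem \ref{commutative} is exactly the classical Schneider--Venjakob dual exponential $\Exp^*$ of \cite{SV}, and the top horizontal arrow is by construction $\Exp_R^*$.

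The remaining verification is that the right-hand vertical arrow in Theorem \ref{commutative}, upon $\psi$-invariants, agrees with the natural ring map $\mathcal{O}_R \to \mathcal{O}_\mathcal{E}$ induced by the coefficient ring homomorphism $R \to \mathcal{O}_K$. This is immediate from the construction in the proof of Theorem \ref{commutative}: the map $V \to V\otimes_R \mathcal{O}_K$ functorially induces $\mathbb{D}_R(V(\chi_{cyc}^{-1}\chi_{LT})) \to \mathbb{D}_{LT}((V\otimes_R\mathcal{O}_K)(\chi_{cyc}^{-1}\chi_{LT}))$, which in our case is the natural scalar-extension map $\mathcal{O}_R \to \mathcal{O}_\mathcal{E}$, and this map commutes with $\psi_q$ since $\psi_q$ on $\mathcal{O}_R$ was defined to be the extension of $\psi_q$ on $\mathcal{O}_\mathcal{E}$ by the trivial action on $R$. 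There is no real obstacle here; the only point requiring a little care is the compatibility of the $\psi$-operators under the scalar-extension map, but this is built into the very definition of $\psi_R$ given before Theorem \ref{Main7.2}, so the square commutes.
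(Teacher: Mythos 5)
Your proposal is correct and follows essentially the same route as the paper: specialize Theorem \ref{Main7.3} (and Theorem \ref{Iwasawa cohomology}) to $V = R(\chi_{cyc}\chi_{LT}^{-1})$, use that $\mathbb{D}_R(R) = \mathcal{O}_R$ with $\psi_{\mathbb{D}_R(R)} = \psi_R$ to get the isomorphism $\Exp_R^*$, and then invoke Theorem \ref{commutative} with $i=1$ for the commutative square. The only difference is that you spell out the identification $\mathbb{D}_R(R) \cong \mathcal{O}_R$ and the compatibility of the $\psi$-operators a bit more explicitly than the paper does, which is fine.
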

\begin{proof}
Since $R(\chi_{cyc}\chi_{LT}^{-1})$ is an $R$-representation of $G_K$, by Theorem \ref{Main7.3}, we have
\begin{align*}
H^1_{Iw}({K_{\infty}/K},R(\chi_{cyc}\chi_{LT}^{-1}))&\cong \mathcal{H}^0(\underline{\Psi}^{\bullet}(\mathbb{D}_{R}(R(\chi_{cyc}\chi_{LT}^{-1})(\chi_{cyc}^{-1}\chi_{LT})))\\&\cong\mathcal{O}_R^{\psi_R=id}.
\end{align*} 
Also, by Theorem \ref{Iwasawa cohomology}, we have
\begin{align*}  
 H^1_{Iw}({K_{\infty}/K},R(\chi_{cyc}\chi_{LT}^{-1})\otimes_{R}\mathcal{O}_L)&\cong H^1_{Iw}({K_{\infty}/K},\mathcal{O}_L(\chi_{cyc}\chi_{LT}^{-1}))\\& \cong \mathcal{H}^0(\underline{\Psi}^{\bullet}(\mathbb{D}_{LT}(\mathcal{O}_L(\chi_{cyc}\chi_{LT}^{-1})(\chi_{cyc}^{-1}\chi_{LT})))\\&\cong\mathcal{O}_\mathcal{E}^{\psi=id}.
 \end{align*}
  Now, the result follows from Theorem \ref{commutative} by putting $i=1$ and $V= R(\chi_{cyc}\chi_{LT}^{-1})$.
\end{proof}

\bibliography{mybib}{}
\bibliographystyle{acm}

\subsection*{Data Availability} Data sharing not applicable to this article as no datasets were generated or analyzed during the current study.

\subsection*{Conflict of interest} The authors declare that they have no conflicts of interest.

\end{document}